\newcommand\restr[2]{{
  \left.\kern-\nulldelimiterspace 
  #1 
  \vphantom{\big|} 
  \right|_{#2} 
  }}
\newtheorem{theorem}{Theorem}
\newtheorem{lemma}[theorem]{Lemma}
\newtheorem{corollary}[theorem]{Corollary}
\newtheorem{proposition}[theorem]{Proposition}
\theoremstyle{definition}
\newtheorem{remark}[theorem]{Remark}
\newtheorem{example}[theorem]{Example}
\newtheorem{definition}[theorem]{Definition}
\newcommand{\eref}[1]{(\ref{e.#1})}
\newcommand{\tref}[1]{Theorem \ref{t.#1}}
\newcommand{\lref}[1]{Lemma \ref{l.#1}}
\newcommand{\pref}[1]{Proposition \ref{p.#1}}
\newcommand{\cref}[1]{Corollary \ref{c.#1}}
\newcommand{\sref}[1]{Section \ref{s.#1}}
\newcommand{\rref}[1]{Remark \ref{r.#1}}
\numberwithin{theorem}{section}
\numberwithin{equation}{section}
\newcommand{\Na}{\mathcal{N}}
\newcommand{\N}{\mathbb{N}}
\newcommand{\Z}{\mathbb{Z}}
\newcommand{\R}{\mathbb{R}}
\newcommand{\grad}{\nabla}
\newcommand{\Dupo}{\partial\{u>0\}}
\newcommand{\pt}{\partial}
\newcommand{\bB}{\overline{B}}
\newcommand{\bca}{\begin{cases}}
\newcommand{\eca}{\end{cases}}
\newcommand{\lb}{\left(}
\newcommand{\rb}{\right)}
\newcommand{\lmb}{\left[}
\newcommand{\rmb}{\right]}
\newcommand{\lma}{\left\{}
\newcommand{\rma}{\right\}}
\newcommand{\Ca}{{\mathcal{C}}}
\newcommand{\Ta}{{\mathcal{T}}}
\newcommand{\csubset}{\subset\joinrel\subset}
\newcommand{\Tc}{\mathcal{T}}
\newcommand{\tr}{\mathrm{tr}}
\newcommand{\gd}{\nabla}
\newcommand{\rta}{\rightarrow}
\newcommand{\lw}{\left|}
\newcommand{\rw}{\right|}
\newcommand{\be}{\begin{equation}}
\newcommand{\ee}{\end{equation}}
\newcommand{\bt}{\begin{thm}}
\newcommand{\et}{\end{thm}}
\newcommand{\bc}{\begin{cor}}
\newcommand{\ec}{\end{cor}}
\newcommand{\bl}{\begin{lem}}
\newcommand{\el}{\end{lem}}
\newcommand{\norm}[1]{\left\lVert#1\right\rVert}
\newcommand{\normm}[1]{{\left\vert\kern-0.25ex\left\vert\kern-0.25ex\left\vert #1 
    \right\vert\kern-0.25ex\right\vert\kern-0.25ex\right\vert}}
\newcommand{\dist}{\operatorname{dist}}
\def\XXint#1#2#3{{\setbox0=\hbox{$#1{#2#3}{\int}$ }
\vcenter{\hbox{$#2#3$ }}\kern-.6\wd0}}
\DeclareFontShape{OT1}{cmr}{bx}{sc}{<-> cmbcsc10}{}
\def\XXint#1#2#3{{\setbox0=\hbox{$#1{#2#3}{\int}$ }
\vcenter{\hbox{$#2#3$ }}\kern-.6\wd0}}
\newcommand{\ep}{\varepsilon}
\newcommand{\osc}{\mathop{\textup{osc}}}
\begin{document}

\title[A gradient degenerate Neumann problem]{Regularity theory of a gradient degenerate Neumann problem}
\subjclass{35R35, 35B65, 35B51}
\keywords{free boundary problems, Signorini problem, degenerate elliptic PDE, unconstrained free boundary problem}
\author[W. M. Feldman]{William M Feldman}
\address{Department of Mathematics, University of Utah, Salt Lake City, Utah, USA}
\email{feldman@math.utah.edu}
\author[Z. Huang]{Zhonggan Huang}
\address{Department of Mathematics, University of Utah, Salt Lake City, Utah, USA}
\email{zhonggan@math.utah.edu}

\begin{abstract}
    We study the regularity and comparison principle for a gradient degenerate Neumann problem. The problem is a generalization of the Signorini or thin obstacle problem which appears in the study of certain singular anisotropic free boundary problems arising from homogenization. In scaling terms, the problem is critical since the gradient degeneracy and the Neumann PDE operator are of the same order. We show the (optimal) $C^{1,\frac{1}{2}}$ regularity in dimension $d=2$ and we show the same regularity result in $d\geq 3$ conditional on the assumption that the degenerate values of the solution do not accumulate. We also prove a comparison principle characterizing minimal supersolutions, which we believe will have applications to homogenization and other related scaling limits.
\end{abstract}
\maketitle

\section{Introduction}

This paper considers the following critically degenerate Neumann problem
\begin{equation}\label{e.grad-degen-neumann}
\begin{cases}
\Delta u = 0 & \hbox{in } B_1^+\\
\min\{\partial_1 u,|\gd' u|\} = 0 &\hbox{on } B_1'.
\end{cases}
\end{equation}
Here we have denoted \(\pt_i=e_i\cdot \gd\) for \(i=1,\cdots, d\), \(\gd'= (\pt_2,\cdots, \pt_d)\), \(B_1^+=B_1\cap\{x_1>0\}\subset \R^d\) and \(B_1'=B_1\cap\{x_1=0\}\subset \R^{d-1}\). The \emph{contact set} of $u$, formally defined as 
\begin{equation}\label{e.contact-set-initial-def}
    \Ca_u := \{x \in B_1': \partial_1u >0\} \subset \{x \in B_1': |\grad'u| = 0\},
\end{equation}
is of central interest.

This problem appears in the study of certain singularly anisotropic Bernoulli free boundary problems arising from homogenization (see \sref{bernoulli-connection} below). In elliptic PDE terms, the problem \eref{grad-degen-neumann} is an example of a PDE with gradient degeneracy. The non-local PDE operator $\partial_1u$ and the gradient degeneracy $|\grad'u|$ are both first-order derivatives making the problem critical.

We will study the regularity and comparison principle for solutions of \eref{grad-degen-neumann}. First, we will show that solutions are Lipschitz continuous. Then we prove the optimal \(C^{1,1/2}\) regularity of solutions in \(d=2\). In higher dimensions $d \geq 3$, we prove optimal regularity under the condition that $u$ only takes finitely many distinct values on its contact set $\Ca_u$.

The gradient degenerate Neumann problem \eqref{e.grad-degen-neumann} is closely related to and, in fact, generalizes the well-known Signorini or thin obstacle problem \cites{athanasopoulos2006optimal,fernándezreal2016c1alpha,garofalo2009monotonicity,garofalo2014,milakis2008regularity} 
\begin{equation}\label{o.thin-obstacle}
\begin{cases}
\Delta w = 0 & \hbox{in } B_1^+\\
\min\{\partial_1 w,-w\} = 0 &\hbox{on } B_1'.
\end{cases}
\end{equation} 

 Notice that \(w\) in the thin obstacle problem \eqref{o.thin-obstacle} is also a viscosity solution to \eqref{e.grad-degen-neumann} with \(w\equiv 0\) on \(\Ca_w\) and \(w\le 0\) on the whole flat boundary \(B_1'\). Unlike the thin obstacle problem, the problem \eqref{e.grad-degen-neumann} does not involve any pre-defined obstacle. However, we will show that any viscosity solution \(u\) to \eqref{e.grad-degen-neumann} is constant on each component of the ``contact set" \(\Ca_u\) defined in \eref{contact-set-initial-def} (see \lref{contact-set-open} below). Thus our ``contact set" $\Ca_u$ generalizes the role of the contact set in the thin obstacle problem, and our problem falls under the general class of \emph{unconstrained free boundary problems} surveyed in \cite{figalli2015overview}. 
 
 Although Signorini solutions solve the degenerate Neumann problem \eqref{e.grad-degen-neumann}, the problem \eqref{o.thin-obstacle} allows additional solutions that do not arise from a Signorini problem. There is, in general, non-uniqueness of solutions to the problem \eqref{e.grad-degen-neumann} even with Dirichlet data posed on the outer boundary $\partial B_1\cap\{x_1>0\}$. Maximal subsolutions of \eref{grad-degen-neumann} just solve the Neumann problem. Minimal supersolutions, on the other hand, generally have nontrivial contact sets $\Ca_u$. In some cases, the minimal supersolution corresponds to a Signorini problem, but even when $\Ca_u$ has only finitely many components solutions may bend below \emph{or} above the \say{obstacle} (see Figure \ref{fig:comparison}). Our final main result of the paper is a comparison principle (see Theorem \ref{comparisonprinciple}) which characterizes minimal supersolution by one additional non-local viscosity solution property, the \emph{boundary maximum principle}.  We expect this comparison principle to allow for regularization arguments, and to have applications in homogenization.

 The generalization brings several new challenges in the analysis of regularity. For example, because of the absence of a thin obstacle, it seems unclear that we can obtain semi-convexity/-concavity of a solution as in the thin obstacle case \cites{athanasopoulos2006optimal,milakis2008regularity}. We solve this challenge by proving pointwise differentiability via a different approach that combines the nontangential convergence theories and the Almgren monotonicity formula. There are also possible piling-ups of infinitely many components of \(\Ca_u\) with \(u\) having infinitely many different values on them, which might ruin the differentiability of a solution (see Theorem \ref{t.introtheorem3}). However, this challenge seems unattainable in the current context so we will defer this issue to future work.

{The problem \eqref{e.grad-degen-neumann} can also be viewed as a critical case of a class of gradient degenerate elliptic problems. In the pioneering work \cite{imbert2013c1alpha}, Imbert and Silvestre studied the following type of degenerate elliptic equation
\[|\grad v|^\gamma F(D^2v) = f\]
with $F$ being uniformly elliptic. This research continued in the case of non-local operators of order $1 < \sigma < 2$ in several works \cites{araújo2023fractional,prazeres2021interior}
\be|\grad v|^\gamma \Delta^{\sigma/2} v = f.\label{inhomogeneousgraddege}\ee 
In this context our problem \eqref{e.grad-degen-neumann} falls at the critical order $\sigma=1$ where the gradient degeneracy and the non-local PDE operator are of the same order. Our work is the first to discuss finer properties in this challenging critical case for gradient degenerate PDEs of this type.}

\subsection{Main results} Matching the optimal \(C^{1,1/2}\) regularity of the thin obstacle problem, we will show the following main result on the regularity of \eqref{e.grad-degen-neumann} in dimension $2$.

\begin{theorem}\label{t.introtheorem1}
    Suppose that $u$ solves \eqref{e.grad-degen-neumann} and $d=2$. Then $u$ is in $ C^{1,\frac{1}{2}}_{\textup{loc}}(B_1^+ \cup B_1')$ and there is a universal $C \geq 1$ so that
    \be
    \norm{u}_{C^{1,\frac{1}{2}}\lb\overline{B_{1/2}^+}\rb} \le C \norm{u}_{L^\infty(B_1^+)}. 
    \ee
    \label{introtheorem1}
\end{theorem}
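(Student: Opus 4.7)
The plan is to exploit the complex structure specific to dimension two. Identifying $\R^2$ with $\C$ via $z=x_1+ix_2$, since $u$ is harmonic in $B_1^+$ the function
\[f(z) := \pt_1 u(z) - i\,\pt_2 u(z)\]
is holomorphic in $B_1^+$, bounded by the previously established Lipschitz estimate, and satisfies $|f|=|\grad u|$. The boundary condition dichotomizes $B_1'$ pointwise: on the Neumann part $\{\pt_1 u=0\}$ the value $f$ is purely imaginary; on the contact part $\{\pt_2 u=0\}$ it is purely real. Either way $f^2$ takes real values on $B_1'$, with $f^2\le 0$ on the Neumann set and $f^2\ge 0$ on the contact set.

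The next step is to extend $g:=f^2$ to a bounded holomorphic function on $B_1$ via Schwarz reflection. Away from the free boundary $\pt\Ca_u\cap B_1'$, the standard reflection principle applies directly: $u$ extends harmonically by even (respectively odd) reflection across the interior of each Neumann (respectively contact) component of $B_1'$, so $g$ is continuous up to those components with real boundary values. At the closed exceptional set $\pt\Ca_u\cap B_1'$, I would close the extension by a removability argument, since $g$ is then bounded and holomorphic on a neighborhood of $B_1'$ minus this closed one-dimensional set.

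Once $g$ is globally holomorphic on $B_1$ with $\norm{g}_{L^\infty(B_1)}\le C\,\norm{u}_{L^\infty(B_1^+)}^2$, I claim its zero set contains $\pt\Ca_u\cap B_1'$: the sign change of $g$ across a free boundary point $z_0$, together with Lipschitz regularity of $u$, forces $g(z_0)=0$. Writing $g(z)=c(z-z_0)^{k}+O(|z-z_0|^{k+1})$ with integer $k\ge 1$, the Cauchy estimates yield
\[|\grad u(z)|=\sqrt{|g(z)|}\le C\,\norm{u}_{L^\infty}|z-z_0|^{k/2}\le C\,\norm{u}_{L^\infty}|z-z_0|^{1/2},\]
giving the pointwise $C^{1,1/2}$ bound at every free boundary point. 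An immediate corollary is that the free boundary in $B_1'$ is discrete, since the zero set of a nontrivial holomorphic function is.

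The main obstacle I anticipate is upgrading this pointwise decay to the full uniform $C^{1,1/2}$ H\"older seminorm on $\grad u$. The difficulty is that $f=\sqrt{g}$ is multivalued near an odd-order zero of $g$, so one cannot simply take a global holomorphic square root. My plan is to localize: writing $g(z)=(z-z_0)^{k_{z_0}}h(z)$ with $h$ holomorphic and nonvanishing near $z_0$, I choose a consistent branch of $\sqrt{h}$ and express $f$ as $(z-z_0)^{k_{z_0}/2}\sqrt{h(z)}$ separately in each sector of the punctured half-disk, with the $B_1'$ cut dictating branches. The explicit $C^{0,1/2}$ seminorm of $(z-z_0)^{k_{z_0}/2}$ combined with the smoothness of $\sqrt{h}$ yields local H\"older control of $\grad u$ near $z_0$; concatenating this with interior Schauder estimates in $B_1^+$ and the boundary regularity coming from even or odd reflection on Neumann or contact components then gives the claimed uniform $C^{1,1/2}$ bound with the correct scaling in $\norm{u}_{L^\infty}$.
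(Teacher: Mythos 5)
Your overall strategy is the same as the paper's (Lewy's complex-variable trick: the holomorphic gradient $F$, its square $G=F^2$ with real boundary values on $B_1'$, extension of $G$ to all of $B_1$, and then a square-root gain of half a derivative). However, the step where you extend $G$ across the free boundary has a genuine gap. You propose classical Schwarz reflection across the open components of $\Na_u$ and $\Ca_u$, and then a removability argument at the exceptional set $\Gamma_u=\partial'\Ca_u$, on the grounds that it is a ``closed one-dimensional set.'' But at this stage of the argument $\Gamma_u$ is only known to be a closed subset of the segment $B_1'$; it could a priori be a fat Cantor set of positive length. Compact subsets of a line are removable for bounded holomorphic functions precisely when they have zero length (Painlev\'e in one direction; in the other, the analytic capacity of a linear compact set is comparable to its length, so e.g.\ the Cauchy transform of arc-length measure on $\Gamma_u$ is a bounded nonconstant holomorphic function off $\Gamma_u$). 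So removability cannot be invoked without first knowing $|\Gamma_u|=0$, which you do not have; the discreteness of $\Gamma_u$ that you derive afterwards is downstream of the extension and cannot be used to justify it. The paper closes exactly this point differently: $V=\operatorname{Im}G$ is a bounded harmonic function in $B_1^+$ whose nontangential limits vanish a.e.\ on $B_1'$ (Lemma \ref{l.weaksol}), and the Hunt--Wheeden Poisson representation (Theorem \ref{t.bdd-harmonic}) then shows $V$ attains zero Dirichlet data on $B_1'$ in the classical sense, so the odd reflection of $V$ is harmonic in all of $B_1$ and $G$ extends via the harmonic conjugate. That measure-theoretic reflection is the correct substitute for your component-by-component reflection plus removability.

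A secondary remark: even granting the extension, your endgame via local branches $F=(z-z_0)^{k/2}\sqrt{h}$ near each zero of $G$, with sector-by-sector branch choices, is workable but more delicate than needed (and your intermediate pointwise bound $|\grad u(z)|\le C|z-z_0|^{1/2}$ at free boundary points is, as you note, not by itself the $C^{1,1/2}$ seminorm). The paper avoids square roots of $F$ entirely: since $U=\operatorname{Re}G$ is harmonic in all of $B_1$ it is locally Lipschitz, the identities $\sigma\tau=0$ and $U=\tau^2-\sigma^2$ a.e.\ on $B_1'$ give $\sigma=\sqrt{U_-}\in C^{0,1/2}_{\textup{loc}}(B_1')$ (using $|\sqrt a-\sqrt b|\le\sqrt{|a-b|}$), and then $u$ solves a Neumann problem with $C^{0,1/2}$ data, so standard Schauder estimates give the stated bound with the correct scaling. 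You may want to adopt that route for the final step.
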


We can also prove similar regularity in dimension $d \geq 3$ under the condition that $u$ takes at most finitely many values on its facets.  

\begin{theorem}\label{t.introtheorem2}
    Suppose that $u$ solves \eqref{e.grad-degen-neumann}, $d \geq 2$, and  $u(\mathcal{C}_u) \subset \R$ is finite. Then $u$ is in $ C^{1,\frac{1}{2}}_{\textup{loc}}(B_1^+ \cup B_1')$ and
    \be
    \norm{u}_{C^{1,\frac{1}{2}}\lb\overline{B_{1/2}^+}\rb} \le C \norm{u}_{L^\infty(B_1^+)}, 
    \ee
    where $C$ is universal when $d=2$, and in $d \geq 3$, at most, $C$ depends on $d$ and the minimal gap of the degenerate values as defined below
    \be\label{minimalgapcondition}
    \textup{gap}(u):=\min\{|a-b|; a\ne b,\,a,b\in u(\Ca_u)\}.
    \ee
    \label{introtheorem2}
\end{theorem}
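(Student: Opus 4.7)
The strategy is to bootstrap the two-dimensional regularity machinery from Theorem \ref{t.introtheorem1} by exploiting the finite-value hypothesis to reduce, locally around each contact point, to a single-facet situation analogous to the $d=2$ case.

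First, I would establish a universal Lipschitz bound $\|u\|_{\mathrm{Lip}(\overline{B_{1/2}^+})} \le L \|u\|_{L^\infty(B_1^+)}$; this step should be dimension-independent and serves as a preliminary to Theorem \ref{t.introtheorem1}. Combined with the hypothesis \eqref{minimalgapcondition}, any two contact points $x,y \in \Ca_u$ with $u(x) \neq u(y)$ are geometrically separated: $|x-y| \geq \mathrm{gap}(u)/L$. Setting $r_0 := \mathrm{gap}(u)/(2L)$, for any $x_0 \in \Ca_u$ with $c := u(x_0)$ the intersection $\Ca_u \cap B_{r_0}(x_0)$ lies entirely in the single level set $\{u = c\}$: any other contact point $y$ in this ball would satisfy $|u(y) - c| \leq L r_0 < \mathrm{gap}(u)$, forcing $u(y)=c$. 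Replacing $u$ by $v := u - c$, I reduce to studying a solution whose nearby contact set has the single value zero, which eliminates the ``piling-up'' obstruction flagged in the introduction.

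Second, with this single-value reduction in hand, I would re-run the argument driving Theorem \ref{t.introtheorem1}---the combination of nontangential convergence theory and the Almgren monotonicity formula centered at $x_0$---to obtain pointwise $C^{1,1/2}$ differentiability of $v$ (hence of $u$) at $x_0$, with quantitative constants depending only on $d$ and on $r_0$, equivalently on $\mathrm{gap}(u)/L$. Away from the contact set, the Neumann condition $\partial_1 u = 0$ together with interior harmonicity gives smoothness by standard Schauder estimates on $B_1^+$ and on the open set $B_1' \setminus \overline{\Ca_u}$. Assembling the pointwise regularity at contact points with the interior/boundary estimates off $\Ca_u$ produces the uniform $C^{1,1/2}$ estimate stated in the theorem.

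The main obstacle will be verifying that the Almgren-plus-nontangential analysis carries over from $d=2$ to $d \geq 3$ once one has reduced to a single contact value. In two dimensions the free boundary of a facet consists of isolated points, whereas for $d \geq 3$ the set $\partial \Ca_u \cap B_1'$ is codimension-two with a priori uncontrolled geometry; one must check that the blow-up classification still yields homogeneous Signorini-type harmonic profiles and that the frequency function is monotone in the relevant generality. The single-value reduction is precisely what keeps this classification tractable, since only one obstacle level is active in a neighborhood of $x_0$, and it is also the step where the dependence of $C$ on $\mathrm{gap}(u)$ enters quantitatively through $r_0$.
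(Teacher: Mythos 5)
Your reduction step is exactly the paper's: the Lipschitz bound $L$ plus the gap hypothesis forces any two facets with different values to be separated by $\mathrm{gap}(u)/L$, so after rescaling to balls of radius $\delta \sim \mathrm{gap}(u)/L$ one may assume $u(\Ca_u\cap B_1)=\{0\}$; this is precisely Remark \ref{r.separation2} and the opening rescaling in the proofs of Theorems \ref{t.C1alphaestimate} and \ref{optimalestimate}, and it is also where the $\delta^{-1/2}$ dependence of the constant enters. The problem is everything after that. You propose to ``re-run the argument driving Theorem \ref{t.introtheorem1}'', but that argument is the Lewy complex-variables trick ($F=\partial_2 u+i\partial_1 u$, $G=F^2$ extends holomorphically across $B_1'$), which is intrinsically two-dimensional and has no analogue in $d\ge 3$. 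The higher-dimensional proof in the paper is an entirely different chain, and your proposal leaves its essential links unproven, merely flagging them as ``the main obstacle.''

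Concretely, the missing steps are: (i) the Almgren monotonicity plus nontangential a.e.\ differentiability only gives that blow-ups at free boundary points are homogeneous; to get \emph{everywhere} differentiability one must classify the $1$-homogeneous solutions, which the paper does via an integration-by-parts identity on the hemisphere and the fact that the second Neumann eigenvalue of $\partial B_1\cap\{x_1>0\}$ equals $d-1$ with linear eigenfunctions (Lemma \ref{l.blow-up-at-FB}); (ii) differentiability is not yet a uniform $C^{1,\alpha}$ estimate --- the paper needs the improvement-of-flatness iteration together with the dichotomy of gradients (Lemmas \ref{improvofflatnessatboundary}--\ref{dichotomyofgradients} and \ref{simpleiteration}) to produce a quantitative modulus, and this intermediate $C^{1,\alpha}$ bound is then used to get $C^1$-compactness of the normalized blow-up sequence $w_t=u(tx)/H(t)^{1/2}$; (iii) the optimal exponent $1/2$ requires the frequency gap $N(0^+,u)\ge 3/2$ at free boundary points, which in $d\ge 3$ is \emph{not} a consequence of the $2$D classification but is proved via the ACF monotonicity formula applied to $(\partial_e u)_\pm$ and a dimension-reduction argument (Theorem \ref{theoremfor3over2inhigherdimension}), together with an $L^2\to L^\infty$ estimate obtained by trapping $u$ between Neumann solutions for the no-sign Signorini problem \eqref{localproblemtoe}; and (iv) the passage from the pointwise growth $|u(x)|\le C|x|^{3/2}$ at free boundary points to the full $\|u\|_{C^{1,1/2}(\overline{B_{1/2}^+})}$ bound requires the covering/reflection argument of the proof of Theorem \ref{optimalestimate}. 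Without at least (i) and (iii) the proposal does not constitute a proof in $d\ge 3$.
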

\begin{remark}
    The minimal gap is always positive under the assumption \(u(\Ca_u)<\infty\), and it is a useful quantitative parameter of the latter condition. In dimension \(d\ge 3\), we can slightly improve the bounding coefficient for a smaller regularity exponent \(1/2>\alpha=\alpha(d)>0\) by proving the estimate:
    \[
     \norm{u}_{C^{1,\alpha}\lb\overline{B_{1/2}^+}\rb} \le C \norm{u}_{L^\infty(B_1^+)},
    \]
    with \(C\) depending at most on \(d\) and \(\# u(\Ca_u)\). Unlike the positive minimal gap \eqref{minimalgapcondition}, the quantity \(\# u(\Ca_u)\) sets no restrictions on the distances between any two distinct degenerate values. See Remark \ref{replacegapbynumber} for the details.
\end{remark}

The proof of the conditional regularity also shows the following result, which is useful to interpret the remaining open issues about the regularity of \eref{grad-degen-neumann}.  If $u$ were to fail to be differentiable at the origin then $u$ would need to have infinitely many facets in any neighborhood of $0$.
\begin{theorem}\label{t.introtheorem3}
    If $u$ solves \eref{grad-degen-neumann} in $B_1^+$ and $u$ fails to be differentiable at $0$ then $\#u(\Ca_u \cap B'_r) = +\infty$ for all $0<r<1$.
\end{theorem}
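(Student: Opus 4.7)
The plan is to argue the contrapositive: assuming there exists some $r>0$ with $\#u(\Ca_u \cap B'_r) < \infty$, I will show $u$ is differentiable at $0$. This reduces the statement to a single-point, localized version of \tref{introtheorem2}, so the work is essentially to isolate which parts of that proof only need the finite-values hypothesis in a neighborhood of the distinguished point, rather than on the whole of $B_1'$.

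The first step is to set up a blow-up analysis at $0$. Lipschitz regularity of $u$ near $0$ is available unconditionally, so the rescalings
\[
u_\rho(x) := \frac{u(\rho x) - c_\rho}{\rho},
\]
for an appropriate centering $c_\rho$ (the value of $u$ at $0$ if $0 \in \Ca_u$, or a harmonic extension average otherwise), form a precompact family of Lipschitz viscosity solutions of \eref{grad-degen-neumann} on fixed half-balls. Along any subsequence $\rho_k \to 0$ one extracts a subsequential blow-up limit $u_0$, which is a global Lipschitz solution in $\{x_1 > 0\}$.

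The second step is to apply the Almgren-type monotonicity formula at $0$ (developed for \tref{introtheorem2}) to conclude each such $u_0$ is homogeneous of some degree $\sigma \geq 1$. Here the local finite-values hypothesis enters decisively: because $u(\Ca_u \cap B'_r)$ is finite, after the rescaling and centering the contact set of any blow-up $u_0$ consists of at most the rescaled limit of one fixed level. The classification of global homogeneous solutions of \eref{grad-degen-neumann} satisfying this rigid contact condition then forces $\sigma = 1$ and selects a unique limit profile, so the blow-up is in fact independent of the subsequence. This is equivalent to differentiability of $u$ at $0$, completing the contrapositive.

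The main obstacle I anticipate is promoting subsequential convergence to a genuine limit. Compactness alone only yields subsequential blow-ups; upgrading to uniqueness requires (i) the Almgren frequency at $0$ to stabilize to a precise value, and (ii) a rigidity statement identifying the $1$-homogeneous profile from its contact data. Both ingredients are delivered by the combination of the Almgren monotonicity formula and the nontangential-convergence theory that powers \tref{introtheorem2}, but a careful re-reading is needed to confirm that those tools only consume the finite-values assumption locally around the target point. Making that localization precise, without inadvertently strengthening the hypothesis, is the principal technical task.
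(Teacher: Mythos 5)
Your plan is essentially the paper's own argument: Theorem \ref{t.introtheorem3} is obtained as the contrapositive of the pointwise differentiability step in the proof of \tref{introtheorem2}, by noting that finiteness of $u(\Ca_u\cap B_r')$ gives the separation condition \eqref{e.adelta} locally, so one rescales to a ball where $u(\Ca_u)$ is a singleton and then runs the Almgren monotonicity formula, the nontangential a.e.\ differentiability, and the classification of $1$-homogeneous blow-ups (Lemma \ref{l.blow-up-at-FB}) at the point in question. The only refinement worth noting is that the degree of a nonzero blow-up of $u(tx)/t$ is pinned to $1$ by the Lipschitz bound rather than by the contact data, and the uniqueness of the limit at a free boundary point is completed by excluding $p'\neq 0$ via Corollary \ref{consequenofuniformlipindeppprime}.
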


Of course, we do not have any example of non-differentiability, so it may be possible to rule this scenario out using other methods. We will further interpret this conditional result below in Section \ref{section.C1alphacondition}.

\subsection{Ideas of the proof}

\subsubsection{Nontangential convergence and almost everywhere differentiability}
In Section \ref{section.lips}, we establish the Lipschitz estimate by using a doubling variable method and the Jensen-Ishii lemma \cites{IshiiLions1990,UsersGuide}. To go beyond the Lipschitz regularity, the typical approach in the Signorini problem goes via a semi-concavity/semi-convexity estimate \cites{athanasopoulos2006optimal,milakis2008regularity}.   This technique does not seem to work in our setting.  Instead, in Section \ref{section.almostdiff}, we utilize the classical theory of the non-tangential boundary behavior of bounded harmonic functions \cites{hunt1968boundary,garnett2005harmonic}.  Since the gradient \(\gd u\) is bounded, by the Lipschitz estimate, and harmonic we can apply this classical theory of harmonic analysis. We show surface measure almost everywhere differentiability (including nontangential directions) of solutions \(u\) to \eqref{e.grad-degen-neumann} on $B_1'$.

\subsubsection{On 2D regularity}
In \cite{athanasopoulos2008structure}*{Section 2}, an idea by Hans Lewy was introduced to observe the optimality of the \(C^{1,1/2}\) regularity of the thin obstacle problems. In Section \ref{section.2dopt} we show that this idea can also be applied to the gradient degenerate Neumann problem \eqref{e.grad-degen-neumann} in dimension \(d=2\). Let \(\gd u = (\pt_1u,\pt_2u)\) be the bounded gradient and then we can define
\[
F=\pt_2 u + i \pt_1 u
\]
as a complex analytic function on \(B_1^+\). Its square satisfies
\[
G:= F^2 = |\pt_2 u|^2 - |\pt_1 u|^2 + 2 i \pt_1u\pt_2 u =: U + iV.
\]
By the boundary condition of \eqref{e.grad-degen-neumann}, we know that \(V\equiv0\) on the flat boundary \(B_1'\), and hence \(V\) can be harmonically extended to the whole \(B_1\) via odd extension. By classical complex analysis, this means that \(G\) is a complex analytic function in the whole \(B_1\). Now \(F=\sqrt{G}\) will admit \(C^{1/2}\) regularity across \(B_1'\) and hence \(u\in C^{1,1/2}\).

\subsubsection{Conditional regularity in \texorpdfstring{$d \geq 3$}{d}}
In Sections \ref{section.C1alphacondition} and \ref{section.conditionopti} we establish the conditional regularity results of \tref{introtheorem2} and \tref{introtheorem3}. 


Our work introduces a distinct method for addressing pointwise differentiability, which avoids relying on the semi-convexity estimate typically used in thin obstacle problems \cites{milakis2008regularity,athanasopoulos2006optimal}. As previously noted, our specific equation \eqref{e.grad-degen-neumann} does not lend itself to semiconvexity-based analysis. Instead, our novel approach integrates the property of non-tangential almost everywhere differentiability with the Almgren monotonicity formula to establish pointwise differentiability. The Almgren monotonicity formula has been extensively applied in the study of thin obstacle problems, see for example \cites{garofalo2009monotonicity, fernándezreal2020obstacle,athanasopoulos2008structure} and other references therein.  In our case there is an additional error term in the derivative of the Almgren frequency functional which we have, so far, only been able to control using the condition \(\# u(\Ca_u\cap B_1)<\infty\). This is the only place where the condition is used in the proof in dimension \(d\ge3\).

Next, using the pointwise differentiability property, we establish a $C^{1,\alpha}$-type improvement of flatness iteration. In the \(C^{1,\alpha}\)-iteration, similar to the set-up in \cite{imbert2013c1alpha}, it is useful to consider the following tilted boundary condition
\be\label{tiltedboundaryconditionm}
\min\{\pt_1 u +m_1,|\gd' u + m'|\}=0,
\ee
with \(m=(m_1,m')\in \R^d\). Unlike the iterations in \cites{imbert2013c1alpha}, we show that there is a general constraint on the gradient \(m\): if \(\osc_{B_1^+} u \le 1\) satisfies \eqref{e.grad-degen-neumann} with the boundary condition replaced by \eqref{tiltedboundaryconditionm} then the vector \(m=(m_1,m')\in \R^d\) would satisfy
\[
|\min\{m_1,|m'|\}| \le K(d),
\]
for some positive constant \(K(d)\) depending only on the dimension. This dichotomy classifies the allowed gradients \say{\(m\)} in the iterations into two cases \(m=(m_1,0)\) with \(m_1>0\) or \(m=(0,m')\) with \(m'\in \R^{d-1}\). We emphasize here that this dichotomy idea and the improvement of flatness procedure essentially does not depend on the finiteness condition \(\# u(\Ca_u\cap B_1)<\infty\), and hence we would obtain a full \(C^{1,\alpha}\) estimate as long as we had pointwise differentiability.

\subsection{Motivations and literature}

\subsubsection{Unconstrained free boundary problems and gradient degenerate elliptic equations}

The gradient degenerate problem \eqref{inhomogeneousgraddege} has drawn much attention in recent years, and can be in general categorized into the class of \emph{regularity matching} problems, see Section 2.2 in the survey of Figalli and Shahgholian \cite{figalli2015overview} on unconstrained problems. In particular, the homogeneous version of the equation \eqref{inhomogeneousgraddege} can be viewed as a regularity matching problem: for a bounded domain \(\Omega\subset\R^n\)
\be
\bca
|\gd u|^\gamma \Delta^{\sigma/2} u = 0 & \hbox{ in } \Omega\\
u=g & \hbox{ outside }\Omega,
\eca\label{homogeneousgradeg}
\ee
where \(\Omega\) is a bounded open domain. In this problem \(u\) satisfies a non-local elliptic problem outside the free domain \(\{|\gd u|=0\}\), in the interior of which the gradient vanishes. Multiple regularity results for different choices of \(\gamma\) and \(\sigma\) have been discussed \cites{araújo2023fractional,prazeres2021interior}. In \cites{prazeres2021interior}, a \(C^{1,\alpha}\) regularity result is obtained for \eqref{inhomogeneousgraddege} for \(1<\sigma<2\) and \(\sigma\) close to 2. The proof relies on a perturbative method around the case \(\sigma=2\), which is included in the well-known work of Imbert and Silvestre \cites{imbert2013c1alpha}. Recently in \cite{araújo2023fractional}, under the condition that the exterior datum \(g\) admits only one solution to the homogeneous equation \eqref{homogeneousgradeg}, an optimal \(C^{1,\alpha}\) regularity result is obtained for the case \(1<\sigma<2\) with 
\be\label{optimalexponentnonloca}
\alpha(\gamma,\sigma)=\frac{\sigma-1}{1+\gamma}.
\ee
As \(\sigma\rta 2^-\), the estimate remains uniform and coincides with the result when \(\sigma=2\) \cites{imbert2013c1alpha}. The gradient degenerate Neumann problem \eqref{e.grad-degen-neumann} can be categorized into the nonlocal gradient degenerate problem \eqref{homogeneousgradeg} in the case that \(\sigma=1\). Indeed, if \(u\) is a global solution to \eqref{e.grad-degen-neumann} then we know that \cites{silvestre2007regularity,caffarelli2008regularity} 
\[
\pt_1 u=\Delta_{x'}^{1/2}u
\]
with \(\Delta_{x'}^{1/2}\) the fractional Laplacian on \(\R^{d-1}\). Now any global viscosity solution to the equation \eqref{e.grad-degen-neumann} satisfies
\[
|\gd' u| \Delta_{x'}^{1/2} u =0
\]
in the viscosity sense. However, by the optimal regularity exponent as described in \eqref{optimalexponentnonloca}, the \(C^{1,\alpha}\) regularity would reduce to a Lipschitz one when \(\sigma\rta 1^+\), which means that \eqref{e.grad-degen-neumann} lies exactly in the critical case of the gradient degenerate problem \eqref{homogeneousgradeg}.

The original strategy of Imbert and Silvestre \cite{imbert2013c1alpha} relies on the following property -- which generalizes to similar nonlinear PDE: 
\[
\hbox{any viscosity solution of } \ |\grad u|^\gamma\Delta u =0 \ \hbox{ in } B_1 \ \hbox{ actually solves } \Delta u = 0 \ \hbox{ in } B_1.
\]
Flat solutions of inhomogeneous problems then inherit regularity from the solutions of the homogeneous problem.  However, in the non-local case \(1<\sigma<2\), we don't have the same property:
\[
|\gd u|^\gamma \Delta^{\sigma/2} u = 0 \quad\textup{ does not imply }\quad  \Delta^{\sigma/2} u =0,
\]
which is due to the nonuniqueness of solutions to the homogeneous problem \eqref{homogeneousgradeg}. Previous results in the literature either require $\sigma$ near $2$ to inherit regularity from the second order case \cite{prazeres2021interior}, or the most recent results obtain regularity under the assumption that \eqref{homogeneousgradeg} has a unique solution to apply a similar improvement of flatness strategy again. In our problem, we also have a similar nonuniqueness issue, but we are specifically interested in general solutions of the homogeneous problem \eqref{e.grad-degen-neumann} in cases of non-uniqueness where the minimal supersolution is nontrivially distinct from the Neumann solution.  We also build on several ideas from \cite{imbert2013c1alpha}, including the Lipschitz estimate and the formulation of the $C^{1,\alpha}$ iteration, but the source of differentiability is distinct and is more related to the thin obstacle theory.  Thus, our techniques combine ideas from the gradient degenerate elliptic PDE theory and the thin obstacle problem.

\subsubsection{Singular Bernoulli free boundary problems}\label{s.bernoulli-connection}
 
Our original motivation to study \eqref{e.grad-degen-neumann} comes from a connection with a singularly anisotropic Bernoulli one-phase problem. Specifically, consider the Bernoulli-type one-phase problem set in the exterior of a compact region $K$
 
\be
\bca
\Delta u =0, &\text{ in }\{u>0\}\setminus K,\\
u\equiv1,&\text{ on }K,\\
|\gd u|= Q(\gd u), & \text{ on } \partial \{u>0\} \setminus K,
\eca\label{discontinuous}
\ee
with the anisotropy \(Q\) being a 0-homogeneous function of the form
\be
Q(e)=\bca
1,&e\ne e_1\\
2,&e=e_1,
\eca
\ee
where \(e_1,e_2,\dots,e_d\) form an orthonormal basis for \(\R^d\). 

This type of singular anisotropy \(Q\) arises from a natural homogenization problem for the classical Bernoulli one-phase problem \cites{caffarelli2007homogenization, kim2008,CaffarelliLeeMellet,feldman2019free,feldman2021limit}.  Specifically, consider the following one-phase problem with laminar oscillatory heterogeneity.
\be
\bca
\Delta u_\ep =0, &\text{ in }\{u_\ep>0\}\setminus K,\\
u_\ep\equiv1,&\text{ on }K,\\
|\gd u_\ep|= q(x_1/\ep), & \text{ on } \partial \{u_\ep>0\}.
\eca\label{laminarcoeff}
\ee
Here $q$ is a $1$-periodic function on \(\R\). While the energy minimizing solutions of \eqref{laminarcoeff} converge to solutions of a classical Bernoulli problem, it is known that the \emph{minimal supersolutions} $u_\ep$ instead converge to the minimal supersolution of the anisotropic problem \eqref{discontinuous}, see \cites{caffarelli2007homogenization,feldman2021limit}.  

There are some results on the regularity of solutions to \eqref{discontinuous} in the case when $K$ is convex \cites{feldman2019free,feldman2024convex,changlara2017boundary}, however little is known without convexity.

The connection between the anisotropic free boundary problem \eqref{discontinuous} and the gradient degenerate Neumann problem \eqref{e.grad-degen-neumann} comes from the formal asymptotic expansion of flat solutions.  Such formal asymptotic expansions can be leveraged, rigorously, to obtain regularity of flat solutions in many PDE problems, the general idea was introduced by Savin \cite{savin2007small} and first leveraged for free boundary problems in a very influential paper of De Silva \cite{desilva2011free}.

To be more specific: suppose that $u$ solves the homogeneous one-phase problem in $B_1$
\[\Delta u = 0 \ \hbox{ in } \ \{u>0\} \cap B_1, \ \hbox{ with } \ |\grad u| = 1 \ \hbox{ on } \ \partial \{u>0\} \cap B_1\]
and is $\ep$-flat, i.e.
\[(x_1 - \ep)_+ \leq u(x) \leq  (x_1 + \ep)_+ \ \hbox{ in } \ B_1\]
for some small enough $\ep >0$. Then one considers the formal asymptotic expansion
\[u(x) = (x_1 + \ep w(x) + o(\ep))_+.\]
Computing the boundary condition
\[1 = |\grad u|^2 = 1 + 2\ep \partial_1 w + o(\ep) \ \hbox{ on } \ \partial \{ u > 0\} \approx \{x_1 >0\}\]
one finds that $w$ should solve the Neumann problem
\[
\begin{cases}
\Delta w = 0 & \hbox{in } B_1^+\\
\pt_1 w = 0 &\hbox{on } B_1'.
\end{cases}
\]
De Silva's approach \cite{desilva2011free} shows the rigorous validity of this asymptotic expansion and uses this to establish $C^{1,\alpha}$ regularity of the free boundary of sufficiently flat (universal $\ep$) solutions.

Later in \cite{changlara2017boundary}, Chang-Lara and Savin studied the regularity of \(\pt\{u>0\}\) when \(u\) is constrained in the way that \(u=0\) outside a smooth obstacle domain \(W_{obs}\) that contains \(K\). They proved optimal \(C^{1,1/2}\) regularity of the free boundary that is near \(\pt W_{obs}\) under the assumption that \(\pt  W_{obs}\) is \(C^{1,1}\). The key observation in their paper is that when \(u\) is sufficiently flat in \(\{u>0\}\cap B_1(x)\), the free boundary can be well-approximated by the function graph of a solution to the thin obstacle problem \eqref{o.thin-obstacle}. The derivation from the asymptotic expansion of \eqref{discontinuous} to the equation \eqref{e.grad-degen-neumann} follows a similar logic to \cite{desilva2011free}.

An analogous formal asymptotic expansion of the singular anisotropic Bernoulli problem \eqref{discontinuous} leads to the gradient degenerate Neumann problem \eqref{e.grad-degen-neumann}. More specifically if $u$ solves \eqref{discontinuous} in $B_1$ and is $\ep$-flat 
  \[(x_1 - \ep)_+ \leq u(x) \leq  (x_1 + \ep)_+ \ \hbox{ in } \ B_1\]
then we can formally expand
\[u(x) = (x_1 + \ep w(x) + o(\ep))_+. \]
If we ignore the higher-order terms, we have, at a free boundary point 
  \[1\le Q(\grad u)^2 = |\grad u|^2 = 1 + 2 \ep \partial_1 w,\]
  which requires that \(\pt_1 w\ge 0\). If \(|\gd' w|>0\) then \(\gd u = e_1 +  \ep\gd w\) is not parallel to \(e_1\) and hence
   \[1= Q(\grad u)^2 = 1 + 2 \ep \partial_1 w.\]
  This formally leads to the boundary condition of the limiting problem: \(\pt_1 w \ge0\) and if \(|\gd' w|>0\) then \(\pt_1 w =0\), which can be simplified as \(\min\{\pt_1 w, |\gd' w|\}=0\) as illustrated in \eqref{e.grad-degen-neumann}.

  In Section \ref{section.flatasymsingularbernoulli} below we follow the approach of \cites{desilva2011free,changlara2017boundary} to show a rigorous flat asymptotic expansion for directionally monotone solutions of \eqref{discontinuous}.

  \begin{proposition}\label{flatasymptoticexpansion}
    For all $\eta>0$ there exists $\ep_0>0$ so that if $u$ is a minimal supersolution of \eqref{discontinuous} in $B_1$ and is $\ep$-flat \eqref{fltanessforp} with slope $p = e_1$ and $\ep \leq \ep_0$ then there is a solution $w$ of \eqref{e.grad-degen-neumann} so that
    \[(x_1 + \ep w(x) - \eta \ep)_+ \leq u(x) \leq (x_1 + \ep w(x) + \eta \ep)_+ \ \hbox{ in } \ B_{1/2}.\]
\end{proposition}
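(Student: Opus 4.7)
\medskip

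\noindent\textbf{Proof proposal.} The plan is to follow the compactness-and-contradiction scheme of De Silva \cite{desilva2011free} and Chang-Lara--Savin \cite{changlara2017boundary}, with the gradient-degenerate free boundary condition $\min\{\partial_1 w,|\nabla'w|\}=0$ arising in place of the Neumann condition $\partial_1 w = 0$.

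First I would introduce the rescaled linearization $w_\ep(x) := (u(x)-x_1)/\ep$ on $\{u>0\}\cap B_1$, extended to $B_1^+$ (e.g.\ by reflecting across the free boundary graph or by setting $w_\ep$ constant below the free boundary). The flatness assumption yields $|w_\ep|\le 1$. Arguing by contradiction, suppose the statement fails: there exist $\eta>0$ and sequences $\ep_k\downarrow 0$, $u_k$ minimal supersolutions of \eqref{discontinuous} that are $\ep_k$-flat with slope $e_1$, for which no solution $w$ of \eqref{e.grad-degen-neumann} satisfies the sandwich bound in $B_{1/2}$. The goal is to extract a subsequential limit $w_{\ep_k}\to w$ locally uniformly on $\overline{B_{3/4}^+}$ and show $w$ solves \eqref{e.grad-degen-neumann}, producing the contradiction.

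For compactness I would establish a partial Harnack inequality for $w_\ep$ following the standard free boundary adaptation: if $w_\ep \ge 0$ on $B_r$, then either $w_\ep \ge \delta$ or $w_\ep \le 1 - \delta$ on $B_{r/2}$, with $\delta$ universal. This relies on harmonicity of $u$ in $\{u>0\}$ and comparison with a radial barrier that can be slid using the free boundary condition $|\nabla u|\le Q(\nabla u)\le 2$ from above and $|\nabla u|\ge 1$ from below. Iterating gives a uniform H\"older modulus for $w_\ep$ up to $B_1'$, providing the desired compactness.

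The crucial step is verifying that the limit $w$ is a viscosity solution of \eqref{e.grad-degen-neumann}. Interior harmonicity $\Delta w = 0$ in $B_{3/4}^+$ is the standard linearization around the harmonic function $x_1$. At the boundary, suppose a smooth test function $\varphi$ strictly touches $w$ from below at $x_0\in B_{3/4}'$. Then $x_1 + \ep_k \varphi$ touches $u_k$ (suitably scaled) from below near a free boundary point $y_k\to (0,x_0')$, and the free boundary condition forces
\[
1 \;\le\; Q(\nabla(x_1 + \ep_k \varphi))^2 \;=\; |\nabla(x_1+\ep_k\varphi)|^2 \;=\; 1 + 2\ep_k\partial_1\varphi(x_0) + O(\ep_k^2),
\]
provided the gradient $e_1 + \ep_k \nabla\varphi$ is not aligned with $e_1$, i.e.\ $|\nabla'\varphi(x_0)|>0$. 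Dividing by $\ep_k$ and sending $k\to\infty$ yields $\partial_1\varphi(x_0)\ge 0$ whenever $|\nabla'\varphi(x_0)|>0$, which is exactly the subsolution viscosity formulation of $\min\{\partial_1 w,|\nabla'w|\}\le 0$. The reverse (supersolution) inequality uses the minimality of $u_k$ as a supersolution: if $\min\{\partial_1 \varphi(x_0),|\nabla' \varphi(x_0)|\}>0$ for a test function touching from above, one can construct a strictly smaller competitor violating minimality, giving $\min\{\partial_1 w,|\nabla' w|\}\ge 0$ in the viscosity sense.

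The main obstacle, I expect, is the boundary supersolution test: unlike the classical Neumann case, when both $\partial_1\varphi$ and $|\nabla'\varphi|$ are positive, the free boundary condition $|\nabla u|=Q(\nabla u)$ takes the value $1$ (since $\nabla u \not\parallel e_1$), but \emph{any} perturbation of slope pushes the boundary into a configuration where $Q=1$, so the usual perturbation argument must be combined with the minimal supersolution property to rule out an inward push of the free boundary. Handling this carefully -- together with the degenerate case $|\nabla'\varphi(x_0)|=0$, where no constraint is imposed -- is the heart of the argument. Once the limit equation is verified, uniform convergence of $w_{\ep_k}$ on $B_{1/2}$ contradicts the choice of $\eta$, completing the proof.
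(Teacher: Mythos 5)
Your overall architecture is the one the paper uses: linearize via $w_\ep=(u-x_1)/\ep$, prove a partial Harnack inequality to get compactness, and identify the limit as a viscosity solution of \eqref{e.grad-degen-neumann} by a contradiction-compactness wrap-up. However, there are two substantive problems.

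First, your verification of the boundary condition has the inequalities reversed and misattributes the role of minimality. When a test function $\varphi$ touches $w$ from \emph{below} at $x_0\in B_1'$, the corresponding $\phi_k=x_1+\ep_k\varphi_k$ touches $u_k$ from below at a free boundary point, and the \emph{supersolution} property of $u_k$ gives $|\grad\phi_k|\le Q(\grad\phi_k)$; if $|\grad'\varphi|(x_0)>0$ then $\grad\phi_k\not\parallel e_1$, so $Q=1$ and $1+2\ep_k\partial_1\varphi+o(\ep_k)\le 1$, yielding $\partial_1\varphi(x_0)\le 0$ — the supersolution condition of Definition \ref{supsolutiondef}. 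You instead wrote $1\le Q^2=|\grad\phi_k|^2$ and concluded $\partial_1\varphi\ge 0$ when $|\grad'\varphi|>0$, which is the opposite sign and is not the viscosity formulation of either half of $\min\{\partial_1 w,|\grad'w|\}=0$. Symmetrically, touching from \emph{above} invokes the subsolution property $|\grad\phi_k|\ge Q\ge 1$ and yields $\partial_1\varphi(x_0)\ge 0$ directly — no minimality needed. Minimality of $u_k$ enters only to prove the additional \emph{strong subsolution} property (Lemma \ref{conditioninfinity}): for a test function $\psi(x_1)$ touching from above with strict separation on a fattened boundary, a competitor $\tilde u_k$ would contradict minimality unless $|\grad\phi_k|^2\ge 2$, forcing $\partial_1\varphi\gtrsim 1/\ep_k$, a contradiction. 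This extra property is not needed for the bare statement of the proposition, but your proposed use of minimality to get "$\min\{\partial_1 w,|\grad'w|\}\ge 0$" does not correspond to any step of the actual argument.

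Second, and more seriously, the partial Harnack inequality is where the genuinely new work lies, and your sketch does not address the obstruction created by the singular anisotropy. In De Silva's sliding argument one excludes touching on the nontrivial portion of the barrier's free boundary because there $|\grad v_\tau|>1=Q$, contradicting the supersolution condition of $u$. Here, at the unique point $y^\ast$ of the barrier's free boundary where $\grad v_\tau\parallel e_1$, the supersolution condition only requires $|\grad\phi|\le Q=2$, while $|\grad v_\tau|=1+O(\ep)$, so no contradiction arises and the barrier can get stuck at $y^\ast$. The paper resolves this with a dichotomy (either $\partial\{u>0\}\cap B_{l/4}$ is entirely a facet $\Lambda_u$, in which case flatness is automatic, or there is a non-facet point $x^\ast$) and, in the second case, by choosing the tangential translation $z(\tau)$ of the barrier so that $y^\ast(\tau,z(\tau))$ lies exactly above $x^\ast$, where touching is excluded; this requires verifying that the barrier's free boundary is a uniformly convex graph so that $y^\ast$ is unique and trackable. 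Without this (or an equivalent device), the compactness step of your argument does not get off the ground.
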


Actually, we can show that $w$ is a minimal supersolution of \eqref{e.grad-degen-neumann}, see Remark \ref{equivalenceofthreenotionsofsolutions}.

\subsection{Non-uniqueness and comparison principle for minimal solutions}\label{subsection.nonuniqueness}

As is known in gradient degenerate problems \cite{araújo2023fractional}, we don't in general have uniqueness for problems of the type \eqref{homogeneousgradeg} for \(0<\sigma<2\). The same phenomenon also occurs when we consider the problem \eqref{e.grad-degen-neumann} with a fixed boundary data on \(\pt B_1\cap\{x_1\ge0\}\). The Perron's method minimal supersolution plays an important extremal role.  It satisfies an additional viscosity solution property, the \emph{boundary maximum principle} (see Lemma \ref{boundarymaximumprinciple}). Our last main result of the paper is a comparison principle characterizing the minimal supersolution. 

\begin{theorem}\label{comparisonprinciple}
      Let \(v\) be a super-solution (see Definition \ref{supsolutiondef}) and \(u\) a sub-solution (see Definition \ref{subsolutiondef}) that satisfies an additional boundary maximum principle as described in Lemma \ref{boundarymaximumprinciple}. If \(v\ge u\) on the boundary \(\pt B_1\cap\{x_1\ge0\}\), then we have \(v \ge u\) on the whole \(\overline{B_1^+}\).
 \end{theorem}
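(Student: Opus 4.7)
My plan is to argue by contradiction using a doubling of variables argument, where the only nonstandard ingredient is the boundary maximum principle from Lemma \ref{boundarymaximumprinciple}, which closes the degenerate branch of the subsolution condition on $B_1'$. Suppose $M := \sup_{\overline{B_1^+}}(u-v) > 0$; since $u$ is USC, $v$ is LSC, and $v \geq u$ on the outer spherical boundary, the supremum is attained at some $x_0 \in B_1^+ \cup B_1'$. An interior maximum is ruled out by the strong maximum principle for harmonic functions (both $u$ and $v$ are smooth and harmonic in $B_1^+$), otherwise $u - v \equiv M$ on the connected component of $B_1^+$ containing $x_0$, contradicting the outer boundary data. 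Therefore $x_0 \in B_1'$ with $u(x_0) - v(x_0) = M$.

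Next I would localize this by the penalized doubling
\[
\Phi_\varepsilon(x,y) = u(x) - v(y) - \frac{|x-y|^2}{2\varepsilon^2} - \delta\bigl(|x-x_0|^2 + |y-x_0|^2\bigr),
\]
choosing $\delta>0$ small and fixed; standard arguments show that the maximizers $(x_\varepsilon, y_\varepsilon)$ converge to $(x_0,x_0)$ with $|x_\varepsilon - y_\varepsilon|^2/\varepsilon^2 \to 0$, and stay away from the outer boundary since $M>0$. The Crandall--Ishii theorem of sums then produces matrices $(p_\varepsilon^x, X_\varepsilon) \in \overline{J^{2,+}} u(x_\varepsilon)$ and $(p_\varepsilon^y, Y_\varepsilon) \in \overline{J^{2,-}} v(y_\varepsilon)$ with the common gradient limit $p_0$ and a matrix inequality yielding $\operatorname{tr}(X_\varepsilon) \leq \operatorname{tr}(Y_\varepsilon) + O(\delta)$. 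The mixed and purely interior cases (at least one of $x_\varepsilon, y_\varepsilon$ interior) are excluded by using harmonicity as the interior viscosity branch: $\operatorname{tr}(X_\varepsilon) \geq 0$ and $\operatorname{tr}(Y_\varepsilon) \leq 0$, together with the Jensen--Ishii trace bound, forces $x_0$ to be approached genuinely from the boundary on both sides.

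The delicate case is thus $x_\varepsilon, y_\varepsilon \in B_1'$. Here the supersolution condition gives $p_{\varepsilon,1}^y \geq 0$, and the subsolution condition gives $\min\{p_{\varepsilon,1}^x, |(p_\varepsilon^x)'|\} \leq 0$. If $p_{\varepsilon,1}^x \leq 0$, passing $\varepsilon \to 0$ yields $p_{0,1} \leq 0$ and combined with $p_{0,1} \geq 0$ we conclude $p_{0,1} = 0$. If instead $|(p_\varepsilon^x)'|=0$, the boundary maximum principle from Lemma \ref{boundarymaximumprinciple}, applied to $u$ with the test function implicitly produced by the doubling, upgrades the degenerate branch so that we still obtain $p_{\varepsilon,1}^x \leq 0$, hence again $p_{0,1}=0$. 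Once $p_{0,1}=0$, the tangential components must also match in the limit, but then the Crandall--Ishii matrix inequality combined with the interior subsolution/supersolution traces gives $0 \leq \operatorname{tr}(X_\varepsilon) \leq \operatorname{tr}(Y_\varepsilon) + O(\delta) \leq O(\delta)$, and a refined choice of the penalty (e.g.\ using $\psi_\varepsilon(x-y)$ with $\psi_\varepsilon$ strictly convex of the type $|\cdot|^4/\varepsilon^{4}$, or exploiting strict positivity of $M$ via a lower-order penalty) produces a strict trace gap, yielding the contradiction as $\delta \to 0$.

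\textbf{Main obstacle.} The heart of the difficulty is the degenerate branch $|\nabla' u| = 0$ of the subsolution condition: on this branch the sign of $\partial_1 u$ is completely unconstrained by the PDE alone, which is exactly why comparison can fail and why non-uniqueness occurs (Section \ref{subsection.nonuniqueness}, Figure \ref{fig:comparison}). The boundary maximum principle is precisely the nonlocal viscosity property that recovers the correct sign of $\partial_1 u$ on the degenerate set, and the main work in the proof is to translate Lemma \ref{boundarymaximumprinciple} into a usable statement for the perturbed test function generated by the doubling, in particular verifying that the approximate degeneracy $|(p_\varepsilon^x)'|=0$ triggers the BMP in a limiting sense, rather than only in the exact degenerate configuration.
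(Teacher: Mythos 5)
Your proposal follows a route the paper deliberately avoids, and it contains a substantive error that undermines the key step. The most serious problem is that you have the two halves of the boundary condition attached to the wrong functions. By Definition \ref{subsolutiondef}, the subsolution $u$ is tested \emph{from above} and yields the one-sided Neumann inequality $\partial_1\varphi(x_0)\ge 0$ — there is no degenerate branch there. The $\min\{\partial_1\varphi,|\gd'\varphi|\}\le 0$ condition belongs to the \emph{supersolution} $v$, tested from below (Definition \ref{supsolutiondef}). Consequently the failure mode of comparison is not ``$|\gd' u|=0$ leaves $\partial_1 u$ unconstrained,'' but rather: the touching may occur inside the contact set $\Ca_v$ of the supersolution, where the test function from below has $|\gd'\varphi|=0$ and $\partial_1\varphi>0$ gives no contradiction. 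Because of this swap, your plan to ``apply the BMP to $u$ to upgrade the degenerate branch and recover $p^x_{\ep,1}\le 0$'' does not correspond to any step that can actually be carried out: the boundary maximum principle is a nonlocal statement about maxima of $u|_{B_1'}$ over subdomains, not a pointwise jet inequality, and nothing in the theorem-of-sums output lets you invoke it. The paper's use of the BMP is entirely different: by \lref{contact-set-open} the supersolution is constant on each component of $\Ca_v$, so at a touching point in $\Ca_v$ the subsolution attains its component-wise maximum there, and the BMP relocates the touching point to $\Gamma_v=\partial'\Ca_v$, where the very definition of the contact set (Definition \ref{definitionofCu}) forbids a lower test function with positive $\partial_1$.

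Beyond that, the doubling-of-variables framework itself is problematic here, which is why the paper replaces it. With $u$ only USC and $v$ only LSC and the PDE living on the lower-dimensional set $B_1'$, the Jensen--Ishii machinery does not interact well with the Neumann-type condition (your ``mixed cases are excluded by harmonicity'' step only yields $0\le\operatorname{tr}X_\ep\le\operatorname{tr}Y_\ep+O(\delta)\le O(\delta)$, which is not a contradiction), and ordinary sup/inf-convolutions do not preserve the sub/supersolution properties for this boundary condition. The paper instead uses \emph{tangential} sup/inf-convolutions combined with harmonic lifts (Sections \ref{s.tnagentialsubsup}--\ref{s.harmonic-lift}) to regularize while preserving both solution properties and the BMP; it creates strictness not through the doubling penalty but by perturbing the supersolution to $\hat v_\ep-\eta(x_1)_+ +10\eta$; and the final contradiction at a touching point in $\Na\sqcup\Gamma$ comes from a two-sided paraboloid/harmonic-replacement argument comparing $\partial_1 w_{\tau,1}$ and $\partial_1 w_{\tau,2}$, not from a trace gap. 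As written, your proposal has a genuine gap at exactly the point the theorem is about.
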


A similar comparison principle for the Bernoulli-type problem can be found in \cite{feldman2019free}*{Theorem 5.3}.  The usefulness of this sort of theorem is that it gives a ``local" viscosity solution characterization of the minimal supersolution.  This uniqueness property can be used in the proof of homogenization or other regularization limits, for example as done for related free boundary problems in \cites{feldman2019free,feldman2024convex}.

In general, without the boundary maximum principle, the gradient degeneracy causes the comparison principle to fail. The touching point between a strict subsolution $v$ and supersolution $u$ may occur within the contact set \(\Ca_u\) of the supersolution \(u\), and positivity of $\partial_1u>0$ is no contradiction. The boundary maximum principle is enough to rule out this scenario. There are also technical challenges since we must work with general semi-continuous sub and supersolutions and the PDE is on a lower dimensional set. To regularize, we need to use tangential sub-/sup-convolutions with harmonic replacement.

In general, given a continuous boundary condition $g$ on $\partial B_1 \cap \{x_1 \geq 0\}$, we have at least three different methods to generate solutions of \eqref{e.grad-degen-neumann}.  We can simply solve the Neumann problem, this gives the maximal subsolution.  We can solve the thin obstacle problem with obstacle $\max_{\partial' B_1'} g$ from above. And we can find the Perron's method minimal supersolution. The Perron's method minimal supersolution always satisfies the boundary maximum principle, while the Signorini and Neumann solutions may not. In Figure \ref{fig:comparison} we show an example where all three of these solutions are distinct.

\begin{figure}[htbp]
    \centering
    \includegraphics[width=0.6\textwidth]{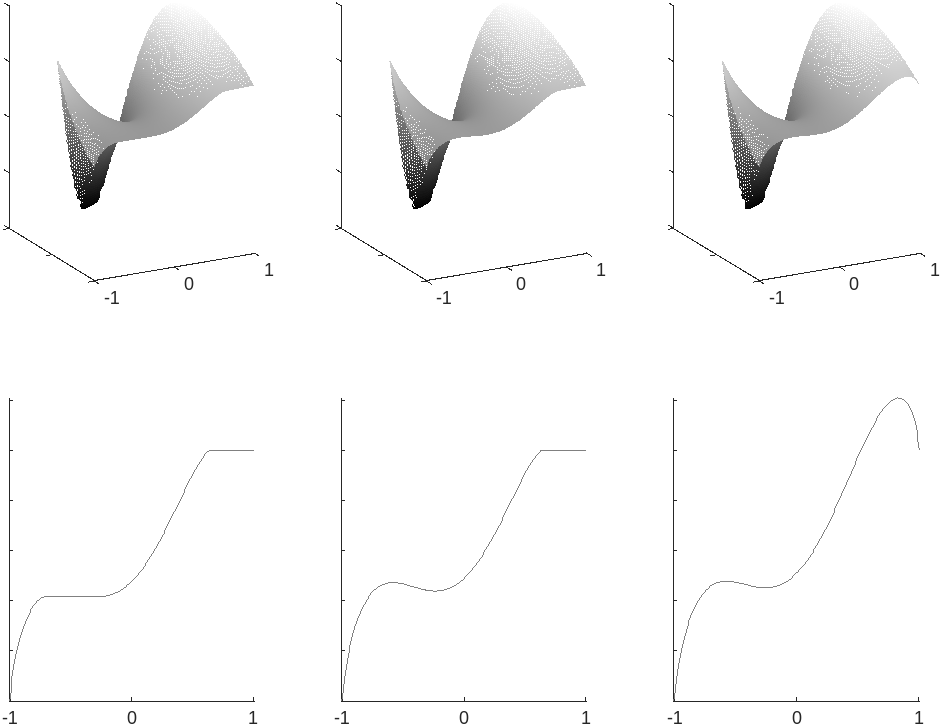}
    \caption{Different solutions of \eqref{e.grad-degen-neumann} with the same boundary data \(g(x_1,x_2)=-43x_1^{8} + 19x_1 + 5x_2 - 5\). Top: (Left) the minimal supersolution: $\Ca_u$ has two components; (Middle) the thin obstacle solution \eqref{o.thin-obstacle} with $\max_{\partial' B_1'} g=0$ as an obstacle from above: it has only one flat component; (Right) the Neumann solution / maximal subsolution. Bottom: the corresponding restrictions to \(B_1'\). Notice that only the minimal supersolution satisfies the boundary maximum principle in this case.}
    \label{fig:comparison}
\end{figure}

\subsection{Outline}
In Section \ref{section.prelim} we will discuss the viscosity solutions to the equation \eqref{e.grad-degen-neumann} and define the \emph{contact set} \(\Ca_u\) of a viscosity supersolution \(u\). In Section \ref{section.lips} we establish an interior Lipschitz estimate for all bounded viscosity solutions in any dimensions \(d\ge 2\) by applying the doubling variable technique in \cite{imbert2013c1alpha}.

In Section \ref{section.almostdiff} we review some results from the literature on the non-tangential boundary behavior of bounded harmonic functions and show the surface measure almost everywhere differentiability (including non-tangential directions) of a solution \(u\) to \eqref{e.grad-degen-neumann} up to the boundary \(B_1'\). In Section \ref{section.2dopt} we prove Theorem \ref{introtheorem1} by applying the almost everywhere differentiability up to \(B_1'\) and the complex analytic arguments. 

In Section \ref{section.C1alphacondition} we prove the Almgren monotonicity formula under the additional condition \(\# u(\Ca_u) < \infty\). In the same section, we establish the improvement of flatness and hence the \(C^{1,\alpha}\) regularity by using the monotonicity formula. In Section \ref{section.conditionopti} we finish the proof of Theorem \ref{introtheorem2} by using the Almgren monotonicity again. In Section \ref{section.flatasymsingularbernoulli} we show the flat asymptotic expansion of \eqref{discontinuous} gives rise to the problem \eqref{e.grad-degen-neumann}. 

\subsection{Acknowledgments} The authors were both supported under the NSF grant DMS-2009286.  The first author appreciated a helpful conversation with Mark Allen.
\section{Preliminaries\label{section.prelim}}
\subsection{Notations}

\begin{enumerate}[label = $\bullet$]
    \item \(d=n+1\ge 2\) are dimensions.
    \item \((x_1,x')=(x_1,x_2,\cdots,x_d)\in \R^d \) are the coordinate functions. \(e_1,\cdots,e_2\) form an orthonormal basis for \(\R^d\). \(\pt_i,\,i=1,\cdots,d\) are the partial derivatives with respect to the directions \(e_i\). \(\gd'= (\pt_2,\cdots,\pt_d)\) is the tangential gradient.
    \item \(B_r(x)\) is the open ball centered at \(x\in \R^d\) with radius \(r>0\). \(B_r =B_r(0)\).
    \item \(\pt \Omega\) is the boundary of an open domain \(\Omega\subset\R^d\).
    \item \(\Omega^+=\Omega\cap \{x_1>0\}\).
    \item \(\Omega'=\Omega\cap\{x_1=0\}\). \(\pt' \Omega'\) is the relative boundary of \(\Omega'\) in \(\{x_1=0\}\). \(B_r'=B_r'(0)\).
    \item \(\overline{\Omega}\) is the closure of \(\Omega\).
    \item The notation $A\sqcup B$ denotes disjoint union of sets $A$ and $B$.

\end{enumerate}

\subsection{Viscosity solutions} Let us discuss the definition of viscosity solutions to the equation \eqref{e.grad-degen-neumann}.
\begin{definition}  
A function $u \in \mathrm{USC}\lb \overline{B_1^+}\rb$ is a \emph{subsolution} of \eref{grad-degen-neumann} if $u$ is subharmonic in $B_1^+$ and whenever $\varphi$ smooth touches $u$ from above at $x_0 \in B_1'$ {with $\Delta \varphi(x_0) < 0$}
\[ \partial_1 \varphi(x_0) \geq 0.\]
\label{subsolutiondef}
\end{definition}

\begin{definition}  
A function $u\in \mathrm{LSC}\lb \overline{B_1^+}\rb$ is a \emph{supersolution} of \eref{grad-degen-neumann} if $u$ is superharmonic in $B_1^+$ and whenever $\varphi$ smooth touches $u$ from below at $x_0 \in B_1'$ with $\Delta \varphi(x_0) > 0$ then
\[ \min\{\partial_1 \varphi(x_0),|\gd' \varphi|(x_0)\} \leq 0.\]
In other words
\[\hbox{if } |\gd'\varphi|(x_0) >0 \ \hbox{ then } \ \partial_1 \varphi(x_0) \leq 0.\]
A continuous function is called a \emph{viscosity solution} if it is both sub- and supersolutions.\label{supsolutiondef}
\end{definition}

\begin{remark}
 There is no comparison principle and no uniqueness for the solutions as defined above. However, in Section \ref{section.comparisonprinciple}, we will discuss the comparison principle for a supersolution and a \emph{strong} subsolution. This comparison principle characterizes the minimal supersolutions to the problem \eqref{e.grad-degen-neumann}.

\end{remark}

We provide some special example solutions.
\begin{example}
\label{quasisolexamp1}
     Any solution to the Signorini problem
        \be
        \bca
        \Delta w = 0,&\hbox{in }B_1^+\\
        w\le c,&\hbox{on }B_1'\\
        \partial_1 w = 0,&\hbox{on } \{w<c\}\cap B_1'\\
        \partial_1 w \ge 0,&\hbox{on }B_1',
        \eca
        \ee
        where \(c\ge \sup_{\partial'B_1'} g\) is some constant. A simple example solution to this equation for \(c=0\) is \(w(x,y)=-\mathrm{Re}\lb(x+iy)^{3/2}\rb\);
\end{example}
\begin{example}
         The sign-reversed Signorini problem
         \be
        \bca
        \Delta w^- = 0,&\hbox{in }B_1^+\\
        w^-\ge \tilde{c},&\hbox{on }B_1'\\
        \partial_1 w^- = 0,&\hbox{on } \{w^->\tilde{c}\}\cap B_1'\\
        \partial_1 w^- \ge 0,&\hbox{on }B_1',
        \eca
        \ee
        where \(\tilde{c}\le \inf_{\partial'B_1'} g\) is some constant. An example solution is \(w^-(x,y)=\mathrm{Re}\lb(x+iy)^{5/2}\rb\).
    \label{quasisolexamp2}
\end{example}

\begin{lemma}
    Let \(u_k\) be a family of continuous viscosity solutions to \eqref{e.grad-degen-neumann} which converge uniformly in on compact subsets of $B_1^+$ to a limit \(u_\infty\), then \(u_\infty\) is also a viscosity solution.
\end{lemma}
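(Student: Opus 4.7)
This is a standard viscosity stability result; the strategy is to pass to the limit separately in the interior equation and in each of the two boundary inequalities. Since each $u_k$ is harmonic in $B_1^+$ and $u_k \to u_\infty$ locally uniformly, $u_\infty$ is harmonic in $B_1^+$ by the usual Weierstrass-type convergence theorem for harmonic functions (equivalently, by passing the mean value property to the limit). In particular $u_\infty$ is both sub- and super-harmonic in $B_1^+$, so the remaining work is to check the boundary conditions in Definitions \ref{subsolutiondef} and \ref{supsolutiondef}.

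For the subsolution condition, let $\varphi$ be smooth, touching $u_\infty$ from above at $x_0 \in B_1'$ with $\Delta \varphi(x_0) < 0$. Setting $\tilde\varphi(x) := \varphi(x) + \delta |x-x_0|^2$ for a small $\delta>0$, one obtains a strict touching from above with $\Delta \tilde\varphi(x_0) = \Delta\varphi(x_0) + 2d\delta <0$ and $\partial_1 \tilde\varphi(x_0) = \partial_1 \varphi(x_0)$. On a small closed half-ball $\overline{B_r(x_0)}\cap \overline{B_1^+}$, $u_\infty - \tilde\varphi$ attains its strict maximum $0$ only at $x_0$, so the local uniform convergence produces maximizers $x_k$ of $u_k - \tilde\varphi$ on this set with $x_k \to x_0$ and staying away from the relative boundary of the half-ball. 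By continuity, $\Delta\tilde\varphi(x_k) <0$ eventually, which rules out $x_k$ being an interior point of $B_1^+$ (the harmonic function $u_k$ cannot be touched from above by a smooth function with strictly negative Laplacian). Hence $x_k \in B_1'$ for large $k$, and the subsolution property of $u_k$ gives $\partial_1 \tilde\varphi(x_k) \geq 0$; passing $k\to\infty$ yields $\partial_1\varphi(x_0) \geq 0$.

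The supersolution condition is verified in exactly the same fashion, using $\tilde\varphi(x) := \varphi(x) - \delta|x-x_0|^2$ to make the touching from below strict while keeping $\Delta\tilde\varphi(x_0) > 0$, locating minimizers $x_k$ of $u_k - \tilde\varphi$, and ruling out interior $x_k$ by superharmonicity of $u_k$ combined with the strict sign of $\Delta\tilde\varphi$. Since $\partial_1\tilde\varphi$ and $|\gd'\tilde\varphi|$ are continuous in $x$ and agree with $\partial_1\varphi$ and $|\gd'\varphi|$ at $x_0$, passing to the limit in $\min\{\partial_1\tilde\varphi(x_k), |\gd'\tilde\varphi(x_k)|\}\leq 0$ gives the required inequality at $x_0$.

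\textbf{Expected difficulty.} There is no substantive obstacle here. The only points requiring a bit of care are making the touching strict via the quadratic perturbation (so that extremizers of $u_k - \tilde\varphi$ genuinely concentrate at $x_0$), and exploiting the strict sign of $\Delta\tilde\varphi(x_0)$ to prevent the contact points $x_k$ from drifting into $B_1^+$ where no useful boundary information is available. Because the boundary operators depend continuously on the first derivatives of $\varphi$, the final passage to the limit is immediate; this contrasts with more subtle stability arguments needed for fully nonlinear degenerate operators where semi-continuous envelopes must be introduced.
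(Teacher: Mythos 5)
Your proof is correct and is exactly the standard stability argument the paper has in mind — the paper omits the proof entirely, remarking only that it "follows the standard argument from the viscosity solution theory," and your quadratic perturbation to make the touching strict, followed by locating contact points $x_k \to x_0$ and using the strict sign of $\Delta\tilde\varphi$ to force $x_k \in B_1'$, is that argument. The one point worth flagging is that the lemma as stated only assumes uniform convergence on compact subsets of the open set $B_1^+$, whereas your boundary step needs convergence up to $B_1'$; this is clearly the intended hypothesis (and is what the authors use in practice, e.g.\ via the uniform Lipschitz estimate of \lref{Lipschitz-estimate}), so your reading is the right one.
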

We omit the proof since it follows the standard argument from the viscosity solution theory.

\subsection{Contact, non-contact set, and the thin free boundary} Let us now study the behavior of a supersolution \(u\) on the flat boundary \(B_1'\). First, we give a formal definition of the contact set. This is named in analogy to the thin obstacle problem, but there is, technically speaking, no obstacle to be contacted.

\begin{definition}\label{definitionofCu}
   Let \(u\) be a supersolution to \eqref{e.grad-degen-neumann}, then the \emph{contact set} $\Ca_u$ is defined by:
   \begin{equation*}
       \Ca_u := \{x \in B_1'\,:\, \ \exists \varphi \in C^\infty \hbox{ touching $u$ from below in $\overline{B_1^+}$ at $x$ with } \partial_1\varphi(x)>0\}.
   \end{equation*}
 
\end{definition}

Our first result says that $\Ca_u$ is open.
 
\begin{lemma}\label{l.contact-set-open}
    Let \(u\) be a supersolution, then \(\Ca_u\) is relatively open in \(B_1'\) and \(u\) is constant on each component of \(\Ca_u\).\label{definecu}
\end{lemma}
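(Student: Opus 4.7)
The plan is to prove both assertions by establishing the local claim: for every $x_0 \in \Ca_u$, there is a relatively open neighborhood $V \subset B_1'$ of $x_0$ with $V \subset \Ca_u$ and $u \equiv u(x_0)$ on $V$. The linchpin is a pointwise rigidity: at any $x \in \Ca_u$, every smooth test function $\varphi$ touching $u$ from below in $\overline{B_1^+}$ at $x$ with $\pt_1 \varphi(x) > 0$ must also satisfy $\gd'\varphi(x) = 0$.

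\textbf{Proving the rigidity.} Given such a $\varphi$, the idea is to perturb by
\[\hat\varphi(y) := \varphi(y) - c\, y_1 + \epsilon\, y_1^2,\]
which agrees with $\varphi(x) = u(x)$ at $x$ since $x_1 = 0$, and satisfies $\hat\varphi(y) - \varphi(y) = y_1(\epsilon y_1 - c) \leq 0$ whenever $0 \leq y_1 \leq c/\epsilon$. Consequently $\hat\varphi \leq \varphi \leq u$ in the sub-neighborhood $B_r(x) \cap \overline{B_1^+}$ for any $r < c/\epsilon$, so $\hat\varphi$ still touches $u$ from below at $x$. Choose $c>0$ small enough so that $\pt_1\hat\varphi(x) = \pt_1\varphi(x) - c > 0$, and then $\epsilon > 0$ large enough so that $\Delta \hat\varphi(x) = \Delta\varphi(x) + 2\epsilon > 0$; the tangential gradient $\gd'\hat\varphi(x) = \gd'\varphi(x)$ is unchanged by the perturbation. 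The supersolution property (Definition \ref{supsolutiondef}) then applies to $\hat\varphi$ and gives $\min\{\pt_1\hat\varphi(x), |\gd'\hat\varphi|(x)\} \leq 0$, so since the first term is positive we conclude $|\gd'\varphi|(x) = 0$.

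\textbf{Deducing the lemma.} With the rigidity in hand, any touching $\varphi$ at $x_0 \in \Ca_u$ satisfies $\varphi(y) = u(x_0) + \pt_1\varphi(x_0) y_1 + O(|y - x_0|^2)$, which yields the boundary lower bound $u(y) \geq \varphi(y) \geq u(x_0) - C|y-x_0|^2$ on $B_1'$ near $x_0$. For the matching upper bound $u \leq u(x_0)$ on a neighborhood $V$, the plan is to exploit the superharmonicity of $u$ in $B_1^+$ together with the strict positivity $\pt_1\varphi(x_0) > 0$: a hypothetical $y_0 \in B_1'$ near $x_0$ with $u(y_0) > u(x_0)$, combined with lower semicontinuity, would propagate a strict lower bound into a neighborhood in $B_1^+$, contradicting either the superharmonicity of $u$ or a second application of the rigidity near $y_0$. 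Once the local constancy $u \equiv u(x_0)$ on $V$ is in hand, openness is immediate: for $y \in V$, the model test function $\psi_y(z) := u(x_0) + \alpha z_1 - K|z-y|^2$ (with $0 < \alpha < \pt_1\varphi(x_0)$ and $K$ sufficiently large) touches $u$ from below at $y$ with $\pt_1\psi_y(y) = \alpha > 0$, placing $y \in \Ca_u$. Connectedness then propagates $u \equiv u(x_0)$ over the whole component of $\Ca_u$ containing $x_0$.

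\textbf{Main obstacle.} The hardest step will be the upper bound $u \leq u(x_0)$ on $V$. The supersolution condition delivers only lower bounds via test functions from below, and $u$ is only lower semicontinuous, so upper-bound estimates are delicate. A cleaner route may be to work with the even reflection of $u$ across $B_1'$, whose distributional Laplacian picks up a positive delta measure on $\Ca_u$ coming from the strict sign $\pt_1 u > 0$, making the reflection effectively subharmonic near the contact set and enabling a strong maximum principle argument for the tangential upper bound.
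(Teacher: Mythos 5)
Your rigidity step is correct (and is essentially a direct unwinding of Definition \ref{supsolutiondef} after adding $\epsilon y_1^2$ to make the Laplacian positive), but the actual content of the lemma --- that $u$ is \emph{exactly} constant on a relative neighborhood of $x_0$ in $B_1'$ --- is not established, and the routes you sketch for it do not work. First, your lower bound is only $u(y)\ge u(x_0)-C|y-x_0|^2$, which is strictly weaker than $u(y)\ge u(x_0)$; your final openness argument needs $\psi_y(y)=u(y)=u(x_0)$ exactly, so even with a correct upper bound you would not close the loop. Second, for the upper bound, "propagating a strict lower bound into $B_1^+$" from $u(y_0)>u(x_0)$ contradicts nothing: superharmonic functions obey a \emph{minimum} principle, not a maximum principle, so they may be large near $y_0$; and "rigidity near $y_0$" is unavailable because you do not know $y_0\in\Ca_u$. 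Your fallback via even reflection also fails for a general supersolution: $u$ is only lower semicontinuous with no trace regularity for $\partial_1 u$, and even formally the reflected distributional Laplacian is $2\partial_1u\,d\mathcal{H}^{d-1}|_{B_1'}$ \emph{plus} the nonpositive interior part, so the reflection is not subharmonic unless $u$ is harmonic.

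The paper closes this gap with a sliding-barrier argument that pins down the value of $u$ pointwise. After normalizing so that $u(x)\ge\beta x_1-\varepsilon$ with $\varepsilon$ small, one slides up the harmonic comparison functions $v_{t,s}(x)=-\delta|x'-s|^2+\tfrac{\beta}{2}(x_1)_++(d-1)\delta x_1^2-\varepsilon+\varepsilon t$ until they touch $u$. Harmonicity excludes interior touching, the one-sided flatness excludes touching on $\partial B_1\cap\{x_1\ge0\}$, and since $\partial_1 v_{t,s}=\beta/2>0$ on $B_1'$ the supersolution condition forces the touching point to be the paraboloid's vertex $s$ (where $\grad'v_{t,s}=0$). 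This gives the exact identity $u(s)=-\varepsilon+\varepsilon t^*(s)$ at every $|s|<1/4$, and the contact-height function $h(s)=t^*(s)$ is Lipschitz with vanishing gradient a.e.\ (all lower touchings occur at vertices), hence constant; constancy of $u$ and openness of $\Ca_u$ follow simultaneously. Some version of this comparison-with-a-sliding-family mechanism is what your proof is missing.
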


We will return to the proof in a moment. First, we give some additional definitions, also named in analogy to the thin obstacle problem.

\begin{definition}
       Define the \emph{non-contact set} $\Na_u$ to be the relative interior of $B_1' \setminus \Ca_u$ and the \emph{free boundary} $\Gamma_u:= B_1' \setminus (\Ca_u \cup \Na_u)$. 
\end{definition}

Given these definitions we have
\begin{equation}
    B_1' = \Ca_u \sqcup \Na_u\sqcup \Gamma_u.
\end{equation}
Also note that, from \lref{contact-set-open}, the free boundary $\Gamma_u$ is relatively closed in $B_1'$ and also 
\[\Gamma_u = \partial'\Ca_u \ \hbox{ and } \ \Gamma_u = \partial'\Na_u.\]

    \begin{remark}\label{generalizedCauNauGammau}
        We can extend the definitions of contact/non-contact/free-boundary sets to a larger class of problems. Suppose \(u\) is a viscosity solution to \eqref{e.grad-degen-neumann} with the boundary condition replaced by the following tilted version 
        \[
        \min\{\pt_1 u+p_1, |\gd' u +p'|\}=0,  \hbox{ on } B_1'
        \]
        for some \(p=(p_1,p')\in \R^d\), then that is equivalent to say that \(u+p\cdot x\) is a viscosity solution to the original equation \eqref{e.grad-degen-neumann}, and hence we may define 
        \[
        \Ca_u:=\Ca_{u+p\cdot x},\,\Na_u:=\Na_{u+p\cdot x},\,\hbox{ and }\Gamma_u:=\Gamma_{u+p\cdot x}
        \]
        correspondingly. 
    \end{remark}

\begin{proof}[Proof of \lref{contact-set-open}]
Let \(x_0\in\Ca_u\), and then we may assume, by translation and rescaling, that \(x_0=0\) and $u(0) = 0$, and \(u\) satisfies the following one-sided flatness condition:
\[
u(x)\ge \beta x_1 -\varepsilon,\,x\in \overline{B_1^+}:= \bB_1\cap\{x_1\ge0\},
\]
where \(\beta>0\) is the inward normal slope of the touching test function in the definition of $\mathcal{C}_u$, and \(\varepsilon>0\) can be made arbitrarily small (at the cost of rescaling to a smaller radius depending on $\beta>0$ and the $C^1$ modulus of the touching test function). 

It then suffices to show that \(u\) must be identically equal to $u(0) = 0$ in a small neighborhood of \(0\in B_1'\). To that end, let us consider the following family of harmonic parabola barriers
\[
v_{t,s}(x_1,x'):= -\delta |x'-s|^2 +\frac{\beta}{2} (x_1)_+ + (d-1)\delta x_1^2 -\varepsilon + \ep t,
\]
where \(\delta>0\) is a small number. It is not hard to observe that when \(\varepsilon\ll \delta < \min\{1,\beta\}/(100d)\) and \(s\in \{0\}\times\R^{d-1},\,|s|<1/4\), we have for \(x\in \pt B_1\cap\{x_1\ge0\}\)
\be\label{cauboundaryinequal}
\begin{split}
   \beta x_1 -\varepsilon-v_{1,s}(x)& = \frac{\beta}{2} (x_1)_+-\ep +\delta |x'-s|^2 - (d-1)\delta x_1^2 \\
  &=\frac{\beta}{2} (x_1)_+ +\delta |x'|^2 - 2\delta x'\cdot s + \delta|s|^2 - (d-1)\delta x_1^2 -\ep\\
  &\ge\frac{\beta}{2} (x_1)_+  - d\delta x_1^2+ \delta  - 2\delta x'\cdot s  -\ep\\
  &\ge \lb\frac{\beta}{2}-d\delta\rb (x_1)_+ + \delta - 2\delta |s| -\ep\\
  &>0.
\end{split}
\ee
Now we have 
\[
v_{0,s}(x) \le u(x),\,|s|<1/4\text{ and }x\in\overline{B_1^+},
\]
and then we may consider the largest \(t^\ast\ge0\) such that 
\[
v_{t^\ast,s}(x) \le u(x),\,|s|<1/4\text{ and }x\in\overline{B_1^+}.
\]
Because \( v_{t^\ast,s}\) are harmonic on \(B_1^+\), the touching point cannot be in the interior. Also because of \eqref{cauboundaryinequal}, the touching point cannot occur at \(\pt B_1\cap\{x_1\ge0\}\) either.

Let \(\tilde{x}\in B_1'\) be a touching point, and then we claim that \(\Tilde{x}=s\), which can be observed by computing
\[
\partial_1 v_{t^\ast,s}(\Tilde{x})=\frac{\beta}{2}>0,
\]
and so this would lead to a contradiction to the super-solution condition if \(\tilde{x}\ne s\).

Now, let \(h(s)=t^\ast(s)\) be defined for each specific \(|s|<1/4\). Because by definition \(h\) is \(C^{1,1}\) on the lower side and hence \(h\) is Lipschitz, and because all the lower touching parabolas have the touching points at the peaks, which means that the gradient of \(h\) must be 0 almost everywhere and hence equals some constant.
    
\end{proof}

\section{Lipschitz regularity\label{section.lips}}

 In the course of proving the Lipschitz regularity of solutions to \eref{grad-degen-neumann} it is convenient to consider a slightly more general class of boundary conditions that arise from renormalizations of the form $u(x) \to u(x) - p'\cdot x$.  Let $p' \cdot e_1 = 0$ be a fixed vector orthogonal to $e_1$ and consider the variant of \eref{grad-degen-neumann}
    \be \label{pprimeboundarycon}
    \begin{cases}
\Delta u = 0 & \hbox{in } B_1^+\\
\min\{\pt_1 u, |\gd' u +p'|\}=0  &\hbox{on } B_1'.
\end{cases}
    \ee
    We will prove a Lipschitz estimate on this general class of equations independent of the vector $p'$.

\begin{lemma}\label{l.Lipschitz-estimate}
Let $p' \cdot e_1 = 0$. There is a constant \(C(d) \geq 1\), independent of $p'$, such that, if \(u\) is a continuous viscosity solution of \eqref{pprimeboundarycon}, then
    \be
    \norm{\grad u}_{L^\infty\lb B_{1/2}^+\rb} \le C \norm{u}_{L^\infty(B_1^+)}.
    \ee\label{lipschitzregu}
    In particular, $u$ is locally Lipschitz continuous in $B_1^+ \cup B_1'$.
\end{lemma}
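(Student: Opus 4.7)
The plan is to adapt the doubling-variable method of \cite{imbert2013c1alpha}*{Section~4} to the degenerate Neumann setting. Fix a target point $x_0 \in B_{1/2}^+ \cup B_{1/2}'$ and, for parameters $L, M > 0$ to be chosen as universal multiples of $\|u\|_{L^\infty(B_1^+)}$, define
\[
\Phi(x,y) := u(x) - u(y) - L\,\omega(|x-y|) - M\bigl(|x-x_0|^2 + |y-x_0|^2\bigr)
\]
on $\overline{B_1^+} \times \overline{B_1^+}$, where $\omega$ is a smooth, strictly concave modulus with $\omega(0) = 0$ and $\omega'(0^+) = 1$. Proving $\Phi \le 0$ and specializing $y = x_0$ yields the desired pointwise Lipschitz bound at $x_0$. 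The $M$-penalty confines the supremum to a small neighborhood of $x_0$ interior to $B_1$, and positivity of the supremum would force $|\bar x - \bar y| \le 2\|u\|_\infty / L$, small when $L$ is large.

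Suppose, for contradiction, that $\sup \Phi > 0$, attained at some $(\bar x, \bar y)$ with $\bar x \neq \bar y$. The Jensen-Ishii lemma \cite{UsersGuide} produces a super-jet $(p_x, X) \in \overline J^{2,+} u(\bar x)$ and a sub-jet $(p_y, Y) \in \overline J^{2,-} u(\bar y)$ with gradients $p_x, p_y \approx L\omega'(|\xi|)\,\nu$ (up to $O(M)$ corrections), where $\xi := \bar x - \bar y$ and $\nu := \xi/|\xi|$, together with a matrix inequality bounding $X - Y$ by the Hessian of the penalty. A case analysis based on the positions of $\bar x, \bar y$ follows. In the bulk case $\bar x, \bar y \in B_1^+$, the viscosity harmonicity gives $\mathrm{tr}(X) \ge 0 \ge \mathrm{tr}(Y)$; combining with the matrix inequality (applied along an orthonormal basis adapted to $\nu$) and the strict concavity of $\omega$ in the $\nu$-direction yields a contradiction for $L/M$ sufficiently large.

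The essential case is when at least one of $\bar x, \bar y$ lies on $B_1'$. For $\bar y \in B_1'$, the test function $\psi(y) := -L\omega(|\bar x - y|) - M|y - x_0|^2$ touches $u$ from below at $\bar y$, and with $\omega$ chosen so that $\Delta \psi(\bar y) > 0$ the super-solution condition (Definition~\ref{supsolutiondef}) yields
\[
\min\{\pt_1 \psi(\bar y),\ |\gd' \psi(\bar y) + p'|\} \le 0.
\]
Since $\bar y_1 = 0 \le \bar x_1$ forces $\nu_1 \ge 0$, the first alternative pushes $\nu_1$ toward $0$ (so $\bar x$ is also on $B_1'$), while the second alternative requires $L \omega'(|\xi|)\,\nu' \approx -p' + O(M)$, which is impossible for $L \gg |p'|$ unless $|\nu'|$ is small. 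The $\min$-structure is precisely what keeps the estimate uniform in $p'$: a large $|p'|$ is accommodated by $|\nu'| \sim |p'|/L$, at the cost of geometric constraints on the direction $\nu$ that, combined with the matrix trace inequality in the $\nu$-direction, close the argument. The main obstacle is the joint boundary sub-case $\bar x, \bar y \in B_1'$, where $\nu$ is purely tangential and both alternatives of the $\min$ must be ruled out simultaneously by combining the boundary conditions at the two points with the tangential part of the matrix inequality; a careful balancing of $L$, $M$, and the shape of $\omega$ closes this remaining case.
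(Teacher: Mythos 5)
Your skeleton (doubling of variables with a strictly concave modulus, a localization penalty anchored at $x_0$, Jensen--Ishii, and a case analysis on the positions of $\bar x,\bar y$) is the same as the paper's, and your treatment of the interior case is fine. The boundary cases, however, contain a genuine gap. First, the sub-case $\bar x\in B_1'$ should be excluded outright rather than "balanced away": the function $\varphi(x)=L\omega(|x-\bar y|)+M|x-x_0|^2+\mathrm{const}$ touches $u$ from \emph{above} at $\bar x$, and its normal derivative there equals $L\omega'(|\xi|)\tfrac{-\bar y_1}{|\xi|}-2M\,x_0\cdot e_1<0$ (one takes $x_0$ interior, so $x_0\cdot e_1>0$), contradicting the subsolution requirement $\partial_1\varphi(\bar x)\ge 0$. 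You never invoke the subsolution condition at $\bar x$, so your "joint boundary sub-case" is a phantom: it never occurs.

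Second, and more seriously, in the remaining case $\bar x\in B_1^+$, $\bar y\in B_1'$ the argument cannot close as you describe. The degenerate Neumann condition is purely first order, so it supplies no analogue of $\tr Y\le 0$ at $\bar y$; the Jensen--Ishii matrix inequality only controls $X-Y$, and the very negative contribution $\sim -L|\xi|^{-1/2}$ lives in the single direction $\nu$, while in the $d-1$ complementary directions one only gets $e\cdot(X-Y)e\le O(M)$. To contradict subharmonicity at the interior point $\bar x$ one must bound $\tr X$, which requires controlling $e\cdot Ye$ for tangential $e$; first-order "geometric constraints on $\nu$" cannot supply this. The paper's mechanism is structural: since $\partial_1\psi(\bar y)=L\omega'(|\xi|)\nu_1+2M\,x_0\cdot e_1>0$, the first alternative of the min fails, the supersolution condition forces $\gd'\psi(\bar y)=-p'$, and hence $\bar y$ lies in the contact set $\Ca_{u+p'\cdot x}$ of Definition \ref{definitionofCu}. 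Lemma \ref{l.contact-set-open} then gives that $u+p'\cdot x$ is constant on the component containing $\bar y$, which yields the second-order tangential information $e\cdot Ye\le 0$ for all $e\perp e_1$; this is the ingredient that closes the trace computation and is also what makes the estimate uniform in $p'$. Without importing Lemma \ref{l.contact-set-open} (or some equivalent source of tangential concavity of $u$ at $\bar y$), the proposal is incomplete. Relatedly, your remark that a large $|p'|$ is "accommodated by $|\nu'|\sim|p'|/L$" is off: the second alternative requires the tangential gradient of the test function to equal $-p'$ exactly, and its size is bounded by $L\omega'+O(M)$ independently of $\nu$; if $|p'|$ exceeds this, both alternatives fail and one concludes immediately, which is consistent with, not an obstacle to, uniformity in $p'$.
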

The idea of the proof is from \cite{imbert2013c1alpha}*{Lemma 4}. Basically, this is a version of the Bernstein method for proving Lipschitz regularity in nonlinear elliptic and parabolic equations which uses doubling of variables when differentiating in the PDE is not possible due to insufficient regularity and lack of a good smoothing procedure.  The origin of the idea goes back to \cite{IshiiLions1990}.
\begin{proof}

By homogeneity of the equation, we can assume that \(\norm{u}_{L^\infty(B_1^+)}\le 1/2\). 
It suffices to show that we can find \(L_1>0\) and \(L_2>0\) such that, for all \(x_0\in B_{1/2}^+\) (and hence \(x_0\cdot e_1>0\)),
\begin{equation}\label{e.M-def}
M=\sup_{x,y\in\overline{B_1^+}} u(x)-u(y) - L_1 \omega(|x-y|)-L_2|x-x_0|^2-L_2|y-x_0|^2\le 0,
\end{equation}
where \(\omega(x)=s-\frac{2}{3} s^{3/2}\) if \(s\le 1\) and \(\omega(s)=\omega(1)\) if \(s\ge 1\). If one proves such an inequality then the Lipschitz constant will be bounded from above by all \(L>L_1+L_2\). Indeed, by boundedness and continuity of \(u\) in \(\overline{B_1^+}\), it suffices to consider the case when \(|x-y|<1\) and \(x,y\in B_1^+\). In this case, we choose \(x_0=y\) and obtain
\[
u(x)-u(y) \le L_1 \omega(|x-y|) + L_2|x-y|^2 \le (L_1+L_2)|x-y|.
\]

Assume towards a contradiction that \(M>0\). Note that, since $u$ is continuous the maximum in \eref{M-def} is achieved. Suppose that \((x,y)\in \overline{B_1^+}\times \overline{B_1^+}\) is a pair that achieves the maximum. Then \(x\ne y\) since, otherwise, \(M\le0\) contradicting the assumption. Note that this is where we use the fact that $u$ is a continuous viscosity solution, for semi-continuous viscosity solutions $u^*(x) - u_*(x)$ can be strictly positive allowing the maximum to occur when $x=y$. 

Then we obtain, by the assumption \(M>0\),
\[
L_1 \omega(|x-y|)+L_2|x-x_0|^2+L_2|y-x_0|^2 < u(x) - u(y)\le |u(x)|+|u(y)|\le 1.
\]
By choosing \(L_2=(4/r)^2\) for some fixed small number \(1\gg r>0\) we obtain that \(|x-x_0|\le r/3\) and \(|y-x_0|\le r/3\).  Now we may assume that both \(x\ne y\) are contained in \(\bB_r(x_0)\cap \overline{B_1^+}\csubset B_{2/3}^+\cup B_{2/3}'\).

We now apply the Jensen-Ishii Lemma, see \cite{UsersGuide}*{Theorem 3.2}, to construct a limiting sub-jet \((q_x,X)\) of \(u\) at \(x\) and super-jet \((q_y,Y)\) of \(u\) at \(y\), where 
\be
q_x=q+2 L_2(x-x_0) \quad\text{and}\quad q_y=q-2L_2(y-x_0),
\ee
with \(q=L_1 \omega'(|x-y|)\frac{x-y}{|x-y|}\) and for all small \(\eta>0\) (dependent of the distance \(\dist(x,y)\))
\be
\lb\begin{matrix}
    X&0\\
    0&-Y
\end{matrix}\rb \preccurlyeq \lb \begin{matrix}
    Z&-Z\\
    -Z&Z
\end{matrix}\rb + (2L_2 + \eta) Id,\label{JensenIshiilemma}
\ee
with
\be
\begin{split}
    Z&=L_1\lmb \lb \frac{1}{|x-y|}-\frac{1}{|x-y|^{1/2}}  \rb Id + \lb \frac{1}{2|x-y|^{1/2}}-\frac{1}{|x-y|}\rb \frac{(x-y)\otimes(x-y)}{|x-y|^2}\rmb\\
&=: L_1\lmb \lb \frac{1}{|x-y|}-\frac{1}{|x-y|^{1/2}}  \rb Id + \lb \frac{1}{2|x-y|^{1/2}}-\frac{1}{|x-y|}\rb j\otimes j\rmb.
\end{split}
\ee
Notice that we have the identity
\[
j\cdot Zj= -\frac{L_1}{2}\frac{1}{|x-y|^{1/2}}. 
\]
For notational convenience, we will merely discuss the limit case in the rest of the proof, although it will be more accurate to discuss everything before taking a limit. Let us first discuss the case that both \(x,y\in B_1^+\). By harmonicity of \(u\) in \(B_1^+\) we know that
\[
\tr(X-Y)\ge 0.
\]
On the other hand, if we apply any vector of the form \((v,v)^T\) to \eqref{JensenIshiilemma} we obtain
\be
(X-Y)v\cdot v \le (4L_2+2\eta)|v|^2,\label{XminusYgeneralbound}
\ee
and if we apply \((j,-j)^T\) then we have 
\[
(X-Y)j\cdot j \le 4L_2+2\eta - 2 L_1 |x-y|^{-1/2}\le 4L_2+2\eta -  L_1,
\]
when \(r\ge |x-y|>0\) is chosen small. Suppose \(\{j,\tilde{e}_2,\cdots,\tilde{e}_d\}\) is an orthonormal basis for \(\R^d\) then we obtain 
\[
\tr(X-Y)= (X-Y)j\cdot j+\sum_{i=2}^d (X-Y)\tilde{e}_i\cdot \tilde{e}_i\le d(4L_2+2\eta)- L_1<0
\]
if one choose \(L_1\) large enough.

We also claim that \(x\not\in B_1'\) or otherwise because we assumed \(x_0\cdot e_1>0\)
\[
q_x\cdot e_1 = L_1\omega'(|x-y|)\frac{-y_1}{|x-y|} - 2 L_2 x_0\cdot e_1 <0,
\]
contradicting the Neumann subsolution condition of \(u\) at \(x\in B_1'\). 

It then suffices to consider the case that \(x\in B_1^+\) and \(y\in B_1'\). In this case, we apply the supersolution condition and because
\[
q_y\cdot e_1=L_1 \omega'(|x-y|)\frac{x_1}{|x-y|}+2L_2(x_0\cdot e_1)>0,
\]
we know that \(q_y=(q_y\cdot e_1) e_1-p'\) with \(q_y\cdot e_1>0\) according to the supersolution condition \ref{supsolutiondef}. Now we arrive at this last case that \(y=(0,y')\in \Ca_{u}\) according to Lemma \ref{definecu} and because \(u\) restricted to \(B_1'\) is linear on connected components of \(\Ca_{u}\) we have the following inequality
\[
e\cdot Ye \le0,\hbox{ for all }e\perp e_1.
\]
In particular, by combining this inequality with \eqref{XminusYgeneralbound} we obtain
\be
e\cdot Xe \le (4L_2+2\eta)|v|^2,\hbox{ for all }e\perp e_1.\label{boundXtangential}
\ee
Let \(j=\frac{x-y}{|x-y|}=:(j_1,j')\) as before with \(j'=\beta \tilde{e}\in B_1'\) for some \(1>\beta\ge0\), then if we apply \((j+\tilde{e},\tilde{e})^T\) to \eqref{JensenIshiilemma} we will obtain
\[
(j+\tilde{e})\cdot X (j+\tilde{e}) \le \tilde{e}\cdot Y\tilde{e}+ j\cdot Z j + 10L_2+10\eta \le 10L_2+10\eta -  L_1 ,
\]
and we may also apply \((j-\tilde{e},-\tilde{e})^T\) to obtain, similarly,
\[
(j-\tilde{e})\cdot X (j-\tilde{e}) \le  10L_2+10\eta -  L_1.
\]
For the case \(\beta>0\) we can take
\[
\left\{\frac{j+\tilde{e}}{\sqrt{|j_1|^2+|1+\beta|^2}},\frac{j-\tilde{e}}{\sqrt{|j_1|^2+|1-\beta|^2}},\overline{e}_3,\cdots,\overline{e}_d\right\}
\]
as an orthonormal basis of \(\R^d\) and combine all the above estimates to obtain
\[
\begin{split}
    \tr(X) &= \frac{1}{|j_1|^2+|1+\beta|^2}(j+\tilde{e})\cdot X (j+\tilde{e})\\
    &\quad+ \frac{1}{j_1^2+|1-\beta|^2}(j-\tilde{e})\cdot X (j-\tilde{e})+\sum_{i=3}^d \overline{e}_i\cdot X \overline{e}_i\\
&\le\frac{1}{2-2\beta}\lb10L_2+10\eta -  L_1\rb +\lb d-1 \rb\lb 10L_2+10\eta\rb -  L_1\\
&<0,
\end{split}
\]
where on \(\overline{e}_i\cdot X \overline{e}_i\) we have used the bound \eqref{boundXtangential}. In the case \(\beta=0\) we may choose \(\tilde{e}=e_2\) and \(\overline{e}_i=e_i\) for each \(i=3,\cdots,d\) and then obtain a similar result. The contradiction of the inequality to the harmonicity of \(u\) leads to the proof of the lemma.

\end{proof}

\section{Nontangential convergence\label{section.almostdiff}}
According to the estimate in the prior section, we know that the gradient \(\gd u\) of a viscosity solution \(u\) to the equation \eqref{e.grad-degen-neumann} is bounded and harmonic on $B_r^+$ for all $r < 1$. We will apply classical harmonic analysis results on the boundary behavior and Poisson integral formulae for bounded harmonic functions in Lipschitz domains.

The following result can be found in the paper of Hunt and Wheeden \cite{hunt1968boundary}*{Page 311}, the exact statement in dimension $d=2$ can also be found in the book of Garnett and Marshall \cite{garnett2005harmonic}*{Corollary 2.5}.

\begin{theorem}\label{t.bdd-harmonic}
Suppose \(h\) is a bounded harmonic function in a Lipschitz and starlike domain \(\Omega\) then there is a bounded function \(f\) on \(\pt\Omega\) such that \(h\) converges to \(f\) nontangentially almost everywhere and \(h\) can be recovered from the Poisson integral of \(f\) on \(\pt\Omega\).\label{nontangentialconver}
\end{theorem}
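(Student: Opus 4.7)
The plan is to combine two classical ingredients from harmonic analysis on Lipschitz domains: a Fatou-type theorem giving nontangential convergence with respect to harmonic measure, and Dahlberg's theorem identifying harmonic measure with surface measure up to an $A_\infty$ weight.

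First I would set up the geometry. Since $\Omega$ is starlike about some interior point $p$ and has Lipschitz boundary, $\partial\Omega$ admits a global Lipschitz parametrization $\partial\Omega = \{p + r(\theta)\theta : \theta \in S^{d-1}\}$ with $r$ Lipschitz and uniformly bounded below, and for each $x_0 \in \partial\Omega$ we have natural nontangential approach regions $\Gamma_\alpha(x_0) = \{x \in \Omega : |x-x_0| < (1+\alpha)\dist(x,\partial\Omega)\}$. A bi-Lipschitz straightening of local charts reduces boundary estimates to the standard half-space setting.

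Next I would establish $\omega^z$-almost-everywhere nontangential convergence, where $\omega^z$ denotes harmonic measure on $\partial\Omega$ based at a fixed reference point $z \in \Omega$. The classical route is: first bound the nontangential maximal function $N_\alpha h$ in $L^p(d\omega^z)$ for some $1 < p < \infty$; then verify nontangential convergence on a dense subclass, e.g.\ Poisson integrals of continuous boundary data, where convergence is elementary; and finally extend to general $h \in L^\infty$ by the standard maximal-function density argument. The output is a bounded function $f$ on $\partial\Omega$ such that $h(x) \to f(x_0)$ nontangentially for $\omega^z$-a.e.\ $x_0$, along with the representation $h(z) = \int_{\partial\Omega} f\, d\omega^z$.

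The final step upgrades $\omega^z$-a.e.\ convergence to $\sigma$-a.e.\ convergence, where $\sigma = \mathcal{H}^{d-1}|_{\partial\Omega}$ is surface measure, and rewrites the representation as a Poisson integral against $d\sigma$. Here I would invoke Dahlberg's theorem: on any Lipschitz domain, $\omega^z$ and $\sigma$ are mutually absolutely continuous, with Radon-Nikodym derivative $k(z,\cdot) = d\omega^z/d\sigma$ lying in the reverse H\"older class $B_2(d\sigma)$. This transfers the a.e.\ convergence to $d\sigma$ and yields the Poisson representation
\[ h(x) = \int_{\partial\Omega} f(y) k(x,y)\, d\sigma(y). \]
The principal obstacle is Dahlberg's theorem itself, which rests on delicate Carleson-measure estimates and the $L^2$-boundedness of Cauchy-type integrals on Lipschitz graphs; for the purposes of the present paper this deep input is simply quoted from the cited references of Hunt--Wheeden and Garnett--Marshall.
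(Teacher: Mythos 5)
This theorem is not proved in the paper at all: it is quoted verbatim as a classical result, with pointers to Hunt--Wheeden (for general $d$) and to Garnett--Marshall's Corollary 2.5 (for $d=2$), and the paper immediately specializes to $\Omega=B_1^+$. Your proposal is therefore doing more than the paper does, and as an outline of the standard proof it is essentially correct: nontangential maximal function control, convergence on a dense class, and transfer from harmonic measure to surface measure via mutual absolute continuity. Two remarks. First, your "maximal-function density argument" presupposes that $h$ is already represented as $h(z)=\int_{\partial\Omega} f\,d\omega^z$ for some $f\in L^\infty(d\omega)$, which for an arbitrary bounded harmonic function is itself something to prove; the standard device is exactly where the starlike hypothesis enters: dilate $h_r(x)=h(p+r(x-p))$, note each $h_r$ is harmonic past $\partial\Omega$ so $h_r=\int h_r\,d\omega^z$, and extract a weak-$*$ limit $f$ of the uniformly bounded traces $h_r|_{\partial\Omega}$. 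Your write-up does not mention this step, which is the one place the geometry is genuinely used. Second, invoking Dahlberg's $B_2$ reverse H\"older estimate is more than is needed: the qualitative mutual absolute continuity of $\omega^z$ and $\sigma$ on Lipschitz domains is already in Hunt--Wheeden (which predates Dahlberg), and that suffices to upgrade $\omega$-a.e.\ convergence to $\sigma$-a.e.\ convergence and to write the representation as a Poisson integral against $d\sigma$. Neither point is a fatal gap, but the first should be filled in if this were to stand as a self-contained proof rather than a citation.
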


In our paper we will focus on the case that \(\Omega= B_1^+\), which satisfies the conditions as described in Theorem \ref{nontangentialconver}

Now let $u$ be a viscosity solution of \eref{grad-degen-neumann}.  By \tref{bdd-harmonic} there is a full measure set \(E=E_u\subset B_1'\) so that the nontangential limit of \(\gd u\) exists at each \(y\in E\). Furthermore, since, again by \lref{Lipschitz-estimate}, \(\restr{u}{B_1'}\) is a Lipschitz continuous function on $B_1'$, it is differentiable in the tangential variables almost everywhere on $B_1'$. Thus we may also, without loss, assume that \(\restr{u}{B_1'}\) is differentiable in the tangential directions at all \(y\in E\). We notice that at this stage we don't know whether the nontangential limit of \(\gd' u\) coincides with the tangential gradient of \(\restr{u}{B_1'}\).

Combining the above information, there exist bounded functions \(\sigma,\tau\) on \(B_1'\), 
\be\label{e.sigmataodef}
\partial_1u(x)\rta\sigma(y),\,|\gd' u|(x)\rta\tau(y),\,\forall x\rta y\in E\hbox{ nontangentially.}
\ee
We would also like to write
\[
\gd u(x)\rta P(y),\,\forall x\rta y\in E\hbox{ nontangentially.}
\]
In the following, we would like to show that \(u\) is differentiable in \(E\) and in particular, \(P'(y) = \gd' \restr{u}{B_1'}(y)\) for all \(y\in E\).

\begin{lemma}\label{l.weaksol}
    For any bounded viscosity solution \(u\) to the problem \eqref{e.grad-degen-neumann}, there is a full measure subset \(E_u\subset B_1'\), on which \(u\) is differentiable (including nontangential directions), and 
    \[
    \min\{\sigma(y),\tau(y)\}=0,\,\forall y\in E=E_u,
    \]
    with \(\sigma,\tau\) as defined in \eqref{e.sigmataodef}. Moreover, \(u\) satisfies the Neumann boundary condition \(\partial_1 u =\sigma\) on \(B_1'\) in the distributional weak sense:
    \[
 \int_{B_{1}^+} \gd u \cdot \gd\phi + \int_{B_{1}'} \sigma \phi = 0,\,\hbox{ for all }\phi\in C_{\textup{loc}}^\infty(B_1^+\sqcup B_1').
    \]
    \label{weaksol} 
\end{lemma}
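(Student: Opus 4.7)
My plan combines the Hunt--Wheeden nontangential boundary theory (Theorem \ref{t.bdd-harmonic}) with the Lipschitz regularity of Lemma \ref{l.Lipschitz-estimate}. Since each component of $\nabla u$ is bounded and harmonic in $B_1^+$, there is a full-measure set $E_1 \subset B_1'$ on which the nontangential limit $P(y) = \lim_{x \to y} \nabla u(x) = (\sigma(y), P'(y))$ exists. The trace $f := u|_{B_1'}$ is Lipschitz, so Rademacher provides a further full-measure tangential differentiability set $E_2$. First I would show $P'(y) = \nabla' f(y)$ a.e.: for $j \geq 2$, integrating by parts in $y_j$ on the slice $\{x_1 = \epsilon\}$ against $\psi \in C_c^\infty(B_1')$ yields
\begin{equation*}
\int_{B_1'} \partial_j u(\epsilon, y') \psi(y') \, dy' = -\int_{B_1'} u(\epsilon, y') \partial_j \psi(y') \, dy'.
\end{equation*}
Dominated convergence on the left (using the $L^\infty$ bound and the pointwise nontangential limit) and uniform convergence $u(\epsilon, \cdot) \to f$ on the right give $P_j = \partial_j f$ weakly, hence a.e. I set $E := E_1 \cap E_2$, further restricted so that this identification holds pointwise.

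\textbf{Step 2 (Full differentiability via blow-up).} For each $y \in E$, consider the rescalings $u_r(\xi) := r^{-1}[u(y + r\xi) - u(y)]$, a uniformly Lipschitz family on compact subsets of $\{\xi_1 \geq 0\}$. For $\xi$ with $\xi_1 > 0$, the ray $\{y + t\xi\}_{t \in (0,r]}$ lies in the nontangential cone $\Gamma_{\xi_1/|\xi|}(y)$, so the fundamental theorem of calculus combined with dominated convergence against the nontangential limit of $\nabla u$ gives $u_r(\xi) \to P(y) \cdot \xi$. For tangential $\xi$ (with $\xi_1 = 0$) the same limit follows from differentiability of $f$ together with the identification from Step 1. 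Pointwise convergence combined with the uniform Lipschitz bound upgrades via Arzela--Ascoli to locally uniform convergence $u_r \to \ell$ with $\ell(\xi) := P(y) \cdot \xi$, which is precisely differentiability of $u$ at $y$ in all admissible directions, including the interior nontangential ones.

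\textbf{Step 3 (The PDE condition on $E$).} Stability of viscosity solutions under the scale-invariant rescalings $u_r$ implies that $\ell$ is a viscosity solution of \eqref{e.grad-degen-neumann} on the halfspace $\{\xi_1 \geq 0\}$. I then classify linear viscosity solutions at the origin. If both $\sigma(y) > 0$ and $|P'(y)| > 0$ simultaneously, I fix $\alpha > d - 1$ and $0 < \beta < \sigma(y)$ and test $\ell$ from below at $0$ by
\begin{equation*}
\varphi(\xi) := \ell(\xi) - |\xi'|^2 + \alpha \xi_1^2 - \beta \xi_1.
\end{equation*}
For $\xi_1 \in [0, \beta/\alpha]$ one checks $\varphi \le \ell$, while $\varphi(0) = \ell(0)$, $\Delta \varphi = 2\alpha - 2(d-1) > 0$, $\partial_1 \varphi(0) = \sigma(y) - \beta > 0$, and $|\nabla' \varphi(0)| = |P'(y)| > 0$, contradicting the supersolution condition for $\ell$. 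Therefore $\min\{\sigma(y), \tau(y)\} = 0$ on $E$.

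\textbf{Step 4 (Distributional Neumann; main obstacle).} Finally, for $\phi \in C^\infty_{\mathrm{loc}}(B_1^+ \sqcup B_1')$ with compact support, integrating by parts on $B_1^+ \cap \{x_1 > \epsilon\}$ using harmonicity of $u$ gives
\begin{equation*}
\int_{B_1^+ \cap \{x_1 > \epsilon\}} \nabla u \cdot \nabla \phi = -\int_{\{x_1 = \epsilon\}} \partial_1 u(\epsilon, y') \phi(\epsilon, y') \, dy'.
\end{equation*}
Sending $\epsilon \to 0^+$, the left side tends to $\int_{B_1^+} \nabla u \cdot \nabla \phi$ by dominated convergence; the right side tends to $-\int_{B_1'} \sigma \phi \, dy'$ because the normal approach $(\epsilon, y') \to (0, y')$ lies in every nontangential cone, giving pointwise a.e.\ convergence $\partial_1 u(\epsilon, y') \to \sigma(y')$, combined with the uniform $L^\infty$ bound. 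The main conceptual obstacle is Step 3: transferring the PDE from $u$ to the linear blowup $\ell$ and designing a quadratic perturbation satisfying the coupled constraints ($\Delta \varphi > 0$, touching from below, $\partial_1 \varphi(0) > 0$, $|\nabla' \varphi(0)| > 0$). The asymmetry between the normal variable and the tangential directions is what forces the specific shape of $\varphi$.
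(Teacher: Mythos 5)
Your proposal follows essentially the same route as the paper: blow-up at a point of nontangential convergence of $\grad u$, the fundamental theorem of calculus along rays in nontangential cones plus dominated convergence to identify the limit as $P(y)\cdot \xi$, uniqueness of the limit to upgrade subsequential to full convergence, stability of viscosity solutions to transfer the boundary condition to the linear profile, and slicing at $\{x_1=\ep\}$ for the distributional Neumann identity. Your Step 1 (identifying $P'$ with $\grad' (u|_{B_1'})$ by testing on slices) is a harmless variant; the paper instead gets the tangential case for free because the locally uniform limit $u_\infty$ is Lipschitz up to $\{x_1=0\}$ and agrees with $P(y)\cdot x$ on the open half-space. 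Your Step 3 usefully makes explicit the barrier computation that the paper leaves as an observation.

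There is, however, one genuine omission in Step 3. Ruling out the case $\sigma(y)>0$ and $\tau(y)>0$ only yields $\min\{\sigma(y),\tau(y)\}\le 0$ (since $\tau\ge 0$ automatically); it does not exclude $\sigma(y)<0$, in which case the minimum would be strictly negative. You must also invoke the \emph{subsolution} half of Definition \ref{subsolutiondef}: if $\sigma(y)<0$, test $\ell$ from above at $0$ with
\[
\varphi(\xi)=\ell(\xi)+|\xi'|^2-\alpha\,\xi_1^2+\beta\,\xi_1,\qquad \alpha>d-1,\ \ 0<\beta<-\sigma(y),
\]
which satisfies $\varphi\ge \ell$ on $\{0\le\xi_1\le\beta/\alpha\}$, $\Delta\varphi<0$, and $\partial_1\varphi(0)=\sigma(y)+\beta<0$, contradicting the requirement $\partial_1\varphi(0)\ge 0$. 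With this mirror-image barrier added, the case analysis is complete and the proof is correct.
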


\begin{proof}
Let \(r>0\) be small, and we would like to consider the following families of functions with \(|x|\le 1,x_1\ge0\)
\[
u_r(x)=\frac{u(y+r x)-u(y)}{r}.
\]
By Lipschitz estimate, we know that the above family of functions has convergent subsequences. Let \(r_k\rta0\) be a subsequence such that \(u_{r_k}\) converges uniformly to some other Lipschitz function \(u_\infty\) in \(\overline{B_1^+}\). By classical viscosity solution theory, we know that \(u_\infty\) also has to be a viscosity solution to \eqref{e.grad-degen-neumann}.

On the other hand, by the nontangential convergence of the \(\gd u\) to the boundary, we have for \(x_1>0\)
\[
\begin{split}
  u_\infty(x)&=\lim_{k\rta\infty} \frac{u(y+r_k x)-u(y)}{r_k}\\
  &=\lim_{k\rta\infty}\frac{\int_{0}^{r_k} \gd u(y+t x)\cdot x dt}{r_k}\\
  &=P(y)\cdot x. 
\end{split}
\]
This equality is also true for \(x_1=0\) because  $u_\infty$ is Lipschitz continuous up to \(B_1'\). Since \(P(y)\) is uniquely determined, we know that the above convergence of \(u_{r_k}\) holds for any convergent subsequences \(r_k\), and hence we obtain differentiability at \(y\in E\). In particular, we have \(\gd' \restr{u}{B_1'}(y)= P'(y)\) for all \(y\in E\). Now the lemma is proved by observing that any viscosity solution of the form \(P(y)\cdot x\) satisfies
\[
\min\{P_1(y),|P'|(y)\}=\min\{\sigma(y),\tau(y)\}=0.
\]

To show that \(u\) satisfies the Neumann boundary condition in the distributional weak sense, we first observe that, by interior regularity of harmonic functions, for any \(\phi\in C_{\textup{loc}}^\infty(B_1^+\sqcup B_1')\) 
\[
    \int_{B_{1}^+\cap\{x_1>1/k\}} \gd u \cdot \gd\phi + \int_{B_{1}^+\cap\{x_1=1/k\}} \pt_1 u \phi = 0.
\]
By applying the nontangential convergence of \(\pt_1u(1/k,x')\rta \sigma(x')\) as \(k\rta\infty\) and the Lipschitz estimate \ref{lipschitzregu}, we know that after sending \(k\rta\infty\),
\[
    \int_{B_{1}^+} \gd u \cdot \gd\phi + \int_{B_{1}'} \sigma \phi = 0.
\]
\end{proof}

\begin{corollary}
    \label{consequenofuniformlipindeppprime}
   For an arbitrary \(p'\in\R^d\) such that \(p'\cdot e_1=0\), a viscosity solution \(w_{p'}\) of \eqref{pprimeboundarycon} also satisfies
    \[
    \min\{\partial_1w_{p'}(x),|\gd' w_{p'}(x)+p'|\}=0,\hbox{ for almost all }x\in B_1',
    \]
    in the sense of nontangential convergence. In particular, there exists a constant \(L=L\lb d,\norm{w_{p'}}_{L^\infty(B_1^+)}\rb>0\) such that if \(|p'|>L\) then \(w_{p'}\) satisfies the zero Neumann boundary condition on \(B_{1/2}'\) in the classical sense.

\end{corollary}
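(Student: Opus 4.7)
My plan is to affine-renormalize \(w_{p'}\) into a viscosity solution of the untilted problem \eqref{e.grad-degen-neumann}, apply \lref{weaksol} there, and then use the \(p'\)-independent Lipschitz estimate \lref{Lipschitz-estimate} to kill the tangential gradient when the tilt \(|p'|\) is large.

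For the first assertion, I would set \(v(x) := w_{p'}(x) + p' \cdot x\) on \(\overline{B_1^+}\).  This is harmonic in \(B_1^+\), and since \(p' \cdot e_1 = 0\) a smooth function \(\varphi\) touches \(v\) from above/below at \(x_0 \in B_1'\) if and only if \(\psi := \varphi - p' \cdot x\) touches \(w_{p'}\) from the same side at \(x_0\) with \(\Delta \psi = \Delta \varphi\), \(\partial_1 \psi = \partial_1 \varphi\), and \(|\nabla' \psi + p'| = |\nabla' \varphi|\).  Consequently the sub- and supersolution tests of \eqref{pprimeboundarycon} for \(w_{p'}\) (Definitions~\ref{subsolutiondef} and \ref{supsolutiondef}) become exactly the sub- and supersolution tests of \eqref{e.grad-degen-neumann} for \(v\).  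Then \lref{weaksol} applied to \(v\) produces a full measure \(E \subset B_1'\) on which \(\nabla v\), and hence \(\nabla w_{p'} = \nabla v - p'\), admits a nontangential limit with \(\min\{\sigma_v, \tau_v\} = 0\); substituting back via \(\partial_1 v = \partial_1 w_{p'}\) and \(\nabla' v = \nabla' w_{p'} + p'\) gives the first assertion.

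For the second assertion the crucial input is that the bound \(\|\nabla w_{p'}\|_{L^\infty(B_{1/2}^+)} \leq C_0 := C(d)\|w_{p'}\|_{L^\infty(B_1^+)}\) from \lref{Lipschitz-estimate} is \emph{independent of \(p'\)}.  With \(L := 2 C_0\) and \(|p'| > L\), the triangle inequality gives \(|\nabla' w_{p'}(x) + p'| \geq C_0 > 0\) throughout \(B_{1/2}^+\); taking the nontangential limit along \(E\), the quantity \(\tau\) associated to \(w_{p'}\) via \eqref{e.sigmataodef} stays bounded below on \(B_{1/2}'\), and so the first assertion forces \(\sigma \equiv 0\) a.e.\ on \(B_{1/2}'\).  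The weak Neumann identity of \lref{weaksol}, transported from \(v\) back to \(w_{p'}\) using \(\int \nabla(p' \cdot x) \cdot \nabla \phi = 0\) for \(\phi \in C_c^\infty(B_{1/2}^+ \cup B_{1/2}')\) (which holds because \(p' \cdot x\) is harmonic and \(p' \cdot e_1 = 0\)), reads as weak zero Neumann for \(w_{p'}\) on \(B_{1/2}^+\).  Even reflection across \(\{x_1 = 0\}\) then turns \(w_{p'}\) into a bounded distributional harmonic function on \(B_{1/2}\), which by Weyl's lemma is classically smooth; thus \(w_{p'}\) extends smoothly up to \(B_{1/2}'\) with \(\partial_1 w_{p'} \equiv 0\) classically on \(B_{1/2}'\).

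There is no serious technical obstacle: the whole argument is driven by the \(p'\)-uniformity of the Lipschitz constant in \lref{Lipschitz-estimate}, without which the threshold \(L\) could not depend only on \(d\) and \(\|w_{p'}\|_{L^\infty(B_1^+)}\).  The one step deserving care is the affine renormalization in the first part, where one must verify that the strict Laplacian conditions \(\Delta \varphi(x_0) < 0\) or \(\Delta \varphi(x_0) > 0\) and the directional inequalities in Definitions~\ref{subsolutiondef} and \ref{supsolutiondef} are correctly transported between \(w_{p'}\) and \(v\) --- which is ensured by \(p' \cdot x\) being harmonic with vanishing normal derivative.
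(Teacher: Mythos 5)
Your proposal is correct and follows essentially the same route as the paper: the paper likewise derives the nontangential statement by rerunning (or, as you do, renormalizing via $v = w_{p'} + p'\cdot x$ as licensed by Remark \ref{generalizedCauNauGammau} and invoking) Lemma \ref{weaksol}, and then uses the $p'$-independent Lipschitz bound of Lemma \ref{lipschitzregu} to force $|\gd' w_{p'}+p'|>0$ on $B_{1/2}'$, hence $\sigma\equiv 0$ there, concluding by classical regularity for the weak Neumann problem. Your explicit even-reflection/Weyl's-lemma step is just a concrete instantiation of the paper's appeal to ``classical regularity theory.''
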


\begin{proof}
    The nontangential convergence can be derived similarly to Lemma \ref{weaksol}. On the other hand, by the Lipschitz estimate, Lemma \ref{lipschitzregu}, the function \(w_{p'}\) is uniformly Lipschitz with the Lipschitz constant \(L=L(d,\norm{w_{p'}}_{L^\infty})>0\) independent of the choice of \(p'\). If one choose \(|p'|>L\), then we have \(|\gd' w_{p'}(x)+p'|>0\) almost everywhere on \(B_{1/2}'\), which implies that \(\pt_1 w_{p'}=0\) almost everywhere on \(B_{1/2}'\). By the second part of Lemma \ref{weaksol}, it implies that \(w_{p'}\) satisfies the zero Neumann boundary condition on \(B_{1/2}'\) in the distributional weak sense. By classical regularity theory for Neumann problems, this implies that \(w_{p'}\) satisfies the Neumann boundary condition in the classical sense on \(B_{1/2}'\).
\end{proof}

\begin{corollary}
    There is a full-measure set \(E_u\subset B_1'\) such that \(\sigma(y)=0\) for all \(y\in E_u\cap \lb\Na_u\sqcup\Gamma_u\rb\). \label{characterizesigma}
\end{corollary}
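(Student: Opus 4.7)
The plan is to argue by contrapositive: show that any $y \in E_u$ with $\sigma(y) > 0$ must lie in $\Ca_u$, so the conclusion will automatically hold on $E_u \cap (\Na_u \sqcup \Gamma_u) = E_u \setminus \Ca_u$. I would take $E_u$ to be the full-measure set furnished by \lref{weaksol}, where $u$ is (nontangentially) differentiable with gradient $P(y) = (\sigma(y), P'(y))$ satisfying $\min\{\sigma(y), |P'(y)|\} = 0$. In particular $\sigma(y) \geq 0$, and when $\sigma(y) > 0$ we must have $|P'(y)| = 0$, so the differentiability expansion becomes
\[
u(y + rh) - u(y) = r\,\sigma(y)\, h_1 + o(r) \quad \text{uniformly in } h \in \overline{B_1^+}.
\]

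Assume $\sigma(y) > 0$ and rescale: set $u_r(h) := r^{-1}(u(y + rh) - u(y))$, which is again a viscosity solution of \eqref{e.grad-degen-neumann} with $u_r(0) = 0$. The expansion above yields, for every $\eta > 0$ and all $r$ sufficiently small, the one-sided flatness
\[
u_r(h) \geq \sigma(y)\, h_1 - \eta \quad \text{on } \overline{B_1^+}.
\]
Inspecting the proof of \lref{contact-set-open}, one sees that its parabolic barrier sliding uses only such a one-sided flatness estimate and the supersolution property of $u_r$; it never uses an a priori hypothesis that $0$ belongs to $\Ca_{u_r}$. Choosing $\eta$ small enough relative to $\sigma(y)$ and $d$ (say $\eta \ll \min\{1,\sigma(y)\}/(100 d)$) and then $r$ correspondingly small, that argument applies to $u_r$ with $\beta = \sigma(y)$ and forces $u_r \equiv 0$ on $B'_{1/4}$. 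Undoing the scaling, $u \equiv u(y)$ on $B'_{r/4}(y)$ for this small $r > 0$.

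Having established that $u$ is constant on a flat disk centered at $y$, Schwarz reflection across $B'_{r/4}(y)$ extends $u$ harmonically, hence smoothly, to the full ball $B_{r/4}(y)$. The classical gradient of this extension at $y$ must agree with the nontangential limit, giving $\nabla u(y) = (\sigma(y), 0)$ classically, and Taylor expansion yields
\[
u(x) = u(y) + \sigma(y)(x_1 - y_1) + O(|x - y|^2)
\]
in a neighborhood of $y$. Then the test function $\varphi(x) := u(y) + \tfrac{1}{2}\sigma(y)(x_1 - y_1) - C|x - y|^2$, for $C$ exceeding the Taylor-remainder constant, satisfies $\varphi(y) = u(y)$, $\partial_1 \varphi(y) = \sigma(y)/2 > 0$, and $\varphi \leq u$ in a neighborhood of $y$ in $\overline{B_1^+}$ (since $x_1 - y_1 \geq 0$ there makes the linear term nonpositive while the quadratic term absorbs the Taylor error). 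Hence $y \in \Ca_u$, contradicting $y \in \Na_u \sqcup \Gamma_u$. The main technical step is the rescaled application of \lref{contact-set-open}; once local constancy of $u$ on $B_1'$ is established, the Schwarz reflection and the construction of the subtangent test function are routine.
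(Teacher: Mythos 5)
Your proof is correct and follows essentially the same route as the paper's: blow up at a point of differentiability in $E_u$, use the resulting one-sided flatness $u_r(h)\ge \sigma(y)h_1-\eta$, and run the parabolic barrier argument of \lref{contact-set-open} to force $y$ into $\Ca_u$, contradicting $y\in\Na_u\sqcup\Gamma_u$. Your extra detour through local constancy and Schwarz reflection is sound but unnecessary — the barrier's touching at the origin already exhibits a smooth lower test function with $\partial_1>0$ there, which places $y$ in $\Ca_u$ directly.
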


\begin{proof}
    We pick \(E_u\) to be the set of differentiability of \(u\) and \(y\in E_u\cap \lb\Na_u\sqcup\Gamma_u\rb\). Let us now consider the blow-up function \(v(x)=v_r(x) = \frac{u(r x+y) - u(y)}{r}\) satisfies the following \(\ep=\ep_r\)-flatness
    \[
    \begin{split}
         v_r(x) &= \frac{u(r x+y) - u(y)}{r}\\
         &= p(y)\cdot x + O(\ep_r)\\
   &= \sigma(y) (x_1)_++ \gd' u(y)\cdot x + O(\ep_r).
    \end{split}
    \]
    where \(x\in \overline{B_1^+},\,r>0\) and as \(r\rta0^+\), the flatness \(\ep_r\rta0\). By Lemma \ref{weaksol} we know that \(\sigma(y)\ge 0\) and if \(\sigma(y)>0\) then \(\tau=|\gd'u(y)|=0\), and hence we may without loss write 
    \[
    v_r(x)=\sigma(y) (x_1)_++O(\ep_r).
    \]
  We argue similarly to Lemma \ref{definecu} by contradiction: if $\sigma(y)>0$, then we construct a function of the form
\[
\sigma(y) x_1/2 -\delta (x_2)^2 + 2\delta (x_1)^2 - \nu.
\]
Choose \(\delta>0\) and \(\nu\) properly, so that the function is below \(\sigma(y)x_1 -C\ep_r\) on \(\pt B_1\cap\{x_1\ge0\}\) for small $r>0$, and it will touch $v_r$ only on $B_1'$. If the touching point is 0, then because 0 is in $\Na_v\sqcup\Gamma_v$, $\sigma(y)\le 0$. If the touching point is not 0 then the touching point has to be in $\Na_v\sqcup\Gamma_v$ too by the supersolution condition of \(v\).
\end{proof}

\section{Optimal regularity in dimension \texorpdfstring{\(d=2\)}{d}\label{section.2dopt}}

In this section, we present the proof of the optimal \(C^{1,1/2}\) regularity of any viscosity solutions \(u\) to the equation \eqref{e.grad-degen-neumann} in dimension \(d=2\). 

The idea starts with the classic use of complex variables, originally due to Hans Lewy, and with well-known application in thin obstacle problems, see \cite{athanasopoulos2006optimal}*{Section 2}. Consider the complex analytic function

\[
F=\partial_2 u + i \partial_1 u,
\]
and its square
\[
G:=F^2=\lw\partial_2 u\rw^2-\lw\partial_1 u\rw^2 + 2 i \partial_1 u \partial_2 u=: U + iV.
\]
We focus first on the imaginary part
\[
V=2\partial_1 u \partial_2 u.
\]
Since it is the imaginary part of an analytic function \(V\) is harmonic, it is also bounded in \(B_1^+\) due to \lref{Lipschitz-estimate}. Furthermore, by Lemma \ref{weaksol}, we have that, for almost every $y \in B_1'$,
\be\label{e.V-nt-0}
V(x) \to 0 \ \hbox{ as } \ x \in B_1^+ \to y.
\ee
In other words, \(V = 0\) on \(B_1'\) in the sense of nontangential convergence. By applying Theorem \ref{nontangentialconver} then \(V\) satisfies zero Dirichlet boundary condition in the classical sense on \(B_1'\) and so \(V\) can be odd extended to a harmonic function in the whole disc \(B_1\), which we still denote by $V$. 

Then, by classical complex analysis, $V$ admits a unique harmonic conjugate in the entire $B_1$.  It must agree with $U$ in the upper half ball and so we denote it as $U$, a harmonic extension of $U$ to $B_1$.  Notice that the odd symmetry of $V$ implies that $U$ is even symmetric with respect to $x_1 \mapsto - x_1$.

Thus we proved the following lemma.
\begin{lemma}
    The function \(F\) is analytic in \(B_1^+\) and its square \(G=F^2\) has a unique analytic continuation to the whole disc \(B_1\).\label{analyticsquare}
\end{lemma}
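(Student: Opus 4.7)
The plan splits naturally into three steps: (i) check that $F$ is holomorphic on $B_1^+$; (ii) verify that the imaginary part $V$ of $G = F^2$ has vanishing nontangential trace on $B_1'$; (iii) perform an odd harmonic reflection of $V$ to produce a harmonic extension of $V$ to the full disc $B_1$, and then reconstruct $G$ from it.

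Step (i) is a direct verification of the Cauchy--Riemann equations for $F = \partial_2 u + i \partial_1 u$ viewed as a function of $z = x_1 + i x_2 \in \C$. The two equations read $\partial_1 \partial_2 u = \partial_2 \partial_1 u$ and $\partial_2^2 u = -\partial_1^2 u$: the first is the equality of mixed partials (and $u$ is smooth on $B_1^+$ by interior regularity for harmonic functions), the second is the harmonicity of $u$. Squaring then gives a holomorphic $G = U + iV$ on $B_1^+$ with $V = 2\,\partial_1 u\,\partial_2 u$. Step (ii) is immediate from \lref{weaksol}: at almost every $y \in B_1'$ the nontangential limits $\sigma(y)$ of $\partial_1 u$ and $\tau(y)$ of $|\partial_2 u|$ exist and satisfy $\min\{\sigma,\tau\}(y) = 0$, so their product, and hence $V$, tends nontangentially to $0$ a.e.\ on $B_1'$.

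Step (iii) is where the real work lies. Since $B_1^+$ is a Lipschitz starlike domain and $V$ is bounded harmonic on it, \tref{bdd-harmonic} identifies $V$ with the Poisson integral on $B_1^+$ of its nontangential boundary trace $v^*$ on $\partial B_1^+$, with $v^* \equiv 0$ a.e.\ on $B_1'$ by step (ii). I would then extend $v^*$ by odd reflection across $B_1'$ to a bounded function $\hat v^*$ on $\partial B_1$ and let $W$ be its Poisson integral on $B_1$. By symmetry of the Poisson kernel under $x_1 \mapsto -x_1$, $W$ is odd in $x_1$ and hence vanishes on $B_1'$; on $\partial B_1^+$ the nontangential trace of $\restr{W}{B_1^+}$ therefore agrees with $v^*$, so the uniqueness in \tref{bdd-harmonic} forces $W \equiv V$ on $B_1^+$. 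Thus $W$ is a harmonic extension of $V$ across $B_1'$. Simple connectivity of $B_1$ then produces a harmonic conjugate of $W$, unique up to a real additive constant, and that constant can be chosen so that on $B_1^+$ the conjugate equals $U$; the resulting holomorphic function on $B_1$ is the desired extension of $G$. Uniqueness of the continuation is the identity theorem applied to any two such extensions.

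The main obstacle is precisely step (iii): we only know $V \to 0$ in the nontangential a.e.\ sense, not pointwise or uniformly up to $B_1'$, so the classical Schwarz reflection principle for harmonic functions does not apply directly. The Poisson-integral representation supplied by \tref{bdd-harmonic}, together with its uniqueness half, is exactly what is needed to push the odd-reflection argument through with only an a.e.\ boundary condition.
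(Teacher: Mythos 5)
Your proof is correct and follows essentially the same route as the paper: holomorphy of $F$ from harmonicity of $u$, vanishing of the nontangential trace of $V$ on $B_1'$ via \lref{weaksol}, and an odd harmonic reflection of $V$ justified through the Poisson-integral representation of \tref{bdd-harmonic}, followed by taking a harmonic conjugate. Your step (iii) merely makes explicit (by constructing the extension as the Poisson integral of the odd-reflected trace and invoking uniqueness) what the paper compresses into the assertion that the a.e.\ nontangential vanishing upgrades to a classical zero Dirichlet condition permitting Schwarz reflection.
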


With this lemma, we are now able to prove Theorem \ref{introtheorem1}.

\begin{proof}[Proof of Theorem \ref{introtheorem1}]
According to Lemma \ref{analyticsquare} we know that \(U\) has a harmonic extension to the whole disc \(B_1\). Also we have the formula $U = |\partial_1u|^2 - |\partial_2u|^2$ and so, given that the supports of $|\partial_1u|$ and $|\partial_2u|$ are disjoint on $B_1'$, we claim that
\be\label{e.sigma-U_+-id}
\sigma=|\partial_1u|=\sqrt{U_-} \  \hbox{ a.e. on $B_1'$},
\ee
where \(U=U_+-U_-\) is the standard decomposition into positive and negative parts. Since $U$ is harmonic in the entire $B_1$ and therefore $U_-$ is locally Lipschitz in $B_1$, the identity \eref{sigma-U_+-id} would imply  \(\sigma\in C_{\text{loc}}^{1/2}(B_1')\).

To prove the claim we first observe that by the nontangential limits of $\partial_ju$, \tref{bdd-harmonic} and \lref{weaksol}, for almost every $y \in B_1'$
\[
G(x)=\lw\partial_2 u\rw^2-\lw\partial_1 u\rw^2 + 2 i \partial_1 u \partial_2 u\rta \tau(y)^2-\sigma(y)^2  \ \hbox{ as } \ x \in B_1^+ \to y\hbox{ non-tangentially.}
\]
 On the other hand, we know that $G$ is defined and holomorphic in $B_1$ so the non-tangential limits must agree with the value of the function
 \[U(y) = G(y) = \tau(y)^2 - \sigma(y)^2 \ \hbox{for a.e. $y\in B_1'$}.\]
 Then, using that $\sigma(y)\tau(y) = 0$ almost everywhere on $B_1'$,
 \[U_-(y) = \sigma(y)^2 \ \hbox{ and } \ U_+ (y)= \tau(y)^2 \ \hbox{ on } \ B_1'.\]
This justifies the claim \eref{sigma-U_+-id}.

Then, since $u$ solves, in the distributional weak sense,
\[-\Delta u = 0 \ \hbox{ in } \ B_1^+ \ \hbox{ with } \ \partial_1 u = \sigma =  \sqrt{U_-} \ \hbox{ on } \ B_1',\]
by standard $C^{1,\alpha}$ estimates for the Neumann problem with a $C^{0,\alpha}$ boundary condition we obtain that $u \in C^{1,\frac{1}{2}}_{\textup{loc}}(B_1^+ \cup B_1')$.

\end{proof}

\section{Conditional regularity in dimension \texorpdfstring{\(d\ge3\)}{s}\label{section.C1alphacondition}}

In this section, we prove \(C^{1,\alpha}\) regularity of a solution \(u\) to the problem \eqref{e.grad-degen-neumann} under the following additional condition. Instead of assuming that \(u(\Ca_u)\) is finite, as stated in Theorem \ref{introtheorem2}, we make an equivalent (see \rref{separation}) assumption with the relevant parameter more clearly quantified:
\begin{equation}\tag{\(\textbf{A}_\delta\)}\label{e.adelta}
    \parbox{.9\textwidth}{\center \(u(\Ca_u)\) is a finite set so that for any connected components $I$ and $J$ of $\Ca_u$\\ with $u(I) \neq u(J)$ the separation condition $\dist(I,J) \geq \delta$ holds.}
\end{equation}

In terms of this separation hypothesis, we aim to prove the $C^{1,\alpha}$ regularity result.

 \begin{theorem}\label{t.C1alphaestimate}
     Let \(u\) be a viscosity solution to \eqref{e.grad-degen-neumann} that satisfies condition \textup{\eref{adelta}}, then there is a small \(\alpha=\alpha(d)\in (0,1)\) such that \(u\in C_{\emph{loc}}^{1,\alpha}(B_1^+\sqcup B_1')\) and there is a constant \(C=C(d)\delta^{-\alpha}>0\) such that
     \[
     \norm{u}_{C^{1,\alpha}\lb\overline{B_{1/2}^+}\rb} \le C  \norm{u}_{L^\infty(B_1^+)}.
          \]
    \label{C1aplhaestimate}
 \end{theorem}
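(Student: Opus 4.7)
The plan is to establish $C^{1,\alpha}$ regularity through two ingredients: an Almgren monotonicity formula (whose error term is controlled by \eref{adelta}) and a Campanato-style improvement-of-flatness iteration adapted to the tilted boundary condition \eqref{tiltedboundaryconditionm}. The flatness iteration follows the broad outline of \cite{imbert2013c1alpha}, but the pointwise differentiability input is obtained from a thin-obstacle-type monotonicity, reflecting the critical order of \eref{grad-degen-neumann}.

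\textbf{Step 1: Almgren frequency formula.} Fix a free boundary point, translate it to the origin, and normalize $u(0)=0$. Introduce
\[
N(r) = \frac{r D(r)}{H(r)}, \qquad D(r) = \int_{B_r^+} |\gd u|^2, \qquad H(r) = \int_{\pt B_r \cap \{x_1>0\}} u^2.
\]
Differentiating $\log N$ via the standard Signorini-type computation produces the usual Rellich/Cauchy--Schwarz cancellation together with a boundary error of the form $\int_{\Ca_u \cap B_r'} u\,\pt_1 u$. By \lref{contact-set-open}, $u$ is constant on each component $I_j$ of $\Ca_u$, so this error reduces to $\sum_j u(I_j) \int_{I_j\cap B_r'} \pt_1 u$. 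Here \eref{adelta} enters essentially: there are only finitely many levels $u(I_j)$, each bounded by $|u(I_j)|\le Cr$ through \lref{Lipschitz-estimate} and $u(0)=0$, while the gap $\delta$ prevents the levels from accumulating. A Gronwall-type absorption then yields that $r\mapsto e^{Cr/\delta} N(r)$ is nondecreasing, so $N(0^+)$ exists.

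\textbf{Step 2: Pointwise differentiability.} Almgren monotonicity combined with the Lipschitz bound of \lref{Lipschitz-estimate} lets normalized blow-ups $u_r(x) = u(rx)/r^{N(0^+)}$ converge along subsequences to homogeneous global solutions of \eref{grad-degen-neumann}. Using the dichotomy of Step 3 and a frequency-gap argument analogous to the thin obstacle and pure Neumann problems, the case $N(0^+)\in (1,3/2)$ is excluded, so $N(0^+)=1$ at any point where $u$ is not purely Neumann. Uniqueness of the linear blow-up, forced by the nontangential almost-everywhere differentiability of \lref{weaksol} matched against the unique degree-one homogeneous profile, then upgrades subsequential convergence to pointwise differentiability with a quantitative modulus.

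\textbf{Step 3: Dichotomy, improvement of flatness, and iteration.} For the tilted problem \eqref{tiltedboundaryconditionm} with $\osc_{B_1^+} u \le 1$, the bound $|\min\{m_1,|m'|\}|\le K(d)$ splits admissible slopes $m\in\R^d$ into class (i) $m=(m_1,0)$ with $m_1\ge 0$ or class (ii) $m=(0,m')$, the latter reducing to a pure Neumann linearization for $|m'|$ large by Corollary~\ref{consequenofuniformlipindeppprime}. Both classes are preserved under rescaling. A standard compactness/contradiction argument using Step 2 then produces universal $\rho\in(0,1/2)$ and $\eta\in(0,1)$ such that $\osc_{B_1^+}(u-m\cdot x)\le\ep$ with $m$ admissible and $\ep$ small implies $\osc_{B_\rho^+}(u-\tilde m\cdot x)\le \eta\ep\rho$ for some admissible $\tilde m$ with $|\tilde m -m|\le C\ep$. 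Setting $\eta = \rho^\alpha$ and iterating delivers $C^{1,\alpha}$ at the base point; the explicit $\delta^{-\alpha}$ prefactor accounts for the cutoff scale $r\sim \delta$ below which \lref{Lipschitz-estimate} is invoked directly, since at smaller scales the rescaled separation condition is degraded. The hardest part I expect is Step 1: making the boundary error in $\tfrac{d}{dr}N(r)$ genuinely quantitative and stable under rescaling. The hypothesis \eref{adelta} is invoked precisely to guarantee that only finitely many levels contribute on $B_r'$ and cannot pile up as $r\to 0^+$, which is what converts the error into a Gronwall-absorbable term and yields the explicit $\delta$-dependence.
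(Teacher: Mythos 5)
Your overall architecture (Almgren frequency formula, blow-up classification at free boundary points to get pointwise differentiability, dichotomy of gradients plus a three-case improvement of flatness, then a geometric iteration with $\eta=\rho^\alpha$) is the same as the paper's. The genuine gap is in Step 1, in how you treat the boundary error term $c(r)=\int_{B_r'}u\,\sigma$ in $\frac{d}{dr}\log N$. You assert that the bounds $|u(I_j)|\le Cr$ and the gap $\delta$ permit ``a Gronwall-type absorption'' yielding almost-monotonicity of $e^{Cr/\delta}N(r)$, but you never show that $c(r)$ is controlled by $\epsilon(r)\left(D(r)+H(r)/r\right)$ with $\epsilon$ integrable against $dr/r$, which is what an absorption argument requires. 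The crude estimates you cite only give $|c(r)|\lesssim r\cdot r^{d-1}=r^d$, and since $H(r)/r\sim r^{d-2+2\kappa}$ this is \emph{not} lower order when the frequency is close to $1$; the paper explicitly flags this term as ``a major difficulty that we are currently only able to deal with'' by making it vanish identically, not by absorbing it. The correct mechanism, which is already implicit in your own hypotheses, is the paper's: if $0\in\Gamma_u$ and $u(0)=0$, the separation in \eref{adelta} forces every component of $\Ca_u$ meeting $B_\delta'$ to carry the value $0$, so $u\sigma=0$ a.e.\ on $B_\delta'$ (using Corollary \ref{characterizesigma} for the part of $B_\delta'$ off $\Ca_u$) and $N$ is \emph{exactly} monotone for $r<\delta$. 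This initial rescaling to radius $\delta$ is also the true source of the $\delta^{-\alpha}$ prefactor, rather than a degradation of the Lipschitz estimate as you suggest.

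A secondary, smaller inaccuracy: in Step 2 you invoke the frequency gap $N(0^+)\notin(1,3/2)$ and conclude $N(0^+)=1$ at non-Neumann points. The $(1,3/2)$ exclusion is an ACF-type statement that the paper only needs for the \emph{optimal} $C^{1,1/2}$ exponent (Section 7); for the $C^{1,\alpha}$ theorem the relevant fact (\lref{blow-up-at-FB}) is that the degree-one blow-up $u(tx)/t$ at a free boundary point tends to $0$, proved by classifying the possible $1$-homogeneous limits as $\gamma|x_1|+p'\cdot x$ with $\gamma\le 0$, forcing $\gamma=0$, and then excluding $p'\ne 0$ via Corollary \ref{consequenofuniformlipindeppprime}. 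Your Step 3 is consistent with the paper. If you replace the Gronwall claim by the scale-$\delta$ reduction and tighten the blow-up classification, the proof goes through as in the paper.
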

\begin{remark}
    The controlling constant \say{\(C=C(d)\delta^{-\alpha}\)} can actually be replaced by \say{\(C(d,N)\)} with \(N=\# u(\Ca_u \cap B_1)\) (see Remark \ref{replacegapbynumber}). We retain the current exposition for the convenience of the proof.
\end{remark}

The proof will make use of the well-known Almgren frequency formula, which has seen frequent use in the thin obstacle problem \cites{fernándezreal2020obstacle,petrosyan2011monotonicity,garofalo2009monotonicity,athanasopoulos2008structure}. The main reason for our conditioning on hypothesis \eqref{e.adelta} is to guarantee the monotonicity of the frequency function.  It will be made clear in the computations in \sref{almgren} that the possible occurrence of infinitely many connected components of $\Ca_u$ piling up on a single point seems to ruin the monotonicity property.

\begin{remark}\label{r.separation}
   Even though the condition \eqref{e.adelta} is somewhat artificial because we cannot verify it in many interesting cases, it is indeed satisfied in the case of the classical Signorini problem and it demonstrates the central difficulty of our problem \eqref{e.grad-degen-neumann}. The Signorini problem corresponds to the case that \(u(\Ca_u)=\{0\}\) is a singleton and \emph{also} $u \leq 0$ on $B_1'$. The singleton case also includes the cases of the sign-reversed Signorini problem as introduced in Example \ref{quasisolexamp2}. However, we are not able to make any general guarantee on when a particular boundary condition may admit a solution to this sign-reversed Signorini problem.\label{remarkonA}
        \end{remark}
        \begin{remark}\label{r.separation2}
        As mentioned above the hypothesis that $u(\Ca_u)$ is finite, and hypothesis \eqref{e.adelta} are in fact equivalent, the latter just quantifying a useful parameter.  Let \(u\) be a viscosity solution with \(\norm{u}_{L^\infty(B_1^+)} =1\) so that $u$ is Lipschitz with Lipschitz constant at most \(L=L(d)>0\). Suppose additionally that $u(\Ca_u)$ finite. Then call $\delta = L^{-1}\min\{|u(z)-u(w)|: \ z, w \in \Ca_u\textup{ and }u(z)\ne u(w)\}$, which is positive due to the set \(u(\Ca_u)\) being finite. Let \(I,J\) be a pair of components of \(\Ca_u\) such that \(u(I)\ne u(J)\). By Lipschitz continuity of \(u\), we have
\[
\dist(I,J) \ge \frac{|u(x)-u(y)|}{L} \ge \delta,
\]
where \(x\in I,y\in J\). 

\end{remark}

\subsection{Almgren monotonicity formula} \label{s.almgren}
In this section, we will study the monotonicity of the Almgren frequency function for the gradient degenerate Neumann problem \eqref{e.grad-degen-neumann}. {Due to the above remarks it suffices to consider the case that 
\be\label{e.single-component}
u(\Ca_u\cap B_1) = \{0\}.
\ee
When we prove \tref{C1alphaestimate} below we will just make an initial re-scaling to a ball of radius $\delta$ to achieve this hypothesis. The initial scaling determines the dependence on $\delta$ in the theorem.}

The following computations, if not particularly mentioned, are obtained after a mollification procedure and an appropriate use of Lemma \ref{weaksol}. Let \(u\) be the even extension of a viscosity solution to \eqref{e.grad-degen-neumann} such that \(u(0)=0\). Consider the frequency functional
\begin{equation}\label{e.frequency-defn}
    N(r)=\frac{r \int_{B_r} |\gd u|^2}{\int_{\pt B_r} u^2}=\frac{r D(r)}{H(r)}.
\end{equation}
Differentiating the denominator gives
\be
H'(r)=\frac{d-1}{r}H(r) + 2 \int_{\pt B_r} u\pt_\nu u,\label{derivativeofHr}
\ee
where \(\pt_\nu\) is the unit outer normal derivative on \(\pt B_r\). Now we aim to integrate by parts in the second term. Recall that $\sigma$, as defined in \eref{sigmataodef}, is the nontangential limit of $\partial_1 u$ on $B_1'$ from $B_1^+$.  We can justify, using the distributional weak formulation as discussed in Lemma \ref{weaksol}, that the distributional Laplacian of $u$ is given by
\[\Delta u = 2 \sigma d\mathcal{H}^{d-1}|_{B_1'} \ \hbox{ in } \ B_1.\]
Using this identity we find
\be\label{Hderivative}
\begin{split}
    H'(r)&=\frac{d-1}{r}H(r) + 2 \int_{B_r} |\gd u|^2 + 4 \int_{B_r'} u \sigma\\
&=\frac{d-1}{r}H(r) + 2 D(r) + 4 c(r).
\end{split}
\ee
\begin{remark}
    This final term $c(r):= \int_{B_r'} u \sigma$ is a major difficulty that we are currently only able to deal with via conditioning on the hypothesis \eqref{e.adelta}, which has allowed us to reduce to the case $u(\Ca_u \cap B_1) = \{0\}$. In this case, we have \(u\equiv0\) on \(\Ca_u\cap B_1\), and on the other hand, we observe by Corollary \ref{characterizesigma} that \(\sigma=0\) a.e. on \(\lb\Na_u\sqcup\Gamma_u\rb\cap B_1\), which shows that \(\sigma u = 0\) a.e. on the whole \(B_1'\) and hence \(c(r)=0,\,0<r<1\).
\end{remark}

On the other hand, we also have after a mollification procedure, Rellich's formula
\[
\begin{split}
  \int_{\pt B_r} |\gd u|^2 &= \frac{d-2}{r} \int_{B_r} |\gd u|^2  + 2 \int_{\pt B_r} \lb\pt_\nu u\rb^2 - \frac{2}{r} \int_{B_r} (x\cdot \gd u) \Delta u\\
  &=\frac{d-2}{r} \int_{B_r} |\gd u|^2  + 2 \int_{\pt B_r} \lb\pt_\nu u\rb^2, 
\end{split}
\]
where \(\Delta u= 2 \sigma \,d\restr{\mathcal{H}^{d-1}}{\Ca_u}\) and \((x\cdot \gd u)\sigma=(x'\cdot \gd' u)\sigma = 0\) a.e. on \(B_1'\) due to \lref{weaksol}. Collecting these computations we have proved the monotonicity formula for \(u\)
\[
\begin{split}
    \frac{N'(r)}{N(r)}&=\frac{1}{r} + \frac{D'(r)}{D(r)} - \frac{H'(r)}{H(r)}\\
    &=2\lb\frac{\int_{\pt B_r} \lb\pt_\nu u\rb^2}{\int_{\pt B_r} u\pt_\nu u} - \frac{\int_{\pt B_r} u\pt_\nu u}{\int_{\pt B_r} u^2}\rb\\
    &\ge 0.
\end{split}
\]
Notice that if \(N(r)=\kappa\) for \(0<r<1\) then \(N'(r)=0\) and by the above Cauchy-Schwartz inequality we know that there is \(g(r)\) for each \(0<r<1\) such that
\[
\pt_\nu u = g(r) u.
\]
To determine \(g\) we observe on the other hand, 
\[
r\frac{d}{dr}\log H(r) = d-1 + 2N(r)=d-1+2\kappa,
\]
which implies that \(H(r)=H(1) r^{2\kappa+d-1}\) and by \eqref{derivativeofHr}
\[
g(r) \int_{\pt B_r} u^2 = \int_{\pt B_r} u\pt_\nu u=\frac{1}{2} \lb H'-\frac{d-1}{r}H\rb = \frac{\kappa}{r} H(r),
\]
and thus \(g(r)\equiv\kappa/r\). This implies that \(u\) is a \(\kappa\)-homogeneous function. We summarize the above computations in a theorem.
\begin{theorem}[Almgren Monotonicity Formula]
    Let $d\geq 2$ and \(u\) be a viscosity solution to \eqref{e.grad-degen-neumann} in $B_1^+$, evenly extended to $B_1$, which has $u(\mathcal{C}_u \cap B_1) = \{0\}$, \(u(0)=0\) and $0 \in \Gamma_u$. Then the quantity
    \[
    N(r)=N(r,u) = \frac{r \int_{B_r} |\gd u|^2}{\int_{\pt B_r} u^2}
    \]
    is monotone increasing in \(0<r<1\). Moreover, if \(N(r)\equiv\kappa\) for all \(0<r<1\) then \(u\) is a \(\kappa\)-homogeneous function in \(B_{1}\).
    \label{t.almgrenmonotone}
\end{theorem}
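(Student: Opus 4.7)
The plan is to compute the logarithmic derivatives of $H(r):=\int_{\partial B_r}u^2$ and $D(r):=\int_{B_r}|\nabla u|^2$ separately, assemble them into $\frac{N'}{N}=\frac{1}{r}+\frac{D'}{D}-\frac{H'}{H}$, and recognize the resulting expression as a Cauchy--Schwarz deficit. Throughout, I would work with smooth mollifications and then pass to the limit, using \lref{weaksol} to justify that the even extension of $u$ across $\{x_1=0\}$ has distributional Laplacian $\Delta u = 2\sigma\,d\mathcal{H}^{d-1}\lfloor_{B_1'}$ in $B_1$; this is what permits integration by parts across the flat boundary.

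For $H$, the coarea/shell identity gives $H'(r)=\frac{d-1}{r}H(r)+2\int_{\partial B_r}u\,\partial_\nu u$, and the divergence theorem applied to $u\nabla u$ converts the last term into $2D(r)+2\int_{B_r}u\,\Delta u = 2D(r)+4\int_{B_r'}u\sigma$. The crucial observation is that the latter boundary integral vanishes: on the open set $\Ca_u$ we have $u\equiv 0$ by \lref{contact-set-open} together with the hypothesis $u(\Ca_u\cap B_1)=\{0\}$, while \cref{characterizesigma} gives $\sigma=0$ almost everywhere on $\Na_u\sqcup\Gamma_u$. Hence $u\sigma=0$ a.e. on $B_1'$ and
\begin{equation*}
H'(r)=\frac{d-1}{r}H(r)+2D(r).
\end{equation*}
For $D$, I would apply a mollified Rellich identity to obtain
\begin{equation*}
\int_{\partial B_r}|\nabla u|^2 = \frac{d-2}{r}D(r)+2\int_{\partial B_r}(\partial_\nu u)^2-\frac{2}{r}\int_{B_r}(x\cdot\nabla u)\Delta u.
\end{equation*}
On $B_1'$ we have $x\cdot\nabla u=x'\cdot\nabla'u$ since $x_1=0$, and the same dichotomy as above applies: where $\sigma\neq 0$ we must be in $\Ca_u$, on which $u\equiv 0$ forces $\nabla'u=0$. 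So the singular term vanishes and $D'(r)=\frac{d-2}{r}D(r)+2\int_{\partial B_r}(\partial_\nu u)^2$.

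Assembling,
\begin{equation*}
\frac{N'(r)}{N(r)}=\frac{1}{r}+\frac{D'(r)}{D(r)}-\frac{H'(r)}{H(r)}=2\left(\frac{\int_{\partial B_r}(\partial_\nu u)^2}{\int_{\partial B_r}u\,\partial_\nu u}-\frac{\int_{\partial B_r}u\,\partial_\nu u}{\int_{\partial B_r}u^2}\right)\geq 0
\end{equation*}
by Cauchy--Schwarz (after first checking $\int_{\partial B_r}u\,\partial_\nu u=D(r)\geq 0$, so the denominators have the right sign once $u$ is not identically zero on $B_r$; the assumption $0\in\Gamma_u$ rules out early vanishing). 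For the rigidity statement, $N\equiv\kappa$ forces equality in Cauchy--Schwarz on every sphere, so $\partial_\nu u(r\omega)=g(r)u(r\omega)$ for some scalar $g(r)$. Plugging into $H'$ yields $r\frac{d}{dr}\log H(r)=d-1+2\kappa$, hence $H(r)=H(1)r^{d-1+2\kappa}$, and matching with $\int_{\partial B_r}u\,\partial_\nu u=g(r)H(r)=\kappa H(r)/r$ pins down $g(r)=\kappa/r$. Integrating $\partial_r u=\kappa u/r$ along rays gives $u(r\omega)=r^\kappa u(\omega)$.

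The main technical obstacle is the rigorous justification of the two integration-by-parts steps when $u$ is only Lipschitz and the singular measure $2\sigma d\mathcal{H}^{d-1}$ sits on the low-dimensional set $B_1'$; I would handle this by first working with tangential mollifications $u_\varepsilon$, noting that the vanishing identities $u\sigma=0$ and $(x'\cdot\nabla'u)\sigma=0$ survive in the limit thanks to the dominated convergence theorem and the nontangential convergence from \tref{bdd-harmonic}. The structural input that makes everything work is the hypothesis \eref{adelta} reduced to a single contact value, which is precisely what kills the error term $c(r)=\int_{B_r'}u\sigma$; without it, the monotonicity computation appears to break down, as the authors explicitly flag.
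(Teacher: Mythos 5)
Your proposal is correct and follows essentially the same route as the paper: the same decomposition $\frac{N'}{N}=\frac{1}{r}+\frac{D'}{D}-\frac{H'}{H}$, the same use of the distributional identity $\Delta u = 2\sigma\, d\mathcal{H}^{d-1}\lfloor_{B_1'}$ to derive $H'$ and the Rellich identity for $D'$, the same observation that $u\sigma=0$ and $(x'\cdot\nabla' u)\sigma=0$ a.e.\ on $B_1'$ (via \lref{contact-set-open} and \cref{characterizesigma}) to kill the error terms, and the same Cauchy--Schwarz deficit plus equality-case analysis for the rigidity statement. No gaps beyond those the paper itself accepts (mollification and sign of $\int_{\partial B_r} u\,\partial_\nu u$, which equals $D(r)\geq 0$ once $c(r)=0$).
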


\subsection{Pointwise differentiability}

Let \(u\) be a viscosity solution evenly extended to the whole ball \(B_1\) that satisfies \eqref{e.adelta}. We would like to consider the following blow-up sequence at a fixed point \(0\in B_1'\), \(tx\in B_1\) and we also assume \(u(0)=0\)
\[
u_t(x) = \frac{u(tx)}{t} \ \hbox{ for } \ 0 < t < 1.
\]
Notice that \(\norm{u_t}_{C^{0,1}(\bB_1)}\le2 \norm{u}_{C^{0,1}(\bB_{1/2})}\) as \(t\rta0^+\), and when \(t\) is sufficiently small by condition \eqref{e.adelta}, \(u_t(\Ca_{u_t})=\{0\}\). In particular, we have after passage to a subsequence \(t_k\rta0\), there is an \(u_0\in C^{0,1}(\bB_1)\) such that \(u_{t_k}\rta u_0\) uniformly in \(\bB_1\).

\begin{lemma}[Blow-up limit at the free boundary]\label{l.blow-up-at-FB}
    If \(0\in \Gamma_u\) and \(u(0)=0\) then
    \[\frac{u(tx)}{t} \to 0 \ \hbox{ as $t\to 0$ uniformly on $\overline{B_1^+}$.}\]
\end{lemma}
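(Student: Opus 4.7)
The strategy is a blowup argument via the Almgren monotonicity formula (\tref{almgrenmonotone}). Consider the rescalings $u_t(x)=u(tx)/t$ for $0<t<1$. By the Lipschitz estimate (\lref{Lipschitz-estimate}) the family $\{u_t\}$ is uniformly bounded in $C^{0,1}(\overline{B_1^+})$, so along any sequence $t_k\to 0$ a subsequence converges uniformly to a Lipschitz viscosity solution $u_0$ of \eref{grad-degen-neumann} with $u_0(0)=0$; the separation hypothesis \eref{adelta} passes to the limit so that $u_0(\Ca_{u_0}\cap B_1)=\{0\}$ as well, and Almgren's formula applies to $u_0$. It suffices to prove that every such subsequential limit $u_0$ is identically zero. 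The scaling identity $N(r,u_t)=N(rt,u)$ combined with Almgren monotonicity gives $N(r,u_0)\equiv N(0^+,u)=:\kappa$ on $(0,1)$, so by the rigidity part of \tref{almgrenmonotone} $u_0$ is $\kappa$-homogeneous. The Lipschitz bound $|u(y)|\leq L|y|$ forces $H(r,u)\leq Cr^{d+1}$, which combined with the lower bound $H(r)\gtrsim r^{d-1+2\kappa}$ from monotonicity yields $\kappa\geq 1$, and a direct computation gives
\[
H(1,u_0)=\lim_{k\to\infty}\frac{H(t_k,u)}{t_k^{d+1}}=\left(\lim_{k\to\infty}\frac{H(t_k,u)}{t_k^{d-1+2\kappa}}\right)\lim_{k\to\infty}t_k^{\,2(\kappa-1)},
\]
where the first factor is finite by monotonicity of $H(r)/r^{d-1+2\kappa}$. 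If $\kappa>1$ the second factor vanishes, so $H(1,u_0)=0$ and $u_0\equiv 0$ by $\kappa$-homogeneity, finishing the proof.

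The delicate case is $\kappa=1$, where the free boundary hypothesis $0\in\Gamma_u$ must enter. Then $u_0$ is a one-homogeneous harmonic function on $B_1^+$, hence linear: $u_0(x)=p_1 x_1+p'\cdot x'$. The viscosity condition forces $\min\{p_1,|p'|\}=0$ with $p_1\geq 0$, leaving, besides $u_0\equiv 0$, the two rigid cases (A) $u_0=p_1 x_1$ with $p_1>0$, or (B) $u_0=p'\cdot x'$ with $|p'|>0$, both to be excluded from $0\in\Gamma_u$. In Case (A) the uniform convergence rewrites as $u(y)\geq p_1 y_1-\epsilon_k t_k$ on $\overline{B_{t_k}^+}$ with $\epsilon_k\to 0$, which is exactly the starting hypothesis of the parabolic barrier construction inside the proof of \lref{contact-set-open}; running that argument at the origin produces a smooth lower test function with positive $\partial_1$-slope at $0$, placing $0\in\Ca_u$ and contradicting $0\in\Gamma_u$. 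In Case (B), since $0\in\partial'\Ca_u$ we pick $z_k\in\Ca_u$ with $|z_k|\leq t_k$ and set $\xi_k:=z_k/t_k\in\Ca_{u_{t_k}}$. The vanishing of $u_{t_k}$ on $\Ca_{u_{t_k}}$ combined with uniform convergence forces the slab confinement $\Ca_{u_{t_k}}\subset\{|p'\cdot y'|\leq\epsilon_k\}$, so any relative ball $B_{\delta_k}'(\xi_k)\subset\Ca_{u_{t_k}}$ has radius $\delta_k\leq\epsilon_k/|p'|$. Rescaling the half-ball $B_{\delta_k}^+(\xi_k)$ to unit size yields a bounded harmonic function vanishing on the flat part whose boundary trace approaches $\hat p'\cdot y'$, and the Hopf lemma produces a uniform positive lower bound on the $\partial_1$-slope of a lower touching test function at $\xi_k$, which combined with uniform convergence contradicts $\partial_1 u_0\equiv 0$ for $u_0=p'\cdot x'$.

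\medskip

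\noindent\textbf{Main obstacle.} Once Almgren monotonicity is in hand, the bulk of the argument is classical. The hard step is the classification at $\kappa=1$, particularly Case (B), where one has to reconcile the openness of $\Ca_u$ near the origin with the purely tangential structure of the linear blowup via the rescaling/Hopf analysis above, carefully aligning the touching radius, the component size of $\Ca_{u_{t_k}}$, and the blowup scale $t_k$.
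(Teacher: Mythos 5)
Your overall architecture (compactness of $u_t$, constant Almgren frequency along blow-ups, $\kappa$-homogeneity, the $H(1,u_0)=0$ computation for $\kappa>1$, and a case analysis at $\kappa=1$) matches the paper's proof, and your normalization argument for $\kappa>1$ is in fact more explicit than the paper's terse ``by Lipschitz regularity.'' However, there is a genuine gap at the central step: you assert that a one-homogeneous harmonic function on $B_1^+$ is ``hence linear.'' This is false in $d\ge 3$. Writing $u_0=rh(\theta)$, the spherical part only satisfies $\Delta_\theta h+(d-1)h=0$ on the \emph{open} hemisphere, and without imposing a boundary condition on the equator this equation has infinitely many Lipschitz solutions that are not restrictions of linear functions (e.g.\ in $d=3$ the separated solution with azimuthal frequency $2$ about the $e_1$-axis, regular at the pole and smooth up to the equator). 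Ruling these out is exactly the content of the paper's argument: the complementarity condition yields $\int_{\partial'B_1'}h\,\partial_{\vec n}h=0$ and $\int_{\partial'B_1'}h=0$ (Green's identity against $x_1$), whence $\tilde h=h-\gamma x_1$ is mean-zero with Rayleigh quotient exactly $d-1$, and since $d-1$ is the second Neumann eigenvalue of the hemisphere with eigenspace the tangential linear functions, $u_0=\gamma|x_1|+p'\cdot x$ with $\gamma\le 0$. Your proof skips this entirely, so the reduction to $u_0=p_1x_1+p'\cdot x'$ is unjustified in $d\ge3$ (in $d=2$ the ODE $h''+h=0$ does give linearity, but the lemma is needed in all dimensions).

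Within the case analysis, your exclusion of $u_0=p_1x_1$ with $p_1>0$ via the barrier construction of \lref{contact-set-open} is sound (and is a legitimate alternative to the paper's sign argument $\gamma\le0$). But your Case (B) does not close as written. After rescaling $B_{\delta_k}^+(\xi_k)$ to unit size, the error between $u_{t_k}$ and $p'\cdot x'$ is $\epsilon_k$ in absolute terms, hence of size $\epsilon_k/\delta_k\ge |p'|$ after division by $\delta_k$ — it is \emph{not} small — so the claim that the rescaled boundary trace ``approaches $\hat p'\cdot y'$'' fails; moreover Hopf's lemma requires a quantitative interior positivity that you have not established (the rescaled function changes sign), and even a uniform lower bound $\partial_1 u_{t_k}(\xi_k)\ge c>0$ would not by itself contradict $\partial_1 u_0\equiv 0$, since normal derivatives at boundary points do not pass to uniform limits without uniform control of the touching functions' $C^1$ moduli. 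The paper's mechanism here is different and robust: $\bar f_j=(u_{t_j}-p'\cdot x)/\epsilon_j$ is bounded and solves the tilted problem with tilt $p'/\epsilon_j\to\infty$, and since the Lipschitz estimate of \lref{Lipschitz-estimate} is independent of the tilt, Corollary \ref{consequenofuniformlipindeppprime} forces the classical zero Neumann condition on $B_{1/2}'$, contradicting $0\in\Gamma_u$. You should replace your Hopf argument with this (or an equivalent quantitative statement). A final minor point: your derivation of $\kappa\ge1$ quotes a lower bound $H(r)\gtrsim r^{d-1+2\kappa}$ ``from monotonicity,'' but monotonicity of $N$ gives $H(r)\ge H(r_0)(r/r_0)^{d-1+2N(r_0)}$ with $N(r_0)\ge\kappa$, i.e.\ the exponent is $2N(r_0)$, not $2\kappa$; the conclusion survives by letting $r_0\downarrow 0$, but the inequality as stated points the wrong way.
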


\begin{remark}
   The differentiability of a solution \(w\) to \eqref{e.grad-degen-neumann} up to \(B_1'\) can be obtained partially by using interior regularity of Neumann or Dirichlet problems near \(\Na_w\) and \(\Ca_w\) respectively. This lemma completes the proof of pointwise differentiability by establishing the differentiability on \(\Gamma_w\).
\end{remark}

\begin{proof}
   We prove by contradiction and assume there is a blow-up limit \(u_0\ne0\) and \(u_{t_k}\rta u_0\) uniformly on $\overline{B_1^+}$ as \(k\rta\infty\). By the argument in \eqref{Hderivative}, we know that \(H_{u_0}(r)>0\) for all \(0<r<1\) and hence for each \(r\) we can find \(k_0(r)>0\) such that \(H_{u_{t_k}}(r)>\frac{1}{2}H_{u_0}(r)>0\) for all \(k>k_0(r)\).
    
To prove the lemma, we notice that by the Almgren monotonicity formula (see \tref{almgrenmonotone}),
\[
N(r,u_{t_k})=N(rt_k,u)\rta N(0^+,u)=\kappa\ge0, 
\]
as \(k\rta\infty\), and on the other hand, we know that
\[
v_k=\frac{u_{t_k}}{\lb\int_{\pt B_r} u_{t_k}^2\rb^{1/2}}=\frac{u_{t_k}}{H_{u_{t_k}}^{1/2}(r)}
\]
satisfies for sufficiently large \(k\) and a constant \(C>0\) independent of \(k\)
\be\label{importantLinftyboundaindifferentiability}
\norm{v_k}_{L^2(\pt B_r)}=1,\,\norm{v_k}_{L^\infty(B_r)}\le C,\,\hbox{ and } \int_{B_r} |\gd v_k|^2 \le N(rt_k,u) \le N(1, u) \le C.
\ee
By interior estimates of harmonic functions, we have local uniform convergence of \(\gd v_k\) to \(\gd u_0/\norm{u_0}_{L^2(\pt B_r)}\) in \(B_r\setminus B_r'\), and because of boundedness of their \(L^\infty\) norms by the Lipschitz estimate \ref{lipschitzregu}, we have the strong convergence of \(v_k\) in \(H^1(B_r)\). This shows that
\[
N(r,u_{t_k})= \frac{r \int_{B_r} |\gd u_{t_k}|^2}{\int_{\pt B_r} u_{t_k}^2}\rta N(r,u_0),\,\hbox{ as }k\rta\infty.
\]
From this we obtain that \(N(r,u_0)=\kappa\) for all \(r>0\), and therefore \(u_0\) is \(\kappa\)-homogeneous on \(\R^d\) according to \tref{almgrenmonotone}. If \(u_0\ne0\) then it can only be a 1-homogeneous function by Lipschitz regularity.

Thus $u_0$ has the form
\[
u_0(r,\theta)=rh(\theta),\,r\ge0,\,\theta\in \pt B_1\cap\{x_1\ge0\}.
\]
Since \(u_0\) is also a viscosity solution to \eqref{e.grad-degen-neumann}, the function \(h\) must satisfy
\be\label{equationforthesphericalfunctionh}
\begin{cases}
    \Delta_\theta h(\theta) + (d-1)h(\theta)=0,&\theta\in \pt B_1\cap\{x_1>0\},\\
    \min\left\{-\partial_{\Vec{n}}h(\theta), \sqrt{|\gd_\tau h(\theta)|^2 + h^2(\theta)}\right\}=0,&\theta\in \partial' B_1',
\end{cases}
\ee
where \(\Delta_\theta\) is the Laplace-Beltrami operator on \(\pt B_1\cap\{x_1>0\}\), \(\partial_{\Vec{n}}\) is the outer normal derivative of \(\pt B_1\cap\{x_1\ge0\}\) on the boundary \(\pt'B_1'\) and \(\gd_\tau\) the tangential gradient on \(\pt'B_1'\). 

We first claim that 
\[
\int_{\pt' B_1'} h =0.
\]
This can be obtained by the following Green's formula with the linear function $\ell(x) = p \cdot x$, which all satisfy $\Delta_\theta \ell + (d-1)\ell = 0$ on the sphere,
\[0 = \int_{\partial B_1 \cap \{x_1>0\}} \ell (\Delta_\theta h +(d-1) h) - h (\Delta_\theta \ell + (d-1)\ell)  = \int_{\partial'B_1'} \ell \pt_{\Vec{n}} h - h \pt_{\Vec{n}}  \ell =  \int_{\partial'B_1'} \ell\pt_{\Vec{n}} h + hp_1\]
The claim is proved by taking \(p = e_1\).

On the other hand, we claim that we can apply \(h\) and do integration by parts to obtain
\be
\int_{\pt B_1\cap\{x_1>0\}}(d-1) h^2 - |\gd_\theta h|^2 + \int_{\pt' B_1'} h\partial_{\Vec{n}} h=\int_{\pt B_1\cap\{x_1>0\}}(d-1) h^2 - |\gd_\theta h|^2=0.\label{homogeneous1comfromhere}
\ee
For the above, we used that
\be\label{thecancelingofhdh}
\int_{\pt' B_1'} h\partial_{\Vec{n}} h =0.
\ee
This is because, by Lemma \ref{weaksol}, \(\min\{\pt_1 u_0(x), |\gd' u_0|(x)\}=0\) for almost every \(x\in B_1'\) in the nontangential convergence sense, which by homogeneity implies
\[
\min\left\{-\partial_{\Vec{n}}h(\theta), \sqrt{|\gd_\tau h(\theta)|^2 + h^2(\theta)}\right\}=0,
\]
for almost all \(\theta\in \pt'B_1'\) also in the sense of nontangential convergence. By a similar proof of the second part of Lemma \ref{weaksol}, we can justify the validity of integration by parts and the fact that \( h \pt_{\Vec{n}} h =0\) almost everywhere on \(\pt' B_1'\).

Now we can subtract off a linear function by considering \(\tilde{h}=h-\gamma x_1\) so that \(\tilde{h}\) also satisfies \eqref{homogeneous1comfromhere} and \(\int_{\pt B_1\cap\{x_1>0\}} \tilde{h}=0\). For the mean zero condition just choose
\[
\gamma=\frac{\int_{\pt B_1\cap\{x_1>0\}} h}{\int_{\pt B_1\cap\{x_1>0\}} (x_1)_+}.
\]
Notice that \(\gamma\le0\) because
\[
(d-1)\int_{\pt B_1\cap\{x_1>0\}} h=\int_{\pt B_1\cap\{x_1>0\}} -\Delta_\theta h
= \int_{\pt' B_1'} \partial_{\Vec{n}} {h}
\le 0.
\]
 As for \eqref{homogeneous1comfromhere}, it suffices to discuss \eqref{thecancelingofhdh}, which is because \(\Delta_\theta \tilde{h}+\tilde{h}=0\) in $\partial B_1 \cap \{x_1>0\}$ and so
\[
\begin{split}
   \int_{\pt' B_1'} \tilde{h}\partial_{\Vec{n}} \tilde{h}&=\int_{\pt' B_1'} h\partial_{\Vec{n}} \tilde{h} \\
 &=\int_{\pt' B_1'} h\partial_{\Vec{n}} h-\gamma \int_{\pt' B_1'} h\\
 &=0.
\end{split}
\]
If \(\tilde{h}\equiv0\) then \(u_0\equiv\gamma |x_1|\) and because \(u_0\) is also a solution to \eqref{e.grad-degen-neumann} and by prior discussions \(\gamma\le0\), we obtain \(\gamma =0\). 

If \(\tilde{h}\not\equiv0\), then 
\be
\frac{\int_{\pt B_1\cap\{x_1>0\}}|\gd_\theta \tilde{h}|^2}{\int_{\pt B_1\cap\{x_1>0\}}\tilde{h}^2}  =d-1.\label{theeigenh}
\ee
On the other hand, we know that the second Neumann eigenvalue
\[
\lambda:=\lambda_{N,2}(\pt B_1\cap\{x_1>0\})=  \inf_{\begin{subarray}{c}0\ne g \in H^1(\pt B_1\cap\{x_1>0\})\\\int_{\pt B_1\cap\{x_1>0\}} g=0\end{subarray}} \frac{\int_{\pt B_1\cap\{x_1>0\}}|\gd_\theta g|^2}{\int_{\pt B_1\cap\{x_1>0\}}g^2}>0
\]
is equal to \(d-1\) and the minimizing functions \(g\) must be restrictions of linear functions of the form \(p'\cdot x\) with \(p'\in \R^d\) and \(p'\cdot e_1=0\) \cite{Stein1970}*{Chapter 3}. By \eqref{theeigenh}, we know that \(\tilde{h}\) must be equal to a Neumann second eigenfunction and so
\[
u_0(x)=\gamma |x_1| + p'\cdot x
\]
is a smooth solution to \eqref{e.grad-degen-neumann} for some \(p'\).  Because, again, \(u_0\) is a viscosity solution to \eqref{e.grad-degen-neumann} and \(\gamma\le0\), we know that \(\gamma=0\) and \(u_0(x) = p'\cdot x\) for some \(p'\cdot e_1=0\).

To show that \(p'=0\), we argue by contradiction and assume that there is a sequence \(s_j\rta0^+\) such that \(f_j=u_{s_j}\) converges locally uniformly to \(p'\cdot x\) for some \(p'\ne0\) as \(j\rta\infty\). Now, we obtain a sequence \(\ep_j\rta0^+\) such that
\[
\overline{f}_j=\frac{f_j-p'\cdot x}{\ep_j}
\]
is uniformly bounded on \(B_{1}^+\) and satisfies boundary condition
\[
\min\{\pt_1 \overline{f}_j, |\gd' \overline{f}_j + p'/\ep_j|\}=0,\,\hbox{ on }B_{1}'
\]
in the viscosity sense. According to Corollary \ref{consequenofuniformlipindeppprime}, we know that when \(j\) is large \(\overline{f}_j\) must satisfy the zero Neumann boundary condition on \(B_{1/2}'\) and hence \(u_{s_j}=f_j\) must also satisfy this condition, which contradicts the assumption that \(0\in \Gamma_{u_{t}}\) for all \(t>0\).
\end{proof}

\subsection{Improvement of flatness} Combining Lemma \ref{l.blow-up-at-FB}, the interior regularity of zero Neumann and Dirichlet problems, we obtain everywhere differentiability (including nontangential directions) of \(u\) on \(B_1'\). To obtain the improvement of flatness results, we plan to first consider the following three different cases respectively
\begin{enumerate}[label = (\Roman*)]
    \item The first case deals with the small gradients, and it essentially corresponds to the original boundary condition
    \[
    \min\{\pt_1 u , |\gd' u|\} = 0.
    \]
    \item The second case deals with large tangential gradients, and it essentially corresponds to the tangentially modified boundary condition
    \[
    \min\{\pt_1 u , |\gd' u + q'|\} = 0,
    \]
    with \(q'\cdot e_1=0\) and \(|q'|\) large.
     \item The last case deals with large inner normal derivatives, and it essentially corresponds to the normal modified boundary condition
    \[
    \min\{\pt_1 u + q_1, |\gd' u |\} = 0,
    \]
    with \(q_1>0\) large.
\end{enumerate}
In fact, it turns out that these three cases are enough to derive the full improvement of flatness results. This can be obtained by Lemma \ref{dichotomyofgradients} that shows the dichotomy of gradients: a solution \(u\), with \(\osc u \le 1\), to 
\[
\min\{\pt_1 u + q_1, |\gd' u + q'|\} = 0
\]
would require \(|\min\{q_1,|q'|\}|\) to be bounded by a constant independent of \(u\). Now, with a bounded cost, we can modify \(u\) so that it is contained in one of the three categories we introduced above.

\begin{lemma}[Improvement of Flatness I]
       Let \(u\) be a viscosity solution to \eqref{e.grad-degen-neumann} in $B_1$ with either $u(\Ca_u \cap B_1) =\{ 0\}$ or \(\Ca_u \cap B_1=\emptyset\), and \(\osc_{B_1^+}{u}\le T_1\) for some fixed \(T_1>0\). There is a \(1/2>\mu=\mu(d,T_1)>0\) such that for each \(u\) as described there is \(1/2>\nu=\nu(u)\ge \mu\) such that
       \be
      \inf_{p\in \R^d} \osc_{B_{\nu}^+}\{u-p\cdot x\}\le \frac{1}{2}\nu.\label{improoffltainequalityatscalenu}
       \ee
 
       \label{improvofflatnessatboundary}
\end{lemma}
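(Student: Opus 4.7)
The plan is a compactness-contradiction argument that converts the pointwise differentiability on $B_1'$ (obtained earlier in this section via \lref{blow-up-at-FB}, \tref{almgrenmonotone}, and interior Neumann/Dirichlet estimates) into a uniform scale bound $\mu=\mu(d,T_1)$. Suppose no such $\mu$ exists: then for each integer $k \ge 2$ there is a solution $u_k$ of \eqref{e.grad-degen-neumann} satisfying the hypothesis with $\osc_{B_1^+} u_k \le T_1$ and with
\[
\inf_{p \in \R^d} \osc_{B_\nu^+}\{u_k - p \cdot x\} > \frac{\nu}{2} \quad \text{for every } \nu \in [1/k, 1/2).
\]
After normalizing $u_k(0)=0$, \lref{Lipschitz-estimate} makes the $u_k$ uniformly Lipschitz on $\overline{B_{3/4}^+}$ and bounded by $T_1$; extract a subsequence converging uniformly on $\overline{B_{1/2}^+}$ to a Lipschitz viscosity solution $u_\infty$.

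Next I would verify that the contact-set hypothesis is stable, i.e.\ $u_\infty(\Ca_{u_\infty} \cap B_1) \subset \{0\}$. Given $x_0 \in \Ca_{u_\infty}$ and a smooth test function $\varphi$ touching $u_\infty$ from below at $x_0$ with $\partial_1 \varphi(x_0) = \beta > 0$, Taylor expansion produces, for any sufficiently small $\epsilon>0$, a radius $r=r(\beta,\epsilon)>0$ with $u_\infty(x) \ge u_\infty(x_0) + \beta (x-x_0)\cdot e_1 - \epsilon$ on $B_r(x_0) \cap \overline{B_1^+}$. Uniform convergence transfers this inequality to $u_k$, with $2\epsilon$ on the right, for all large $k$, so the parabolic barrier argument in the proof of \lref{contact-set-open} applied to $u_k$ places $x_0$ inside $\Ca_{u_k}$; the hypothesis then gives $u_k(x_0)=0$, and passing to the limit yields $u_\infty(x_0)=0$. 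Hence $u_\infty$ satisfies the hypothesis.

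Differentiability of $u_\infty$ at the origin then follows by the results of this subsection: on $\Na_{u_\infty}$ via interior zero-Neumann regularity, on $\Ca_{u_\infty}$ via interior Dirichlet regularity around the neighborhood in $B_1'$ where $u_\infty$ vanishes, and on $\Gamma_{u_\infty}$ via \lref{blow-up-at-FB}. Let $p_\infty \in \R^d$ be the differential and choose $\nu_0 \in (0, 1/2)$ so that $\sup_{B_{\nu_0}^+} |u_\infty - p_\infty \cdot x| \le \nu_0/8$. Uniform convergence on $\overline{B_{1/2}^+}$ then yields $\osc_{B_{\nu_0}^+}\{u_k - p_\infty \cdot x\} < \nu_0/2$ for all $k$ with $1/k < \nu_0$, contradicting the hypothesized lower bound at scale $\nu_0$. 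The hard part will be the stability step; using the barrier from \lref{contact-set-open} rather than a direct test-function perturbation sidesteps the subtlety that the first-order sign condition defining $\Ca_u$ is not automatically inherited by uniform limits.
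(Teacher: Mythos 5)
Your proof follows essentially the same compactness--contradiction scheme as the paper's: extract a uniform limit $u_\infty$ via the Lipschitz estimate, show the single-contact-value hypothesis is stable under the limit, and use pointwise differentiability of $u_\infty$ at the origin (Neumann/Dirichlet regularity on $\Na_{u_\infty}$ and $\Ca_{u_\infty}$, \lref{blow-up-at-FB} on $\Gamma_{u_\infty}$) to contradict the assumed failure of flatness at all scales in $[1/k,1/2)$. The one substantive difference is the stability step. You show directly that each contact point of $u_\infty$ lies in $\Ca_{u_k}$ for large $k$, by transferring the one-sided flatness and re-running the barrier construction from the proof of \lref{contact-set-open}; the paper argues in the reverse direction, showing that a contact component of $u_\infty$ carrying a nonzero value would force $u_k$ (and hence $u_\infty$) to satisfy the zero Neumann condition there via \lref{weaksol} and Corollary \ref{characterizesigma}, contradicting membership in $\Ca_{u_\infty}$. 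Both mechanisms are sound, and your choice of the barrier route is a reasonable way to avoid the sign subtlety you identify at the end.

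There is one wrinkle to repair. After you normalize $u_k(0)=0$, the hypothesis $u_k(\Ca_{u_k}\cap B_1)=\{0\}$ no longer holds for the renormalized functions (the contact value becomes $-u_k^{\mathrm{orig}}(0)$), so the line ``the hypothesis then gives $u_k(x_0)=0$'' is not available as written. The fix is short: when $\Ca_{u_k}\cap B_1\neq\emptyset$ the hypothesis already forces $\|u_k\|_{L^\infty(B_1^+)}\le \osc_{B_1^+}u_k\le T_1$ (since $u_k$ vanishes somewhere on $B_1'$), so no normalization is needed in that case, and when $\Ca_{u_k}\cap B_1=\emptyset$ the normalization is harmless. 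Alternatively, keep your normalization and note that your stability argument then yields $u_\infty(\Ca_{u_\infty})=\{c_\infty\}$ for a single constant, and that $c_\infty=u_\infty(0)=0$ by continuity in the only case where the value matters, namely $0\in\Gamma_{u_\infty}=\partial'\Ca_{u_\infty}$, which is the case requiring \lref{blow-up-at-FB} and the Almgren formula.
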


\begin{remark}
Notice that here \(T_1\) is absorbed in \(\nu(u)\) and \(\mu(d,T_1)\), instead of writing \(\nu T_1\) and \(\mu T_1\). We will use similar notations in the following improvement of flatness lemmas, where \(T_1, T_2\) and \(T_3\) are constants to be determined.\label{abouthteabsorption}
\end{remark}
\begin{remark}\label{replacegapbynumber}
    This lemma can actually be improved to the case that \(\#u(\Ca_u \cap B_1)\le N\) for some fixed positive integer \(N\) with a slight modification to the proof. In this way we can replace the constant \say{\(C=C(d)\delta^{-\alpha}\)} in Theorem \ref{C1aplhaestimate} by \say{\(C(d,N)\)}. That is, the controlling constant can be made independent of the minimal gap in \(u(\Ca_u \cap B_1)\), but solely depending on the dimension and the number of elements in it.

\end{remark}
 
\begin{proof}
According to \lref{blow-up-at-FB}, for blow-ups on $\Gamma_u$, and the interior regularity of the solution on \(\Ca_u\sqcup\Na_u\), we have the following convergence
\[
\inf_{p\in\R^d}\frac{ \osc_{B_r^+}\{u-p\cdot x\}}{r} \rta 0,\,\hbox{ as }r\rta0^+.
\]
Thus for each \(u\) there is a \(\nu'=\nu'(u)\in (0,1)\) such that for every \(0<r<\nu'\), the following inequality holds
\be
\inf_{p\in \R^d} \osc_{B_{r}^+}\{u-p\cdot x\}\le \frac{1}{4}r.\label{intermediateimproveflatness}
\ee
We now define 
\[
\eta(u)=\max\{0<s\le1\,;\, \eqref{intermediateimproveflatness} \hbox{ holds for }r = s\}>0.
\]
It then suffices to show a uniform positive lower bound for \(\eta\)'s since we will obtain \eqref{improoffltainequalityatscalenu} immediately by taking \(\nu=\eta/2\). We argue by contradiction and assume that there exists a sequence of functions \(u_j\) that satisfy the conditions described in the statement while
\be
\eta(u_j) \rta 0 \hbox{ as }j\rta\infty.\label{improvflatarguecontradasumm}
\ee
Because \(\osc_{B_1^+}(u_j) \le T_1\), we know by the Lipschitz estimate, \lref{Lipschitz-estimate}, that (up to a subsequence) \(u_j\) converges locally uniformly to some \(u_\infty\) in \(B_1^+\sqcup B_1'\). By classical viscosity solution theories, \(u_\infty\) also satisfies the conditions as described in the statements. Indeed, it suffices to check that either \(u_\infty\lb\Ca_{u_\infty}\cap B_1\rb=\{0\}\) or \(\Ca_{u_\infty}\cap B_1=\emptyset\). Suppose there is a number \(s\ne0\) such that \(s\in u_\infty\lb\Ca_{u_\infty}\cap B_1\rb\), then we can find a relatively open component \(I_s\subset \Ca_{u_\infty}\cap B_1\) such that \(u_\infty(I_s)=s\). By the local uniform convergence of \(u_j\) to \(u_\infty\) on \(B_1^+\sqcup B_1'\) we know that for some small \(\delta>0\), \(u_j\) are uniformly close to \(s\) on \(I_s\cap B_{1-\delta}\) for large \(j\). By the assumption \(u_j(\Ca_{u_j}\cap B_1)=\{0\}\), we know that \(u_j\) satisfy the zero Neumann boundary condition on \(I_s\cap B_{1-\delta}\), which would imply that \(u_\infty\) also satisfies the zero Neumann boundary condition on \(I_s\cap B_{1-\delta}\), contradicting the assumption that \(I_s\subset \Ca_{u_\infty}\). A similar proof can address the issues as discussed in Remark \ref{replacegapbynumber}.

On the other hand, by the everywhere differentiability of \(u_\infty\), we know that for some small \(\tilde{\eta}>0\) there must be some \(\tilde{p}\in \R^d\) such that
\[
\osc_{B_{\tilde{\eta}}^+}\{u_\infty-\tilde{p}\cdot x\}\le \frac{1}{8}\tilde{\eta},
\]
which contradicts the assumption \eqref{improvflatarguecontradasumm}.

\end{proof}
\begin{lemma}[Improvement of Flatness II]\label{improvofflatnessatboundary2}
    Let \(u\) be a viscosity solution to \eqref{e.grad-degen-neumann} with the boundary condition replaced by
    \[
    \min\{\partial_1 u, |\nabla' u + q'|\} = 0,
    \]
    for some \(q' \in \{0\} \times \mathbb{R}^{d-1}\), and \(\osc_{B_1^+}{u} \leq T_2\) for some fixed \(T_2 > 0\). There exists \(J = J(d,T_2) > 0\) such that if \(|q'| > J\), then there exists \(\iota = \iota(d, T_2) > 0\) with \(1/2 > \iota\) satisfying
    \[
    \inf_{p \in \mathbb{R}^d} \osc_{B_{\iota}^+}\{u - p \cdot x\} \leq \frac{1}{2}\iota. \label{improoffltainequalityatscalenu2}
    \]
\end{lemma}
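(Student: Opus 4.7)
The plan is to exploit Corollary \ref{consequenofuniformlipindeppprime}, which is precisely tailored to the large tangential gradient regime: if $|q'|$ exceeds the universal Lipschitz bound of $u$, the tangential term $|\gd' u + q'|$ is strictly positive everywhere on $B_{1/2}'$, and the boundary condition collapses to the classical zero Neumann condition. This turns the problem on $B_{1/2}^+$ into an interior problem for a harmonic function, which gives improvement of flatness for free via standard Taylor expansion.

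First, normalize by subtracting a constant so that $\|u\|_{L^\infty(B_1^+)} \le T_2$. By Lemma \ref{l.Lipschitz-estimate} there is $L = L(d,T_2) > 0$ such that $\|\gd u\|_{L^\infty(B_{1/2}^+)} \le L$. Set $J = J(d,T_2) := 2L$. If $|q'| > J$, then $|\gd' u(x) + q'| \ge |q'| - L > L > 0$ for every point of nontangential convergence in $B_{1/2}'$, which by Corollary \ref{consequenofuniformlipindeppprime} forces $\pt_1 u = 0$ on $B_{1/2}'$ in the classical sense.

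With the zero Neumann boundary condition in hand, let $\tilde u$ denote the even reflection of $u$ across $\{x_1 = 0\}$. Then $\tilde u$ is harmonic in $B_{1/2}$ with $\|\tilde u\|_{L^\infty(B_{1/2})} \le T_2$. Standard interior $C^{1,1}$ estimates for harmonic functions give a constant $C = C(d)$ with
\[
\|D^2 \tilde u\|_{L^\infty(B_{1/4})} \le C(d) T_2.
\]
Set $\tilde p := \gd \tilde u(0)$; by the evenness of $\tilde u$ in $x_1$ we automatically have $\tilde p \cdot e_1 = 0$. Taylor expansion at the origin then yields, for every $x \in B_\iota \subset B_{1/4}$,
\[
|\tilde u(x) - \tilde u(0) - \tilde p \cdot x| \le C(d) T_2 \iota^2.
\]
Restricting to $\overline{B_\iota^+}$, this implies $\osc_{B_\iota^+}\{u - \tilde p \cdot x\} \le 2 C(d) T_2 \iota^2$.

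Finally, choose $\iota = \iota(d, T_2) := \min\{\tfrac{1}{4}, \tfrac{1}{4 C(d) T_2}\}$. Then $2 C(d) T_2 \iota^2 \le \tfrac{1}{2} \iota$, and taking $p = \tilde p$ in the infimum gives the desired conclusion. There is no real obstacle in this proof: the whole content is isolated in Corollary \ref{consequenofuniformlipindeppprime}, which reduces the critical degenerate boundary condition to a zero Neumann problem, after which harmonic Taylor expansion finishes the job.
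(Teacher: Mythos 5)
Your proof is correct and follows exactly the paper's route: take $J$ to be twice the universal Lipschitz constant from Lemma \ref{l.Lipschitz-estimate}, invoke Corollary \ref{consequenofuniformlipindeppprime} to collapse the boundary condition to zero Neumann on $B_{1/2}'$, and then read off improvement of flatness from the smoothness of Neumann solutions. The only difference is that you spell out the last step (even reflection, interior $C^{1,1}$ bound, Taylor expansion, and the explicit choice of $\iota$) which the paper leaves as "comes naturally from the smoothness of Neumann solutions"; your added detail is accurate.
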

\begin{proof}
    According to Lemma \ref{lipschitzregu} and Corollary \ref{consequenofuniformlipindeppprime}, we know that the Lipschitz constant \(L=L\lb d,\norm{u}_{L^\infty(B_1^+)}\rb>0\) of \(u\) in \(B_{1/2}^+\) is independent of the choice of \(q'\). If one chooses \(J=2 L \lb d, T_2\rb> L\), then \(u\) satisfies the zero Neumann boundary condition on \(B_{1/2}'\) and \(\osc_{B_{1/2}^+}{u} \leq T_2\), and then the improvement of flatness comes naturally from the smoothness of Neumann solutions.
  
\end{proof}

\begin{lemma}[Improvement of Flatness III]\label{diminishofosci}
    Let \(u\) be a viscosity solution to \eqref{e.grad-degen-neumann} with the boundary condition replaced by
    \[
    \min\{\partial_1 u+q_1, |\nabla' u |\} = 0,
    \]
    for some \(q_1\in \R\), and \(\osc_{B_1^+}{u} \leq T_3\) for some fixed \(T_3 > 0\). There exists \(I = I(d,T_3) > 0\) such that if \(q_1 > I\), then there exists \(\gamma = \gamma(d, T_3) > 0\) with \(1/2 > \gamma\) satisfying
    \be
    \inf_{p \in \mathbb{R}^d} \osc_{B_{\gamma}^+}\{u - p \cdot x\} \leq \frac{1}{2}\gamma. \label{dimiosciineq}
    \ee
\end{lemma}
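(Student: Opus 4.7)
The plan is to mirror the strategy of Lemma~\ref{improvofflatnessatboundary2}: for sufficiently large $q_1$, the tilted problem should reduce to a Dirichlet-type problem with constant boundary data on $B_{1/4}'$, and then standard boundary regularity for harmonic functions will deliver the quantitative flatness. First, after subtracting a constant I will normalize so that $0 \le u \le T_3$ on $B_1^+$, and set $v := u + q_1 x_1$, which is a viscosity solution of the untilted equation \eqref{e.grad-degen-neumann} with $v \ge q_1 x_1 \ge 0$ on $\overline{B_1^+}$.

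The key step will be to adapt the barrier argument from the proof of Lemma~\ref{l.contact-set-open} to show that $B_{1/4}' \subset \Ca_v$. For each $s \in B_{1/4}'$ I will consider the barrier
\[
\psi_s(x) := -\delta |x'-s|^2 + \tfrac{q_1}{2} x_1 + (d-1)\delta\, x_1^2 + \eta\, x_1^2,
\]
with $\delta := 10\,T_3$ and $\eta := T_3$, so that $\Delta\psi_s = 2\eta > 0$, $\partial_1 \psi_s(0,s) = q_1/2 > 0$, and $\nabla'\psi_s(0,s) = 0$. Using $|x - (0,s)|^2 \ge (1-|s|)^2 \ge 9/16$ for $x \in \partial B_1$ and $u \ge 0$, I can bound
\[
v - \psi_s \ge \tfrac{q_1}{2} x_1 + \tfrac{9\delta}{16} - (d\delta + \eta) x_1^2 \quad\text{on } \partial B_1 \cap \{x_1 > 0\},
\]
whose minimum over $x_1 \in [0,1]$ equals $9\delta/16 = \tfrac{45}{8} T_3 > T_3$ provided $q_1 > I(d,T_3) := 2d\delta + 2\eta$. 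Since $v - \psi_s$ is superharmonic in $B_1^+$ and its value at $(0,s)$ equals $u(s) \le T_3$, the minimum of $v-\psi_s$ over $\overline{B_1^+}$ must be attained at some $(0,\tilde x') \in B_1'$; if $c^\ast$ is this minimum value, then $\psi_s + c^\ast$ touches $v$ from below at $(0,\tilde x')$. Since $\Delta(\psi_s + c^\ast) > 0$ and $\partial_1(\psi_s + c^\ast) > 0$, the supersolution condition for $v$ forces $|\nabla'\psi_s|(0,\tilde x') = 2\delta |\tilde x' - s| = 0$, hence $\tilde x' = s$. This exhibits $(0,s) \in \Ca_v$.

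Once $B_{1/4}' \subset \Ca_v$, Lemma~\ref{l.contact-set-open} gives that $v$ is constant on the (single, by connectedness of $B_{1/4}'$) component of $\Ca_v$ containing $B_{1/4}'$, so $u \equiv C$ on $B_{1/4}'$ for some $C \in \R$. Then $f := u - C$ is harmonic in $B_{1/4}^+$, vanishes on $B_{1/4}'$, and satisfies $|f| \le T_3$; its odd reflection $\tilde f$ across $\{x_1 = 0\}$ is harmonic on the full ball $B_{1/4}$ with the same bound, so interior estimates give $\|\nabla^2 \tilde f\|_{L^\infty(B_{1/8})} \le C(d) T_3$. Setting $p := \nabla u(0) = \nabla \tilde f(0)$, Taylor's theorem yields $\osc_{B_\gamma^+}\{u - p \cdot x\} \le 2 C(d) T_3 \gamma^2$ for $\gamma \le 1/8$, so the choice $\gamma := \min\{1/(4 C(d) T_3),\, 1/8\}$ produces the desired $\osc_{B_\gamma^+}\{u - p \cdot x\} \le \gamma/2$ with $\gamma$ depending only on $d$ and $T_3$.

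The hard part will be verifying the quantitative barrier estimate on the outer boundary in the second paragraph; the rest is a routine application of Dirichlet regularity via odd reflection. The structural feature that drives the proof, and that distinguishes this lemma from Lemmas~I and~II, is the global one-sided bound $v \ge q_1 x_1$ arising from the large inward normal tilt $q_1 > 0$ together with the normalization $u \ge 0$. This plays the role of a uniform inward-flatness condition that allows the barrier argument of Lemma~\ref{l.contact-set-open} to be applied simultaneously at every $s \in B_{1/4}'$, rather than at a single preferred point.
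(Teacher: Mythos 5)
Your proposal is correct and follows essentially the same route as the paper: the paper sets $w = x_1 + u/q_1$ (your $v/q_1$), invokes the barrier argument of Lemma \ref{l.contact-set-open} to conclude $B_{1/4}' \subset \Ca_w$ for $q_1$ large, and then finishes by odd reflection and interior regularity of harmonic functions, exactly as you do. The only difference is that you write out the quantitative barrier computation explicitly (with a comparable admissible threshold $I(d,T_3)$), whereas the paper cites the earlier lemma's argument.
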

\begin{proof}
Let us study the family of functions
\[
w=x_1 + \frac{u}{q_1},
\]
where \(q_1>I\) to be chosen. Let \(\ep=T_3/I\), we know that \(w\) are bounded solutions to \eqref{e.grad-degen-neumann} that are uniformly flat in the sense that
\[
x_1-\ep\le w\le x_1 + \ep.
\]
By a similar argument to Lemma \ref{definecu}, we may choose \(I=400\max\{1,T_3\}d\) so that \(\ep \le 1/(400d)\). We then obtain \(B_{1/4}'\subset \Ca_{w}\). This also implies that \(u\equiv C\) for some constant \(C\) on \(B_{1/4}'\). By an odd reflection, \(u-C\) can be extended to a harmonic function in \(B_{1/4}\) with \(\osc_{B_{1/4}}\{u-C\} = \osc_{B_{1/4}}\{u\} \le T_3\). By applying the interior regularity of harmonic function we can determine the constant \(0<\gamma< 1/4\) that satisfies \eqref{dimiosciineq}.

\end{proof}

\begin{lemma}[Dichotomy of Gradients]
    Suppose \(\osc_{B_1^+}(u) \le 1\) solves
    \be\label{e.m1-m'-PDE}
    \bca
   \Delta u = 0,&\hbox{ in }B_1^+\\
   \min\{\pt_1 u + m_1, |\gd' u +  m'|\}=0&\hbox{ on }B_1'
    \eca
    \ee
    in the viscosity sense for some \(m=(m_1,m')\in \R^d\). Then there is a constant \(K=K(d)>0\) such that
    \[
    |\min\{m_1,|m'|\}| \le K.
    \]\label{dichotomyofgradients}
\end{lemma}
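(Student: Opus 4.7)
The plan is to combine the sub-solution and super-solution viscosity inequalities at touching points produced by sliding two explicit quadratic barriers against $u$. Since \eqref{e.m1-m'-PDE} is invariant under adding a constant to $u$, we normalize $u(0) = 0$, so that $\|u\|_{L^\infty(B_1^+)} \leq 1$ by the oscillation hypothesis.

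\emph{Step 1 (lower bound on $m_1$ from the subsolution property).} I would take the upper barrier
\[
\varphi(x) := (2d+1) x_1 - (2d-1) x_1^2 + 2|x'|^2,
\]
which satisfies $\Delta \varphi = -2$ and on $\partial B_1 \cap \{x_1 \geq 0\}$ equals $(2d+1) x_1(1-x_1) + 2 \geq 2$. Setting $t^* := \max_{\overline{B_1^+}}(u - \varphi)$, one has $t^* \geq (u - \varphi)(0) = 0$, whereas on the upper hemisphere $u \leq 1$ gives $u - \varphi \leq -1$. Because $u$ is harmonic in $B_1^+$, the difference satisfies $\Delta(u - \varphi) = 2 > 0$, ruling out interior maxima. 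Hence $t^*$ is attained at some $x_0 = (0, x_0')$ in the open disk $B_1'$. The translate $\varphi + t^*$ then touches $u$ from above at $x_0$ with $\Delta(\varphi + t^*) < 0$, and Definition \ref{subsolutiondef} gives $\partial_1 \varphi(x_0) + m_1 \geq 0$. Since $\partial_1 \varphi(x_0) = 2d+1$, I conclude $m_1 \geq -(2d+1)$.

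\emph{Step 2 (upper bound on $\min\{m_1, |m'|\}$ from the supersolution property).} I argue by contradiction, assuming $m_1 > 2d+1$ and $|m'| > 4$. Take the lower barrier
\[
\psi(x) := -(2d+1) x_1 + (2d-1) x_1^2 - 2|x'|^2,
\]
which satisfies $\Delta \psi = 2$ and $\psi \leq -2$ on the upper hemisphere. Setting $s^* := \max_{\overline{B_1^+}}(\psi - u) \geq (\psi - u)(0) = 0$, and noting that $\psi - u \leq -1$ on the upper hemisphere while $\Delta(\psi - u) = 2 > 0$, the maximum is attained at some $x_0 = (0, x_0') \in B_1'$ with $|x_0'| < 1$. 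Applying Definition \ref{supsolutiondef} to the touching from below by $\psi - s^*$,
\[
\min\bigl\{\partial_1 \psi(x_0) + m_1,\; |\nabla' \psi(x_0) + m'|\bigr\} \leq 0.
\]
However, $\partial_1 \psi(x_0) + m_1 = m_1 - (2d+1) > 0$ and $|\nabla' \psi(x_0) + m'| = |m' - 4 x_0'| \geq |m'| - 4 > 0$ by the assumption, contradicting the displayed inequality. Hence either $m_1 \leq 2d+1$ or $|m'| \leq 4$, which in either case gives $\min\{m_1, |m'|\} \leq 2d+1$.

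Combining Steps 1 and 2 yields $|\min\{m_1, |m'|\}| \leq 2d+1 =: K(d)$. The main delicate point to verify is that each touching point $x_0$ lies in the open disk $B_1'$ rather than on its rim $\partial B_1 \cap \{x_1 = 0\}$ (where the viscosity test does not apply) or in the interior of $B_1^+$; this is handled by the explicit strict gap that the chosen barriers create on the entire upper hemisphere $\partial B_1 \cap \{x_1 \geq 0\}$, together with the strict viscosity sub-/super-harmonicity of $u - \varphi$ and $\psi - u$, which precludes interior extrema via the standard comparison with constant test functions.
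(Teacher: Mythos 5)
Your proof is correct, and it takes a genuinely different route from the paper. The paper argues by compactness and contradiction: it assumes a sequence $u_j$ with $|\min\{m_{j,1},|m_j'|\}|\to\infty$, splits into the two cases according to which term realizes the minimum, renormalizes (e.g.\ $w_j^1 = u_j/|m_{j,1}| + (m_{j,1}/|m_{j,1}|)x_1$), invokes the uniform Lipschitz estimate of Lemma \ref{l.Lipschitz-estimate} and Corollary \ref{consequenofuniformlipindeppprime} to extract locally uniform limits, and then rules out the limits ($\pm x_1$ is not a viscosity solution; a one-sided flat limit violates the supersolution condition as in Corollary \ref{characterizesigma}). That yields only a non-explicit $K(d)$. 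You instead slide two explicit strictly sub-/superharmonic quadratic barriers against $u$; the strict gap of size $1$ on the closed hemisphere together with $\Delta(u-\varphi)>0$ (resp.\ $\Delta(\psi-u)>0$) forces the touching point into the open flat disk $B_1'$, where the sub- and supersolution tests of Definitions \ref{subsolutiondef} and \ref{supsolutiondef} (applied to the tilted boundary condition, as in Remark \ref{generalizedCauNauGammau}) directly give $m_1\ge -(2d+1)$ and rule out $m_1>2d+1$ with $|m'|>4$ simultaneously. Your computations check out: $\Delta\varphi=-2$, $\varphi\ge 2$ and $\psi\le -2$ on $\partial B_1\cap\{x_1\ge 0\}$ including the rim, and $|\nabla'\psi(x_0)+m'|\ge |m'|-4$. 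The payoff of your approach is that it is self-contained, avoids the Lipschitz estimate and limit-of-viscosity-solutions machinery entirely, and produces the explicit constant $K=2d+1$; the paper's approach, while softer, recycles lemmas already in place for the $C^{1,\alpha}$ iteration.
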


\begin{proof}
    Suppose that there exist a sequence \(u_j\) satisfying \(\osc_{B_1^+}(u_j) \le 1\) and \eref{m1-m'-PDE}, but \(|\min\{m_{j,1},|m_j'|\}|\rta\infty\) as \(j\rta\infty\). 
    
    We can assume that either (i) \(\min\{m_{j,1},|m_j'|\}=m_{j,1}\) for all $j$ or (ii) \(\min\{m_{j,1},|m_j'|\}=|m_j'|\) for all $j$.  We consider the two cases separately.
    
    First suppose that \(\min\{m_{j,1},|m_j'|\}=m_{j,1}\). We claim that \(\displaystyle\liminf_{j\rta\infty}|m_j'|/|m_{j,1}|>0\). It suffices to consider the case that \(m_{j,1}<0\). To show this we consider 
    \[
    w_j^1=\frac{u_j}{|m_{j,1}|} + \frac{m_{j,1}}{|m_{j,1}|} x_1,
    \]
    which by assumption on \(u_j\) is a bounded sequence of functions. Observe that \(w_j^1\) satisfy the boundary condition
    \[
    \min\{\pt_1 w_j^1,|\gd' w_j^1 + m_{j}'/|m_{j,1}||\}=0.
    \]
    If a subsequence \(|m_j'|/|m_{j,1}|\rta 0^+\), then by the compactness of \(w_j^1\) (this is because of the Lipschitz estimate, Lemma \ref{lipschitzregu}), after passage to a subsequence, \(w_j^1\) would converge locally uniformly to a viscosity solution to \eqref{e.grad-degen-neumann}. On the other hand, this sequence converges uniformly to \(-x_1\), which is not a viscosity solution to \eqref{e.grad-degen-neumann} and shows the claim. 
    
    We further claim that it's impossible that \(|m_{j,1}|\rta\infty\). We divide into cases depending on whether $m_{j}'/|m_{j,1}|$ stays bounded or not. If $m_{j}'/|m_{j,1}|$ stays bounded then, after passage to a subsequence, we can suppose that \(m_{j}'/|m_{j,1}|\rta m_\infty'\in \{0\}\times\R^{d-1}\) with \(|m_\infty'|>0\) (by the first claim). This implies that the limit function \(w_\infty^1=\pm x_1\) is a viscosity solution of \(\min\{\pt_1 w_\infty^1,|\gd' w_\infty^1 +m_\infty'|\}=0\) on $B_1'$, which is not true. If \(m_{j}'/|m_{j,1}|\rta\infty\) then by Corollary \ref{consequenofuniformlipindeppprime}, for sufficiently large \(j\), \(w_j^1\) satisfies zero Neumann boundary condition on \(B_{1/2}'\), which also contradicts the form of the limit function \(w_\infty^1=\pm x_1\) because of compactness of zero Neumann solutions with bounded oscillation on $B_{1/2}^+$.

 In the case \(\min\{m_{j,1},|m_j'|\}=|m_j'|\) we define
    \[
    w_j^2=\frac{u_j}{|m_j'|} + \frac{m_j'}{|m_j'|}\cdot x + \frac{m_{j,1}}{|m_{j}'|} x_1. 
    \]
    This implies that \(w_j^2\) satisfies the original boundary condition \eqref{e.grad-degen-neumann}. In this case, we observe that when \(j\) is large, \(w_j^2\) (after passage to a subsequence) has one-sided flatness in the way that
   \[
   w_j^2 (x) \ge m_\infty'\cdot x + x_1 -o_j(1),\,\hbox{ for all }x\in \overline{B_1^+}
   \]
    with \(\displaystyle \lim_{j\rta\infty} \frac{m_j'}{|m_j'|}=m_\infty'\in \{0\}\times \R^{d-1}\) in a proper subsequence, \(|m_\infty'|=1\), and \(|w_j^2(0)|\le 1/|m_j'|=o_j(1)\). This contradicts the supersolution condition of \(w_j^2\) for large \(j\) according to a similar argument to the proof of Lemma \ref{characterizesigma}.
    \end{proof}

\subsection{\texorpdfstring{$C^{1,\alpha}$}{s}-iteration}
Before proving the theorem, we write a lemma that summarizes the improvement of flatness results in the previous subsection. Define the set of allowed gradients
\[
\Ta=\{(q_1,q')\in \R^d\,;\,\min\{q_1,|q'|\}=0\},
\]
and for \(R>0\) we define the fattening
\[
\Ta_R=\{(q_1,q')\in \R^d\,;\,|\min\{q_1,|q'|\}|\le R\}.
\]
\begin{lemma}\label{simpleiteration}
    Suppose that \(u\) is harmonic in $B_1^+$ with
    \[
    \min\{\pt_1 u +q_1,|\gd' u +q'|\}=0 \ \hbox{ on } \ B_1',
    \]
    for some \(q=(q_1,q')\in \Ta_R\) and \(\osc_{B_1^+}(u)\le 1\). If \(v(\Ca_{v} \cap B_1)\) has at most one element with \(v:= q\cdot x +u\) (which is a viscosity solution to \eqref{e.grad-degen-neumann}; see Remark \ref{generalizedCauNauGammau}), then there is a \(1/2>\nu=\nu(u)>0\) and \(\kappa=\kappa(d,R)>0\) such that \(\nu\ge \kappa\) and 
    \[
   \inf_{p\in\R^d} \osc_{B_\nu^+} (u-p\cdot x) \le \frac{1}{2}\nu.
    \]
\end{lemma}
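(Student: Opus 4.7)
The plan is to dispatch \(u\) to whichever of Lemmas \ref{improvofflatnessatboundary}, \ref{improvofflatnessatboundary2}, \ref{diminishofosci} handles the regime in which the shift vector \(q = (q_1, q') \in \Ta_R\) lives. A useful preliminary observation is that the membership \(q \in \Ta_R\) forces \(q_1 \ge -R\): otherwise \(\min\{q_1, |q'|\} = q_1 < -R\), violating \(|\min\{q_1, |q'|\}| \le R\). Fix the constants \(J := J(d, 1+2R)\) from Lemma \ref{improvofflatnessatboundary2} and \(I := I(d, 1+2R)\) from Lemma \ref{diminishofosci}.

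\emph{Bounded-gradient regime.} Suppose \(|q_1| \le R\) and \(|q'| \le J\), or \(R < q_1 \le I\) and \(|q'| \le R\). Then \(|q| \le C_0 = C_0(d,R)\). Consider \(v := q \cdot x + u\), which is a viscosity solution of \eqref{e.grad-degen-neumann} with \(\osc_{B_1^+} v \le 1 + 2C_0\). The hypothesis that \(v(\Ca_v \cap B_1)\) is at most a singleton \(\{c\}\) (possibly empty) places \(v - c\) into the class handled by Lemma \ref{improvofflatnessatboundary}, which yields a scale \(\nu_1(v) \ge \mu(d, 1 + 2C_0)\) with \(\inf_p \osc_{B_{\nu_1}^+}(v - p\cdot x) \le \tfrac{1}{2}\nu_1\). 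Since \(v - p\cdot x = u - (p - q)\cdot x\), the desired inequality for \(u\) at scale \(\nu_1\) follows.

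\emph{Large-tangential regime.} If \(|q'| > J\), then \(|\min\{q_1, |q'|\}| \le R < J\) forces \(|q_1| \le R\). Introduce the harmonic function \(\tilde u := u + q_1 x_1\), whose boundary condition reduces to \(\min\{\pt_1 \tilde u, |\gd' \tilde u + q'|\} = 0\) on \(B_1'\) and whose oscillation is at most \(1 + 2R\); Lemma \ref{improvofflatnessatboundary2} then applies directly. The symmetric \emph{large-normal regime} \(q_1 > I\) forces \(|q'| \le R\) by the same min-argument, and is handled by \(\tilde u := u + q' \cdot x'\) together with Lemma \ref{diminishofosci}. In both cases the shift by a linear function is absorbed into the infimum over \(p \in \R^d\), recovering the inequality for \(u\).

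Combined with the forced lower bound \(q_1 \ge -R\), the case split according to whether \(|q'| > J\), whether \(q_1 > I\), or neither, exhausts \(\Ta_R\), and choosing \(\kappa(d,R)\) as the minimum of the three scale-lower-bounds \(\mu, \iota, \gamma\) concludes the proof. The main difficulty is not analytic but organizational: one must verify that the three regimes genuinely cover all of \(\Ta_R\) and that the shifts in each case transform the tilted boundary condition into exactly the form assumed by the target lemma; both points reduce to tracking how \(\pt_1\) and \(\gd'\) transform under adding a linear function.
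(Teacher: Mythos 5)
Your proposal is correct and follows essentially the same strategy as the paper's proof: use the constraint \(q\in\Ta_R\) to bound the ``minimal'' component of \(q\) by \(R\), absorb that small component into \(u\) by a linear shift, and then dispatch to Lemma \ref{improvofflatnessatboundary2} or Lemma \ref{diminishofosci} when the remaining component is large (tangential or normal, respectively) and to Lemma \ref{improvofflatnessatboundary} applied to \(v=q\cdot x+u\) otherwise, taking \(\kappa\) as the minimum of the three scale lower bounds. The only difference is organizational (the paper decomposes \(q=\bar q+\tilde q\) explicitly and splits on \(|\bar q|\) versus \(\Lambda(d,R)\), while you split directly on the sizes of \(q_1\) and \(|q'|\), which additionally requires the harmless normalization \(I,J\ge R\)).
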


\begin{proof}
We define \(\Bar{q}\in \Tc\) as follows
\[
{\bar{q}}:=\bca 
q',&\hbox{ if } \min\{q_1,|q'|\}=q_1\\
q_1 e_1,&\hbox{ if }\min\{q_1,|q'|\}=|q'|,
\eca \ \hbox{ and } \ {\tilde{q}:= q - \bar{q}}.
\]
Notice that \(|\tilde{q}|\le R\) because \(q\in \Tc_R\) and
\[
{\tilde{q}}:=\bca 
q_1 e_1,&\hbox{ if } \min\{q_1,|q'|\}=q_1\\
q',&\hbox{ if }\min\{q_1,|q'|\}=|q'|.
\eca 
\]
Now the function \(w:= \tilde{q}\cdot x + u\) satisfies \eqref{e.grad-degen-neumann} with the boundary condition replaced by 
\[
\min\{\pt_1 w +\overline{q}_1,|\gd' w +\overline{q}'|\}=0,
\]
where \(\overline{q}=(\overline{q}_1,\overline{q}')\in \Ta\), and \(\osc_{B_1^+}(w) \le 1+2 R\). 

If \(|\overline{q}| \ge 2\max\{I(d,1+2R),J(d,1+2R)\} =: \Lambda(d,R)\), then the improvement of flatness follows from Lemma \ref{improvofflatnessatboundary2} and \ref{diminishofosci}, where \(J(d,T_2)\) and \(I(d,T_3)\) are defined. Here we choose \(T_2=T_3=1+2R\).

If \(|\overline{q}| \le \Lambda(d,R)\), then we know that \(v=u+q\cdot x\) satisfies the original \eqref{e.grad-degen-neumann} with \(\osc_{B_1^+}(v) \le 1+ 2 R + 2\Lambda(d,R)\). Since \(v(\Ca_v)\) has at most one element, we can apply Lemma \ref{improvofflatnessatboundary} to obtain the improvement of flatness with \(T_1=1+ 2 R + 2\Lambda(d, R)\).

We may define \(\kappa(d,R)=\min\{\mu(d,T_1),\iota(d,T_2),\gamma(d,T_3)\}>0\).
\end{proof}

\begin{proof}[Proof of Theorem \ref{C1aplhaestimate}]
    First, we re-scale to reduce to the case that \(u(\Ca_u \cap B_1)=\{0\}\). If \(u\) satisfies condition \eqref{e.adelta}, then we can consider \(w(x):=\frac{u(\delta x + x_0)-u(x_0)}{\delta}\) and observe that
 \(w(\Ca_w  \cap B_1)\) has at most one element. Furthermore $w$ will be bounded independent of \(\delta\) due to the Lipschitz estimate (Lemma \ref{lipschitzregu}) and
    \[[u]_{C^{1,\alpha}(B_{\delta/2}(x_0))} \leq \delta^{-\alpha}[w]_{C^{1,\alpha}(B_{1/2})}.\]

    To prove that \(u\) is \(C_{\text{loc}}^{1,\alpha}(B_1^+\sqcup B_1')\) it suffices to show that \(u\) is \(C^{1,\alpha}\) at \(0\) in the sense that there is \(C=C(d)>0\) and \(p\in \R^d\) such that
    \be
    \osc_{B_r^+} (u-p\cdot x) \le C r^{1+\alpha},\,r\in (0,1).\label{contiC1alpha}
    \ee
    Indeed, by classical arguments this implies \(C_{\textup{loc}}^{1,\alpha}\) of \(u\) when restricted to \(B_1'\), which then implies the \(C_{\textup{loc}}^{1,\alpha}\) regularity of \(u\) in the whole \(B_1^+\sqcup B_1'\) by classical estimates for Dirichlet problems.
    
    To show \eqref{contiC1alpha}, it suffices to find a sequence \((q_k,r_k)\) such that \(q_k\in \R^d\), and 
    \be
    \osc_{B_{r_k}^+} (u-q_k\cdot x) \le r_k^{1+\alpha} \ \hbox{ for all $k \in \N$}\label{discreteC1alpha}
    \ee
     where \(r_k\rta0\) as \(k\rta\infty\) and \(\frac{1}{2}\ge \frac{r_{k+1}}{r_k} \ge \kappa(d)>0\) for all \(k\) and some $\kappa(d)>0$. If this is done then the constant \(C\) in \eqref{contiC1alpha} would take the form \(\kappa^{-(1+\alpha)}\).

    We start with \(u_0=u\) such that \(\osc_{B_{1}^+}(u_0)\le 1\) and \(q_0=0\). We fix \(R= K(d)\) as in Lemma \ref{dichotomyofgradients}, \(T_1=1+2K(d)+2\Lambda(d,K(d))\) and \(T_2=T_3=1+2K(d)\) as in the proof of Lemma \ref{simpleiteration}. By applying Lemma \ref{simpleiteration} to \(u_0\), we obtain \(1/2>\nu_1\ge\kappa(d,K(d))=:\kappa(d)>0\) and \(p_1\in \R^d\) such that
    \[
    \osc_{B_{\nu_1}^+} (u-p_1\cdot x) \le \frac{1}{2} \nu_1.
    \]
    We now choose \(\alpha>0\) small so that \(\kappa^\alpha>1/2\). Suppose for \(k\ge1\) we have already constructed \(q_k\in \R^d\) (notice that we already have \(q_1=p_1\) and \(r_1=\nu_1\)) such that \eqref{discreteC1alpha} holds true. We then consider for \(x\in B_1^+\sqcup B_1'\)
    \[
    u_k(x) = r_k^{-1-\alpha} \lb u(r_k x) - q_k \cdot (r_k x)\rb.
    \]
    Notice that \(\osc_{B_1^+}(u_k) \le 1\) and \(u_k\) satisfies \eqref{e.grad-degen-neumann} with boundary condition replaced by
    \[
    \min\{\pt_1 u_k + r_k^{-\alpha} q_{k,1},|\gd' u_k + r_k^{-\alpha} q_k'|\}=0.
    \]
    By Lemma \ref{dichotomyofgradients} we obtain that \(r_k^{-\alpha} q_k \in \Ta_{K}\), and then we may apply Lemma \ref{simpleiteration} with \(R=K(d)\) to \(u_k\) and obtain \(1/2>\nu_{k+1}\ge \kappa\), \(p_{k+1}\in \R^d\) such that
    \[
    \osc_{B_{\nu_{k+1}}} (u_k - p_{k+1}\cdot x) \le \frac{1}{2} \nu_{k+1}.
    \]
    Setting \(r_{k+1}=r_k\nu_{k+1}\) and \(q_{k+1}=q_k+r_{k}^{\alpha}p_{k+1}\), we will obtain
    \[
    \osc_{B_{r_{k+1}}^+} (u - q_{k+1}\cdot x) \le r_k^{1+\alpha} \frac{1}{2} \nu_{k+1} \le r_{k+1}^{1+\alpha}.
    \]
   
\end{proof}

\section{Conditional optimal regularity in \texorpdfstring{$d \geq 3$}{x}}\label{section.conditionopti}

In this section we discuss the optimal \(C^{1,1/2}\) regularity of a viscosity solution \(u\) to \eqref{e.grad-degen-neumann} satisfying the condition $\#u(\Ca_u \cap B_1)<+\infty$. The proof uses the Almgren monotonicity formula \tref{almgrenmonotone} again in a similar way to results for the thin obstacle problem. Let us start with a more detailed version of Theorem \ref{introtheorem2}.

 \begin{theorem}
     Let \(u\) be a viscosity solution to \eqref{e.grad-degen-neumann} that satisfies condition \emph{\eqref{e.adelta}}, then there is a constant \(C(d,\delta)=C(d)\delta^{-1/2}>0\) such that
     \[
     \norm{u}_{C^{1,1/2}(\overline{B_{1/2}^+})} \le C \norm{u}_{L^\infty(B_1^+)}.
          \]
    \label{optimalestimate}
 \end{theorem}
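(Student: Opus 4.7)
The strategy follows the classical Almgren-frequency approach used for the thin obstacle problem: upgrade the $C^{1,\alpha}$ regularity already established in Theorem \ref{t.C1alphaestimate} to $C^{1,1/2}$ by proving the sharp frequency bound $\kappa \geq 3/2$ at every free boundary point.

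First I would rescale exactly as in the proof of Theorem \ref{t.C1alphaestimate} to reduce to the normalized case $u(\Ca_u \cap B_1) = \{0\}$; this accounts for the $\delta^{-1/2}$ factor in the theorem's constant. Interior regularity for the Neumann and Dirichlet problems (after even/odd reflection of $u$ across $\Na_u$ and $\Ca_u$ respectively) then reduces the proof to the pointwise $C^{1,1/2}$ estimate at an arbitrary $x_0 \in \Gamma_u \cap B_{1/2}'$. Under the normalization $u(x_0)=0$ by continuity, and Lemma \ref{l.blow-up-at-FB} gives $u(x_0+tx)/t \to 0$ as $t\to 0^+$, forcing $\nabla u(x_0) = 0$. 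The goal therefore becomes the pointwise growth bound
\[
|u(x_0+x)| \leq C |x|^{3/2} \quad \text{for all small } x \in \overline{B_{1/2}^+}.
\]

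The heart of the argument is the frequency lower bound $\kappa(x_0) := N(0^+, u, x_0) \geq 3/2$ at every $x_0 \in \Gamma_u$. I would run the same normalized blow-up procedure used in Lemma \ref{l.blow-up-at-FB}, but now rescaling by $H(t, u, x_0)^{1/2}$ instead of by $t$. After passage to a subsequence this produces a non-trivial $\kappa(x_0)$-homogeneous global viscosity solution $u_0$ of \eqref{e.grad-degen-neumann} whose even extension satisfies $\Delta u_0 = 2 \sigma \, d\mathcal{H}^{d-1}|_{\Ca_{u_0}}$ with $\sigma \geq 0$ on the contact cone $\Ca_{u_0}$, zero Neumann on $\Na_{u_0}$, and $u_0 = 0$ on $\Ca_{u_0}$. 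Writing $u_0(r,\theta) = r^\kappa h(\theta)$, the angular profile $h$ is an eigenfunction of a Steklov-type mixed Dirichlet-Neumann problem on the hemisphere with eigenvalue $\kappa(\kappa+d-2)$, subject to the additional one-sided constraint $-\partial_{\vec n} h \geq 0$ on $\Ca_{u_0} \cap \partial' B_1'$. Lemma \ref{l.blow-up-at-FB} already excludes $\kappa = 1$, so it remains to exclude all $\kappa \in (1, 3/2)$ by a spherical eigenvalue analysis in the spirit of the thin-obstacle classification.

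Once the bound $N(r, u, x_0) \geq 3/2$ is in place, the identity $\frac{d}{dr}\log H(r) = (d-1+2N(r))/r$ derived in the proof of Theorem \ref{t.almgrenmonotone} integrates to $H(r, u, x_0) \leq C\, r^{d+2}$ for $0 < r \leq 1/2$, where $C$ depends on $\|u\|_{L^\infty(B_1^+)}$ and on the upper bound $N(1/2, u, x_0) \leq C(d)$ coming from the Lipschitz estimate Lemma \ref{l.Lipschitz-estimate}. A standard subharmonic majorant argument applied to the even extension of $u$ converts this $L^2$-in-spheres bound into the pointwise decay $\sup_{B_r^+(x_0)} |u| \leq C r^{3/2}$, which combined with Caccioppoli and interior harmonic gradient estimates upgrades to the desired $C^{1,1/2}$ bound for $\nabla u$ at $x_0$. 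A covering argument, together with the already-established regularity away from $\Gamma_u$, then globalizes the estimate to $B_{1/2}^+ \cup B_{1/2}'$.

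The main obstacle is the classification of $\kappa$-homogeneous solutions for $\kappa \in (1, 3/2)$. Unlike the thin obstacle problem we do not have the sign constraint $u_0 \leq 0$ on $B_1'$, and $\Ca_{u_0}$ need not be a half-hyperplane. The natural way forward is to exploit the subharmonicity of the even extension of $u_0$ together with the vanishing on $\Ca_{u_0}$: a subharmonic function vanishing on a homogeneous cone, with zero Neumann on the complement and the sign constraint on the reflection jump, has a rigid angular spectrum, and adapting the Athanasopoulos-Caffarelli-Salsa eigenvalue computation to this geometric configuration should rule out frequencies in $(1, 3/2)$.
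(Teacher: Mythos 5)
Your outline matches the paper's architecture almost exactly: rescale by $\delta$ to reach $u(\Ca_u\cap B_1)=\{0\}$, blow up at free boundary points with the $H(t)^{1/2}$ normalization, prove the frequency bound $N(0^+,u)\ge 3/2$, integrate $r\frac{d}{dr}\log H = d-1+2N(r)$ to get $H(r)\le Cr^{d+2}$, and finish with reflection across $\Na_u$/$\Ca_u$ and interior gradient estimates. However, two steps that you treat as routine are precisely where the paper has to do real work, and as written they are gaps.

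First, the passage from $L^2$ control to $L^\infty$ control. You invoke a ``standard subharmonic majorant argument applied to the even extension of $u$'' to convert $H(r)\le Cr^{d+2}$ into $\sup_{B_r^+}|u|\le Cr^{3/2}$. The even extension satisfies $\Delta u = 2\sigma\, d\mathcal{H}^{d-1}|_{B_1'}$ with $\sigma\ge 0$, so it is subharmonic; the local maximum principle for subharmonic functions then bounds $\sup u$ by the $L^2$ average, but it gives no lower bound on $u$, so you do not get $\sup|u|$. The same issue appears earlier and more seriously in your blow-up step: the normalization $\|w_t\|_{L^2(\partial B_1)}=1$ plus the Dirichlet energy bound from Almgren monotonicity does not by itself give the uniform $L^\infty$ (hence $C^{1,\alpha}$) compactness needed to extract a nontrivial homogeneous limit. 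The paper closes both gaps with a dedicated $L^2\to L^\infty$ estimate (Lemma 7.3): since $u$ also solves the no-sign Signorini problem $\min\{\partial_1 w,|w|\}=0$ on $B_1'$, one traps $u$ between the Neumann solutions $v_-\le u\le v_+$ with boundary data $g_-$ and $g_+$, using the viscosity sub/supersolution structure to rule out interior touching. You need this lemma or a substitute for it.

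Second, the exclusion of frequencies $\kappa\in(1,3/2)$, which you correctly identify as the main obstacle but leave as a sketch of a ``spherical eigenvalue analysis.'' The paper does not run a Steklov/eigenvalue computation in $d\ge 3$; instead it applies the Alt--Caffarelli--Friedman monotonicity formula to $v_\pm=(\pm\partial_e w)_+$ for tangential directions $e$ (these are subharmonic with disjoint supports once the $C^{1,\alpha}$ regularity is in hand), so that $\phi_e(r)=r^{4(\kappa-2)}\phi_e(1)$ being monotone with $1<\kappa<2$ forces $\phi_e(1)=0$ and hence a sign for each $\partial_e w$. A connectedness argument on $\mathbb{S}^{d-2}$ then produces $d-2$ directions of translation invariance, reducing to $d=2$, where the explicit classification of homogeneous solutions to the no-sign Signorini problem (the table in Appendix A, obtained by solving the angular ODE with the boundary condition $\min\{m'(0),|m|(0)\}=0$) shows the only admissible frequency in $(1,2)$ is $3/2$. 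Without the ACF step and the 2D classification your argument does not close; the lack of a sign condition on $u_0$ and the fact that $\Ca_{u_0}$ need not be a half-space make a direct eigenvalue comparison on the hemisphere substantially harder than in the classical Signorini setting.
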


To obtain optimal regularity we would like to consider functions of the form
\[
    w_t(x)=\frac{u(tx)}{\lb\frac{1}{t^{d-1}}\int_{\pt B_t} u^2\rb^{1/2}},
\]
where \(u\) is evenly extended to the whole ball \(B_1\), \(tx\in B_1\). We would like to consider the blow-up limit of \(w_t\) at the base point \(0\in \Gamma_u\) and \(u(0)=0\). Notice that \(w_t\) is controlled in the sense that 
\be\label{boundaryL2ofw2is1}
\norm{w_t}_{L^2(\pt B_1)} =1.
\ee
On the other hand, by the Almgren monotonicity, we have
\[
\int_{B_1} |\gd w_t|^2 = N(1,w_t)=N(t,u) \le N(T,u),\,0<t\le T.
\]
 Unlike \eqref{importantLinftyboundaindifferentiability}, we don't immediately have a uniform \(L^\infty\) bound for \(w_t\), and thus these \(L^2\) estimates are not enough for working with the blow-up limits, and we need an additional \(L^2\) to \(L^\infty\) estimate to proceed.

As discussed in \rref{separation2} it suffices to consider the case that $u(\Ca_u \cap B_1) = \{0\}$, since if $u$ satisfies \emph{\eqref{e.adelta}} then it satisfies $\#u(\Ca_u \cap B_\delta) = 1$ in all balls of radius $\delta$.

\subsection{An \texorpdfstring{\(L^2\)}{s} to \texorpdfstring{\(L^\infty\)}{s} estimate}

When $u(\Ca_u \cap B_1) = \{0\}$ then $u$ also solves the following \emph{no-sign Signorini problem}
\be
\bca
\Delta w=0,&\hbox{ in }B_1^+\\
\min\{\pt_1 w,|w|\} = 0,&\hbox{ on }B_1'\\
w=g,&\hbox{ on }\pt B_1\cap\{x_1\ge0\},
\eca\label{localproblemtoe}
\ee
in the viscosity sense.  Note that solutions to this problem may only be $C^{1/2}$ regular, for example, $w(x) = \hbox{Re} (x_2+i|x_1|)^{1/2}$ solves, but we are just using this as a convenient setting to prove the $L^2 \to L^\infty$ estimate.
\begin{remark} 
    Given a viscosity solution \(w\) to \eqref{localproblemtoe}, we will obtain a partition
    \[
    B_1'=\Ca_w\sqcup\Gamma_w\sqcup\Na_w,
    \]
    where \(\Na_w=\{|w|>0\}\cap B_1'\) is open and \(\Gamma_w=\partial' \Na_w\) is called the \emph{free boundary} of \(w\). Notice that the definitions of these sets are essentially different from those for solutions to \eqref{e.grad-degen-neumann} in Remark \ref{generalizedCauNauGammau}. An example that shows this difference is $w(x_1,x_2) = \hbox{Re} (x_2+i|x_1|)^{1/2}$. This example is a solution to \eqref{localproblemtoe} but not \eqref{e.grad-degen-neumann}. Notice that for any \(N>0\) there exists a smooth function \(\phi_N\) touching \(w\) from below at \(0\), while \(\pt_1\phi_N(0)>N>0\), which means that \(0\in \Ca_w\) if in the sense of Definition \ref{definitionofCu}, but it is in fact contained in \(\Gamma_w\) by the definitions of \(\Na_w\) and \(\Gamma_w\) as described above. However, the definitions will \emph{coincide} if a solution solves both \eqref{e.grad-degen-neumann} and \eqref{localproblemtoe}. 
\end{remark}

\begin{lemma}
    Let \(w\) be a continuous viscosity solution to the equation \eqref{localproblemtoe}, then there is a constant \(C=C(d)>0\) such that
    \be
    \norm{w}_{L^\infty(B_{1/2}^+)} \le C \norm{g}_{L^2(\pt B_1\cap \{x_1\ge0\})}.\label{L2toLinfty}
    \ee
\end{lemma}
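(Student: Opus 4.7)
The plan is to exploit the one-sided structure of the Signorini-type boundary condition $\min\{\pt_1 w,|w|\}=0$ to show that the even reflection of $|w|$ across $B_1'$ is subharmonic on the full ball $B_1$, and then invoke the classical $L^2$-to-$L^\infty$ Poisson-kernel bound for harmonic functions.

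As a first step I would note that the viscosity condition $\min\{\pt_1 w,|w|\}=0$ forces $\pt_1 w\ge 0$ on $B_1'$ (both the sub- and super-solution conditions yield this one-sided statement). Let $\tilde w(x_1,x'):=w(|x_1|,x')$ denote the even reflection of $w$ across $\{x_1=0\}$. The key claim is that $|\tilde w|$ is subharmonic on all of $B_1$. Away from $B_1'$, this is immediate, since $|w|=\max\{w,-w\}$ is the pointwise maximum of two harmonic functions.

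The substantive step is subharmonicity across $B_1'$, which I would verify by viscosity testing. Suppose $\phi\in C^2$ touches $|\tilde w|$ from above at some $x_0\in B_1'$. If $w(x_0)=0$, then $\phi\ge|\tilde w|\ge 0$ with $\phi(x_0)=0$ forces $\phi$ to have a local minimum at $x_0$, so $\Delta\phi(x_0)\ge 0$ trivially. If instead $w(x_0)\ne 0$, then by continuity $|w|>0$ on a neighborhood of $x_0$ in $\overline{B_1^+}$, and there the boundary condition degenerates to the homogeneous Neumann condition $\pt_1 w=0$ in the viscosity sense. Standard Neumann regularity then lets us extend $w$ smoothly and harmonically across $B_1'$ via even reflection, so $|\tilde w|=\pm\tilde w$ is smooth and harmonic near $x_0$, and $\Delta\phi(x_0)\ge 0$ follows from the touching. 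By continuity this viscosity subharmonicity upgrades to distributional subharmonicity in $B_1$.

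With $|\tilde w|$ subharmonic on $B_1$, let $h$ be the harmonic function on $B_1$ with boundary data $|\tilde w|\big|_{\pt B_1}$. The subharmonic comparison principle gives $|\tilde w|\le h$ in $B_1$, and the standard Poisson-kernel estimate (uniform boundedness of $P(x,\cdot)$ on $\pt B_1$ for $x\in B_{1/2}$, followed by Cauchy--Schwarz) yields
\[
\norm{h}_{L^\infty(B_{1/2})} \le C(d)\norm{|\tilde w|}_{L^2(\pt B_1)} \le \sqrt{2}\,C(d)\norm{g}_{L^2(\pt B_1\cap\{x_1\ge 0\})},
\]
which combined with $|w|\le h$ in $B_{1/2}^+$ gives the desired bound. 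The main obstacle I anticipate is the rigorous passage across $B_1'$ in step two, specifically verifying that the Signorini boundary condition really forbids any singular Laplacian mass on $B_1'$ in the reflected function $|\tilde w|$; the two-case viscosity argument above is the honest dichotomy one must check, with the $w(x_0)\ne 0$ case relying on local Neumann regularity rather than any sign structure of $\phi$.
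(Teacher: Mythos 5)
Your proof is correct, but it takes a genuinely different route from the paper. The paper decomposes the boundary data as $g=g_++g_-$, solves the two Neumann problems $v_\pm$ with data $g_\pm$, and proves the two-sided pointwise bound $v_-\le w\le v_+$ by a sliding/touching argument: the upper bound is immediate since $w$ is a Neumann subsolution, and the lower bound uses that $v_-<0$ in $B_1^+\sqcup B_1'$ (strong maximum principle) so that at a putative touching point one has $|w|>0$ and the supersolution condition of \eqref{localproblemtoe} is violated by a barrier of the form $v_-+\delta(x_1)_++C$. The $L^2$-to-$L^\infty$ estimate then comes from classical Neumann theory. Your argument instead reflects evenly and shows $|\tilde w|$ is subharmonic in the full ball, then compares with the Poisson integral of $|g|$; your two-case dichotomy at points of $B_1'$ is sound (the $w(x_0)=0$ case is a local minimum of the test function, and the $w(x_0)\ne 0$ case reduces to the locally classical Neumann problem, where Schwarz reflection applies). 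The one step you should make explicit is the upgrade from the viscosity Neumann condition to the classical one near points where $|w|>0$ — this needs the comparison/uniqueness theory for the viscosity Neumann problem (you cannot invoke the paper's nontangential-convergence machinery here, since solutions of \eqref{localproblemtoe} are only $C^{1/2}$ and $\grad w$ need not be bounded a priori); once $w$ is identified with the classical mixed-boundary solution on a small half-ball, the rest follows. Your approach is more self-contained (no auxiliary Neumann solutions, no sliding), and the sub-mean-value property of $|\tilde w|$ also gives the rescaled solid-average version in Remark \ref{remarkonrescalinofL2toLinfty} directly; the paper's approach buys the stronger structural sandwich $v_-\le w\le v_+$ by Neumann solutions, which is not needed for the stated estimate.
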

\begin{remark}
 Using the same proof we know that for some constant \(C>0\) and all \(r>0\) 
    \[
    \norm{w}_{L^\infty(B_{r/2}^+)} \le C r^{-d/2}\norm{w}_{L^2(B_r^+)},
    \]
    where \(C\) is independent of \(r\).

\label{remarkonrescalinofL2toLinfty}
\end{remark}

\begin{proof}
Let \(g_+=\max\{g,0\}\), \(g_-=\min\{g,0\}\), and denote \(v_+,\,v_-\) respectively the Neumann solution with boundary data \(g_+,\,g_-\). By classical theories \(v_{\pm}\) are smooth in \(B_1^+\sqcup B_1'\) up to the flat boundary. We claim that any continuous viscosity solution \(w\) to \eqref{localproblemtoe} has to satisfy
\[
v_-\le w \le v_+,\,\hbox{ in }\overline{B_1^+}.
\]
Once we prove the claim the estimate \eqref{L2toLinfty} will follow from the classical theories for Neumann solutions. The upper bound \(w\le v_+\) can be immediately obtained by observing that \(w\) is also a Neumann subsolution and \(g\le g_+\) on \(\pt B_1\cap\{x_1\ge0\}\). To obtain the lower bound we consider the following maximization problem for small \(\beta>0\)
\[
\max_{x\in \overline{B_1^+}} v_-(x)-w(x)+\beta (x_1)_+-\beta.
\]
By comparison principle of harmonic functions and \(g_-\le g\) on \(\pt B_1\cap\{x_1\ge0\}\), the maximum point \(x_{\ast}\) must occur in \(B_1'\) if the maximum value is positive. Let us first consider the case that \(g_-\ne0\). In this case \(v_-<0\) in \(B_1^+\sqcup B_1'\) by strong maximum principle, and hence 
\[
w(x_\ast)< v_-(x_\ast)-\beta<0.
\]
Moreover \(v_-+\delta(x_1)_++C\) touches \(w\) from below at \(x_\ast\) for some constant \(C\), which contradicts the supersolution condition of \(w\) at \(x_\ast\).  

In the case \(g_-=0\) we would like to show that \(w\ge0\). Let us similarly consider the following maximization problem
\[
\max_{x\in \overline{B_1^+}} \beta (x_1)_+-\beta-w(x).
\]
Also by maximum principle the maximum point \(x_\ast\) can only occur on \(B_1'\) if the maximum value is positive. This shows that 
\[
w(x_\ast)<-\beta<0,
\]
and then \(\beta(x_1)_++C\) touches \(w\) from below at \(x_\ast\), which also contradicts the supersolution condition of \(w\).
\end{proof}

\subsection{Blow-up profiles}

In this section, we discuss the possible blow-up profiles of the function sequence \(w_t\) as discussed after Theorem \ref{optimalestimate}.

According to \eqref{boundaryL2ofw2is1}, we know that the blow-up sequence \(w_t\) have bounded \(L^2\)-norm on the boundary portion \(\pt B_1\cap \{x_1\ge0\}\). By applying Lemma \ref{L2toLinfty}, we obtain boundedness of \(w_t\) in \(L^\infty(B_{1/2})\) (when extended to the whole ball by even reflection). Now using the \(C^{1,\alpha}\) estimate, Theorem \ref{C1aplhaestimate}, the sequence of functions \(w_t\) is bounded in \(C^{1,\alpha}\lb\overline{B_{1/4}}\rb\), which shows the following lemma.

\begin{lemma}
    The sequence of functions \(w_t\) is compact in both \(H^1(B_{1/4})\) and \(C^1\lb \overline{B_{1/4}}\rb\).
\end{lemma}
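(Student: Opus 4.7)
The plan is to chain together the three estimates already available: the $L^2$ normalization on the boundary, the $L^2 \to L^\infty$ bound for the no-sign Signorini problem, and the conditional $C^{1,\alpha}$ regularity theorem, and then apply Arzelà–Ascoli.

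First I would observe that since $u$ satisfies $u(\mathcal{C}_u \cap B_1) = \{0\}$ and $u(0)=0$, the rescaling $w_t(x) = u(tx)/c_t$ with $c_t = (t^{-(d-1)} \int_{\partial B_t} u^2)^{1/2}$ is, after even reflection across $B_1'$, a viscosity solution of \eqref{e.grad-degen-neumann} on $B_{1/t}$ that still satisfies $w_t(\mathcal{C}_{w_t} \cap B_{1/t}) = \{0\}$; in particular $w_t$ solves the no-sign Signorini problem \eqref{localproblemtoe} on $B_1$. By construction $\|w_t\|_{L^2(\partial B_1)} = 1$, and by the even symmetry this forces $\|w_t\|_{L^2(\partial B_1 \cap \{x_1 \geq 0\})} = 2^{-1/2}$.

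Next I would apply Lemma \ref{L2toLinfty} with $g = w_t|_{\partial B_1 \cap \{x_1 \geq 0\}}$ to obtain a uniform bound
\[ \|w_t\|_{L^\infty(B_{1/2}^+)} \leq C(d). \]
This is the crucial quantitative input: without the no-sign Signorini formulation, the mere $H^1$ bound from Almgren monotonicity would not yield an $L^\infty$ bound. With the $L^\infty$ bound in hand, I would then invoke Theorem \ref{t.C1alphaestimate} (applied after a harmless rescaling from $B_{1/2}$ to $B_1$, with the condition \eqref{e.adelta} trivially met since the contact set takes only the value zero) to get
\[ \|w_t\|_{C^{1,\alpha}(\overline{B_{1/4}^+})} \leq C(d) \|w_t\|_{L^\infty(B_{1/2}^+)} \leq C(d), \]
uniformly in $t$ on some interval $(0, T]$.

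Finally, Arzelà–Ascoli gives, up to a subsequence, convergence of $w_t$ in $C^1(\overline{B_{1/4}})$. Since uniform convergence of both $w_t$ and $\nabla w_t$ on the bounded set $\overline{B_{1/4}}$ implies $L^2$ convergence of $w_t$ and of $\nabla w_t$, the same subsequence converges in $H^1(B_{1/4})$. The main (only) obstacle here is really step two: ensuring the no-sign Signorini formulation is available, which is precisely where the hypothesis $\#u(\mathcal{C}_u \cap B_1) < \infty$ (plus the reduction to $u(\mathcal{C}_u \cap B_1) = \{0\}$ via \eqref{e.adelta}) does the work; everything else is a direct concatenation of the already-proved bounds.
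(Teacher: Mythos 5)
Your proposal is correct and follows essentially the same route as the paper: normalize to get the boundary $L^2$ bound, upgrade to $L^\infty(B_{1/2})$ via the no-sign Signorini $L^2\to L^\infty$ estimate, then apply the conditional $C^{1,\alpha}$ theorem and Arzel\`a--Ascoli. The extra care you take in verifying that $w_t$ solves the no-sign Signorini problem is exactly the reduction the paper performs just before stating the lemma.
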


By applying this lemma, we can find \(t_j\rta0^+\) such that \(w_{t_j}\rta w_0\) in both \(H^1(B_{1/4})\) and \(C^1\lb \overline{B_{1/4}}\rb\) as \(j\rta\infty\). On the other hand, we have for \(0<r<1/4\)
\[
N(r,w_{0})=\lim_{j\rta\infty} N(r,w_{t_j})=\lim_{j\rta\infty} N(rt_j, u) = N(0^+,u)=:\kappa.
\]
Applying the Almgren's monotonicity formula, Theorem \ref{t.almgrenmonotone}, we obtain the following characterization of all the blow-up limits.

\begin{proposition}
    Let \(u\) be a viscosity solution to \eqref{e.grad-degen-neumann} that satisfies the condition \emph{\eqref{e.adelta}}, then the blow-up limit \(w_0\) as defined above is a nonzero global solution to \eqref{localproblemtoe}, and is homogeneous of degree \(\kappa=N(0^+,u)>1\).
\end{proposition}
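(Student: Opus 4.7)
The plan is to first invoke \rref{separation2} and rescale as in the proof of \tref{C1alphaestimate} to reduce to the case $u(\mathcal{C}_u\cap B_1)=\{0\}$ with $0\in\Gamma_u$ and $u(0)=0$, then to build enough uniform estimates on $\{w_t\}$ to extract a $C^1_{\mathrm{loc}}\cap H^1_{\mathrm{loc}}$ subsequential limit on all of $\R^d$, and finally to verify the four asserted properties of the limit. Under the reduction, a routine viscosity check shows that $u$ also solves the no-sign Signorini problem \eqref{localproblemtoe}: at any $x_0\in\mathcal{C}_u$ we have $u(x_0)=0$, while at $x_0\in\mathcal{N}_u\cup\Gamma_u$ the definition of the contact set prohibits any test function touching $u$ from below from having positive inner normal derivative, so $\min\{\partial_1 u,|u|\}\le 0$ holds on every admissible lower test. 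Each rescaling $w_t$ inherits the same property.

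For compactness, by construction $\|w_t\|_{L^2(\partial B_1)}=1$. The scale-invariance $N(r,w_t)=N(rt,u)$ together with \tref{almgrenmonotone} gives $N(r,w_t)\le N(T,u)$ whenever $rt\le T$, and integrating the log-derivative identity $r(\log H_{w_t})'(r)=d-1+2N(r,w_t)$ produces a uniform bound on $H_{w_t}(r)$ for $r\in(0,R]$, hence a uniform $L^2(B_R)$-bound. The $L^2\to L^\infty$ estimate \eqref{L2toLinfty} (valid because $w_t$ solves \eqref{localproblemtoe}) upgrades this to a uniform $L^\infty(B_{R/2})$-bound, and \tref{C1alphaestimate} -- applicable since $w_t$ has the single contact value $0$, so \eqref{e.adelta} is satisfied trivially on each $B_R$ -- yields a uniform $C^{1,\alpha}(\overline{B_{R/4}^+})$-bound. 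A diagonal Arzel\`a--Ascoli extraction then produces a subsequence $w_{t_j}\to w_0$ in $C^1_{\mathrm{loc}}(\R^d)\cap H^1_{\mathrm{loc}}(\R^d)$.

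Three of the four properties follow quickly from the limit: stability of viscosity solutions under locally uniform convergence gives that $w_0$ solves \eqref{localproblemtoe} globally; $C^1$-convergence on $\partial B_1$ preserves $\|w_0\|_{L^2(\partial B_1)}=1$, so $w_0\not\equiv 0$; and the $C^1$-convergence together with $N(r,w_{t_j})=N(rt_j,u)\to\kappa$ yields $N(r,w_0)\equiv\kappa$ on $(0,\infty)$, whence the rigidity clause of \tref{almgrenmonotone} forces $w_0$ to be $\kappa$-homogeneous on $\R^d$. The main obstacle is the strict inequality $\kappa>1$. For this, \lref{blow-up-at-FB} gives $u(tx)/t\to 0$ uniformly, which together with the $C^{1,\alpha}$-regularity implies $\nabla u(0)=0$. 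Writing $C_t:=(t^{-(d-1)}\int_{\partial B_t} u^2)^{1/2}$, the chain rule gives $\nabla w_t(0)=(t/C_t)\nabla u(0)=0$, and passing to the $C^1$-limit yields $\nabla w_0(0)=0$. Any nonzero $\kappa$-homogeneous function that is $C^1$ at the origin must have $\kappa\ge 1$, and the borderline case $\kappa=1$ would force $w_0(x)=\nabla w_0(0)\cdot x\equiv 0$, contradicting $w_0\not\equiv 0$. Therefore $\kappa>1$.
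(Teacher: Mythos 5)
Your proposal is correct and follows essentially the same route as the paper's surrounding discussion: reduce via \rref{separation2} to the single contact value $\{0\}$, observe that $u$ (hence each $w_t$) solves \eqref{localproblemtoe}, get compactness from the normalization $\|w_t\|_{L^2(\partial B_1)}=1$ together with the $L^2$-to-$L^\infty$ estimate \eqref{L2toLinfty} and \tref{C1alphaestimate}, pass the frequency to the limit, and invoke the rigidity clause of \tref{almgrenmonotone} for homogeneity. The one step you spell out that the paper leaves implicit is the strict inequality $\kappa>1$; your argument --- $\nabla u(0)=0$ from \lref{blow-up-at-FB}, hence $\nabla w_0(0)=0$ by the $C^1$ convergence, so a nonzero homogeneous limit must have degree at least $1$, and degree exactly $1$ would force $w_0$ to be the zero linear function --- is the natural way to close that gap and is consistent with how the paper uses the differentiability lemma elsewhere.
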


Now, we would like to classify all the \(\kappa\)-homogeneous solutions \(w_\kappa\) to \eqref{localproblemtoe} in dimension \(d=2\). It can be checked (see Appendix \ref{Appendix.classification}) that after reflection and normalization, any nonzero homogeneous viscosity solution \(w_\kappa\) of degree \(\kappa\ge0\) has to take one of the forms for \(k\in \Z_+\) in Table \ref{tab:tableofclassificaiton}.

\begin{table}[htbp]
    \centering
    \renewcommand{\arraystretch}{1.5}
\begin{tabular}{|c|c|c|c|c|}
\hline
\(\kappa\) & \(w_\kappa(x_1,x_2)\) & \(\Ca_{w_\kappa}\cap B_1\) & \(\Gamma_{w_\kappa}\cap B_1\) & \( \Na_{w_\kappa}\cap B_1\)\\
\hline
$1$   & $|x_1|$  & \( B_1'\) &  \(\emptyset\) & \(\emptyset\)\\
\hline
$2k+1$   & $\text{Im}\left((x_2+i|x_1|)^\kappa\right)$  & \(B_1'\setminus\{0\}\)& \(\{0\}\) & \(\emptyset\)\\
\hline
 $\frac{2k-1}{2}$& $ \text{Im}\left((x_2+i|x_1|)^\kappa\right)$ & \(\{0\}\times(0,1)\)& \(\{0\}\)& \(\{0\}\times(-1,0)\) \\
\hline
 $k$& $\pm\text{Re}\left((x_2+i|x_1|)^\kappa\right)$ & \( \emptyset\)& \(\emptyset\)& \(B_1'\)\\
\hline
\end{tabular}

\vspace{0.2cm}

    \caption{Classification of \texorpdfstring{\(\kappa\)}{d}-homogeneous solutions to \eqref{localproblemtoe} in dimension \(d=2\). See the proof in Appendix \ref{Appendix.classification}.}
\label{tab:tableofclassificaiton}
\end{table}
\begin{remark}
    In the case \(\kappa=\frac{4k-1}{2}\) we have
    \[
    \text{Im}\left((x_2+i|x_1|)^\kappa\right)=-\text{Re}\left((-x_2+i|x_1|)^\kappa\right)
    \]
    correspond to the \emph{nontrivial} homogeneous solutions to the Signorini problem in Example \ref{quasisolexamp1}. In the case \(\kappa=\frac{4k-3}{2}\) we have
    \[
\text{Im}\left((x_2+i|x_1|)^\kappa\right)=\text{Re}\left((-x_2+i|x_1|)^\kappa\right)
    \]
    correspond to the \emph{nontrivial} homogeneous solutions to the sign-reversed Signorini problem in Example \ref{quasisolexamp2}.
\end{remark}
In particular, if \(\kappa>1\) then we know that \(\kappa\ge 3/2\). In higher dimensions, we can also obtain this property by using the ACF monotonicity formula, see \cites{petrosyan2011monotonicity,garofalo2009monotonicity}.

\begin{theorem}\label{theoremfor3over2inhigherdimension}
    Let \(w\) be a homogeneous viscosity solution to \eqref{localproblemtoe} of degree \(2>\kappa>1\). Then \(\kappa=3/2\), and 
    \[
    w(x)=\emph{Im}(x_2+i|x_1|)^{3/2}=- \emph{Re}(-x_2+i|x_1|)^{3/2},
    \]
    after a possible rotation in \(\R^{d-1}\) and normalization.
\end{theorem}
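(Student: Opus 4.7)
My plan is to reduce the higher-dimensional classification to the two-dimensional result of Table \ref{tab:tableofclassificaiton} by using the ACF monotonicity formula on positive/negative parts of tangential derivatives to show that $w$ depends nontrivially on only two spatial variables.

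Fix a unit tangential vector $e\in\{0\}\times\R^{d-1}$ and consider $v:=\pt_e w$, extended by even reflection in $x_1$. Since $w$ is Lipschitz and $\kappa$-homogeneous with $\kappa>1$, $v$ is bounded and $(\kappa-1)$-homogeneous. The identity $\pt_1 w\equiv 0$ on the open set $\Na_w$ ensures that $v$ extends harmonically across $\Na_w$, while $w\equiv 0$ on the interior of $\Ca_w$ forces $v\equiv 0$ there. Using $C^{1,\alpha}$ regularity for the spherical profile of $w$ away from $\Gamma_w\cap S^{d-1}$ (a mixed Dirichlet/Neumann free-boundary problem on the sphere), $v$ is continuous on $B_1$ and vanishes on $\Gamma_w$ as well. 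Consequently $\{v>0\}$ and $\{v<0\}$ are disjoint open subsets of $(B_1\setminus B_1')\cup \Na_w$ on which $v$ is harmonic, so $v_+$ and $v_-$ are continuous, non-negative, subharmonic functions on $B_1$ satisfying $v_+v_-\equiv 0$ and $v_\pm(0)=0$.

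I then apply the ACF monotonicity formula to the pair $(v_+,v_-)$: the quantity
\[
\Phi(r)=\frac{1}{r^4}\left(\int_{B_r}\frac{|\gd v_+|^2}{|x|^{d-2}}\,dx\right)\left(\int_{B_r}\frac{|\gd v_-|^2}{|x|^{d-2}}\,dx\right)
\]
is non-decreasing on $(0,1)$. A direct change of variables using the $(\kappa-1)$-homogeneity of $v$ gives $\Phi(r)=r^{4\kappa-8}\Phi(1)$. Since $\kappa<2$ makes the exponent strictly negative, monotonicity forces $\Phi(1)=0$, so $v_+\equiv 0$ or $v_-\equiv 0$: for every tangential $e$, the derivative $\pt_e w$ has constant sign throughout $B_1$. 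Set $K:=\{e\in\R^{d-1}:\pt_e w\ge0\}$, identifying tangential directions with $\R^{d-1}$. By linearity $K$ is a closed convex cone with $K\cup(-K)=\R^{d-1}$, and a standard convexity argument shows that any such cone is either $\R^{d-1}$ itself or a closed half-space $\{e:e\cdot\tau\ge 0\}$. The first alternative makes every tangential derivative vanish, reducing $w$ to a nontrivial $\kappa$-homogeneous harmonic function of $x_1$ alone, which is impossible for $\kappa\in(1,2)$. Hence $K$ is a half-space with unit normal $\tau\in\{0\}\times\R^{d-1}$, and $w$ depends only on $(x_1,x\cdot\tau)$. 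Restricted to the $2$-plane spanned by $e_1$ and $\tau$, $w$ is a nontrivial $\kappa$-homogeneous viscosity solution to \eqref{localproblemtoe} in dimension two, so Table \ref{tab:tableofclassificaiton} yields $\kappa=3/2$ and $w=\mathrm{Im}((x\cdot\tau+i|x_1|)^{3/2})$ up to positive scaling. A rotation in $\R^{d-1}$ sending $\tau$ to $e_2$ gives the claimed form (the equivalent representation $-\mathrm{Re}((-x_2+i|x_1|)^{3/2})$ follows from the algebraic identity $\mathrm{Im}(z^{3/2})=-\mathrm{Re}((-z)^{3/2})$).

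The main technical obstacle is the verification that $v_\pm$ are genuinely subharmonic on all of $B_1$, which in turn requires $v$ to be continuous up to $\Gamma_w$ with value zero there. This relies on regularity results for the mixed Dirichlet/Neumann free-boundary problem on the sphere for the spherical profile of $w$. Without such regularity the distributional Laplacian of $v$ could carry a signed contribution along $\Ca_w$, and the subharmonicity of $v_\pm$ would have to be established by a more delicate distributional argument before invoking ACF.
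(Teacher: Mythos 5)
Your proposal is correct and follows essentially the same route as the paper: apply the ACF monotonicity formula to $v_\pm=(\pt_e w)_\pm$ for each tangential direction $e$, use $(\kappa-1)$-homogeneity to get $\Phi(r)=r^{4(\kappa-2)}\Phi(1)$ and conclude $\Phi(1)=0$ for $\kappa<2$, deduce that every tangential derivative has a sign, reduce to a two-variable profile, and invoke the $d=2$ classification of Table \ref{tab:tableofclassificaiton}. Your convex-cone packaging of the dimension reduction is a cosmetic variant of the paper's iterative choice of directions in $S_+\cap S_-$, and your flagged concern about subharmonicity of $v_\pm$ across $\Gamma_w$ is exactly the point the paper disposes of by citing the $C^{1,\alpha}$ regularity of Theorem \ref{C1aplhaestimate}.
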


\begin{proof}[Sketch of Proof]
    We outline the idea of the proof here. See \cites{petrosyan2011monotonicity,garofalo2009monotonicity} for detailed proofs. Given \(w\), extended evenly to the whole ball \(B_1\), we would like to consider the following two functions with \(e\in \{0\}\times\R^{d-1}\)
    \[
    v_+=\max\{\pt_e w,0\},\,v_-=\max\{\pt_{-e} w, 0\}=\max\{-\pt_{e} w, 0\}.
    \]
    By \(C^{1,\alpha}\) regularity, Theorem \ref{C1aplhaestimate}, we can discuss everything in classical setting. Therefore, by the boundary condition \eqref{e.grad-degen-neumann}, we know that \(v_\pm\) are harmonic wherever they are positive, which shows that both of them are subharmonic. On the other hand, we have \(v_-\cdot v_+=0\). By the ACF monotonicity formula,
    \[
    \phi_e(r)=\frac{1}{r^{4}}\int_{B_r} \frac{|\gd v_+|^2}{|x|^{d-2}}\int_{B_r} \frac{|\gd v_-|^2}{|x|^{d-2}}= r^{4(\kappa-2)}\phi_e(1)
    \]
    is monotone in \(r>0\). When \(1<\kappa<2\) then the monotonicity would imply \(\phi_e(1)=0\), which means that one of \(v_\pm\) is identically zero and hence \(\pt_e w\) is either nonnegative or nonpositive on the entire \(\R^{d}\). We denote 
    \[
    S_+=\{e\in \mathbb{S}^{d-2}:=\pt'B_1' : \pt_e u \ge 0\},
    \]
    and
    \[
    S_-=\{e\in  \mathbb{S}^{d-2}:=\pt'B_1' : \pt_e u \le 0\}.
    \]
    Notice that \(S_+=-S_-\ne \emptyset\), \(\mathbb{S}^{d-2}= S_+\cup S_-\) and both are closed subsets. Since whenever \(d>2\), \(\mathbb{S}^{d-2}\) is connected and hence \(S_+\cap S_-\ne \emptyset\). One can choose \(e^{(1)}\in S_+\cap S_-\) to reduce to the orthogonal subspace of \(\{0\}\times \R^{d-1}\) with respect to \(e^{(1)}\). Proceed with the same procedure we can obtain an orthogonal sequence \(e^{(1)},\cdots, e^{(d-2)}\) such that we can reduce to the subspace spanned by \(e^{\ast}\in \{0\}\times\R^{d-1}\) with \(e^\ast\cdot e^{(j)}=0\) for all \(j=1,\cdots, d-2\), which is equivalently solving the problem in the case that \(d=2\). In this case we know that the only homogeneous solution with \(1<\kappa<2\) is of the form
    \[
    w(x)=\textup{Im}(x_2+i|x_1|)^{3/2}=- \textup{Re}(-x_2+i|x_1|)^{3/2}.
    \]
\end{proof}

\subsection{Optimal regularity in dimension \texorpdfstring{\(d\ge 3\)}{a} under condition \texorpdfstring{\eqref{e.adelta}}{d}}

To prove Theorem \ref{optimalestimate}, we follow the framework in \cite{garofalo2014} and start with the estimation near \(0\in\Gamma_u\).
    
\begin{lemma}
    Let \(u\) be a viscosity solution to \eqref{e.grad-degen-neumann} that satisfies condition \emph{\eqref{e.adelta}} with \(\osc_{B_1^+} u \le 1\), \(u(0)=0\) and \(0\in \Gamma_u\). Then \(|\gd u(0)|=0\) and there is \(r_0=C(d)\delta>0\) such that
    \[
    |v(x)|\le \overline{C} |x|^{3/2},\,|x|\le r_0,
    \]
    where \(\overline{C}>0\) is universal.
\end{lemma}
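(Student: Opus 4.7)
The plan is to combine the Almgren monotonicity formula (Theorem~\ref{t.almgrenmonotone}) with the classification of homogeneous blow-up profiles (Theorem~\ref{theoremfor3over2inhigherdimension}) to establish that the Almgren frequency satisfies $N(0^+,u)\geq 3/2$ at any free boundary point, and then to cash this in via the $L^2\to L^\infty$ estimate (Remark~\ref{remarkonrescalinofL2toLinfty}) to obtain the pointwise growth bound.

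First I would perform the standard initial rescaling. Following the reduction in Remark~\ref{r.separation2} and the proof of \tref{C1alphaestimate}, replacing $u(x)$ by $\delta^{-1}u(\delta x)$ brings us into the case $u(\Ca_u\cap B_1)=\{0\}$, at the cost of shrinking the working radius by a factor of $\delta$; this is where the $r_0=C(d)\delta$ dependence enters. Under this normalization $u$ is in particular also a viscosity solution of the no-sign Signorini problem \eqref{localproblemtoe}, so the $L^2\to L^\infty$ estimate is available on every subball.

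Next I would show that $\kappa:=N(0^+,u)\geq 3/2$. Monotonicity of $N(r,u)$ is provided by Theorem~\ref{t.almgrenmonotone} under the current hypothesis $u(\Ca_u\cap B_1)=\{0\}$. The compactness and convergence results of the preceding subsection exhibit the normalized blow-up $w_0=\lim_{t_j\to 0}w_{t_j}$ as a nonzero $\kappa$-homogeneous global solution of \eqref{localproblemtoe}. Lipschitz regularity together with $u(0)=0$ forces $H(r)\leq L^2 r^{d+1}$, hence $\kappa\geq 1$. To rule out $\kappa=1$, I would invoke \lref{blow-up-at-FB}: since $0\in\Gamma_u$, the unnormalized blow-up $u(tx)/t$ converges uniformly to $0$ on $\overline{B_1^+}$. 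Were $\kappa=1$, the normalizing factor $\bigl(t^{-(d-1)}\int_{\partial B_t}u^2\bigr)^{1/2}$ would be comparable to $t$, so $w_0$ would be a nonzero scalar multiple of $\lim u(tx)/t\equiv 0$, a contradiction. Thus $\kappa>1$, and the classification in Theorem~\ref{theoremfor3over2inhigherdimension} forces $\kappa\geq 3/2$.

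Then monotonicity $N(r)\geq 3/2$ on $(0,r_0)$ plugged into \eqref{Hderivative} (together with $c(r)\equiv 0$ under our normalization) gives
\[
\frac{H'(r)}{H(r)}=\frac{d-1}{r}+\frac{2N(r)}{r}\geq \frac{d+2}{r},
\]
which integrates to $H(r)\leq C r^{d+2}$ for $r\leq r_0$. Therefore $\int_{B_r}u^2=\int_0^r H(s)\,ds\leq C r^{d+3}$. Applying Remark~\ref{remarkonrescalinofL2toLinfty} to the even reflection of $u$ on $B_{2r}$ yields
\[
\|u\|_{L^\infty(B_{r/2})}\leq C r^{-d/2}\|u\|_{L^2(B_r)}\leq \overline{C}\,r^{3/2},
\]
and finally $|\gd u(0)|=0$ is immediate from $u(0)=0$ together with the cubic-root-of-$r^3$ growth, since the solution is pointwise differentiable at $0$ by the results of \sref{C1alphacondition}.

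The main obstacle I anticipate is verifying cleanly that the blow-up $w_0$ along $w_{t_j}$ is a genuine nonzero $\kappa$-homogeneous solution in the right sense; this requires the $L^2\to L^\infty$ step of the previous subsection to upgrade the boundary $L^2$ normalization into an interior $L^\infty$ bound, and then the $C^{1,\alpha}$ compactness of \tref{C1alphaestimate} together with strong $H^1$ convergence to transfer the equality $N(r,w_0)\equiv\kappa$ and conclude homogeneity. With these in hand, the contradiction argument ruling out $\kappa=1$ via \lref{blow-up-at-FB} and the final integration of the frequency ODE are routine.
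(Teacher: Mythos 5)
Your proposal follows essentially the same route as the paper: reduce to $u(\Ca_u\cap B_1)=\{0\}$ by a $\delta$-rescaling, use the classification of homogeneous blow-ups (Theorem \ref{theoremfor3over2inhigherdimension}) plus Almgren monotonicity to get $N(r,u)\ge N(0^+,u)\ge 3/2$, integrate $r\frac{d}{dr}\log H\ge d+2$ to obtain $\int_{B_r}u^2\le Cr^{d+3}$, and conclude via the $L^2\to L^\infty$ estimate of Remark \ref{remarkonrescalinofL2toLinfty}. The only soft spot is your exclusion of $\kappa=1$: monotonicity alone gives the upper bound $H(t)\le Ct^{d+1}$ but only $H(t)\ge c_\epsilon t^{d+1+2\epsilon}$ from below, so "the normalizing factor is comparable to $t$" needs this standard refinement (or one can simply cite the paper's blow-up proposition, which already records $\kappa=N(0^+,u)>1$).
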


\begin{proof}
    For the proof recall the definition of the frequency function $N(u,r) = \frac{rD(r)}{H(r)}$ in \eref{frequency-defn} with $D(r) = \int_{B_r} |\grad u|^2$ and $H(r) = \int_{\partial B_r} u^2$. By Theorem \ref{theoremfor3over2inhigherdimension} we know that for \(0<r<r_0\)
    \[
    \frac{3}{2}\le N(0^+,u) \le N(r,u).
    \]
This implies that 
\[
r\frac{d}{dr}\log H(r) \ge d+2 \ \hbox{ for } \ 0<r<r_0, 
\]
and then 
\[
H(r)\le C r^{d+2}.
\]
After integrating with respect to \(r\) we know that
\[
\int_{B_r} u^2 \le C r^{d+3}.
\]
The proof is now completed by applying Remark \ref{remarkonrescalinofL2toLinfty}.    
\end{proof}

Now we can prove the optimal regularity estimate for all the viscosity solutions to \eqref{e.grad-degen-neumann}.

\begin{proof}[Proof of Theorem \ref{optimalestimate}]
    We follow the reflection arguments in Theorem 6.7 in \cite{garofalo2014}. Similar to the proof of Theorem \ref{C1aplhaestimate}, it suffices to consider the case that \(u(\Ca_u)\) has at most one element. For each \(x_0\in B_{1/2}^+\sqcup B_{1/2}'\) we define \(d(x_0)=\dist(x_0,\Gamma_u)\) with \(\Gamma_u\) the free boundary of \(u\). We claim that either
     \be
    B(x_0,d(x_0))\cap B_1'\subset \Ca_u\hbox{, or }B(x_0,d(x_0))\cap B_1'\subset \Na_u.\label{dichotomyofballwithdistancetogammau}
    \ee
    Indeed, by the partition \(B_1'=\Ca_u\sqcup\Na_u\sqcup \Gamma_u\) we know that \(A:=B(x_0,d(x_0))\cap B_1'\)  can be partitioned into
\[
A=(A\cap \Ca_u) \cup (A\cap \Na_u) \cup (A\cap\Gamma_u).
\]
We assume without loss that \(A\ne \emptyset\). By definition of \(d(x_0)\), we know that \(A\cap \Gamma_u=\emptyset\) and so if neither of \ref{dichotomyofballwithdistancetogammau} is satisfied then \(A\) would be a disconnected set, which contradicts of the fact that it is an open subball of \(B_1'\). If \(A_{x_0}\subset \Ca_u\) then we extend \(u\) to the whole \(B(x_0,d(x_0))\) by odd extension, and if \(A_{x_0}\subset \Na_u\) then we extend \(u\) to the whole \(B(x_0,d(x_0))\) by even extension. By Schwarz reflection, these are smooth extensions.

To show \(C^{1,1/2}\) it suffices to show that for \(|\xi-\eta|\le \frac{1}{32},\, \xi,\eta\in B_{1/2}^+\sqcup B_{1/2}'\),
\[
|\gd u(\xi) - \gd u(\eta)| \le C |\xi-\eta|^{1/2},
\]
for some constant \(C>0\). In the following \(C>0\) means a universal constant that may change from line to line. If \(d(\xi) \ge \frac{1}{16}\) (or symmetrically \(d(\eta)\ge \frac{1}{16}\)), then we can use the smoothness of \(u\) in \(B(\xi,\frac{1}{32})\) in either the case of odd or even extension as described above. If \(d(\eta)\le d(\xi) \le \frac{1}{16}\) and \(|\xi-\eta|\ge d(\xi)/2\), then by the gradient estimate we have
\[
\begin{split}
   |\gd u(\xi)|&\le \frac{C}{d(\xi)} \sup_{B(\xi,d(\xi))} |u|\\
 &\le\frac{C}{d(\xi)} \sup_{B(\xi_0,2d(\xi))} |u|\\
 &\le C d(\xi)^{1/2}\\
 &\le C |\xi-\eta|^{1/2},
\end{split}
\]
where \(\xi_0\in \Gamma_u\) is a point that \(|\xi-\xi_0|=d(\xi)\). Similarly \(|\gd u(\eta)| \le C |\xi-\eta|^{1/2}\). In the last case that \(d(\eta)\le d(\xi) \le \frac{1}{16}\) and \(|\xi-\eta|< d(\xi)/2\), we use the interior estimate for second order derivatives of harmonic functions and obtain
\[
\begin{split}
   |\gd u(\xi)-\gd u(\eta)|&\le \frac{C|\xi-\eta|}{d(\xi)^2} \sup_{B(\xi,d(\xi))} |u|\\
 &\le \frac{C|\xi-\eta|}{d(\xi)^2} \sup_{B(\xi_0,2d(\xi))} |u|\\
 &\le C |\xi-\eta|d(\xi)^{-1/2}\\
 &\le C |\xi-\eta|^{1/2}.
\end{split}
\]

\end{proof}

\section{Minimal supersolution and comparison principle\label{section.comparisonprinciple}}
In this section, we study the minimal supersolutions to \eqref{e.grad-degen-neumann} by using Perron's method and prove the characterizing comparison principle. Given a fixed continuous boundary data \(g\) on the boundary portion \(\pt B_1\cap \{x_1\ge0\}\), a minimal supersolution with respect to the boundary data \(g\) is defined as
\[
v_{g}(x):= \inf\{v(x)\,;\,v\hbox{ is a supersolution to \eqref{e.grad-degen-neumann} and } v\ge g\hbox{ on }\pt B_1\cap \{x_1\ge0\}\}.
\]
Unlike the general viscosity solutions to \eqref{e.grad-degen-neumann}, a minimal supersolution would satisfy an additional \emph{strong subsolution condition}.

\begin{definition}[Strong subsolution]
  \label{definitionofstrongsubsolution}  An upper semicontinuous function \(u\) is called a strong subsolution to \eqref{e.grad-degen-neumann} if it is a subsolution and there are no \(C^1\) up-to-boundary function of the form \(\varphi(x_1,x')\equiv\psi(x_1)\) that touches \(u\) from above in \(\Omega_h\cap\overline{B_1^+}\) at some \(x_0\in B_1'\) and \(\varphi>u\) in \(\overline{\Omega_h\setminus\Omega} \cap \overline{B_1^+}\) where \(\Omega\) is an arbitrary open domain of \(\R^d\) containing \(x_0\) and \(\Omega_h=\bigcup_{y\in\Omega} B_h(y)\) for some small \(h>0\) so that \(\overline{\Omega_h}\cap \overline{B_1^+} \csubset B_1^+\cup B_1'\).
\end{definition}
We will show in the following subsections that this strong subsolution condition is equivalent to the boundary maximum principle, and is indeed a necessary condition for a minimal supersolution.

\begin{remark}
    In Section \ref{section.flatasymsingularbernoulli}, we will discuss how the flat asymptotic expansion of the minimal supersolutions to \eqref{discontinuous} gives rise to the strong subsolution property of the asymptotic limit. This, by the comparison principle, Theorem \ref{comparisonprinciple} (which we will prove in this section) will then lead to the equivalence of the three notions of solutions: the minimal supersolutions, the solutions that satisfy the strong subsolution condition (or the asymptotic expansion limit arising in \eqref{discontinuous}) and the solutions that satisfy the boundary maximum principle.
\end{remark}

 To prove the comparison principle (see Theorem \ref{comparisonprinciple}) for supersolutions and strong subsolutions we face several difficulties due to the degeneracy of the problem \eqref{e.grad-degen-neumann}. The degenerate Neumann boundary condition in \eqref{e.grad-degen-neumann} is incompatible with the classical doubling variable arguments. We can not use the classical sub-/sup-convolution either because otherwise the sub-/super-solutions would not be preserved under the mollification procedure. We overcome this issue by introducing the \say{tangential} sub-/sup-convolution technique along with a harmonic lift (see Section \ref{s.tnagentialsubsup} and \ref{s.harmonic-lift}). 

 \begin{remark}\label{equivalenceofthreenotionsofsolutions}
     Combining the discussions in Section \ref{section.comparisonprinciple} and \ref{section.flatasymsingularbernoulli}, we know that the following three functions are equal to each other if they share the same boundary data on the boundary portion \(\pt B_1\cap \{x_1\ge0\}\)
     \begin{enumerate}
         \item the asymptotic expansion of the Singular Bernoulli problem as discussed in Section \ref{section.flatasymsingularbernoulli}, which satisfies an additional strong subsolution condition as defined in Definition \ref{definitionofstrongsubsolution};
         \item the viscosity solution to \eqref{e.grad-degen-neumann} that satisfies an additional boundary maximum principle as described in Lemma \ref{boundarymaximumprinciple};
         \item the minimal supersolution to \eqref{e.grad-degen-neumann} as discussed in Section \ref{section.comparisonprinciple}.
     \end{enumerate}
 \end{remark}

\subsection{Boundary maximum principle}

Let us now show that the strong subsolution condition is equivalent to the boundary maximum principle. To that end, let us recall the concept of sub/sup-convolutions.

\begin{definition}
    Let \(U\subset \R^d\) be a domain and \(u,v: U\rta \R\). The sup-convolution of \(u\in \mathrm{USC}\lb \overline{U}\rb\) is defined for \(\varepsilon>0\)
    \[
    u^\ep(x)=\sup_{y\in \overline{U}} \lma u(y)-\frac{1}{2\ep}|x-y|^2\rma,\,x\in\R^n.
    \]
    The inf-convolution of \(v\in\mathrm{LSC}\lb \overline{U}\rb\) is defined as
    \[
    v_\ep(x)=\inf_{y\in \overline{U}} \lma v(y)+\frac{1}{2\ep}|x-y|^2\rma,\,x\in\R^n.
    \]

\end{definition}

For the convenience of discussing the limit of inf/sup-convolutions, let us also introduce the half-relaxed limits of Barles \cite{Barles1987}.

\begin{definition}
    Let \(u^k\) be a family of functions that is bounded from above, then the upper half relaxed limit of \(u^k\) is defined as
    \[
    {\limsup_{k\rta\infty}}^\ast u^k(z) := \lim_{k\rta\infty} \sup_{n>k,\,|z-x|\le1/k} u^n(x).
    \]
    If \(v_k\) is a family of functions that is bounded from below, then the lower half relaxed limit is defined as
    \[
    \underset{k\rta\infty}{{\liminf}_\ast} v_k(z) := \lim_{k\rta\infty} \inf_{n>k,\,|z-x|\le1/k} v_n(x)
    \]
\end{definition}

\begin{remark}
Let us recall some of the basic properties of sub/sup-convolutions (see \cites{UsersGuide,Silvestre2015} and the references therein).
\begin{enumerate}
    \item  It is known that \(v_\ep\) and \(u^\ep\) are, respectively, semiconcave and semiconvex, specifically $v_\ep(x) - \frac{1}{2\ep}|x|^2$ is concave and $u^\ep(x) + \frac{1}{2\ep}|x|^2$ is convex.  In particular, both $u^\ep$ and $v_\ep$ are also Lipschitz continuous. \label{part.sompropertyofhalfrelaxed.1}
    \item If \(u\) is subharmonic in a domain \(\Omega\) then \(u^\ep\) is also subharmonic in a slightly smaller domain \(\Omega_\ep\csubset\Omega\). 
    \item The lower and upper half relaxed limits defined above are always, respectively, lower and upper semi-continuous.
    \item  When \(u\) is upper semi-continuous, then we have the following half-relaxed convergence
    \[
   u = {\limsup_{\ep\rta0^+}}^\ast u^\ep.
    \]
A similar convergence holds for lower semi-continuous \(v\):
\[
v = \underset{\ep\rta0^+}{{\liminf}_\ast} v_\ep .
\]
\item If \(u={\limsup}^\ast u^k\) on a compact set \(K\subset \R^d\), then we have 
\[
\limsup_{k\rta\infty} \max_K\lb u^k\rb \le \max_K(u).
\] 
A similar statement holds true for \(v={\liminf}_\ast v_k\), then 
\[
\liminf_{k\rta\infty} \min_K\lb v_k\rb \ge \min_K(u).
\]
\item Let \(u\) be an upper semi-continuous function and call its sup-convolutions \(u^\ep\), then because \(u^\ep\ge u\), on a compact set \(K\subset \R^d\) we have
\[
\lim_{\ep\rta0} \max_K\lb u^\ep\rb = \max_K(u).
\]
A similar result also holds for lower ones.
\end{enumerate}
\label{sompropertyofhalfrelaxed}
\end{remark}

Next, we show that {strong subsolutions} satisfy a \emph{boundary maximum principle}.

\begin{lemma}[Boundary Maximum Principle]
    Let \(u\) be a strong subsolution as defined in Definition \ref{definitionofstrongsubsolution}, then we have for any subdomain \(\Omega\csubset B_1'\)
    \be
    \max_{x\in \overline{\Omega}} u(x)=\max_{x\in \partial' \Omega} u(x),\label{boundarymaximumprinciple}
    \ee
    where \(\partial' \Omega\) is defined as the relative boundary of \(B_1'\) in \(\{x_1=0\}\).
\end{lemma}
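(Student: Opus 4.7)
I argue by contradiction: assume \(M := \max_{\overline{\Omega}} u > M' := \max_{\partial'\Omega} u\) and construct a test function \(\varphi(x) = \psi(x_1)\) depending only on \(x_1\) that witnesses the forbidden touching in Definition \ref{definitionofstrongsubsolution}. Upper semicontinuity of \(u\) and compactness of \(\overline{\Omega}\) supply a maximizer \(\hat{x} = (0, \hat{x}')\); the strict gap \(M > M'\) forces \(\hat{x}' \in \Omega\). Set \(\gamma := (M - M')/4 > 0\).

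For the test data I take the linear function \(\psi(x_1) := M + L x_1\), and, identifying \(\Omega\) with its embedding in \(\R^{d-1} \cong \{x_1 = 0\}\), I take the \(\R^d\)-domain playing the role of ``\(\Omega\)'' in Definition \ref{definitionofstrongsubsolution} to be the short cylinder \(V := \Omega \times (-\eta, \eta)\), with parameters \(\eta, h > 0\) small. Since \(\Omega \csubset B_1'\), small enough \(\eta, h\) guarantee the required \(\overline{V_h} \cap \overline{B_1^+} \csubset B_1^+ \cup B_1'\). The crucial quantitative ingredient is the following collar bound: for \(\eta, h\) sufficiently small,
\(u(y) \leq M' + 2\gamma\) on \(\{y \in \overline{B_1^+} : \dist(y, \partial'\Omega) \leq \eta + h\}\).
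This is obtained by applying USC of \(u\) at each point of the compact set \(\partial'\Omega\) (where \(u \leq M'\)) and extracting a finite cover by \(\R^d\)-balls. I deliberately avoid invoking \lref{Lipschitz-estimate} here, since that lemma requires continuity of \(u\), while strong subsolutions need only be USC. Combined with \(u \leq M\) on \(\overline{\Omega}\), the collar bound gives \(u(0, \cdot) \leq M\) on the flat cross-section \(\overline{V_h} \cap B_1' = \overline{\Omega_h}\), with \(u(0, \cdot) \leq M' + 2\gamma\) on the annulus \(\overline{\Omega_h \setminus \Omega}\). Finally, fix \(L\) large enough that \(M + L \eta > \norm{u}_{L^\infty(\overline{B_1^+})}\), finite by USC on the compact set \(\overline{B_1^+}\).

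The function \(v := \psi - u\) is then superharmonic on \(V_h \cap B_1^+\) (since \(u\) is subharmonic and \(\psi\) is harmonic). By the minimum principle it suffices to verify \(v \geq 0\) on \(\partial(V_h \cap B_1^+)\), which splits into three parts: on the flat piece, \(v = M - u(0, \cdot) \geq 0\); on the top portion \(\partial V_h \cap \{x_1 \geq \eta\}\), \(v \geq M + L \eta - \norm{u}_{L^\infty(\overline{B_1^+})} > 0\) by the choice of \(L\); on the lateral portion with \(x_1 \in (0, \eta]\), the collar bound yields \(v \geq M - (M' + 2\gamma) = 2\gamma > 0\). Hence \(v \geq 0\) on \(\overline{V_h} \cap \overline{B_1^+}\) with equality at the touching point \(\hat{x} \in B_1'\). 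The same three estimates read on the outer ring \(\overline{V_h \setminus V} \cap \overline{B_1^+}\) give the strict inequality \(v > 0\): on the flat annulus at \(x_1 = 0\), \(v \geq 2\gamma > 0\); on the top and lateral pieces the slack established above is strict. Thus \(\varphi(x) = \psi(x_1)\) realizes exactly the forbidden configuration in Definition \ref{definitionofstrongsubsolution}, which is the desired contradiction. The main technical subtlety I anticipate is the USC-based collar bound; once that is in hand, the rest is a routine superharmonic minimum-principle computation.
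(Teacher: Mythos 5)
Your argument is correct, and it reaches the conclusion by a genuinely more elementary route than the paper's. The paper first regularizes: it passes to sup-convolutions $u^{\ep_j}$, which are Lipschitz with constants $L_j$, builds the barrier $\max_{\overline{\Omega}} u_j + 100 L_j (x_1)_+$, verifies the strict ordering on the fattened boundary using the half-relaxed convergence of the convolutions, and then must transfer the touching from $u_j$ back to $u$ via constants $c_j \to 0$. You instead work directly with the upper semicontinuous $u$: the only regularity input is the collar bound $u \leq M' + 2\gamma$ near $\partial'\Omega$, which you correctly extract from upper semicontinuity plus compactness (and you are right that \lref{Lipschitz-estimate} is unavailable here, since a strong subsolution need not be continuous). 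Your slope $L$ is dictated by $\sup_{\overline{B_1^+}} u$ and the cylinder height $\eta$ rather than by a Lipschitz constant of a regularization, and the verification that $\psi - u \geq 0$ on the cylinder reduces to the maximum principle for the USC subharmonic function $u - \psi$, checked separately on the flat cross-section, the lateral collar, and the top cap; this removes the approximation and limit-passing steps entirely. The only points worth spelling out in a final write-up are the geometric bookkeeping that every point of $\overline{V_h \setminus V} \cap \overline{B_1^+}$ with $x_1 \leq \eta$ lies within distance $\eta + h$ of $\partial'\Omega$ (so the collar bound covers the lateral boundary and the flat annulus), and that the strict gap $M > M'$ forces the maximizer $\hat{x}$ into $V$ rather than the ring; both are straightforward but are exactly what makes the configuration match Definition \ref{definitionofstrongsubsolution}.
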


\begin{remark}\label{equivalencebetweenboundamxandstrongsubsolution}
    We notice that if a subsolution \(u\) satisfies \eqref{boundarymaximumprinciple}, then it will also satisfy the strong subsolution condition.
\end{remark}

\begin{proof}
We extend \(u\) to \(\overline{B_1}\) evenly and consider it as an upper semi-continuous function defined on the whole \(\R^d\) by setting it to be \(-\infty\) outside \(\overline{B_1}\). It can be observed that \(u\) is subharmonic in \(B_1\). Let \(u^\ep\) be a family of sup-convolutions of \(u\), then according to Remark \ref{sompropertyofhalfrelaxed} we know that \(u^\ep\) are Lipschitz, subharmonic in \(B_{1-\gamma(\ep)}\), and \(u^\ep\) converges in \(\overline{B_1^+}\) to \(u\) as \(\ep\rta0\) in the sense
\[
u={\limsup_{\ep\rta0^+}}^\ast u^\ep.
\] 
Suppose there is a subdomain \(\Omega\csubset B_1'\) and positive numbers \(\delta,\rho>0\) such that 
\[
\max_{\overline{\Omega}} u \ge \max_{\overline{\Omega_\rho\setminus\Omega}} u +\delta,
\]
with \(\Omega_\rho:= \bigcup_{x\in\Omega} B_\rho'(x)\csubset B_1'\). According to Remark \ref{sompropertyofhalfrelaxed} (5) and (6), for sufficiently small \(\ep>0\) we have
\be
u^\ep(y) \le \max_{\overline{\Omega}} u -2\delta/3 \ \hbox{ for } \ y\in\overline{\Omega_\rho\setminus\Omega}.\label{boundarystricsmallofboundaryaxprixn}
\ee
On the other hand, we pick for convenience a subsequence \(\ep_j\rta0^+\) as \(j\rta\infty\), and using Remark \ref{sompropertyofhalfrelaxed} (6) again on the set \(K=\overline{\Omega}\), we obtain a sequence \(u_j:= u^{\ep_j}\) such that 
\be
\lim_{j\rta\infty} \max_{\overline{\Omega}} u_j = \max_{\overline{\Omega}} u,\label{maxlimitforsupconvol}
\ee
and the following property is satisfied by combining \eqref{boundarystricsmallofboundaryaxprixn} and \eqref{maxlimitforsupconvol}
\be
\max_{\overline{\Omega_{\rho}\setminus\Omega}} u_j \le \max_{\overline{\Omega}} u_j -\delta/2 ,\label{boundarystriimaximumprinciforuj}
\ee
for sufficiently large \(j\).

To make a contradiction, we assume \(L_j>0\) to be the Lipschitz constants of \(u_j\) (we may, without loss, assume that \(L_j\rta\infty\) as \(j\rta\infty\)), then we construct 
\[
w_j (x)=\max_{\overline{\Omega}} u_j + 100L_j (x_1)_+ ,
\]
that touches \(u_j\) from above in \(\Omega_\rho\times[0,r]\) at some \(x_j\in\Omega\) with \(r>0\) small so that \(\Omega_\rho\times[0,r]\csubset B_1^+\sqcup B_1'\). We claim that \(w_j-u_j\ge \delta/2>0\) on the fattened boundary \(\overline{\Omega_\rho\setminus\Omega}\times[0,r] \bigcup \overline{\Omega_\rho}\times[r(1-\rho),r]\). Indeed, on the set \(\overline{\Omega_\rho}\times[\delta/L_j,r]\), we always have
\[
w_j-u_j\ge 98\delta>\delta/2.
\]
For \(z\in \overline{\Omega_\rho\setminus\Omega}\times[0,\delta/L_j]\), we have by \eqref{boundarystriimaximumprinciforuj}, for sufficiently large \(j\) 
\[
\begin{split}
    w_j(z) &= \max_{\overline{\Omega}} u_j + 100L_j (x_1)_+ \\
    &\ge \max_{\overline{\Omega_{\rho}\setminus\Omega}} u_j  + \delta/2 + 100L_j (x_1)_+ \\
    &\ge u_j(z) + \delta/2.
\end{split}
\]
This completes the claim.

Now there is a sequence \(c_j\to 0\) (which is because \(w_j(0)=\max_{\overline{\Omega}} u_j\rta \max_{\overline{\Omega}} u\)) such that \(w_j+c_j\) touches \(u\) from above in \(\Omega_\rho\times[0,r]\). We claim that the touching must be at some point in \(\Omega\). This is because \(w_j\) are harmonic the touching must occur on the boundary of $\Omega_\rho \times (0,r)$, in which case we can reduce to the boundary portion \(\Omega\) due to the strict ordering on the fattened boundary \(\overline{\Omega_\rho\setminus\Omega}\times[0,r] \bigcup \overline{\Omega_\rho}\times[r(1-\rho),r]\).

Indeed we can choose $j$ sufficiently large so that \(|c_j|\ll \delta/4\), and then we have by the previous claim the strict inequality 
\[
w_j+c_j > u_j+c_j +\delta/2> u + \delta/4 \ \hbox{ in } \ \overline{\Omega_\rho\setminus\Omega}\times[0,r] \bigcup \overline{\Omega_\rho}\times[r(1-\rho),r],
\]
which contradicts the {strong subsolution property}.

\end{proof}

\subsection{Tangential sub/sup-convolution}\label{s.tnagentialsubsup}

For the comparison principle proof, we will use a procedure based on inf/sup-convolutions in the tangential variables and harmonic replacement.  This is natural for the nonlinear Neumann problem, which could also be viewed as a nonlinear fractional order PDE problem on the lower dimensional $B_1'$. 

Let us now define the tangential inf/sup-convolutions.

 \begin{definition}
     Suppose \(u,-v\in \mathrm{USC}(\overline{B_1^+})\), then the \emph{tangential sup-convolution} of \(u\) is defined as
     \[
     \Tc^\ep u(x):= \sup_{x+h\in \overline{B_1^+}(0); \,h\in \R^{d-1}} \left\{u(x+h) - \frac{1}{2\ep}|h|^2\right\}, 
     \]
     where \(x=(x_1,x')\). The tangential inf-convolution of \(v\) is defined as \(\Tc_\ep v := - \Tc^\ep(-v)\).
 \end{definition}

 \begin{remark}
     Let \(u\in \mathrm{USC}(\overline{B_1^+})\), then we may naturally extend \(u(x_1,x')=-\infty\) for \(x\not\in\overline{B_1^+}(0)\) and then \(u\in \mathrm{USC}([0,1]\times\R^{d-1})\). The advantage of this extension is that we may extend the tangential sup-convolution formula to
     \[
     \Tc^\ep u(x) = \sup_{h\in \R^{d-1}} \left\{u(x+h) - \frac{1}{2\ep}|h|^2\right\} \ \hbox{ for } \ x\in[0,1]\times\R^{d-1}.
     \]
     Similar extension can be done to \(v\).
\end{remark}

Now we will briefly establish several properties of the tangential inf/sup-convolutions that follow from or have very similar proofs to the properties of standard inf/sup convolutions which were collected in Remark \ref{sompropertyofhalfrelaxed}.

 \begin{lemma}
     For any \(\ep>0\) small and fixed \(x_1\in[0,1]\), the tangential sup-convolution \(\Tc^\ep u(x_1,x')\) is semi-convex in \(x'\in\R^{d-1}\). For every \(\ep\), \(\Tc^\ep u\) is upper semi-continuous in \([0,1]\times\R^{d-1}\).\label{somepropertoftangentialconvol}
 \end{lemma}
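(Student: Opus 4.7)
The plan is to handle the two claims separately, each by standard sup-convolution arguments adapted to the tangential setting.

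For the semi-convexity, I would first make the change of variable $y = x' + h$ to rewrite
\[
\Tc^\ep u(x_1,x') \;=\; \sup_{y\in\R^{d-1}} \Bigl\{ u(x_1,y) - \tfrac{1}{2\ep}|x'-y|^2\Bigr\}.
\]
Expanding the quadratic gives
\[
\Tc^\ep u(x_1,x') + \tfrac{1}{2\ep}|x'|^2 \;=\; \sup_{y\in\R^{d-1}} \Bigl\{ u(x_1,y) - \tfrac{1}{2\ep}|y|^2 + \tfrac{1}{\ep}\, x'\cdot y\Bigr\},
\]
which is a supremum of affine functions in $x'$ and hence convex in $x'$ for each fixed $x_1$. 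Equivalently, $\Tc^\ep u(x_1,\cdot)+\frac{1}{2\ep}|\cdot|^2$ is convex, i.e.\ $\Tc^\ep u(x_1,\cdot)$ is semi-convex with constant $1/\ep$.

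For the upper semi-continuity, fix a point $(x_1,x')\in[0,1]\times\R^{d-1}$ and a sequence $(x_1^n,x'^{\,n})\to(x_1,x')$. If $\Tc^\ep u(x_1^n,x'^{\,n}) \to -\infty$ the inequality is trivial, so I may pass to a subsequence on which this quantity is bounded below by some constant $c>-\infty$. Since $u$ is bounded above on its (compact) support (extended by $-\infty$ off $\overline{B_1^+}$), for each $n$ I can pick a near-maximizer $h_n\in\R^{d-1}$ with
\[
u(x_1^n,x'^{\,n}+h_n) - \tfrac{1}{2\ep}|h_n|^2 \;\ge\; \Tc^\ep u(x_1^n,x'^{\,n}) - \tfrac{1}{n},
\]
and the penalty term forces $|h_n|\le\sqrt{2\ep(\sup u - c) + 2}$, a bound independent of $n$. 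Extracting a further subsequence so that $h_n\to h^\ast$, upper semi-continuity of $u$ at $(x_1,x'+h^\ast)$ yields
\[
\limsup_{n\to\infty} \Tc^\ep u(x_1^n,x'^{\,n}) \;\le\; \limsup_{n\to\infty}\bigl[u(x_1^n,x'^{\,n}+h_n) - \tfrac{1}{2\ep}|h_n|^2\bigr] \;\le\; u(x_1,x'+h^\ast) - \tfrac{1}{2\ep}|h^\ast|^2 \;\le\; \Tc^\ep u(x_1,x'),
\]
which is the desired semi-continuity.

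Neither step should present a real obstacle: the completion of squares is algebraic, and the only subtlety in the USC argument is verifying that near-maximizers stay in a compact set, which is immediate from the quadratic penalty plus the assumption $u\in\mathrm{USC}(\overline{B_1^+})$ (hence bounded above on the compact set). Because $x_1$ plays a purely parametric role in the definition of $\Tc^\ep u$, the proof of USC does not need to be split between the two variables; the same compactness-plus-USC argument handles joint convergence $(x_1^n,x'^{\,n})\to(x_1,x')$ at once.
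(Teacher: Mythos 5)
Your proof is correct and follows essentially the same route as the paper: the semi-convexity is the standard completion-of-squares fact for sup-convolutions (which the paper simply cites from its remark on classical sup-convolutions), and your upper semi-continuity argument — extract (near-)maximizers $h_n$, observe they lie in a compact set, pass to a subsequential limit $h^\ast$, and invoke upper semi-continuity of $u$ at $(x_1, x'+h^\ast)$ — is exactly the paper's argument, with the only cosmetic difference that the paper uses exact maximizers (available since $u$ is USC and $-\infty$ off the compact set $\overline{B_1^+}$, forcing $|h_n|\le 2$) while you use $1/n$-near-maximizers.
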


 \begin{proof}
     The first statement is an immediate consequence of the same result for the classical sup-convolution recalled in Remark \ref{sompropertyofhalfrelaxed}. To show the second statement, we consider a sequence of points \(z_n\rta z\in \overline{B_1^+}\). For each \(z_n\) there corresponds an \(h_n\in B_2'(0)\) such that
     \[
     \Tc^\ep u (z_n) = u(z_n+h_n)-\frac{1}{2\ep}|h_n|^2.
     \]
By compactness of \(h_n\) we may obtain after passage to a subsequence
\[
h_n\rta h_\infty,
\]
which implies that
\[
\begin{split}
    \limsup_{n\rta\infty}\Tc^\ep u (z_n) &= \limsup_{n\rta\infty}u(z_n+h_n)-\frac{1}{2\ep}|h_n|^2 \\
    &\le u (z+h_\infty) - \frac{1}{2\ep}|h_\infty|^2\\
    &\le \Tc^\ep u (z).
\end{split}
\]
\end{proof}

\begin{lemma}
    Let \(u\) be an upper semi-continuous function, \(u^\ep\) the classical sup-convolution, then 
    \be
    u^\ep\ge \Tc^\ep u \ge u,\label{comparingsuptangentialsup}
    \ee
    and in particular,
    \[
   {\limsup_{\ep\rta0^+}}^\ast \Tc^\ep u = u.
    \]
\end{lemma}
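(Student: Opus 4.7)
The plan is to establish the pointwise sandwich \(u\le \Tc^\ep u \le u^\ep\) first, and then deduce the half-relaxed convergence from monotonicity of \({\limsup}^\ast\) together with the corresponding fact for the full sup-convolution that is already recorded in Remark \ref{sompropertyofhalfrelaxed}.

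For the lower bound \(\Tc^\ep u(x) \ge u(x)\), I would simply observe that \(h=0\) is admissible in the defining supremum (since \(x\in\overline{B_1^+}\)), and contributes the value \(u(x)\). For the upper bound \(u^\ep(x) \ge \Tc^\ep u(x)\), the point is that any competitor in \(\Tc^\ep u(x)\), i.e.\ any \(y=x+h\) with \(h\in\R^{d-1}\) and \(y\in\overline{B_1^+}\), has \(y_1=x_1\) and \(|x-y|=|h|\). Such \(y\) is therefore also a competitor in the classical sup-convolution \(u^\ep(x)\), and the contribution \(u(y)-\tfrac{1}{2\ep}|h|^2 = u(y)-\tfrac{1}{2\ep}|x-y|^2\) is identical. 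Hence \(\Tc^\ep u(x)\) is a supremum over a subclass of the competitors defining \(u^\ep(x)\), so \(u^\ep(x)\ge \Tc^\ep u(x)\). This proves \eqref{comparingsuptangentialsup}.

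To deduce the half-relaxed limit, I would apply the chain of inequalities \(u \le \Tc^\ep u \le u^\ep\) pointwise in \(\ep\) and use monotonicity of the operator \({\limsup}^\ast_{\ep\to 0^+}\) (which is immediate from its definition as a \(\lim\) of \(\sup\)'s over shrinking neighborhoods). Applying \({\limsup}^\ast_{\ep\to 0^+}\) to the constant family \(\{u\}\) gives \(u\) itself, because \(u\) is upper semi-continuous; applying it to \(\{u^\ep\}\) gives \(u\) by the fourth bullet of Remark \ref{sompropertyofhalfrelaxed}. Thus
\[
u = {\limsup_{\ep\rta 0^+}}^\ast u \;\le\; {\limsup_{\ep\rta 0^+}}^\ast \Tc^\ep u \;\le\; {\limsup_{\ep\rta 0^+}}^\ast u^\ep = u,
\]
forcing equality throughout.

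There is no real obstacle in this argument; the only care needed is to verify admissibility of \(h=0\) and to check that the domain constraint \(x+h\in\overline{B_1^+}\) in \(\Tc^\ep u(x)\) is compatible with \(y\in\overline{B_1^+}\) in \(u^\ep(x)\). Both are automatic. The step that merits brief attention is invoking monotonicity of \({\limsup}^\ast\), which follows directly from its pointwise definition.
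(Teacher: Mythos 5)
Your proof is correct and follows essentially the same route as the paper, which obtains \eqref{comparingsuptangentialsup} directly from the definitions and then deduces the half-relaxed limit by squeezing between \(u\) and \(u^\ep\) via Remark \ref{sompropertyofhalfrelaxed} (4). Your write-up simply spells out the details (admissibility of \(h=0\), tangential competitors being a subclass of the classical ones, and monotonicity of \({\limsup}^\ast\)) that the paper leaves implicit.
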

\begin{proof}
    The inequality \eqref{comparingsuptangentialsup} can be obtained by the definition directly. The half-relaxed limit can be obtained by applying the inequality \eqref{comparingsuptangentialsup} and Remark \ref{sompropertyofhalfrelaxed} (4).
\end{proof}

 \begin{lemma}
     Let \(u\in \mathrm{USC}(\overline{B_1^+})\) and \(x_0\in \overline{B_1^+}\). If a smooth function \(\phi\) touches \(\Tc^\ep u\) (strictly) from above at \(x_0\), and 
    \[
     \Tc^\ep u(x_0) = u(x_\ep) -\frac{1}{2\ep}|x_0-x_\ep|^2 \ \hbox{ for some } \ x_\ep \in x_0 +  \R^{d-1}.
     \]
     Then \(\psi(x)=\phi(x+x_0-x_\ep)+ \frac{1}{2\ep}|x_\ep - x_0|^2\) will touch \(u\) (strictly) from above at \(x_\ep\in \overline{B_1^+}\) that is close to \(x_0\), and \(\gd' \psi(x_\ep)=\gd' \phi(x_0)=\frac{1}{\ep}(x_\ep-x_0)\in \R^{d-1}\). 
  
 \end{lemma}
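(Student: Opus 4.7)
The plan is to set $h_0 := x_\ep - x_0 \in \{0\} \times \R^{d-1}$ and observe first that boundedness of $u$ forces $|h_0|^2 \leq 4\ep\|u\|_{L^\infty(\overline{B_1^+})}$, so $x_\ep$ is automatically close to $x_0$ once $\ep$ is small. In particular, the $h_0$-translate of any neighborhood $U \ni x_0$ on which $\phi > \Tc^\ep u$ strictly (outside $x_0$) is a neighborhood of $x_\ep$ lying in $\overline{B_1^+}$. The argument then splits into three pieces: the upper bound $\psi \geq u$, the equality at $x_\ep$, and the gradient identity $\gd' \phi(x_0) = h_0/\ep$.

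For the first two, I would combine the touching condition with the very definition of $\Tc^\ep u$: for any $z \in U$ and any tangential $h$ with $z+h \in \overline{B_1^+}$,
\[
\phi(z) \;\geq\; \Tc^\ep u(z) \;\geq\; u(z+h) - \tfrac{1}{2\ep}|h|^2.
\]
Specializing $h = h_0$ and substituting $y = z + h_0$ yields $\psi(y) \geq u(y)$ on a neighborhood of $x_\ep$. At $y = x_\ep$ (i.e.\ $z = x_0$, $h = h_0$) the hypothesis $\phi(x_0) = \Tc^\ep u(x_0) = u(x_\ep) - \tfrac{1}{2\ep}|h_0|^2$ makes the chain saturate, giving $\psi(x_\ep) = u(x_\ep)$. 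Strictness then follows by contrapositive: a further equality point $y \neq x_\ep$ for $(\psi,u)$ would force equality at $y - h_0 \neq x_0$ for $(\phi, \Tc^\ep u)$, contradicting the strict touching of $\phi$ at $x_0$.

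For the gradient identity, the chain rule immediately gives $\gd' \psi(x_\ep) = \gd' \phi(x_0)$, so it suffices to show $\gd' \phi(x_0) = h_0/\ep$. The idea is to vary the two parameters $z$ and $h$ simultaneously in a way that keeps $z + h$ pinned at $x_\ep$: for small tangential $s$, I would choose $z = x_0 + s$ and $h = h_0 - s$, so
\[
\phi(x_0 + s) \;\geq\; u(x_\ep) - \tfrac{1}{2\ep}|h_0 - s|^2.
\]
Subtracting the equality $\phi(x_0) = u(x_\ep) - \tfrac{1}{2\ep}|h_0|^2$ and expanding the quadratic gives $\phi(x_0+s) - \phi(x_0) \geq \tfrac{1}{\ep} h_0 \cdot s - \tfrac{1}{2\ep}|s|^2$. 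Smoothness of $\phi$ lets me Taylor-expand the left side to obtain $(\gd' \phi(x_0) - h_0/\ep) \cdot s \geq O(|s|^2)$ for all small tangential $s$; replacing $s$ with $-s$ yields the reverse inequality and forces $\gd' \phi(x_0) = h_0/\ep$.

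The only genuinely subtle point is the gradient identity: the naive idea of varying $h$ alone would require differentiating the merely USC function $u$. Coordinated variation of $z$ and $h$ sidesteps this by transferring all first-order information onto the smooth test function $\phi$. Everything else amounts to careful bookkeeping with the sup-convolution inequality.
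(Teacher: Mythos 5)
Your proof is correct and is exactly the standard sup-convolution argument that the paper itself omits (it only cites Calder's lecture notes, Proposition 8.6): transfer the inequality $\phi \geq \Tc^\ep u \geq u(\cdot + h) - \frac{1}{2\ep}|h|^2$ via the translation $h = h_0 := x_\ep - x_0$, check saturation at $x_\ep$, and recover the gradient identity by the coordinated variation $z = x_0+s$, $h = h_0 - s$ that pins $z+h$ at $x_\ep$ and puts all first-order information on the smooth $\phi$. The only cosmetic slip is that the displayed inequality should read $(\gd'\phi(x_0) - h_0/\ep)\cdot s \geq -C|s|^2$ rather than $\geq O(|s|^2)$; the two-sided limit argument then goes through as you describe.
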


The proof is omitted since it is similar to the standard sup-/inf-convolution, and the details of the proof can also be in \cite{calder2018lecture}*{Proposition 8.6}.

Applying this lemma it is standard to check that the inf-convolution and sup-convolutions preserve viscosity super and subsolution properties respectively.

\begin{corollary}
    If \(u\) is a supersolution (subsolution) to \ref{supsolutiondef}, then \(\Tc_\ep u\) (\(\Tc^\ep u\)) is still a supersolution (subsolution) to \ref{supsolutiondef} (or \ref{subsolutiondef}) in \(\overline{B_{1-\gamma}^+}\) for some small \(\gamma=\gamma(\ep)>0\). Moreover, if \(u\) is a {strong subsolution}, then so is \(\Tc^\ep u\) in \(\overline{B_{1-\gamma}^+}\). The constant \(\gamma\rta 0\) as \(\ep\rta0^+\).
\end{corollary}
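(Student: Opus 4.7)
The plan is to transfer test functions touching $\Tc^\ep u$ (resp.\ $\Tc_\ep u$) to test functions touching $u$ via the preceding translation lemma, and then invoke the viscosity properties of $u$ directly. The decisive point is that the translation is purely tangential, so it preserves the boundary set $B_1'$, the Laplacian $\Delta$, and the components $\pt_1$ and $\gd'$ of the gradient. A priori, admissible tangential shifts are bounded: at a maximizer $x_\ep=x_0+(0,h_0^*)$ of the sup-convolution, choosing $h'=0$ as a competitor yields $|h_0^*|^2/(2\ep)\le\osc u$, so $|h_0^*|\le\sqrt{2\ep\,\osc u}=:\gamma(\ep)\rta0^+$ as $\ep\rta0^+$. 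Restricting to $\overline{B_{1-\gamma(\ep)}^+}$ therefore ensures that every shift stays inside $\overline{B_1^+}$ and that the sup-convolution is realized at an interior competitor.

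For the ordinary sub/supersolution claim, first observe that $\Tc^\ep u$ is subharmonic in $B_1^+$: each tangential translate $u(\cdot+(0,h'))-\frac{1}{2\ep}|h'|^2$ is subharmonic, so integrating the sub-mean-value inequality and taking the supremum over $h'$ gives the same inequality for $\Tc^\ep u$, which is upper semicontinuous by Lemma \ref{somepropertoftangentialconvol}. Symmetrically $\Tc_\ep u$ is superharmonic. For the boundary condition, if a smooth $\phi$ touches $\Tc^\ep u$ from above at $x_0\in B_1'$ with $\Delta\phi(x_0)<0$, the translation lemma produces a smooth $\psi$ touching $u$ from above at some $x_\ep\in B_1'$ (tangential shifts preserve $\{x_1=0\}$) with $\gd\psi(x_\ep)=\gd\phi(x_0)$ and $\Delta\psi(x_\ep)=\Delta\phi(x_0)<0$; applying the subsolution property of $u$ then gives $\pt_1\psi(x_\ep)\ge 0$, which is exactly $\pt_1\phi(x_0)\ge 0$. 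The supersolution case with $\Tc_\ep u$ is symmetric and yields $\min\{\pt_1\phi,|\gd'\phi|\}(x_0)\le 0$.

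The strong subsolution preservation is the most distinctive step. Suppose, for contradiction, that some $C^1$ test function $\varphi(x_1,x')\equiv\psi(x_1)$ touches $\Tc^\ep u$ from above in $\Omega_h\cap\overline{B_1^+}$ at $x_0\in\Omega\cap B_1'$ with strict separation on $\overline{\Omega_h\setminus\Omega}\cap\overline{B_1^+}$. Pick $h_0^*\in\R^{d-1}$ attaining the sup-convolution at $x_0$ and set
\begin{equation*}
\tilde\psi(x_1):=\psi(x_1)+\tfrac{1}{2\ep}|h_0^*|^2, \qquad \Omega':=\Omega+(0,h_0^*).
\end{equation*}
Since $\tilde\psi$ depends only on $x_1$, for any $z=y+(0,h_0^*)$ with $y\in\Omega_h$ one has
\begin{equation*}
\tilde\psi(z_1)=\psi(y_1)+\tfrac{1}{2\ep}|h_0^*|^2\ge \Tc^\ep u(y)+\tfrac{1}{2\ep}|h_0^*|^2\ge u(y+(0,h_0^*))=u(z),
\end{equation*}
with equality at $z_0:=x_0+(0,h_0^*)\in\Omega'\cap B_1'$ and strict inequality on $\overline{\Omega'_h\setminus\Omega'}\cap\overline{B_1^+}$. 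Thus $\tilde\psi$ is an admissible $C^1$ test function of the form $\tilde\psi(x_1)$ that touches $u$ from above in the region $\Omega'_h\cap\overline{B_1^+}$ with strict separation outside $\Omega'$, contradicting the strong subsolution property of $u$ on the pair $(\Omega',\Omega'_h)$.

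The main (and essentially only) obstacle is the bookkeeping around the $\sqrt{\ep}$-scale tangential shift: one must choose $\gamma(\ep)\ge\sqrt{2\ep\,\osc u}$ so that both the sup-convolution is attained by an interior competitor and the shifted test domain satisfies $\overline{\Omega'_h}\cap\overline{B_1^+}\Subset B_1^+\cup B_1'$, making $\tilde\psi$ a legitimate test function for the strong subsolution property of $u$. No further conceptual difficulty appears beyond standard semiconvex-envelope calculations.
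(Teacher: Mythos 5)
Your proof is correct and is exactly the standard argument the paper alludes to (the paper omits the proof entirely, saying only that the corollary follows from the preceding translation lemma): transfer test functions by the purely tangential shift, which preserves $B_1'$, $\Delta$, $\partial_1$ and $\grad'$, and handle the strong-subsolution case by translating the domain pair $(\Omega,\Omega_h)$ along with the test function. The quantitative point that admissible shifts have size at most $\sqrt{2\ep\,\osc u}$, which fixes $\gamma(\ep)\to 0$, is also the right bookkeeping.
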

 
\subsection{Harmonic lift}\label{s.harmonic-lift}
In this section, we study the harmonic lift of a given bounded subharmonic function \(v\) on \(\overline{B_1^+}\). The results are standard but we want to carefully enumerate the properties of the harmonic lift since $v$ will only be upper-semicontinuous.

Using Perron's method, we can define
\begin{equation}\label{e.harmonic-replace-defn}
    w(z):=\inf\{u(z)\,; \hbox{ $u\in C\lb\overline{B_1^+}\rb$, superharmonic and } u \geq v \ \hbox{in $B_1^+$}\} \ \hbox{ for } \ z\in\overline{B_1^+}.
\end{equation}
 Note that $\max_{\overline{B_1^+}} v$ is one such superharmonic function so the infimum is well-defined. By the standard arguments in Perron's method for the Laplacian, we have \(w=w^\ast=w_\ast\) is continuous and harmonic in the interior \(B_1^+\).

\begin{lemma}
    Let \(w\) be as defined in \eref{harmonic-replace-defn}, then \(w^\ast=v\) on \(\pt B_1^+\).\label{equalwehnsupenvelope}
\end{lemma}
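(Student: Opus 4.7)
The plan is to establish the identity $w^*=v$ on $\partial B_1^+$ by two matching inequalities, with the nontrivial direction relying on a standard Perron-type barrier construction at each boundary point.

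First, since every admissible competitor $u$ in the definition \eqref{e.harmonic-replace-defn} satisfies $u\ge v$ on $\overline{B_1^+}$, we have $w\ge v$ on $\overline{B_1^+}$ and consequently $w^*\ge v^*=v$ on $\partial B_1^+$, because $v$ is upper semi-continuous. This gives the easy direction.

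For the reverse inequality $w^*(z_0)\le v(z_0)$ at a fixed $z_0\in\partial B_1^+$, the plan is, for each $\eta>0$, to construct a continuous superharmonic majorant $u_\eta$ of $v$ with $u_\eta(z_0)\le v(z_0)+\eta$. By upper semi-continuity of $v$ at $z_0$ choose $\delta>0$ so that $v(z)<v(z_0)+\eta/2$ on $\overline{B_1^+}\cap B_\delta(z_0)$. If we can produce a continuous $\phi_{z_0}\colon\overline{B_1^+}\to[0,\infty)$, superharmonic in $B_1^+$, with $\phi_{z_0}(z_0)=0$ and $\phi_{z_0}\ge c_\delta>0$ on $\overline{B_1^+}\setminus B_\delta(z_0)$, then setting
\[
u_\eta(z):=v(z_0)+\tfrac{\eta}{2}+A\,\phi_{z_0}(z),\qquad A:=\frac{M-v(z_0)}{c_\delta},\ M:=\sup_{\overline{B_1^+}} v,
\]
yields $u_\eta\ge v$ everywhere (inside $B_\delta(z_0)$ because of the first bound on $v$, outside because $A\phi_{z_0}\ge M-v(z_0)$), $u_\eta$ is continuous and superharmonic, so $u_\eta$ is an admissible competitor. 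Hence $w\le u_\eta$, and continuity of $u_\eta$ at $z_0$ gives
\[
\limsup_{z\to z_0} w(z)\le u_\eta(z_0)=v(z_0)+\tfrac{\eta}{2}<v(z_0)+\eta.
\]
Letting $\eta\downarrow 0$ yields $w^*(z_0)\le v(z_0)$, completing the proof.

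The one point requiring care, and the main obstacle, is the existence of the barrier $\phi_{z_0}$ at every boundary point of $B_1^+$. This is classical since $B_1^+$ is piecewise smooth and satisfies an exterior cone (even exterior ball) condition at every point of $\partial B_1^+$: on the spherical portion $\partial B_1\cap\{x_1\ge 0\}$ one can take $\phi_{z_0}(z)=|z|^{2-d}-|z_0|^{2-d}$ if $d\ge 3$ (or $-\log(|z|/|z_0|)$ if $d=2$) suitably truncated, and on the flat portion $B_1'$ one uses an analogous barrier centered on the reflected exterior half-space, for instance $\phi_{z_0}(z)=|z-z_0^-|^{2-d}-c$ where $z_0^-$ is the reflection of a nearby point across $\{x_1=0\}$, again truncated and combined with a constant so as to vanish at $z_0$ and stay nonnegative on $\overline{B_1^+}$. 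For points on the edge $\partial B_1\cap\{x_1=0\}$ the domain is locally a Lipschitz cone, and the Poincaré–Zaremba barrier applies. In each case one obtains a continuous superharmonic $\phi_{z_0}$ with the required properties, which is all that the above argument needs.
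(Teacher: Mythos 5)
Your argument is correct, but it takes a genuinely different route from the paper's. The paper proves the inequality $w^\ast\le v$ on $\partial B_1^+$ by writing $v(z)=\inf\{h(z):h\in C(\overline{B_1^+}),\ h\ge v \text{ on }\overline{B_1^+}\}$ (the usual characterization of the upper semicontinuous envelope) and, for each such continuous majorant $h$, taking the harmonic function in $B_1^+$ with boundary data $h$; outer regularity of $B_1^+$ guarantees this solution is continuous up to $\partial B_1^+$ and attains $h$ there, so it is an admissible competitor whose value at $z$ can be pushed down to $v(z)$. You instead fix a single boundary point $z_0$, use upper semicontinuity of $v$ only at $z_0$, and assemble one explicit continuous superharmonic competitor from a Poincar\'e-type barrier. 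Both arguments rest on the same geometric input (every point of $\partial B_1^+$ is regular for the Dirichlet problem, e.g.\ via the exterior ball condition), but yours makes that input explicit and avoids invoking solvability and boundary continuity of the Dirichlet problem with merely continuous data---a fact that is itself normally established with the very barriers you build---so your proof is more self-contained, while the paper's is shorter once the standard Perron theory is taken for granted.

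One cosmetic correction to your illustrative barrier formulas: the barrier must \emph{increase} away from $z_0$. On the spherical portion, $|z|^{2-d}-|z_0|^{2-d}=|z|^{2-d}-1$ vanishes on all of $\partial B_1$, not only at $z_0$, so it fails the lower bound $\phi_{z_0}\ge c_\delta$ off $B_\delta(z_0)$; the correct choice is the exterior tangent-ball barrier $\phi_{z_0}(z)=1-|z-2z_0|^{2-d}$ for $d\ge 3$ (note $|z-2z_0|^2\ge 1+|z-z_0|^2$ on $\overline{B_1}$). Similarly on $B_1'$ the function $|z-z_0^-|^{2-d}-c$ has the wrong sign and should be $\rho^{2-d}-|z-(z_0-\rho e_1)|^{2-d}$; the same exterior-ball barrier also covers the edge $\partial B_1\cap\{x_1=0\}$. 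These slips do not affect the argument, since the existence of such barriers at every point of $\partial B_1^+$ is classical and is all your proof requires.
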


\begin{proof}
 Indeed, since \(v=v^\ast\), we have by the equivalent form of upper envelope
\[
v(z)=v^\ast(z)=\inf\{h(z)\,;\, h\in C\lb \overline{B_1^+}\rb,\,h\ge v \hbox{ on }\overline{B_1^+}\},\,z\in \overline{B_1^+}
\]
there would be a sequence of continuous functions \(h_{n,z}\in C\lb \overline{B_1^+}\rb\) for \(z\in \pt B_1^+\), such that \(h_{n,z}(z)\rta v^\ast(z)=v(z)\) as \(n\rta\infty\). Now we construct \(w_{n,z}\) such that
\[
\bca
\Delta w_{n,z} =0,&\hbox{ in }B_1^+\\
w_{n,z} = h_{n,z},&\hbox{ on }\pt B_1^+,
\eca
\]
and the $w_{n,z}$ are continuous up to $\partial B_1^+$ because the boundary data is continuous and the domain is outer regular.  By definition of \(w\), we always have
\[
w\le w_{n,z}\hbox{ in }\overline{B_1^+}.
\]
This shows that we have
\[
w^\ast(z) \le w_{n,z}(z),\,\forall n,
\]
which shows the inequality \(w^\ast\le v\). The other side can be obtained directly from the definition of \(w\).
\end{proof}

\begin{definition}
    Let \(v\in \hbox{USC}\lb \overline{B_1^+}\rb\) be a bounded subharmonic function, then we defined its harmonic lift to be, with $w$ from \eref{harmonic-replace-defn}, \(w^\ast\in \hbox{USC}\lb \overline{B_1^+}\rb\cap C^\infty\lb B_1^+\rb\), which is harmonic in \(B_1^+\) and \(w^\ast=v\) on \(\pt B_1^+\). The lower semicontinuous harmonic lift of a bounded superharmonic function in $B_1^+$ is defined similarly.
\end{definition}

Suppose \(u\) is a bounded subsolution to in the sense of Definition~\ref{subsolutiondef} then \(u\) is also a subharmonic function. We denote the harmonic lift of \(u\) as \(\hat{u}\). For a supersolution \(v\), in the sense of Definition~\ref{supsolutiondef}, we may do a similar procedure and obtain \(\hat{v}\). 
 
Next, we show that the harmonic lifts of sub/ supersolutions are still sub/supersolutions.
\begin{lemma}\label{l.tangential-conv+lift-soln}
     Suppose \(u,v\) are respectively bounded sub-/supersolutions to \eqref{e.grad-degen-neumann}. Then \(\hat{u},\hat{v}\) will still be sub-/supersolutions. Moreover, if \(u\) satisfies the boundary maximum principle then \(\hat{u}\) will as well.
 \end{lemma}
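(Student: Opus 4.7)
The plan is to exploit two fundamental properties of the harmonic lift: (i) $\hat{u}$ is harmonic (hence both subharmonic and superharmonic) in $B_1^+$ and USC on $\overline{B_1^+}$ by Perron's construction, and (ii) by Lemma~\ref{equalwehnsupenvelope}, $\hat{u} = u$ pointwise on $\partial B_1^+$ and in particular on $B_1'$, while $\hat{u} \geq u$ throughout $\overline{B_1^+}$ (since $\hat{u}$ is the USC envelope of an infimum of superharmonic continuous functions all of which lie above $u$). Together these two properties are exactly what is needed to transfer every viscosity property from $u$ to $\hat{u}$ through a single-line touching argument.

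First I would check that $\hat{u}$ is a subsolution. The interior subharmonicity is automatic from harmonicity, so only the boundary condition on $B_1'$ needs verification. Let $\varphi$ be smooth, touching $\hat{u}$ from above at $x_0 \in B_1'$ with $\Delta \varphi(x_0) < 0$. Near $x_0$ we have $\varphi \geq \hat{u} \geq u$ by property~(ii), and at $x_0$ itself $\varphi(x_0) = \hat{u}(x_0) = u(x_0)$. Hence $\varphi$ also touches $u$ from above at $x_0$ with the same Laplacian sign, and Definition~\ref{subsolutiondef} applied to $u$ gives $\partial_1 \varphi(x_0) \geq 0$, which is precisely the required condition for $\hat{u}$.

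The supersolution case for $\hat{v}$ is completely symmetric: the lift is LSC and satisfies $\hat{v} \leq v$ on $\overline{B_1^+}$ with $\hat{v} = v$ on $\partial B_1^+$. If $\varphi$ touches $\hat{v}$ from below at $x_0 \in B_1'$ with $\Delta \varphi(x_0) > 0$, then $\varphi \leq \hat{v} \leq v$ near $x_0$ with equality at $x_0$, so $\varphi$ also touches $v$ from below there and Definition~\ref{supsolutiondef} for $v$ immediately yields $\min\{\partial_1 \varphi(x_0), |\nabla' \varphi|(x_0)\} \leq 0$.

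Finally, for the boundary maximum principle: since $\hat{u}$ and $u$ agree pointwise on all of $B_1'$, for any subdomain $\Omega \csubset B_1'$ one has
\[
\max_{\overline{\Omega}} \hat{u} \;=\; \max_{\overline{\Omega}} u \;=\; \max_{\partial' \Omega} u \;=\; \max_{\partial' \Omega} \hat{u},
\]
so the principle transfers verbatim. I anticipate no serious obstacle in this proof; the only step deserving real care is the clean invocation of Lemma~\ref{equalwehnsupenvelope} to obtain the pointwise boundary identity $\hat{u} = u$ on $B_1'$, which is exactly what permits the touching arguments to proceed without any modification of the test functions.
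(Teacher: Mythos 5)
Your proof is correct and follows essentially the same route as the paper's: the interior property comes from harmonicity of the lift, the boundary conditions transfer because $\hat{u}\ge u$ (resp.\ $\hat{v}\le v$) with equality on $\partial B_1^+$ forces any test function touching the lift on $B_1'$ to touch the original function at the same point, and the boundary maximum principle transfers since $\hat{u}=u$ on $B_1'$. No gaps.
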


 \begin{proof}
     The interior PDE follows from the properties of harmonic lift established above. 
 Because of the inequalities \(\hat{u}\ge u\) and \(\hat{u}= u\) on the boundary \(\pt B_1^+\), if a test function $\varphi$ touches $\hat{u}$ from above on $B_1'$ then it also touches $u$ from above at the same point. The viscosity subsolution property of $\hat{u}$ follows immediately from this observation. The supersolution property is similar.
 
 As for the boundary maximum principle, this property depends only on the values on $B_1'$, and $\hat{u} = u$ on $B_1'$.
 \end{proof}

\subsection{Comparison principle}

Let us now prove the comparison principle in dimension \(d\ge 2\).

 \begin{proof}[Proof of Theorem \ref{comparisonprinciple}]
 It suffices to consider bounded \(u,v\) since we can replace, for some big \(N>0\), \(u\) by \(\max\{u,-N\}\) and \(v\) by \(\min\{v,N\}\). Note that \(u\) is upper semi-continuous and \(v\) is lower semi-continuous the compact set $\overline{B_1^+}$, so there is some \(N>0\) large so that \(u\le N\) and \(v\ge -N\). This combined with \(v\ge u\) on the boundary ensures that \(\min\{v,N\} \ge \max\{u,-N\}\) on the boundary for $N$ large enough.
 
 We first claim that for all \(s>0\) there is a \(\delta=\delta(u,v,s)>0\) such that
 \[
 v+s\ge u \ \hbox{ on }\bigcup_{x\in\pt B_1\cap\{x_1\ge0\}} B_\delta(x) \cap \overline{B_1^+}=: D_\delta.
 \]
 Indeed, otherwise there would be a \(s_0>0\) and a sequence \(x_j\rta x\in \pt B_1\cap\{x_1\ge0\}\) such that \(v(x_j)+s_0\le u(x_j)\) for all \(j\), then we would have
\[
u(x)-v(x) \ge \limsup_{j\rta\infty} u(x_j)-v(x_j) \ge s_0>0,
\]
which contradicts the assumption.

 For \(\ep>0\), we now study in the smaller domain \(U_\ep=B_{1-\gamma(\ep)}^+\), with both \(\ep\) and \(\gamma=\gamma(\ep)>0\) sufficiently small. Fix $s_0>0$ small and in the following we always assume $\gamma(\ep) < \delta(u,v,s_0)=: \delta_0$. On \(U_\ep\) we define
 \[
 \hat{u}^\ep := \widehat{\Tc^\ep u} \ \hbox{ and } \ \hat{v}_\ep :=\widehat{\Tc_\ep v}.
 \]
 By \lref{tangential-conv+lift-soln} these are, respectively, an upper semicontinuous strong subsolution of \eref{grad-degen-neumann} satisfying boundary maximum principle, and a lower semicontinuous supersolution of \eref{grad-degen-neumann} in $U_\ep$. We write $\hat{u}$ and $\hat{v}$ dropping the $\ep$-dependence when it is not important.  Notice that because \(\Tc^\ep u\) and \(\Tc_\ep v\) are continuous when restricted to \(B_1'\), \(\hat{u}\) and \(\hat{v}\) are also continuous up to \(B_1'\) by standard boundary barrier arguments for harmonic functions.
 
 Now we make a perturbation to a strict supersolution. Let \(\eta\gg\ep>0\) be a fixed small number, we further consider the following modified functions
\[
\overline{u}=\hat{u}^\ep=\hat{u}  \ \hbox{ and } \
 \overline{v}=\hat{v}_\ep-\eta (x_1)_+ +10\eta.
 \]
It then suffices to show that 
\be\label{intermediateproofmaximumprin}
\max_{\overline{U_\ep}}( \overline{u} - \overline{v} )= \max_{\pt{U_\ep}\setminus U_\ep'} (\overline{u} - \overline{v}). 
\ee
Indeed, if we have established this equality, then on one hand we have
\[
\begin{split}
    \max_{\overline{U_\ep}} (\overline{u} - \overline{v})& \ge \max_{\overline{U_\ep}} (\Tc^\ep u - \Tc_\ep v) -100\eta\\
    &\ge \max_{\overline{U_\ep}}  (u -  v) -100\eta,
\end{split}
\]
 and on the other hand, we have by Lemma \ref{equalwehnsupenvelope}, \ref{somepropertoftangentialconvol} and Remark \ref{sompropertyofhalfrelaxed} (5)
 \[
 \begin{split}
   \max_{\pt{U_\ep}\setminus U_\ep'} (\overline{u} - \overline{v})& = \max_{\pt{U_\ep}\setminus U_\ep'} (\Tc^\ep u - \Tc_\ep v +\eta (x_1)_+ -10\eta)\\
   & \le \max_{D_{\delta_0}} \Tc^\ep (u-v)  + 100\eta\\
     &= \max_{D_{\delta_0}}  (u-v)  + o_{\ep}(1) + 100\eta\\
   &\le s_0 + o_{\ep}(1) + 100\eta.
 \end{split}
 \]
Sending \(\ep,s_0,\eta\rta0^+\) would complete the proof.

{Let $\hat{x}$ be a point in $\overline{U}_\ep$ where the maximum in \eqref{intermediateproofmaximumprin} is achieved.} We need to show that $\hat{x}$ cannot be in \(U_\ep\sqcup U_\ep'\).  By strong maximum principle for harmonic functions \(\hat{x} \not\in U_\ep\). 

We now show that \(\hat{x} \not\in U_\ep'\). According to Remark \ref{generalizedCauNauGammau} we partition the flat boundary portion \(B_1'\) into
\[
B_{1-\gamma(\ep)}'= \Na_{\overline{v}} \sqcup \Ca_{\overline{v}} \sqcup \Gamma_{\overline{v}},
\]
with \(\Na_{\overline{v}}= \Na_{\hat{v}},\Ca_{\overline{v}}= \Ca_{\hat{v}}\) and \(\Gamma_{\overline{v}}= \Gamma_{\hat{v}}\). For convenience, we will write them as \(\Na,\Ca\) and \(\Gamma\) respectively. 

We may without loss assume that \(\hat{x}\in \Na\sqcup \Gamma\), because if \(\hat{x}\in \Ca\) then \(\hat{u}-\hat{v}=\hat{u}-K\) in a component of \(\Ca\) for some constant \(K\), and by boundary maximum principle of \(\hat{u}\) there would be another point \(\hat{x}^\ast\in \Gamma=\pt'\Ca\) such that \(\overline{u}(\hat{x}^\ast)=\overline{u}(\hat{x})\). We can then replace \(\hat{x}\) by this new \(\hat{x}^\ast\).

 Next, we observe that, by the definition of tangential sub/sup-convolutions, at the maximum point \(\hat{x}\) there exist two quadratic functions \(P_1, P_2\) on \(\R^{d-1}\) and a constant \(c\) such that \(P_1\ge\restr{\overline{v}}{B_1'}+c\ge \restr{\overline{u}}{B_1'}= \restr{\hat{u}}{B_1'} \ge P_2\) and \(P_1(\hat{x})=P_2(\hat{x})=:l\). In particular, for some \(q\in \R^{d-1}\) we can write
 \be\label{P-1andP-2}
 P_1(x')=l + q\cdot(x'-\hat{x}) + \frac{1}{2\ep}|x'-\hat{x}|^2,\, \hbox{ and }P_2(x')=l + q\cdot(x'-\hat{x}) - \frac{1}{2\ep}|x'-\hat{x}|^2.
 \ee
 For convenience, we assume \(c=0\), since it will not affect the proof. We claim that for any \(h>0\) small there exists a smooth function \(w\) that touches \(\overline{v}\) from below at \(\hat{x}\) and
 \[
 \frac{\partial w}{\partial x_1}(\hat{x}) \ge -h.
 \]
 This claim will immediately imply that \(\hat{x}\not\in \Na\sqcup\Gamma\) and lead to a contradiction because otherwise \(w^\eta=w+\eta(x_1)_+ -10\eta\) would touch \(\hat{v}\) from below at \(\hat{x}\) and \(\pt_1w^\eta(\hat{x}) \ge \eta-h>0\), violating the supersolution condition. 
 
 To prove the existence of such $w$, we let \(\ep\gg\tau>0\) be a small number and consider in the half ball \(B_\tau^+(\hat{x})\) the following functions \(w_{\tau,i}\) with \(i=1,2\):
 \be
 \bca
 \Delta w_{\tau,i} (x)=0,&x\in B_\tau^+(\hat{x}),\\
 w_{\tau,i} (x) = \hat{u}(x),&x\in \partial B_\tau^+(\hat{x})\cap\{x_1\ge0\},\\
 w_{\tau,i} (0,x') = P_i(x'),&x=(0,x')\in  B_\tau^+(\hat{x})\cap\{x_1=0\}.
 \eca\label{upperlowerextensionofpis1}
 \ee
The boundary data is potentially discontinuous on $\partial B_\tau^+(\hat{x})\cap\{x_1\ge0\}$, so we are solving \eqref{upperlowerextensionofpis1} by Perron's method as in \sref{harmonic-lift}. However, the boundary data on $B_\tau'$ is polynomial so $w_{\tau, i}$ are smooth in a neighborhood of $\hat{x}$. Now \(w_{\tau,1}\) touches \(\hat{u}\) from above and \(w_{\tau,2}\) touches \(\hat{u}\) from below at \(\hat{x}\), because of the ordering \((w_{\tau,1})_\ast\ge \hat{u} \ge w_{\tau,2}^\ast\) on the boundary \(\pt B_\tau^+\). According to the subsolution condition of \(\hat{u}\),
 \[
 \frac{\partial w_{\tau,1}}{\partial x_1} (\hat{x}) \ge 0.
 \]
 Notice that, on the other hand, \(w_{\tau,2}\) touches \(\overline{v}\) from below at \(\hat{x}\). We then just need to show that \(\partial_1 w_{\tau,2}(\hat{x})\) is sufficiently close to \(\partial_1w_{\tau,1}(\hat{x})\) when \(\tau\) is chosen small. To that end, we need to control \(\Tilde{w}_\tau=w_{\tau,1}-w_{\tau,2}\), which will satisfy the following equation,
 \be
 \bca
 \Delta \Tilde{w}_{\tau} (x)=0,&x\in B_\tau^+(\hat{x}),\\
 \Tilde{w}_{\tau} (x) = 0,&x\in \partial B_\tau^+(\hat{x})\cap\{x_1\ge0\},\\
 \Tilde{w}_{\tau} (0,x') = P_1(x')-P_2(x')=: P(x'),&x=(0,x')\in  B_\tau^+(\hat{x})\cap\{x_1=0\},
 \eca
 \ee
 where by \eqref{P-1andP-2} \(P(x')=  \frac{1}{\ep}|x'-\hat{x}|^2\). By the transformation \(w_\tau(z):= \frac{\ep}{\tau^2}\Tilde{w}_\tau(\tau z +\hat{x})\), we observe that \(w_\tau\) satisfies
 \be
 \bca
\Delta w_{\tau} (z)=0,&z\in B_1^+(0),\\
 w_{\tau} (z) = 0,&z\in \partial B_1^+(0)\cap\{z_1\ge0\},\\
 w_{\tau} (0,z') = |z'|^2,&z=(0,z')\in  B_1^+(0)\cap\{z_1=0\},
 \eca
 \ee
 which has a universal bound $C$ on its Lipschitz constant in $B_{1/2}$ and so
\[
|\partial_1 \tilde{w}_\tau(0)| = \frac{\tau}{\ep}|\partial_1 w_\tau(0)|\le C\ep^{-1}\tau.
 \] 
 which can be made arbitrarily small by choosing $\tau$ small enough depending on $\ep$.  Note that $\ep>0$ is a fixed positive number in this argument.

\end{proof}

\subsection{Minimal supersolutions are exactly the solutions satisfying boundary maximum principle}

In this section, we show that the minimal supersolution to Equation \eqref{e.grad-degen-neumann} would satisfy the strong subsolution condition, and hence the boundary maximum principle.

To construct a minimal supersolution let us first write \eqref{e.grad-degen-neumann} in the following form
\be
\begin{cases}\label{e.neumandegenewithg}
\Delta u = 0 & \hbox{in } B_1^+\\
\min\{\partial_1 u,|\gd' u|\} = 0 &\hbox{on } B_1'\\
u=g &\hbox{ on }\pt B_1\cap\{x_1\ge0\},
\end{cases}
\ee
with \(g\) an arbitrary continuous function. 
\begin{definition}
    An upper semicontinuous function \(u\) is called a viscosity subsolution to \eqref{e.neumandegenewithg} if it is a subsolution to \eqref{e.grad-degen-neumann} and \(u\le g\) on \(\pt B_1\cap\{x_1\ge0\}\). Similarly, a lower semicontinuous function \(v\) is called a viscosity supersolution to \eqref{e.neumandegenewithg} if it is a supersolution to \eqref{e.grad-degen-neumann} and \(v\ge g\) on \(\pt B_1\cap\{x_1\ge0\}\).  
\end{definition}

One can easily check that
\[
w_{\textup{sub}}(x):=-\norm{g}_\infty,
\]
is a subsolution to \eqref{e.neumandegenewithg} that satisfies boundary maximum principle on \(B_1'\). Similarly, we know that 
\[
w_{\textup{sup}}=  \norm{g}_\infty
\]
is a supersolution to \eqref{e.neumandegenewithg}.

Define the Perron's method minimal supersolution 
\be
\label{minimalsuper}
v_{\mathrm{min}}(x):= \inf\{v(x)\,;\,v\hbox{ is a supersolution to \eqref{e.neumandegenewithg} }\}.
\ee

\begin{proposition}\label{p.minimal-supersolution}
    The function \(v_{\mathrm{min}}\) is the unique viscosity solution to \eqref{e.neumandegenewithg} that satisfies the strong subsolution condition, Definition \ref{definitionofstrongsubsolution}, and equivalently the boundary maximum principle.
\end{proposition}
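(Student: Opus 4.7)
The plan is to combine Perron's method with the comparison principle Theorem~\ref{comparisonprinciple}. Write $u = v_{\min}$ and let $\mathcal{S}$ denote the family of supersolutions appearing in \eqref{minimalsuper}. Since $\norm{g}_\infty\in\mathcal{S}$, $u$ is well-defined. First I would run the classical Perron argument, together with outer boundary barriers at $\partial B_1\cap\{x_1\geq 0\}$, to verify that the lower semicontinuous envelope $u_*$ is a supersolution attaining $g$ on the outer boundary, and that the upper semicontinuous envelope $u^*$ is a subsolution with the same outer boundary data. The subsolution check is the standard bump argument: any failure of interior subharmonicity or of the Neumann inequality $\partial_1 u^*\geq 0$ at a test point allows harmonic replacement on a small ball to produce a strictly smaller element of $\mathcal{S}$, contradicting minimality.

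The main new input, and the hardest step, is to show that $u^*$ satisfies the boundary maximum principle of Lemma~\ref{boundarymaximumprinciple} (equivalently, by Remark~\ref{equivalencebetweenboundamxandstrongsubsolution}, the strong subsolution condition of Definition~\ref{definitionofstrongsubsolution}). Suppose for contradiction that there exist $\Omega\Subset B_1'$ and $\delta>0$ with $M:=\max_{\overline{\Omega}}u^* > \max_{\partial'\Omega}u^* + 2\delta$, and let $K:=\{u^*\geq M-\delta\}\cap\overline{\Omega}$, a compact subset of $\Omega$ by upper semicontinuity. Take a slight fattening $V\Subset\Omega$ of $K$ in $B_1'$ and consider the cylinder $V\times[0,h]$ for small $h>0$. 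For $c>0$ large enough that the linear function $\psi(x_1)=M+cx_1$ exceeds $u$ on the side $\partial V\times[0,h]$ and top $V\times\{h\}$ (possible since $u^*\leq M-\delta$ on $\partial V$ and $c$ can dominate $\norm{u}_{L^\infty}$ at $x_1=h$), I propose the Perron competitor
\[
\tilde v(x) = \begin{cases} \min\{u(x),\ \psi(x_1)-\eta\} & x\in V\times[0,h]\cap\overline{B_1^+}, \\ u(x) & \text{otherwise,} \end{cases}
\]
for $\eta<\delta$ small. The strict inequality $\psi-\eta>u^*$ on the fattened boundary of the cylinder ensures continuity of the pasting. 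Inside $V\times(0,h)$, $\tilde v$ is superharmonic as the minimum of two harmonic functions. On the flat part $V\times\{0\}$, where the cap is active the lower semicontinuity of $u$ forces $\tilde v\equiv M-\eta$ locally, so any smooth $\varphi$ touching $\tilde v$ from below satisfies $D^2\varphi\leq 0$ at the touching point, which makes the hypothesis $\Delta\varphi(x_0)>0$ of Definition~\ref{supsolutiondef} never trigger and the supersolution condition vacuous. Thus $\tilde v\in\mathcal{S}$; but at any point $y\in V\times\{0\}$ with $u(y)>M-\eta$ (which exists by upper semicontinuity of $u^*$ near the max) we have $\tilde v(y)=M-\eta<u(y)$, contradicting $u$ being the pointwise infimum.

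With $u^*$ a subsolution satisfying the boundary maximum principle, $u_*$ a supersolution, and both agreeing with $g$ on the outer boundary, Theorem~\ref{comparisonprinciple} yields $u^*\leq u_*$ on $\overline{B_1^+}$, which combined with $u_*\leq u\leq u^*$ forces equality throughout. Hence $u=v_{\min}$ is a continuous viscosity solution of \eqref{e.neumandegenewithg} satisfying the strong subsolution condition. Uniqueness is then immediate: any other such solution $u'$ satisfies $u'\leq v$ on $\overline{B_1^+}$ for every $v\in\mathcal{S}$ by Theorem~\ref{comparisonprinciple} (both agree with $g$ on the outer boundary), so $u'\leq v_{\min}$; and $u'$ itself lies in $\mathcal{S}$, giving the reverse inequality. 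The delicate point throughout is the competitor construction in the middle step, where the parameters $(c,h,\eta)$ must be coordinated so that $\tilde v$ is simultaneously continuous across the cylinder interface, a bona fide viscosity supersolution at every boundary point where the linear cap becomes active, and strictly smaller than $u$ somewhere near the alleged interior maximum.
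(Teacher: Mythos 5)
Your overall strategy is the same as the paper's: run Perron's method so that $u_*$ is a supersolution and $u^*$ a subsolution with the correct outer data, establish the boundary maximum principle for $u^*$ by building a strictly smaller Perron competitor from a cap depending only on $x_1$, and then close with Theorem \ref{comparisonprinciple}. The one substantive difference is cosmetic: you verify the boundary maximum principle directly, while the paper verifies the (equivalent, by Remark \ref{equivalencebetweenboundamxandstrongsubsolution}) strong subsolution condition of Definition \ref{definitionofstrongsubsolution}; the competitor $\min\{\cdot,\psi-s/2\}$ is the same in both cases.

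There is, however, a genuine error in your verification that the competitor $\tilde v$ is a viscosity supersolution on the flat part of the cylinder. You claim that where the cap is active, lower semicontinuity forces $\tilde v\equiv M-\eta$ locally and hence any test function touching from below has $D^2\varphi\le 0$, so the hypothesis $\Delta\varphi(x_0)>0$ in Definition \ref{supsolutiondef} never triggers. Neither assertion is right: lower semicontinuity of $u$ at a point where $u(x_0)\ge M-\eta$ only gives $u>M-\eta-\epsilon$ nearby, not $u\ge M-\eta$, so the cap need not be active on a neighborhood; and even where $\tilde v$ is constant on $B_1'$ it is not constant in $\overline{B_1^+}$, so a test function touching $\tilde v$ from below in the half-ball can perfectly well have positive Laplacian (dip quadratically in $x'$, rise in $x_1$). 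The correct — and one-line — reason the supersolution condition holds is tangential: if $\varphi$ touches $\tilde v$ from below at $x_0\in V\times\{0\}$ where $\tilde v(x_0)=M-\eta$, then since $\tilde v\le M-\eta$ on $V\times\{0\}$, the restriction $\varphi(0,\cdot)$ has an interior maximum at $x_0'$, so $|\grad'\varphi|(x_0)=0$ and $\min\{\partial_1\varphi,|\grad'\varphi|\}(x_0)\le 0$ automatically; at points where the cap is inactive, $\varphi$ touches $u_*$ from below and you inherit its supersolution property. This is exactly why the paper insists the cap have the form $\psi(x_1,x')\equiv\phi(x_1)$. Two smaller points you should tighten: the competitor must be built from the lower envelope $u_*$ (a bona fide supersolution), not from $v_{\min}$ itself, and the point where $\tilde v<u_*$ need not lie on $V\times\{0\}$ — it is most easily found among interior points $x_j\to x_0$ with $u(x_j)\to M$ and $(x_j)_1<\eta/(2c)$, using that $u_*=u=u^*$ in $B_1^+$. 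With these repairs the argument is complete and matches the paper's.
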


\begin{remark}
    By applying comparison principle, Theorem \ref{comparisonprinciple}, it can be observed immediately that all viscosity solutions to \eqref{e.neumandegenewithg} are bounded from above by the solution \({v}_{\textup{N}}\) to the mixed boundary problem 
    \be
    \bca
     \Delta v_{\textup{N}} =0 ,&\hbox{ in }B_1^+\\
     v_{\textup{N}} = g , &\hbox{ on }\pt B_1\cap\{x_1\ge0\}\\
     \pt_1 v_{\textup{N}} =0, &\hbox{ on }B_1'.
    \eca
    \ee
    and from below by the minimal supersolution \(v_{\mathrm{min}}\) to \eqref{e.neumandegenewithg}. In Example \ref{quasisolexamp1}, the solution \(w(x,y)=-\mathrm{Re}\lb(x+iy)^{3/2}\rb\) to the Signorini problem coincides with the minimal supersolution to \eqref{e.grad-degen-neumann} because \(w\) satisfies the boundary maximum principle. In Example \ref{quasisolexamp2}, the solution \(w^-(x,y)=\mathrm{Re}\lb(x+iy)^{5/2}\rb\) to the sign-reversed Signorini problem is also the minimal supersolution because it also satisfies boundary maximum principle.

\end{remark}

\begin{proof}

By the discussion above \pref{minimal-supersolution}, we have established that \(w_{\textup{sub}}\le v:= v_{\min}\le w_{\textup{sup}}\) is a well-defined bounded function. By applying a slight modification of the classical Perron's method, we know that the lower envelope \(v_\ast\) is a supersolution, and \((v_\ast)^\ast\) is a subsolution. Moreover, \((v_\ast)^\ast=v_\ast\) in \(B_1^+\) is harmonic, and we also have \((v_\ast)^\ast=v_\ast =g\) on \(\pt B_1\cap \{x_1\ge0\}\). 

It then suffices to show that \((v_\ast)^\ast\) also satisfies the boundary maximum principle. Indeed, with the boundary maximum principle we can apply the comparison principle \ref{comparisonprinciple}, which implies that \((v_\ast)^\ast\le v_\ast\), and then \(v=(v_\ast)^\ast=v_\ast\) on the whole \(\overline{B_1^+}\), which implies the continuity of \(v\) up to boundary and that \(v\) is a viscosity solution to \eqref{e.neumandegenewithg}.

We would like to show using the strong subsolution condition, which is equivalent to the boundary maximum principle according to Remark \ref{equivalencebetweenboundamxandstrongsubsolution}. Suppose for a bounded supersolution \(\tilde{v}\) to \eqref{e.neumandegenewithg}, the upper envelope \(\tilde{v}^\ast\) does not satisfy the strong subsolution condition, then there is a smooth up to boundary function \(\psi(x_1,x')\equiv \phi(x_1)\) (we may without loss assume that \(\phi''(x_1)\le 0\)) that touches \(\tilde{v}^\ast\) from above at \(z\in B_1'\), and there is a domain \(\Omega\) containing \(z\) with \(\Omega_h=\bigcup_{x\in \Omega} B_h(x) \cap \overline{B_1^+} \csubset B_1^+\sqcup B_1'\) for some small \(h>0\), so that 
\[
\psi\ge \tilde{v}^\ast + s,\,\hbox{ on } \overline{\Omega_h\setminus \Omega},
\]
for some small constant \(s>0\). To reach a contradiction, we consider the function 
\[
v_c =\bca
\min\{\tilde{v}, \psi-s/2\},&\hbox{ in }\overline{\Omega_h}\\
v,&\hbox{ elsewhere}.
\eca
\]
The proof will complete if we observe that \(v_c\) is a supersolution to \eqref{e.neumandegenewithg} but there is a point \(z_0\in \Omega\) such that \(v_c(z_0)<\tilde{v}(z_0)\). Indeed, \(v_c=v\) outside \(\Omega\) and \(\psi-s/2\) is a supersolution, and hence \(v_c\) is a supersolution to \eqref{e.neumandegenewithg}. On the other hand, by definition, there is a sequence \(z_j\in {B_1^+}\sqcup B_1'\) such that \(z_j\rta z\) and \(\tilde{v}(z_j)\rta \tilde{v}^\ast(z)\) as \(j\rta\infty\). One can check that \(v_c(z_j) < \tilde{v}(z_j)\) for a sufficiently large \(j\).

\end{proof}

\section{Flat asymptotic expansion of a singular Bernoulli problem\label{section.flatasymsingularbernoulli}}
In this section, we study the following discontinuous anisotropic model
\be
\bca
\Delta u =0, &\text{ in }\{u>0\}\setminus \overline{W},\\
u\equiv1,&\text{ on }\overline{W},\\
|\gd u|^2= Q^2(\gd u), & \text{ on } \partial \{u>0\},
\eca\label{discontinuous2}
\ee
with the anisotropy \(Q\) being a 0-homogeneous function of the form
\be\label{defineQ}
Q(e)=\bca
1,&e\ne e_1\\
2,&e=e_1,
\eca
\ee
where \(e_1,e_2,\dots,e_d\) form an orthonormal basis for \(\R^d\). More rigorously, we will assume \(u\) to be a minimal supersolution to \eqref{discontinuous2}. For the definitions and discussion of such solutions, see \cites{feldman2019free,feldman2021limit}.

We already know that when \(W\) is a convex domain, then the free domain \(\{u>0\}\) is also convex and the regularity problem of the free boundary can be easily reduced to the case discussed in \cite{changlara2017boundary}, and the optimal regularity is exactly \(C^{1,1/2}\).

In the case that \(W\) is non-convex, we would like to divide the free boundary \(\partial\{u>0\}\) into two disjoint parts:
\be
\partial\{u>0\}=:  \{|\gd u|>1\}\sqcup R=: \Lambda\sqcup R,
\ee
where ``\(|\gd u|>1\)'' is defined in the sense of viscosity. We would like to show the following facts:
\begin{itemize}
    \item[1.] \(\Lambda\) is relatively open in \(\partial\{u>0\}\);
    \item[2.] \(\Lambda\) is composed of intervals that are parallel to \(e_2\). 
\end{itemize}

Based on the second statement we would like to show that the free boundary would be \(C^{1,\alpha}\) in a neighborhood of \(\Lambda\cup \partial'\Lambda\). Following the idea of \cites{desilva2011free,changlara2017boundary}, we would like to study the asymptotic expansion of the solution \(u\) near \(\Lambda\cup\pt'\Lambda\). 

Let us begin with a more precise definition of \(\Lambda\).

\begin{definition}
    \(\Lambda\) is composed of all the points \(x\in \pt \{u>0\}\) such that there is a smooth function \(\phi\) touching \(u\) from below at \(x\) satisfying \(\gd\phi(x)\) parallel to \(e_1\) and \(|\gd\phi(x)|>1\).
\end{definition}

According to this definition, we know that any point \(x\in\Lambda\) is an inner regular point of \(\Dupo\). For the convenience of the arguments, we may without loss assume that \(x=0\) and the solution \(u\) satisfies a half-flatness condition in a unit ball \(B_1=B_1(0)\) for small \(\varepsilon>0\)
\be
u (x)\ge \gd\phi(0)\cdot x - \varepsilon=: \alpha x_1 - \varepsilon,\label{halfflat}
\ee
for some \(\alpha>1\) and we can also assume \(B_2(2e_1)\cap B_1\subset \{u>0\}\cap B_1\).

Let us now prove that \(\Dupo\) is flat near \(x=0\).

\begin{lemma}
    With the above assumptions, we show that there is a small number \(\delta>0\) such that \(B_\delta\cap\{u>0\}=B_\delta\cap\{x_1>0\}\).\label{lambdaisopenforu}
\end{lemma}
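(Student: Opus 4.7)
The plan is to establish both inclusions in the equality $\{u>0\}\cap B_\delta = \{x_1>0\}\cap B_\delta$ by combining the interior ball condition, the half-flatness \eqref{halfflat}, and a barrier-plus-minimality argument that exploits the specific value $Q(e_1)=2$.

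For the easier inclusion $\{x_1>0\}\cap B_\delta \subset \{u>0\}$, I would cover the region $\{x_1>0\}\cap B_\delta$ by three pieces. First, the half-flatness directly gives $u>0$ on $\{x_1>\varepsilon/\alpha\}$. Second, the interior ball hypothesis $B_2(2e_1)\cap B_1 \subset \{u>0\}$ covers, near the origin, the paraboloidal cone $\{x_1 \geq |x'|^2/4+O(|x'|^4)\}$. Third, the thin cusp region $\{0<x_1\leq \min(\varepsilon/\alpha,\,|x'|^2/4)\}$ left over from the first two can be filled in by propagating positivity along Harnack chains inside the open connected set $\{u>0\}$, using that for $\delta$ sufficiently small each point in the cusp can be joined to the big interior ball by a short path lying entirely in a neighborhood where $u$ is known to be positive.

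For the harder inclusion $\{u>0\}\cap B_\delta \subset \{x_1>0\}\cap B_\delta$, I would use a barrier-from-above argument that is natural given the discontinuous anisotropy $Q$. The key observation is that the shifted half-plane function $v_s(x):=2(x_1+s)^+$, for $s\geq 0$, is a supersolution of \eqref{discontinuous2}: it is harmonic where positive, and on its flat free boundary $\{x_1=-s\}$ the normal is $e_1$ and $|\nabla v_s|=2=Q(e_1)$. Using the global bound $u\leq 1$ together with the half-flatness from below and a Harnack-type propagation from interior points of $B_2(2e_1)$ down to the free boundary, one establishes a sharp upper bound of the form $u(x)\leq 2 x_1^+ + K\varepsilon$ on $\partial B_\delta$. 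Then the competitor
\[
\bar{u}(x) := \begin{cases} \min\{u(x),\,v_s(x)+K\varepsilon\} & x\in B_\delta \\ u(x) & x\notin B_\delta \end{cases}
\]
with $s$ chosen small enough, is a supersolution of \eqref{discontinuous2} (as the minimum of two supersolutions across a patching interface where the two agree) and satisfies $\bar{u}\leq u$. By minimality of $u$, $\bar{u}=u$, hence $u\leq v_s+K\varepsilon$ throughout $B_\delta$, which forces $\{u>0\}\cap B_\delta \subset \{x_1>-s\}$. Iterating the argument with smaller and smaller $s$ (or by a single sharper initial barrier obtained after shrinking $\delta$) drives $s\to 0$ and yields the inclusion.

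The main obstacle in this plan is establishing the upper control $u\leq 2x_1^+ + K\varepsilon$ on $\partial B_\delta$, since this encodes a "non-degeneracy from above" for the minimal supersolution at free boundary points with normal $e_1$; it cannot be read off directly from $u\leq 1$ and requires combining the half-flatness with iterated Harnack in $\{u>0\}$ anchored at an interior point of $B_2(2e_1)$. A secondary subtlety is verifying that $\bar{u}$ is still a viscosity supersolution across $\partial B_\delta$ and across the free boundaries of both $u$ and $v_s+K\varepsilon$, which is standard for Bernoulli-type problems but must be checked given the discontinuous anisotropy $Q$.
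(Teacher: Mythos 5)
Your proposal has several genuine gaps. For the inclusion $\{x_1>0\}\cap B_\delta\subset\{u>0\}$, the Harnack-chain step is circular: a Harnack chain propagates quantitative lower bounds \emph{within} a region already known to lie in $\{u>0\}$, but it cannot certify that a point of the cusp $\{0<x_1\le \min(\varepsilon/\alpha,|x'|^2/4)\}$ belongs to $\{u>0\}$ in the first place; since $\varepsilon$ is a fixed (small) parameter here, this cusp is a genuine open region not covered by the half-flatness or the interior ball, and its positivity is precisely part of what must be proved. For the reverse inclusion, the barrier $v_s+K\varepsilon$ is not a supersolution of \eqref{discontinuous2}: adding the constant $K\varepsilon>0$ makes the function strictly positive everywhere (so it no longer has a free boundary on which to verify the Bernoulli condition) and it is \emph{subharmonic}, not superharmonic, across the kink $\{x_1=-s\}$, so $\min\{u,v_s+K\varepsilon\}$ need not be a supersolution. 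Moreover the patching across $\partial B_\delta$ requires $u\le v_s+K\varepsilon$ near the lower portion of $\partial B_\delta$, i.e.\ $u\le K\varepsilon$ on $\{x_1<-s\}\cap\partial B_\delta$, which is essentially the conclusion you are trying to reach; and even if one repairs the barrier by translating instead of adding a constant, the method only yields $\{u>0\}\cap B_\delta\subset\{x_1>-K\varepsilon\}$ with $\varepsilon$ fixed, not the exact set equality claimed in the lemma. Finally, the sharp upper bound $u\le 2x_1^++K\varepsilon$ on $\partial B_\delta$, which you correctly flag as the main obstacle, does not follow from $u\le 1$ together with interior Harnack; it is a nondegeneracy-from-above statement that itself requires an argument.

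The paper's proof avoids all of this by working only with a \emph{lower} barrier: it takes the De Silva-type family $v_\tau=x_1-\varepsilon+C_0\varepsilon(w-\tau)$ built from a radial strictly subharmonic profile $w$ centered at $\overline{x}=\tfrac15 e_1$, slides $\tau$ down from $1$ to $0$, and observes that a touching from below at a free boundary point with $|\gd v_\tau|>1$ is forbidden by the supersolution condition unless $\gd v_\tau$ is parallel to $e_1$ (where $Q=2$), which by the radial structure of $w$ can happen only on the axis through $\overline{x}$; the axis point is excluded by the choice $C_0<1/2$. Translating the barrier tangentially produces a height function $h(t)$ whose touching points all lie on the corresponding axes; $h$ is semi-convex and Lipschitz with vanishing viscosity gradient, hence constant, which pins the free boundary to the flat set $\{x_1=0\}$ and yields both inclusions simultaneously. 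If you want to salvage your strategy, you would need to replace the Harnack-chain step and the upper barrier by an argument of this type that exploits where the touching of a sliding barrier is permitted under the discontinuous anisotropy $Q$.
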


\begin{proof}
    Observe first that by \eqref{halfflat} we know that for \(\overline{x}=\frac{1}{5}e_1\)
    \[
    u\lb\overline{x}+t\rb - \frac{1}{5} \ge \frac{1}{5}(\alpha-1) -\varepsilon\gg \varepsilon,
    \]
    for \(t=(0,t_2,\cdots,t_d)^T\in\{0\}\times\R^{d-1},\,|t|<1/20\). By Harnack inequality, we may without loss assume that 
    \[
    u(x)\gg \varepsilon,\,\forall x\in B_{1/10}\lb\overline{x}\rb.
    \]
    Let \(w\) be a positive function such that it is strictly subharmonic in the annulus \(A=B_{3/4}(\overline{x})\setminus\overline{B_{1/20}(\overline{x})}\) and is \(1\) on the inner boundary and 0 on the outer boundary. We extend \(w\) to be constantly 1 in the inner disc. It is enough to take 
    \[
    w= c (|x-\overline{x}|^{-\gamma}-(3/4)^{-\gamma}), x
    \in A
    \]
    with \(c\) chosen so that the conditions above are satisfied.
    
    Now we follow De Silva's argument and compare
    \[
    u(x)\ge x_1 - \varepsilon + C_0\varepsilon (w(x)-\tau)=: v_\tau(x), \,x\in B_{1/2}?
    \]
   This is true indeed for \(\tau=1\), and we want to show that this is also true for \(\tau=0\). Let \(1\ge\tau^\ast\ge0\) be the smallest number such that the above inequality holds then by strict subharmonicity and strict inequality inside \(B_{1/20}(\overline{x})\) the touching point can only be at the boundary. But because of strict inequality \(|\gd v_{\tau^\ast}|>1\) at the touching point, the touching point can only be the origin, which is naturally excluded by choosing \(C_0<1/2\). This shows that \(\tau^\ast\le0\) and hence the inequality also holds for \(\tau=0\).

   The same argument shows that \(v_0(x+h(0)e_1)\) touches \(u\) from below exactly at the origin for some unique \( h(0) \approx \varepsilon\). Let \(\norm{h}_{L^\infty(-\delta,\delta)}\le C\varepsilon\) be the function such that for each \(|t|\ll 1/20\) we have that \(v_0(x-t+h(t)e_1)\) touches \(u\) from below exactly at the boundary. Similar as before, all the touching points would be of the form \((h(t)+c)e_1-t\) for some constant \(c\) and each fixed \(t\). 
   
   Now the function \(h(t)\) is semi-convex and Lipschitz. Moreover because of the existence of upper touching functions having gradient zero at each \(|t|<\delta\) we conclude that \(\gd h(t)=0\) in the viscosity sense and hence \(h(t)\equiv h(0)\) is a constant.

\end{proof}

\subsection{A Harnack inequality assuming both flatness and the free boundary being a function graph}
We are interested in the regularity of the free boundary near the relative boundary \(\pt'\Lambda\) of \(\Lambda\). Let \(0\in \pt'\Lambda\) and we consider a solution \(u\) to \eqref{discontinuous2} that is restricted to the unit ball \(B_1(0)\). Unlike in the case of Chang-Lara and Savin \cite{changlara2017boundary}, we don't necessarily have the inner or outer regularity of the free boundary at \(0\), and so we don't immediately obtain the differentiability of \(u\) near the origin. However, because of the definition of \(\Lambda\) and Lemma \ref{lambdaisopenforu}, we do know that if \(u\) admit differentiability at \(0\) then \(|\gd u|(0)=1\) and hence after a rescaling and a small translation, the following \emph{flatness} condition would be satisfied:
\be
(x\cdot p)_+\le u(x) \le (x\cdot p+\varepsilon)_+,\,x\in B_1(0),\,p\in \partial B_1(0).\label{fltanessforp}
\ee
For the convenience of analysis, we also assume that the free boundary is the \emph{function graph} of a continuous function \(f\) on the set \(\{x\cdot p=0\}\). 

The main difficulty of our problem is that we don't a priori know the position of the facets \(\Lambda\), but we do know the following dichotomy: either \(\pt \{u>0\} \cap B_{1/(400d)} = \Lambda_u\cap B_{1/(400d)}\) (i.e. the whole free boundary in \(B_{1/(400d)}\) is completely flat), or the following Harnack inequality is satisfied.

\begin{lemma}
    Let \(p(x):= x\cdot p \) and \(l=\frac{1}{100d}\). There exist constants \(\overline{\varepsilon}=\overline{\ep}(d)>0\) and \(l/2> \mu=\mu(d)>0\) such that if \(u\) satisfies the anisotropic boundary condition \(|\gd u|=Q(\gd u)\) with \(Q\) defined in \eqref{defineQ} on the free boundary \(\Dupo\cap B_1\) that is a function graph, \(u\) is harmonic in its positive set satisfying the \(\ep\)-flatness condition \eqref{fltanessforp} with \(0<\ep\le \overline{\ep}\), then if at \(\overline{x}=\frac{1}{5}p\)
    \be\label{subsolutioncaseinharnackcompar}
     u(\overline{x})\ge p(\overline{x})+\frac{\ep}{2},
    \ee
    either
    \begin{enumerate}
        \item[i.] the free boundary satisfies \(\pt \{u>0\} \cap B_{l/4} = \Lambda_u\cap B_{l/4}\), and so is completely flat in \(B_{l/4}\);
        \item[ii.] or there is a point \(x^\ast\in \pt \{u>0\} \cap B_{l/4}\) not contained in \(\Lambda_u\cap B_{l/4}\) and
        \be
    u\ge (p(x)+c\varepsilon)_+,\,\text{ in }\overline{B}_{\mu},\label{changlarasavinidea}
    \ee
    for some \(0<c=c(d)<1\). 
    \end{enumerate}
   If 
    \be
     u(\overline{x})\le p(\overline{x})+\frac{\ep}{2},
    \ee
    then similarly
    \be
    u\le (p(x)+(1-c)\varepsilon)_+,\,\text{ in }\overline{B}_{\mu}.
    \ee
\end{lemma}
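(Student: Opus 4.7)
The plan is to adapt De Silva's classical sliding-barrier Harnack argument (used for the standard one-phase Bernoulli problem and successfully deployed already in Lemma~\ref{lambdaisopenforu}), carefully splitting the final contradiction into the two regimes dictated by the anisotropy $Q$. I will focus on the subsolution case, hypothesis \eqref{subsolutioncaseinharnackcompar}; the supersolution case is entirely symmetric with the reversed comparison.

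First I would upgrade the pointwise inequality at $\bar x$ to an interior Harnack estimate: since $u-p$ is nonnegative and harmonic in $\{u>0\}\cap B_{1/2}(\bar x)$, and $B_{1/20}(\bar x)$ lies strictly in $\{u>0\}$ by flatness, the classical Harnack inequality gives $u-p\ge c_0\varepsilon$ on $\overline{B_{1/20}(\bar x)}$ for some dimensional $c_0>0$. Next I would build the standard radially decreasing barrier $w(x)=c_1(|x-\bar x|^{-\gamma}-(3/4)^{-\gamma})$ in the annulus $A=B_{3/4}(\bar x)\setminus\overline{B_{1/20}(\bar x)}$, extended by $w\equiv 1$ inside, chosen so that $\Delta w\ge\delta_0>0$ in $A$, and then slide the one-parameter family $v_\tau(x)=p(x)+C_0\varepsilon(w(x)-\tau)$ downwards from $\tau=1$ (where trivially $v_1\le p\le u$), with $C_0\le c_0/2$ fixed. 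Let $\tau^\ast$ be the smallest $\tau\ge 0$ for which $v_\tau\le u$ on $\overline{B_{3/4}(\bar x)}\cap\{u>0\}$.

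The next step is to locate the touching point when $\tau^\ast>0$. Strict subharmonicity of $v_{\tau^\ast}$ rules out the interior of $\{u>0\}$; on $\partial B_{3/4}(\bar x)$ we have $v_{\tau^\ast}\le p-C_0\varepsilon\tau^\ast<u$; on $\partial B_{1/20}(\bar x)$ the Harnack estimate yields $u\ge p+c_0\varepsilon>v_{\tau^\ast}$. So any touching point $x_0$ lies on $\partial\{u>0\}$, and then $v_{\tau^\ast}$ is a smooth lower test function so the viscosity boundary condition gives $|\nabla v_{\tau^\ast}(x_0)|\le Q(\nabla v_{\tau^\ast}(x_0))$. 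Since $\nabla w(x_0)$ points from $x_0$ toward $\bar x=p/5$ and $x_0\in B_{l/4}(0)$ lies on the $-p$ side of $\bar x$, one checks $p\cdot\nabla w(x_0)\ge c_2>0$ dimensionally, whence
\[
|\nabla v_{\tau^\ast}(x_0)|^2 \;=\; 1+2C_0\varepsilon\,p\cdot\nabla w(x_0)+O(\varepsilon^2) \;\ge\; 1+c_2 C_0\varepsilon .
\]
Here the anisotropy bifurcates the argument: if $\nabla v_{\tau^\ast}(x_0)$ is not parallel to $e_1$ then $Q(\nabla v_{\tau^\ast}(x_0))=1$ and we get a clean contradiction, forcing $\tau^\ast\le 0$; if instead $\nabla v_{\tau^\ast}(x_0)\parallel e_1$, then $Q=2$ and no contradiction occurs, and moreover $p+C_0\varepsilon\nabla w(x_0)\parallel e_1$ together with the function-graph assumption on the free boundary shows that $\nabla u(x_0)$ is also parallel to $e_1$, that is, $x_0\in\Lambda_u$.

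Now I would invoke the dichotomy. Either every point of $\partial\{u>0\}\cap B_{l/4}$ is of the obstructed type just described, in which case $\partial\{u>0\}\cap B_{l/4}\subseteq\Lambda_u$ and conclusion (i) holds; or there exists $x^\ast\in\partial\{u>0\}\cap B_{l/4}$ with $x^\ast\notin\Lambda_u$. In the latter case, by a slight horizontal translation of the barrier center $\bar x$ (keeping the Harnack estimate intact thanks to the full flatness in a fixed neighborhood) we can arrange that the touching point produced by the sliding procedure agrees with (or falls arbitrarily close to) a non-$\Lambda$ free boundary point; the $Q=1$ contradiction then forces $\tau^\ast\le 0$, i.e.\ $v_0\le u$ throughout $A\cap\{u>0\}$. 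Choosing $\mu\in(0,l/2)$ so small that $B_\mu(0)\subset A$ and $w\ge w_0>0$ on $\overline{B_\mu(0)}$ yields $u\ge p+C_0\varepsilon w_0=p+c\varepsilon$ on $\overline{B_\mu(0)}$ with $c:=C_0 w_0$, which is precisely \eqref{changlarasavinidea}.

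The main obstacle is the asymmetric role of $\Lambda$ in the free boundary condition: the usual strict-gradient inequality $|\nabla v_{\tau^\ast}|>1$ is impotent at $\Lambda$-points where $Q=2$, so the classical argument does not directly close. Resolving this requires the dichotomy above and a careful use of the function-graph assumption to conclude that parallelism of $\nabla v_{\tau^\ast}$ to $e_1$ at a touching point propagates to $\nabla u$, pushing the touching point into $\Lambda_u$. Verifying this propagation rigorously, and checking that the translation-of-center trick genuinely produces a non-$\Lambda$ touching whenever one exists in $B_{l/4}$, are the two technical points that will require the most care.
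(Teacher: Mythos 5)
Your overall strategy is the right one --- it is the same De Silva sliding-barrier scheme the paper uses, and you correctly isolate the central obstruction: at a touching point where $\gd v_{\tau^\ast}$ is parallel to $e_1$ the anisotropy gives $Q=2$ and the strict-gradient contradiction evaporates. But the step where you resolve this obstruction has a genuine gap, and it is precisely the step that carries all the content of the lemma. You propose to handle it ``by a slight horizontal translation of the barrier center'' so that ``the touching point produced by the sliding procedure agrees with (or falls arbitrarily close to) a non-$\Lambda$ free boundary point.'' The touching point is where $u$ first meets the descending barrier; you do not get to choose it. What one \emph{can} control is the location of the dangerous points on the \emph{barrier's} free boundary, i.e.\ the set where $\gd v_{\tau,z}\parallel e_1$, and the argument only closes if that set consists of \emph{at most one} point $y^\ast(\tau,z)$. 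The paper obtains this uniqueness by taking the outer radius of the annulus to be $l+1/5$ with $l=\tfrac{1}{100d}$ (not $3/4$ as in your construction), which is exactly what makes the barrier's free boundary a \emph{strictly convex} graph via the computation \eqref{convexgraphg1}--\eqref{convexgraphg3}; with outer radius $3/4$ the estimate \eqref{convexgraphg3} fails and the dangerous set need not be a single point, so no translation can simultaneously neutralize all of it. Moreover, since $y^\ast(\tau,z)$ moves as $\tau$ varies, the recentering must be done adaptively, $z=z(\tau)$, for every $\tau$ in the sliding family, not once.

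There is a second missing case. Even with uniqueness of $y^\ast$, the matching $(y^\ast)'(\tau,z(\tau))=(x^\ast)'$ (which, combined with the function-graph hypothesis, forces the only admissible dangerous touching to occur at $x^\ast\notin\Lambda_u$ and hence not at all) is only possible when $|(y^\ast)'(\tau,0)|\le l/4$; when $y^\ast$ does not exist or sits at tangential distance $\ge l/4$, one cannot place it over $x^\ast$. In that regime the paper does \emph{not} conclude $\tau^\ast\le 0$; instead it uses the quantitative strict convexity of the barrier graph to show $g(0)\le -\tfrac{mh(d)l^2}{16}\ep$ and hence $\tau^\ast\le w(g(0),0)-\tfrac{h(d)l^2}{16}$, which still delivers the improvement \eqref{changlarasavinidea} on a smaller ball $B_{\mu(d)}$ with a definite gain. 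Your write-up assumes the conclusion is always ``$\tau^\ast\le 0$,'' which is not established and need not hold in this case. To repair the proposal you would need to (a) shrink the annulus so the barrier free boundary is a strictly convex graph, giving uniqueness of $y^\ast$; (b) recenter adaptively in $\tau$ to park $y^\ast$ over $x^\ast$ whenever $|(y^\ast)'|\le l/4$; and (c) supply the strict-convexity estimate to handle the remaining regime.
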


\begin{remark}
    This Harnack inequality doesn't assume \(0\in \pt \{u>0\}\cap B_1\). It also shows that there is no degeneracy in the flatness parameter \(\Bar{\ep}\) as \(p\approx e_1\).
\end{remark}

\begin{proof}
We focus on the first case that \(u(\overline{x})\ge p(\overline{x})+\frac{\ep}{2}=1/5+\frac{\ep}{2}\), because the second case exactly falls in the case of De Silva \cite{desilva2011free}. In this case, by classical Harnack inequality, we obtain for some universal constant \(m>0\)
    \be
    u(x)-p(x)\ge m\ep,\,\forall x\in B_{1/10}\lb\overline{x}\rb.\label{harnackimplylocalinequaltiyin1over10}
    \ee
 Let \(w\) be a positive function such that it is strictly subharmonic in the annulus \(A=B_{l+1/5}(\overline{x})\setminus\overline{B}_{1/20}(\overline{x})\) for some \(l>0\) to be determined, and is \(1\) on the inner boundary and 0 on the outer boundary. We extend \(w\) to be constantly 1 in the inner disc. It is enough to take 
    \[
    w= \tilde{c} (|x-\overline{x}|^{-\gamma}-(l+1/5)^{-\gamma}), x
    \in A
    \]
    with \(\tilde{c}\) chosen so that the conditions above are satisfied. Indeed, \(w\) is strictly subharmonic in \(A\) when \(\gamma>d-2\). We will choose \(\gamma=d-1\) in the following proof.

    Now we follow De Silva's argument and compare for \(z\in B_{l/2}(0)\) and \(\tau\ge 0\)
    \be\tag{DS}\label{desilvaarg}
    u(x)\ge p(x)  + m\varepsilon (w(x-z)-\tau)=: v_{\tau,z}(x), \,x\in B_{l/2}?
    \ee
Notice that we always have the above inequality for \(\tau \ge 1\) because of the \(\ep\)-flatness assumption \eqref{fltanessforp}. Even for \(\tau = 0\), by \eqref{harnackimplylocalinequaltiyin1over10} we still have 
\be\label{boundeoutsideAplusz}
u(x) \ge v_{0,z}(x),\,z\in B_{l/2}(0),\,\hbox{ and } x \notin \overline{A+z}.
\ee
On the other hand, we claim that the portion of the barrier free boundary \(\pt\{v_{\tau,z}>0\}\setminus \{p(x)=0\}\) is contained in \(B_{r(l)}(z')\) with \(r(l)=\sqrt{3l/5 + 3l^2/4}\) and \(z'=z-p(z)p\). Indeed, the barrier free boundary portion \(\pt\{v_{\tau,z}>0\}\setminus \{p(x)=0\}\), no matter what \(z\in B_{l/2}(0)\) and \(\tau\ge0\), is contained in \(A+z\cap \{p(x)<0\}\subset B_{r(l)}(z')\). 

Let us now choose a proper \(l=l(d)>0\) so that for all \(0<\ep\le \overline{\ep}\), \(z,\tau\), the barrier free boundary portion \(\pt\{v_{\tau,z}>0\}\setminus \{p(x)=0\}\) is the graph of a convex function on \(\{p(x)=0\}\). According to implicit function theorem, for \(\ep>0\) small, the boundary \(\pt\{v_{\tau,z}>0\}\) can be denoted by a function \(\xi_1=g(\xi')\), with \((\xi_1,\xi')\) a new Euclidean coordinate system of \(\R^d\) such that \(\pt_{\xi_1}=p\cdot \gd\). It then suffices to show that, for a constant \(l=l(d)\), \(g\) is always a convex function. Indeed, we can observe that if we write \(v_{\tau,z}(\xi_1,\xi')=\xi_1-\ep\phi(\xi)\), with \(\phi(\xi)=-m(w(\xi-z)-\tau)\), we obtain
\be\label{convexgraphg1}
    (1-\ep \pt_{\xi_1}\phi) \gd_{\xi'} g = \ep \gd_{\xi'} \phi,
\ee
and hence
\be
\lb \gd_{\xi'}\rb^2 g = \ep \lb \gd_{\xi'}\rb^2 \phi + O(\ep^2).
\ee
But we know that 
\be\label{convexgraphg2}
 \frac{1}{m\tilde{c}}\lb \gd_{\xi'}\rb^2 \phi = \frac{\gamma}{|\xi-z-\overline{x}|^{\gamma+2}} \hbox{Id}_{(d-1)\times(d-1)} - \gamma(\gamma+2) \frac{(\xi'-z')\otimes (\xi'-z')}{|\xi-z-\overline{x}|^{\gamma+4}},
\ee
with the smallest eigenvalue
\[
\gamma- \gamma(\gamma+2)\frac{|\xi'-z'|^2}{|\xi'-z'|^2+|\xi_1-z_1-1/5|^2} >0,
\]
will require
\[
|\xi'-z'|^2<\frac{1}{\gamma+1} |\xi_1-z_1-1/5|^2=\frac{1}{d}|\xi_1-z_1-1/5|^2.
\]
Since \(\xi\in \pt\{v_{\tau,z}>0\}\subset\{-\ep\le p(x)\le0\}\), we know that \(|\xi_1-z_1-1/5|>1/5-l/2>0\) for \(0<l<2/5\). On the other hand, by the prior discussions on \(r(l)\), we know that on the nontrivial portion 
\(\pt\{v_{\tau,z}>0\}\setminus\{p(x)=0\}\)
\[
|\xi'|=|x'-z'|\le r(l).
\]
This means it suffices to set
\be\label{convexgraphg3}
r(l)=\sqrt{3l/5 + 3l^2/4} < \frac{1}{\sqrt{d}} (1/5-l/2),
\ee
or simply choose \(l=\frac{1}{100d}\).

By the previous paragraph, we deduce that there is at most one point \(y^\ast=y^\ast(\tau,z)\) on the nontrivial portion of the barrier boundary \(\pt\{v_{\tau,z}>0\}\setminus\{p(x)=0\}\) such that \(\gd v_{\tau,z}\) is parallel to \(e_1\) at \(y^\ast\). Let us now discuss the position of \(y^\ast\) for different \(z\) and \(\tau\). Observe that if one writes \(1=p\cdot p\), then we have
\[
\begin{split}
  v_{\tau,z}(x) &= p(x)  + m\varepsilon (w(x-z)-\tau)\\
  &= p(x-m\ep\tau p) + m \ep w(x-z)\\
  &= v_{0,z-m\ep\tau p}(x-m\ep\tau p)\\
  &=: v_{0,z-\beta p}(x-\beta p),
\end{split}
\]
where we define \(\beta = m\ep \tau\). By this formula we have 
\[
y^\ast(\tau,z) = y^\ast(0,z-\beta p) + \beta p.
\]
Furthermore, we have for \(z'=z-p(z)p\)
\[
y^\ast(\tau,z) =y^\ast(0,z-\beta p)= y^\ast(0,(p(z)-\beta)p) + z' + \beta p.
\]
Thus, we obtain that the positions of \(y^\ast(\tau,z)\) are just translations of \(y^\ast(0, \zeta p)=:y^\ast(\zeta)\). Using this one parameter family of \(y^\ast\), we now consider the following subfamily of \(v_{\tau,z}\): for each \(\tau\ge0\) such that if \(y^\ast(\tau,0)=y^\ast(-m\ep \tau )+m\ep \tau p\) exists in the nontrivial portion \(\pt\{v_{\tau,0}>0\}\setminus\{p(x)=0\}\) and the tangential magnitude \(|(y^\ast)'(\tau,0)|=|(y^\ast)'(-m\ep \tau )| \le l/4\), we choose \(z=z'(\tau)\in B_{l/2}(0)\cap\{p(x)=0\}\) so that
\[
z(\tau) + (y^\ast)'(-m\ep\tau) = (x^\ast)',
\]
which means that we can make sure that \((y^\ast)'(\tau,z(\tau))=(x^\ast)'\), and therefore the touching point can never be \(y^\ast\) by the assumptions on \(x^\ast\); for the rest cases (either \(\tau\ge0\) doesn't correspond a \(y^\ast\) or \(|(y^\ast)'|\ge l/4\)) we choose \(z(\tau)=0\). 

We are now able to complete the proof by simply considering the De Silva argument \eqref{desilvaarg} with \(v_{\tau,z}\) replaced by \(\tilde{v}_{\tau}:=v_{\tau,z(\tau)}\) and hence showing the inequality \eqref{changlarasavinidea} for some constant \(l/2>\mu>0\) (even though we wrote \(l/2\) in the De Silva argument \eqref{desilvaarg}, we eventually obtain the improvement in a smaller ball of radius \(\mu\)). Let \(\tau^\ast\ge0\) be the smallest number such that \eqref{desilvaarg} is satisfied for \(\tilde{v}_{\tau^\ast}\). Because of our choice of \(z=z(\tau)\), it suffices to consider the case that \(y^\ast(\tau^\ast,z(\tau^\ast))\) exists but \(|(y^\ast)'(\tau^\ast,z(\tau^\ast))|\ge l/4\) (in other cases, because the touching can never happen on \(y^\ast\) due to our choice of \(z(\tau)\), this is a similar touching argument of De Silva, which shows that \(\tau^\ast=0\)). In this case, \(z(\tau^\ast)=0\), and the boundary portion \(\pt\{\tilde{v}_{\tau^\ast}>0\}\setminus\{p(x)=0\}\) is the function graph of a convex function \(\xi_1=g(\xi')\) on \(B_{r(l)}(0)\cap \{p(x)=0\}\), and \(\gd_{\xi'} g(0)=0\). According to \eqref{convexgraphg1},\eqref{convexgraphg2} and \eqref{convexgraphg3}, we know that the function \(g\) is in fact \(\ep m h(d)\)-strictly-convex for some \(h(d)>0\), and hence 
\[
g(0) + {\ep m h(d)} |(y^\ast)'(\tau^\ast,0)|^2 \le g\lb(y^\ast)'(\tau^\ast,0)\rb\le0,
\]
which shows that 
\[
g(0) \le -\frac{ m h(d) l^2}{16}\ep.
\]
On the other hand, we have 
\[
g(0)+ m\ep \lb w(g(0),0) - \tau^\ast\rb=0,
\]
which shows that
\[
\tau^\ast \le w(g(0),0) - \frac{ h(d) l^2}{16}.
\]
This leads to the following inequality
\[
u(x) \ge p(x) + m\ep \lmb w(x)-w(g(0),0) + \frac{ h(d) l^2}{16}\rmb.
\]
Because the gradient of \(w\) is bounded near the origin and the point \((g(0),0)\) is \(\ep\)-close to the origin, we know that there is a constant \(\mu(d)>0\) such that for all \(x\in B_{\mu(d)}(0)\), \(|w(x)-w(g(0),0)| \le \frac{ h(d) l^2}{32}\) and
\[
u(x) \ge p(x) + m\ep \lmb  \frac{ h(d) l^2}{32} \rmb,\,\hbox{ for all }x\in B_{\mu(d)}(0),
\]
where we can simply choose \(\mu(d)=\frac{ h(d) l^2}{64d 1000^d}\).

\end{proof}

\begin{corollary}[{\textbf{Harnack Inequality}}]
    There is a universal constant \(\overline{\varepsilon}\), such that if \(u\) is a viscosity solution to \eqref{discontinuous2} and it satisfies at some point \(x_0\in \Dupo\) and for some \(p\in \partial B_1(0)\)
    \be
    (x\cdot p + a_0)_+\le u(x) \le (x\cdot p +b_0)_+,\,\text{ in }B_r(x_0),
    \ee
    with
    \[
    b_0-a_0\le \varepsilon r,\,0<\varepsilon\le \overline{\varepsilon}
    \]
    then
    \[
    (x_\cdot p + a_1)_+ \le u(x) \le (x\cdot p + b_1)_+\text{, in }B_{r\mu}(x_0),
    \]
    with
    \[
    a_0\le a_1\le b_1\le b_0,\,b_1-a_1\le (1-c)\varepsilon r,
    \]
    and \(0<c<1\) universal.
\end{corollary}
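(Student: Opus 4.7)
The plan is a standard normalization followed by invoking the dichotomy from the preceding lemma. First, I would rescale via $v(y) := u(x_0 + ry)/r$. By the zero-homogeneity of $Q$, $v$ is again a viscosity solution of \eqref{discontinuous2} on $B_1$, and the flatness hypothesis becomes
\[
(y\cdot p + \tilde{a})_+ \le v(y) \le (y\cdot p + \tilde{b})_+ \quad \hbox{in } B_1,
\]
with $\tilde{a} = a_0/r$, $\tilde{b} = b_0/r$, and $\tilde{b} - \tilde{a} \le \varepsilon$. Since $x_0 \in \partial\{u>0\}$ forces $v(0) = 0$, one has $\tilde{a} \le 0 \le \tilde{b}$, hence $|\tilde{a}|, |\tilde{b}| \le \varepsilon$.

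Next I would translate so the lower flatness passes through the origin: set $w(z) := v(z - \tilde{a}\, p)$, defined on a ball containing $B_{1-\varepsilon}(0)$, still solving \eqref{discontinuous2}, and satisfying the clean flatness
\[
(z\cdot p)_+ \le w(z) \le (z\cdot p + \tilde{\varepsilon})_+, \quad \tilde{\varepsilon} := \tilde{b} - \tilde{a} \le \varepsilon.
\]
A further rescaling by $1-\varepsilon$ restores the unit ball and only inflates $\tilde{\varepsilon}$ by a factor of $(1-\varepsilon)^{-1}$, which is harmless after shrinking $\overline{\varepsilon}$.

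Now I would invoke the preceding lemma applied to $w$, branching on its value at $\overline{x} = p/5$. If $w(\overline{x}) \ge p(\overline{x}) + \tilde{\varepsilon}/2$, the lemma returns either case (ii), where one immediately gets the improved lower barrier $w \ge (z\cdot p + c\tilde{\varepsilon})_+$ in $B_\mu$, or case (i), where the free boundary in $B_{l/4}$ is entirely a $\Lambda$-facet. Otherwise, the lemma yields the improved upper barrier $w \le (z\cdot p + (1-c)\tilde{\varepsilon})_+$ in $B_\mu$. In each scenario, undoing the translation and scaling produces an affine pair $a_1, b_1 \in [a_0, b_0]$ with $b_1 - a_1 \le (1-c)\varepsilon r$, valid in $B_{r\mu'}(x_0)$ for a universal $\mu' \ge \mu/2$.

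The main obstacle will be case (i) of the first branch, in which the preceding lemma supplies only the geometric information that the free boundary is a $\Lambda$-piece and no direct affine improvement. I plan to dispatch it by exploiting the Bernoulli slope $|\nabla w| = 2$ forced along any $\Lambda$-facet: since $|p| = 1$, this mismatch between the flatness direction and the true gradient on the free boundary forces $w$ to lie strictly above the bare $(z\cdot p)_+$. Concretely, I would build a harmonic barrier supported by the flat facet and with normal slope in $(1, 2)$, analogous to the radial barrier used inside the lemma's proof, to conclude $w \ge (z\cdot p + c\tilde{\varepsilon})_+$ on $B_\mu$ with a universal $c > 0$. Once case (i) is settled, the remainder of the proof is just the bookkeeping of the rescaling above.
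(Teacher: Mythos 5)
Your overall route---normalize and translate so the flatness is clean, apply the dichotomy lemma at $\overline{x}=p/5$, then undo the scaling---is the intended one (the paper's own proof consists only of the remark that it is ``essentially the same as De Silva''), and you are right to single out case (i) of the lemma, where $\partial\{u>0\}\cap B_{l/4}=\Lambda_u\cap B_{l/4}$, as the step that the lemma does not cover. The gap is in how you propose to close that case. The conclusion you aim for there, $w\ge (z\cdot p+c\tilde\varepsilon)_+$ in $B_\mu$, cannot hold in general: since $x_0\in\partial\{u>0\}$, in case (i) the normalized solution has a free boundary point lying on a facet $F=\{x_1=t\}\cap B_{l/4}$ with $-\tilde\varepsilon\le t\le 0$, and $w$ vanishes identically on and below $F$ near that point. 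Unless you can show $t\le -c\tilde\varepsilon$ (which you do not), there are points with $-c\tilde\varepsilon<z_1<t$ where $w=0$ but $(z\cdot p+c\tilde\varepsilon)_+>0$, so the claimed lower barrier fails. The slope mismatch you invoke is the right observation, but it must be run to a \emph{contradiction} showing case (i) is vacuous, not to an improved lower bound inside it.

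Here is how that goes. In case (i), relative openness of $\Lambda$ together with Lemma \ref{lambdaisopenforu} and a clopen/connectedness argument shows the free boundary component through the given point is the full disc $F=\{x_1=t\}\cap B_{l/4}$, with $u$ harmonic in $\{x_1>t\}\cap B_{l/4}$ and $u=0$ on $F$; testing the viscosity sub- and supersolution conditions at an interior point of $F$ with $\beta(x_1-t)_+$, whose gradient is exactly parallel to $e_1$ so that $Q=2$, pins $\partial_1 u\equiv 2$ on $F$. On the other hand, since $F$ is perpendicular to $e_1$ and must lie in the slab $\{-\varepsilon\le x\cdot p\le 0\}$, one has $|p-e_1|\le C\varepsilon/l$, and the function $h:=u-(x_1-t)$ is harmonic in $\{x_1>t\}\cap B_{l/4}$, vanishes on $F$, and satisfies $|h|\le C\varepsilon$ there by \eqref{fltanessforp}. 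Odd reflection across $F$ and the interior gradient estimate give $|\partial_1 h|\le C\varepsilon/l$ at the center of $F$, i.e.\ $\partial_1 u\le 1+C\varepsilon/l$ there, contradicting $\partial_1 u=2$ once $\overline\varepsilon<l/C$. (Equivalently: your barrier with slope $1+\delta$ supported on $F$ sits below $u$ near $F$ and, slid upward, exceeds $(x\cdot p+\varepsilon)_+$ at height of order $l$---again a contradiction, not an improvement.) With case (i) thus ruled out, the corollary follows from case (ii) and the upper-improvement branch exactly as in the rest of your write-up, whose bookkeeping is fine.
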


\begin{proof}
    The proof is essentially the same as De Silva.
\end{proof}

 The above corollary, by a similar argument in \cite{desilva2011free}*{Corollary 3.2}, shows the following result.

\begin{corollary}\label{preforaatheorem}
    The functions 
    \[
    w_\varepsilon=\frac{u(x)-x\cdot p}{\varepsilon}
    \]
    have a uniform H\"{o}lder modulus of continuity at \(0\) in \(B_{1}\), outside a ball of radius \(\ep/\Bar{\ep}\), i.e. for all \(x\in B_1\) with \(|x|\ge \ep/\Bar{\ep}\)
    \[
    |w_\ep(x)-w_\ep(0)| \le C |x|^\gamma.
    \]
\end{corollary}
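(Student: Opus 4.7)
The plan is to iterate the Harnack inequality from the previous corollary at geometrically decreasing scales, following the standard argument of De~Silva~\cite{desilva2011free}*{Corollary~3.2}. Starting from the hypothesis $(x\cdot p)_+\le u(x)\le (x\cdot p+\varepsilon)_+$ in $B_1$, I would inductively construct monotone sequences $0=a_0\le a_1\le a_2\le\cdots$ and $\varepsilon=b_0\ge b_1\ge b_2\ge\cdots$ satisfying
\[
(x\cdot p+a_k)_+\le u(x)\le (x\cdot p+b_k)_+ \ \hbox{in}\ B_{\mu^k}(0), \qquad b_k-a_k\le (1-c)^k\varepsilon.
\]
Each step requires the rescaled flatness ratio $(1-c)^k\varepsilon/\mu^k$ to remain at most $\bar\varepsilon$; a direct calculation shows this is preserved as long as $\mu^k$ is comparable to or larger than $\varepsilon/\bar\varepsilon$, which is precisely the lower cutoff appearing in the statement. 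Moreover, testing the lower flatness bound at $x=0$ against the fact that $u(0)=0$ (since $0\in\partial\{u>0\}$) forces $a_k=0$ for every $k$, so the flatness simplifies to $(x\cdot p)_+\le u(x)\le (x\cdot p+b_k)_+$ with $b_k\le (1-c)^k\varepsilon$.

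Given this geometric decay, the H\"older estimate at the origin follows by a standard interpolation step. For $x\in B_1$ with $|x|\ge \varepsilon/\bar\varepsilon$, select the integer $k$ satisfying $\mu^{k+1}\le |x|< \mu^k$; then $x\in B_{\mu^k}$ and the flatness at step $k$ yields
\[
|w_\varepsilon(x)-w_\varepsilon(0)|=|w_\varepsilon(x)|\le \frac{b_k}{\varepsilon}\le (1-c)^k\le C|x|^\gamma,
\]
with H\"older exponent
\[
\gamma=\frac{\log(1-c)}{\log\mu}>0
\]
and constant $C=\mu^{-\gamma}$ coming from $\mu^k\le |x|/\mu$. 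The identity $w_\varepsilon(0)=0$ uses that $u(0)=0$ at the free boundary point.

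The main obstacle lies in controlling $w_\varepsilon$ on the portion $\{x\cdot p<-b_k\}$ of $B_{\mu^k}$, where $u\equiv 0$ and hence $w_\varepsilon(x)=-x\cdot p/\varepsilon$ is linear in $|x|$ rather than H\"older small. I expect to handle this exactly as in De~Silva: the H\"older estimate is interpreted on an effective subdomain where positivity is maintained (for instance $\{u>0\}\cup\{x\cdot p\ge 0\}$), or equivalently after a shift of the reference plane by a distance of order $\varepsilon$ that is absorbed into the universal constant. On this domain the oscillation bound $(b_k-a_k)/\varepsilon$ translates directly to a pointwise estimate on $w_\varepsilon$, and combined with the iteration it delivers the full H\"older modulus at $0$ stated in the corollary.
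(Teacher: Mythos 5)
Your proposal is correct and is exactly the argument the paper intends: the paper gives no written proof, simply invoking the iteration of the Harnack inequality from De Silva's Corollary 3.2, which is what you carry out (geometric improvement of flatness in balls $B_{\mu^k}$ until the rescaled flatness exceeds $\bar\varepsilon$, giving the cutoff $|x|\ge\ep/\bar\ep$, then interpolation to get the H\"older bound with $\gamma=\log(1-c)/\log\mu$). Your remark about the set $\{x\cdot p<0\}$, where $w_\ep$ is not small and the estimate must be read on the positivity set, is a legitimate point that the paper also glosses over and that is resolved in the standard De Silva way.
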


\subsection{A compactness result}
In this section, we show Proposition \ref{flatasymptoticexpansion} by proving several lemmas.

\begin{remark}
    We will also show that for \(p\ne e_1\), the function \(w\) would correspond to the solutions to the zero Neumann boundary condition.
\end{remark}
To prove this proposition we would like to study the following blow-up families of functions
\be
w_\varepsilon (x):= \frac{u(x)-x\cdot p}{\varepsilon}.
\ee
We argue by contradiction and assume that there is a subsequence \((u_k,\varepsilon_k)\), where \(u_k\) is a solution to \eqref{discontinuous2} in \(B_1(0)\) with \(\varepsilon_k\)-flatness and \(\varepsilon_k\rta 0^+\), and we define
\be
w_{k}:= \frac{u_k-x\cdot p}{\varepsilon_k}.\label{limitbernoulli1}
\ee
By Corollary \ref{preforaatheorem} and Arzela-Ascoli theorem, we know that \(w_k\) has compactness in \(C(\overline{B_{1/2}})\), which allows us to assume that for some \(w\in C^\alpha(\overline{B_{1/2}}\cap\{x\cdot p\ge0\})\), the following uniform convergence
\be
w_{k}\rta w,\,\hbox{uniformly on }B_{1/2}.\label{limitbernoulli}
\ee
In particular, the free boundary \(\lb\partial\{u_k>0\}\rb\cap B_{1/2}\) converges in the Hausdorff sense to \(\{x\cdot p =0\} \cap B_{1/2}\).

Now we finish this section by characterizing the equation for \(w\) in the following lemmas. For notational convenience we define the following subsets of \(\{x\cdot p \ge 0\} \cap B_{1/2}\):
\[
B^+:= \{x\cdot p > 0\} \cap B_{1/2};B':= \{x\cdot p =0\} \cap B_{1/2}.
\]
\begin{lemma}
    For general \(p\in \partial B_1(0)\), the limit function \(w\) is a viscosity subsolution to the following Neumann problem
    \be
    \bca
    \Delta w =0 &\text{ in }B^+,\\
    \partial_p w \geq 0 & \text{ on }B',
    \eca\label{neumannprob}
    \ee
    where \(\partial_p=p\cdot\gd\). Moreover, the function \(w\) is harmonic in \(B^+\).
\end{lemma}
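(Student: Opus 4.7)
Interior harmonicity of $w$ in $B^+$ is immediate from the $\ep_k$-flatness: for any compact $K \Subset B^+$, set $\delta := \min_K(x \cdot p) > 0$, and the inequality $(x \cdot p - \ep_k)_+ \le u_k(x)$ gives $u_k > 0$ on $K$ as soon as $\ep_k < \delta$, so $u_k$ is harmonic on $K$. Since $w_k = (u_k - x \cdot p)/\ep_k$ is an affine modification of $u_k$, it is harmonic on $K$ as well, and the uniform convergence $w_k \to w$ transfers harmonicity to $w$ on every such $K$, hence on all of $B^+$.

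For the Neumann subsolution condition on $B'$, I would test against a smooth $\varphi$ touching $w$ from above at $x_0 \in B'$ with $\Delta\varphi(x_0) < 0$; after replacing $\varphi$ by $\varphi + |x-x_0|^4$ I may assume the touch is strict without changing $\nabla\varphi(x_0)$ or $\Delta\varphi(x_0)$. Uniform convergence on a fixed small ball around $x_0$ then produces constants $a_k \to 0$ and points $x_k \to x_0$ so that $\varphi - a_k$ touches $w_k$ from above at $x_k$. Setting $\Phi_k(x) := x \cdot p + \ep_k(\varphi(x) - a_k)$, the identity $u_k = (x \cdot p + \ep_k w_k)_+$ combined with $w_k \le \varphi - a_k$ yields $\Phi_k^+ \ge u_k$ with equality at $x_k$.

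The central technical step is to prove that $x_k$ is actually a free-boundary point of $u_k$ for large $k$. If $\Phi_k(x_k) > 0$, the touch is interior to $\{u_k > 0\}$, where $u_k$ is harmonic; hence $\Delta\Phi_k(x_k) \ge 0$, which forces $\Delta\varphi(x_k) \ge 0$ and contradicts $\Delta\varphi(x_0) < 0$ for $k$ large. If instead $\Phi_k(x_k) < 0$, then $\Phi_k^+ \equiv 0$ in a neighborhood of $x_k$ and $u_k \equiv 0$ there, so $x_k$ is a local maximum of $\Phi_k$ with $\nabla\Phi_k(x_k) = 0$; but $|\nabla\Phi_k(x_k)| = |p + \ep_k\nabla\varphi(x_k)| \to 1$, another contradiction. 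Therefore $\Phi_k(x_k) = 0$ and $x_k \in \partial\{u_k > 0\}$.

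Finally, since $u_k$ is a minimal supersolution of \eqref{discontinuous2}, it inherits the viscosity free-boundary subsolution inequality $|\nabla\Phi|^2 \ge Q^{\ast 2}(\nabla\Phi) \ge 1$ (using that $Q \ge 1$ pointwise, so its upper semicontinuous envelope satisfies $Q^\ast \ge 1$) whenever a smooth $\Phi$ touches $u_k$ from above at a free-boundary point with $\nabla\Phi \ne 0$. Expanding $|\nabla\Phi_k(x_k)|^2 = 1 + 2\ep_k\,p\cdot\nabla\varphi(x_k) + \ep_k^2|\nabla\varphi(x_k)|^2 \ge 1$, dividing by $2\ep_k$, and letting $k \to \infty$ gives $\partial_p\varphi(x_0) \ge 0$, as required. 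The main obstacle I anticipate is pinning down and citing the correct viscosity free-boundary sub-solution formulation for the minimal supersolution with discontinuous anisotropy $Q$; this is a standard property of Perron minimal supersolutions in Bernoulli-type problems (see \cite{feldman2019free,feldman2021limit}), but the discontinuity of $Q$ at $e_1$ makes the envelopes $Q^\ast, Q_\ast$ delicate and deserves careful invocation.
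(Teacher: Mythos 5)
Your proof is correct in substance and follows the same skeleton as the paper's: transfer the test function from $w$ to $u_k$ via $\phi_k = x\cdot p + \ep_k(\varphi - a_k)$, localize the touching point to the free boundary, invoke the viscosity subsolution inequality $|\gd \phi_k|\ge 1$ there, and expand in $\ep_k$. The one step you do differently is the localization: the paper does not assume $\Delta\varphi(x_0)<0$ and instead perturbs $\varphi \mapsto \varphi + \eta(x\cdot p - x_0\cdot p) - C(\eta)(x\cdot p - x_0\cdot p)^2$ to force the touch onto $\partial\{u_k>0\}$, whereas you exploit the sign condition $\Delta\varphi(x_0)<0$ built into the subsolution test (Definition \ref{subsolutiondef}) together with harmonicity of $u_k$ in its positivity set to exclude interior touching. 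Both are standard and valid; your route is arguably cleaner given the paper's definition of subsolution, while the paper's perturbation handles arbitrary smooth test functions.

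Two loose ends. First, your case $\Phi_k(x_k)<0$ is vacuous rather than excluded by your stated argument: at a touching point $w_k(x_k)=\varphi(x_k)-a_k$, the identity $x\cdot p + \ep_k w_k \equiv u_k$ forces $\Phi_k(x_k)=u_k(x_k)\ge 0$ outright, and your claim that ``$u_k\equiv 0$ near $x_k$ makes $x_k$ a local maximum of $\Phi_k$'' is a non sequitur (indeed $\gd\Phi_k = p + \ep_k\gd\varphi \ne 0$). Second, and more substantively, $\Phi_k(x_k)=0$ only gives $u_k(x_k)=0$; to conclude $x_k\in\partial\{u_k>0\}$ you must take the sup defining the touching over $\overline{\{u_k>0\}}\cap B_\rho(x_0)$ rather than over a full ball, as the paper does. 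This restriction is not optional: on $\{x\cdot p<-\delta\}$ one has $w_k=-x\cdot p/\ep_k\to+\infty$, so the uniform convergence $w_k\to w$ and the ordering $w_k\le \varphi - a_k$ only make sense on the closed positivity set in the first place. With that framing your case analysis collapses to exactly the right dichotomy (interior touch, excluded by $\Delta\varphi(x_0)<0$, versus free-boundary touch), and the rest of the argument, including your observation that $Q\ge 1$ pointwise suffices regardless of which semicontinuous envelope of $Q$ one adopts, goes through.
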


\begin{proof}
It suffices to check the condition on the flat boundary because harmonicity of \(w_k\)'s are preserved under uniform convergence in the interior. Suppose there is a smooth function \(\varphi\) touching \(w\) from above at some \(x_0\in B'\), then by standard theory there are \((c_k,x_k)\) such that \(\varphi_k=\varphi+c_k\) touches \(w_k\) from above at \(x_k\in B_{1/2}\cap\lb\{u_k>0\}\cup\partial\{u_k>0\}\rb\), and \((c_k,x_k)\rta (0^+, x_0)\) as \(k\rta\infty\). Denoting \(\phi_k=x\cdot p+\varepsilon_k\varphi_k\), it is equivalent to say that \(\phi_k\) touches \(u_k\) from above at \(x_k\) for each \(k\). By performing the transformation \(\varphi\mapsto \varphi + \eta (x\cdot p-x_0\cdot p)-C(\eta)(x\cdot p-x_0\cdot p)^2\) for suitably chosen \(\eta,C(\eta)>0\), we may assume without loss that \(x_k\in \lb\partial\{u_k>0\}\rb\cap B_{1/2}\). Now because \(|\gd u_k|\ge 1\) on the free boundary for each \(k\), we have
\[
1+2\varepsilon_k \partial_p \varphi (x_k) + o(\varepsilon_k)=|\gd\phi_k|^2\ge 1,
\]
which implies that
\[
\partial_p\varphi(x_0)\ge 0.
\]

\end{proof}

\begin{lemma}
    In the case \(p=e_1\), \(w\) would satisfy the strong subsolution condition, Definition \ref{definitionofstrongsubsolution}. That is, there are no \(C^1\) up-to-boundary function of the form \(\varphi(x_1,x')\equiv\psi(x_1)\) that touches \(w\) from above in \(\Omega_h\cap\overline{B_1^+}\) at some \(x_0\in B_1'\) and \(\varphi>w\) in \(\overline{\Omega_h\setminus\Omega} \cap \overline{B_1^+}\) where \(\Omega\) is an arbitrary open domain of \(\R^d\) containing \(x_0\) and \(\Omega_h=\cup_{y\in\Omega} B_h(y)\) for some small \(h>0\) so that \(\overline{\Omega_h}\cap \overline{B_1^+} \csubset B_1^+\cup B_1'\).\label{conditioninfinity}
\end{lemma}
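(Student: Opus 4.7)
I argue by contradiction. Suppose there exist an open set $\Omega$ with $\overline{\Omega_h}\cap\overline{B_1^+} \csubset B_1^+ \cup B_1'$ and a $C^1$ up-to-boundary function $\psi(x_1)$ such that $\varphi(x_1,x') \equiv \psi(x_1)$ touches $w$ from above in $\overline{\Omega_h}\cap\overline{B_1^+}$ at some $x_0 \in B_1'$, with $\varphi > w$ strictly on $\overline{\Omega_h\setminus\Omega}\cap\overline{B_1^+}$. The plan is to perturb $\psi$ to prepare a strong maximum principle argument, lift the touching to the approximating Bernoulli solutions $u_k$ via the uniform convergence \eqref{limitbernoulli}, and finally contradict the minimal supersolution property at the free boundary via the singular value $Q(e_1) = 2$ from \eqref{defineQ}. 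First I would replace $\psi$ by $\widetilde{\psi}(x_1) := \psi(x_1) + \eta x_1^2$ for a fixed $\eta > \tfrac{1}{2}\|\psi''\|_\infty$ on the relevant compact interval. Since $\eta x_1^2 \geq 0$ vanishes at $x_{0,1}=0$, $\widetilde{\psi}$ still touches $w$ from above at $x_0$ and retains the strict inequality on $\overline{\Omega_h\setminus\Omega}\cap\overline{B_1^+}$, while now $\widetilde{\psi}''(x_1) \geq \eta > 0$.

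The uniform convergence $w_k \to w$ then produces sequences $c_k \to 0$ and $x_k \in \overline{\Omega_h}\cap\overline{B_1^+}$ (convergent along a subsequence) such that the lifted function
\[
\widetilde{\Phi}_k(x) := x_1 + \varepsilon_k\bigl(\widetilde{\psi}(x_1) + c_k\bigr)
\]
satisfies $\widetilde{\Phi}_k \geq u_k$ in $\overline{\Omega_h}\cap\overline{B_1^+}$ with equality at $x_k$. The strict inequality on $\overline{\Omega_h\setminus\Omega}\cap\overline{B_1^+}$ survives for $k$ large, confining $x_k$ to the interior of $\Omega$. Since $\Delta\widetilde{\Phi}_k = \varepsilon_k\widetilde{\psi}''(x_1) > 0$, the function $\widetilde{\Phi}_k - u_k$ is strictly subharmonic in $\{u_k > 0\}\cap\Omega_h$; the strong minimum principle rules out its zero minimum being attained in the interior of $\{u_k > 0\}$, and an interior point of $\{u_k = 0\}$ is excluded because $\nabla\widetilde{\Phi}_k(x_k)$ is a positive multiple of $e_1$, forcing $\widetilde{\Phi}_k < 0 = u_k$ on one side and violating the ordering. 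Hence $x_k \in \partial\{u_k > 0\}$, and the $\varepsilon_k$-flatness \eqref{fltanessforp} forces $x_{k,1} = O(\varepsilon_k)$, so $x_k \to x_0 \in B_1'$.

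The contradiction then comes from the singular anisotropy. At the free boundary point $x_k$, $\nabla\widetilde{\Phi}_k(x_k) = (1 + \varepsilon_k\widetilde{\psi}'(x_{k,1}))\,e_1$ is a positive multiple of $e_1$, so the upper semicontinuous envelope $Q^*$ satisfies $Q^*(\nabla\widetilde{\Phi}_k(x_k)) = Q(e_1) = 2$. Because $u_k$ is a minimal supersolution of \eqref{discontinuous2}, minimality upgrades the viscosity subsolution condition at the free boundary: every upper-touching $C^1$ test function $\phi$ must satisfy $|\nabla \phi| \geq Q^*(\nabla\phi)$. Applied to $\widetilde{\Phi}_k$ this yields $|\nabla\widetilde{\Phi}_k(x_k)| \geq 2$, whereas direct computation gives $|\nabla\widetilde{\Phi}_k(x_k)| = 1 + O(\varepsilon_k) \to 1$, a contradiction for $k$ large.

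The main obstacle I anticipate is justifying the enhanced subsolution condition with $Q^*$ rather than with the lower envelope $Q_* \equiv 1$ (the latter would yield only the useless bound $|\nabla\widetilde{\Phi}_k| \geq 1$). This is precisely where the minimality of $u_k$ enters: if $|\nabla\phi| < Q^*(\nabla\phi) = 2$ at an upper-touching free boundary point, the linear half-space function $L(x) := \beta(x_1 - \xi)_+$ with slope $\beta$ chosen in the gap $(|\nabla\phi|,2)$ is a classical supersolution of \eqref{discontinuous2} (since $\beta < Q(e_1) = 2$), and for a suitable shift $\xi$ the truncation $\min\{u_k, L\}$ glued back to $u_k$ outside a small neighborhood produces a supersolution strictly below $u_k$ with unchanged outer boundary data, contradicting minimality. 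This type of reasoning for minimal supersolutions of anisotropic Bernoulli problems is developed in \cites{feldman2019free,feldman2021limit,caffarelli2007homogenization}; some care will be needed here to carry out the gluing while maintaining both superharmonicity and the ordering on the outer boundary of the perturbation region.
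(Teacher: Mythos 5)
Your overall strategy is the same as the paper's: lift the touching to the approximating Bernoulli solutions $u_k$ via the uniform convergence of $w_k$, localize the touching point to $\partial\{u_k>0\}$, and then use minimality of $u_k$ together with the singular value $Q(e_1)=2$ to force $|\gd \phi_k(x_k)|\ge 2$, which contradicts $|\gd\phi_k(x_k)| = 1+O(\ep_k)$. The paper's competitor is $\min\{u_k,(\phi_k)_+(x-\eta e_1)\}$ glued to $u_k$ outside $\Omega_h$ --- truncating with the (shifted) test function itself rather than your linear barrier $\beta(x_1-\xi)_+$, which has the advantage that the strict ordering $\phi_k>u_k$ on $\overline{\Omega_h\setminus\Omega}$ makes the gluing automatic; your barrier would require the extra verification you flag at the end.

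There is, however, a genuine error in your localization step. You replace $\psi$ by $\widetilde\psi=\psi+\eta x_1^2$ so that $\Delta\widetilde\Phi_k=\ep_k\widetilde\psi''>0$, and then claim that ``the strong minimum principle rules out'' the nonnegative, strictly \emph{subharmonic} function $\widetilde\Phi_k-u_k$ attaining its zero minimum at an interior point of $\{u_k>0\}$. Subharmonic functions satisfy a maximum principle, not a minimum principle: $|x|^2$ is strictly subharmonic, nonnegative, and vanishes at an interior point. Indeed, at an interior touching point one only gets $D^2(\widetilde\Phi_k-u_k)(x_k)\ge 0$, hence $\Delta\widetilde\Phi_k(x_k)\ge 0$, which is \emph{consistent} with your perturbation rather than contradictory. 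To exclude interior touching of the harmonic $u_k$ from above you need the test function to be strictly \emph{superharmonic} at the touching point, i.e.\ a concave perturbation; and since $\psi-C x_1^2$ alone may drop below $w$ away from $\{x_1=0\}$, one must combine it with a linear tilt, exactly as in the paper's preceding lemma via $\varphi\mapsto\varphi+\eta(x\cdot p-x_0\cdot p)-C(\eta)(x\cdot p-x_0\cdot p)^2$, which simultaneously pushes the touching point toward the free boundary and makes the test function strictly superharmonic there. With that correction (and the gluing issue for your barrier addressed, or the paper's truncation used instead), the rest of your argument goes through.
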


\begin{proof}
    Similar as before, we have \(\phi_k=x_1+\varepsilon_k(\varphi+c_k)\) touching \(u_k=x_1+\varepsilon_k w_k\) from above at \(x_k\in\partial\{u_k>0\}\cap B_{1/2}\) that converges to \(x_0\) as \(k\rta\infty\). Because of the strict inequality \(\varphi>w\) in in \(\overline{\Omega_h\setminus\Omega} \cap \overline{B_1^+}\) and uniform convergence of \(w_k\) to \(w\), we also have \(\phi_k>u_k\) in \(\overline{\Omega_h\setminus\Omega}\cap \{u_k>0\}\). If \(|\partial_1 \phi_k|(x_k)<2\), then 
    \[
    \Tilde{u}_k:= \bca
    \min\{u_k,(\phi_k)_+(x-\eta e_1)\},&\hbox{in }\Omega_h\cap B_1^+,\\
    u_k,&\hbox{elsewhere}
    \eca
    \]
    will become a new supersolution that is strictly smaller than \(u_k\) for some small \(0<\eta\ll r_1-r_2\), which is impossible because \(u_k\) is assumed to be a minimal supersolution to \eqref{discontinuous}. Now, we have
    \[
1+2\varepsilon_k \partial_1 \varphi (x_k) + o(\varepsilon_k)=|\gd\phi_k|^2\ge 2,
\]
which implies that
\[
\psi'(0)\approx\partial_1\varphi(x_k)\ge O(1/\varepsilon_k),
\]
for any \(k\) large.
\end{proof}

\begin{lemma}
    For general \(p\), we extend \(w\) to the whole \(B_{1/2}\) evenly, and then \(w\) is subharmonic in \(B_{1/2}\).\label{subharmonic}
\end{lemma}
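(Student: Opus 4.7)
The plan is to check subharmonicity of the even extension $\tilde w$ directly against the viscosity definition, leveraging the Neumann subsolution property of $w$ established in the previous lemma. Away from $B'$ there is nothing to check, since $w$ is harmonic on $B^+$ and its reflection is harmonic on the mirror image. So I only need to verify the touching condition at points $x_0\in B'$.

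Suppose for contradiction that some smooth $\varphi$ touches $\tilde w$ from above at $x_0\in B'$ with $\Delta\varphi(x_0)<0$. The first step is to symmetrize the test function: set $\tilde\varphi(x):=\tfrac12\bigl(\varphi(x)+\varphi(x^*)\bigr)$ with $x^*$ the reflection of $x$ across $B'$. Since $\tilde w(x)=\tilde w(x^*)$, $\tilde\varphi$ still touches $\tilde w$ from above at $x_0$, with $\Delta\tilde\varphi(x_0)=\Delta\varphi(x_0)<0$ and, crucially, $\partial_p\tilde\varphi(x_0)=0$ by evenness. After adding a small quadratic $\mu|x-x_0|^2$ (preserving evenness), I may also assume the touching is strict on $\overline{B_r(x_0)}\setminus\{x_0\}$ for a small $r>0$, while keeping $\Delta\tilde\varphi<0$ throughout $B_r(x_0)$.

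The second step is to perturb in the normal direction and slide: define $\psi_\delta(x):=\tilde\varphi(x)-\delta(x\cdot p)$ for small $\delta>0$, and view it on $\overline{B^+_r(x_0)}$. A direct estimate using the strict quadratic margin shows that for $\delta<\mu r/2$ one has $\psi_\delta-w\ge\mu r^2-2\delta r>0$ on the curved part of $\partial B^+_r(x_0)$. Therefore setting $c:=-\min_{\overline{B^+_r(x_0)}}(\psi_\delta-w)\in[0,\delta r]$, the translated function $\psi_\delta+c$ touches $w$ from above at some point $y_0$ in the relative interior of $B^+_r(x_0)\cup(B'\cap B_r(x_0))$.

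The contradiction is now extracted in two cases. If $y_0\in B^+_r(x_0)$, then $\psi_\delta+c$ is a smooth function touching a harmonic function $w$ from above, so $\Delta\psi_\delta(y_0)\ge 0$; but by continuity and the choice of $r,\mu$, one still has $\Delta\psi_\delta(y_0)=\Delta\tilde\varphi(y_0)+2d\mu<0$, a contradiction. If instead $y_0\in B'\cap B_r(x_0)$, then since $\Delta\psi_\delta(y_0)<0$, the Neumann subsolution property of $w$ proved in the preceding lemma gives $\partial_p\psi_\delta(y_0)\ge 0$; yet $\partial_p\psi_\delta(y_0)=\partial_p\tilde\varphi(y_0)-\delta$, and by shrinking $r$ we ensure $|\partial_p\tilde\varphi(y_0)|<\delta/2$ (since $\partial_p\tilde\varphi(x_0)=0$), forcing $\partial_p\psi_\delta(y_0)\le-\delta/2<0$, again a contradiction. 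The main delicate point is the bookkeeping of the three small parameters $\mu,r,\delta$ so that both the negative Laplacian of $\tilde\varphi$ and the vanishing of $\partial_p\tilde\varphi$ at $x_0$ persist at the sliding touching point $y_0$; once that is arranged, the Neumann subsolution property of $w$ does the rest.
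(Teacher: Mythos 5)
Your argument is correct and is essentially the paper's proof: symmetrize the test function across $B'$ so that $\partial_p\tilde\varphi=0$ there, tilt by a small multiple of $x\cdot p$ to make the normal derivative strictly negative on the flat part, slide until touching, and use the Neumann subsolution property from the preceding lemma to force the new touching point into the interior, where harmonicity gives the contradiction. One small remark: the final step, where you shrink $r$ to get $|\partial_p\tilde\varphi(y_0)|<\delta/2$ after having required $\delta<\mu r/2$, is circular as written (it would need $\|D^2\tilde\varphi\|\lesssim\mu$); but no shrinking is needed, since the even symmetry of $\tilde\varphi$ in the $p$-direction forces $\partial_p\tilde\varphi\equiv 0$ on all of $B'$, so $\partial_p\psi_\delta(y_0)=-\delta<0$ exactly, which is also what the paper uses.
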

\begin{proof}
    It suffices to check points \(x\in B_{1/2}'\). Indeed, suppose \(\varphi\) touches \(w\) from above at \(x\in B_{1/2}'\), then we may start with locally (\(x_p=x\cdot p\), \(x=(x_p,x')\))
    \[
    \psi(x_p,x')=\frac{\varphi(x_p,x')+\varphi(-x_p,x')}{2}.
    \]
    Now we have
    \[
    \Delta \psi(x_p,0)=\Delta \varphi(x_p,0),\,\partial_p\psi(x_p,0)=0,
    \]
    and \(\psi\) also touches \(w\) from above. Let \(\psi_\varepsilon=\psi - \varepsilon x_p +C_\varepsilon\) for small \(\varepsilon>0\) and some \(C_\varepsilon>0\). By standard theory, we may choose \(C_\varepsilon\) so that \(\psi_\varepsilon\) also touches \(w\) from above at some \(x_\varepsilon\in B_{1/2}\) and \(x_\varepsilon\rta x\) as \(\varepsilon\rta0^+\). We claim that all \(x_\varepsilon\not\in B_{1/2}'\) because the function \(w\) satisfies \(\partial_p w\ge 0\) in the viscosity sense, and hence if \(x_\varepsilon\in B_{1/2}'\)
    \[
\partial_p\psi_\varepsilon(x_\varepsilon)=\partial_p\psi(x_\varepsilon)-\varepsilon\ge0,
    \]
    which violates the definition of \(\psi\). Therefore, \(x_\varepsilon\in B_{1/2}\setminus B_{1/2}'\), and so \(\Delta \varphi(x)=\Delta \psi(x)\ge0\).
\end{proof}

\begin{lemma}
    In the case that \(p=e_1\), we show that if a smooth function \(\varphi\) touches \(w\) from below at some \(x_0\in B'\) and satisfies \(|\gd'\varphi|(x_0)>0\), then we have
    \[
    \partial_1 \varphi(x_0) \le 0.
    \]

\end{lemma}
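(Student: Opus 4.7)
The plan is to transfer the touching property from the limit $w$ back to the prelimit $u_k$ via the blow-up relation \eqref{limitbernoulli}, and then exploit the Bernoulli free boundary condition at a suitable contact point on $\pt\{u_k>0\}$. In keeping with Definition~\ref{supsolutiondef}, I will assume $\Delta\varphi(x_0)>0$ (the borderline case $\Delta\varphi(x_0)=0$ is recovered by the standard perturbation $\varphi\mapsto\varphi-\delta|x-x_0|^2$ and letting $\delta\to 0^+$, since the desired conclusion $\pt_1\varphi(x_0)\le 0$ is closed).

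First, via a standard sliding-type construction, I would produce constants $c_k\to 0$ and points $x_k\to x_0$ for which $\phi_k(x):=x_1+\varepsilon_k(\varphi(x)+c_k)$ satisfies $(\phi_k)_+\le u_k$ in a fixed neighborhood of $x_0$ with equality at $x_k$. This is possible because $\varphi\le w$ near $x_0$ (with equality only at $x_0$ after an initial strict perturbation), together with the uniform convergence $w_k\to w$ on $\overline{B^+}$.

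Second, I would rule out the degenerate locations for $x_k$. If $x_k$ lay in the open positive set $\{u_k>0\}$, then $\phi_k$ would touch the harmonic function $u_k$ from below at an interior point, forcing $\varepsilon_k\Delta\varphi(x_k)=\Delta\phi_k(x_k)\le 0$; by continuity this contradicts $\Delta\varphi(x_0)>0$ for $k$ large. If $x_k$ lay in the interior of $\{u_k=0\}$, then $\phi_k\le 0$ in a full neighborhood of $x_k$ would force $\gd\phi_k(x_k)=0$, but $\gd\phi_k(x_k)=e_1+\varepsilon_k\gd\varphi(x_k)\to e_1$, again impossible for large $k$. Therefore $x_k\in\pt\{u_k>0\}$, and the equality $\phi_k(x_k)=u_k(x_k)=0$ places $x_k$ simultaneously on the free boundary of $\phi_k$ and of $u_k$.

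Third, I would invoke the Bernoulli free boundary condition satisfied by the minimal supersolution $u_k$. The touching from below at the free boundary point $x_k$ with $\gd\phi_k(x_k)\ne 0$ yields the viscosity supersolution-side inequality $|\gd\phi_k(x_k)|^2\le Q(\gd\phi_k(x_k))^2$. Now the tangential component of $\gd\phi_k(x_k)=e_1+\varepsilon_k\gd\varphi(x_k)$ equals $\varepsilon_k\gd'\varphi(x_k)$, which is nonzero for $k$ large by continuity of $\gd'\varphi$ near $x_0$ and the hypothesis $|\gd'\varphi(x_0)|>0$. Hence $\gd\phi_k(x_k)$ is not parallel to $e_1$, and by \eqref{defineQ} we have $Q(\gd\phi_k(x_k))=1$. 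Expanding
\[|\gd\phi_k(x_k)|^2=1+2\varepsilon_k\pt_1\varphi(x_k)+O(\varepsilon_k^2)\le 1\]
gives $\pt_1\varphi(x_k)\le O(\varepsilon_k)$, and sending $k\to\infty$ produces $\pt_1\varphi(x_0)\le 0$.

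The main obstacle is the justification of the supersolution-side Bernoulli condition at $x_k$ for the minimal supersolution $u_k$. This can be obtained either via the standard viscosity theory for minimal supersolutions of Bernoulli problems (they satisfy both sub- and supersolution free-boundary conditions), or directly by a minimality argument analogous to Lemma~\ref{conditioninfinity}: if $|\gd\phi_k(x_k)|^2>1=Q(\gd\phi_k(x_k))^2$, then a small outward shift of $(\phi_k)_+$ patched into $u_k$ would yield a strictly smaller admissible supersolution, contradicting the minimality of $u_k$.
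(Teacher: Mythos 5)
Your proposal is correct and follows essentially the same route as the paper: slide the test function $\phi_k = x_1 + \varepsilon_k(\varphi+c_k)$ to touch $u_k$ from below at free boundary points $x_k \to x_0$, observe that $|\gd'\varphi(x_k)|>0$ forces $Q(\gd\phi_k(x_k))=1$, and expand $|\gd\phi_k|^2 \le 1$ to conclude $\partial_1\varphi(x_0)\le 0$. The extra details you supply (ruling out interior touching via the sign of $\Delta\varphi$ and the nonvanishing of $\gd\phi_k$, and justifying the supersolution-side Bernoulli condition) are exactly what the paper delegates to "a similar procedure as the proof for subsolution."
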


\begin{proof}
We follow a similar procedure as the proof for subsolution and obtain a converging sequence \((c_k,x_k)\rta (0^+,x_0)\) such that \(\phi_k:= x_1+\varepsilon_k(\varphi+c_k)\) touches \(u_k\) from below at \(x_k\in \lb\partial\{u_k>0\}\rb\cap B_{1/2}\). Since \(|\gd' \varphi(x_0)|>0\), for large \(k\) we also have \(|\gd' \varphi(x_k)|>0\), which implies that by the super-solution condition of \(u_k\) at \(x_k\),
\[
1\ge |\gd\phi_k|^2(x_k)=1+2\varepsilon_k\partial_1\varphi(x_k) + o(\varepsilon_k),
\]
and so we obtain \(\partial_1\varphi(x_k)\le0\) for all large \(k\). This completes the proof.

\end{proof}

\begin{lemma}
    In the case that \(p\ne e_1\), then \(w\) is harmonic inside \(B_{1/2}\).
\end{lemma}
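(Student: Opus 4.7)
My plan is to upgrade $w$ from a Neumann subsolution to a function satisfying the \emph{zero} Neumann condition on $B'$, and then conclude harmonicity by Schwarz reflection. The key input is that when $p\neq e_1$, the singular value $Q(e_1)=2$ is never ``seen'' in the flat limit, so the asymptotic expansion reduces to the classical Bernoulli case.

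\medskip

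\textbf{Step 1: Zero Neumann condition.} I want to show that if $\varphi$ is smooth and touches $w$ from below at $x_0\in B'$, then $\pt_p\varphi(x_0)\le 0$. Proceeding exactly as in the previous subsolution lemma, one obtains a sequence $(c_k,x_k)\to(0^-,x_0)$ so that
\[
\phi_k(x):=x\cdot p+\ep_k(\varphi+c_k)
\]
touches $u_k$ from below at $x_k\in \pt\{u_k>0\}\cap B_{1/2}$. Since $\gd\phi_k(x_k)=p+\ep_k\gd\varphi(x_k)\to p\ne e_1$, for $k$ sufficiently large we have $\gd\phi_k(x_k)\ne e_1$, and by $0$-homogeneity $Q(\gd\phi_k(x_k))=1$. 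The supersolution property of $u_k$ at a free boundary point (with respect to the boundary condition $|\gd u|=Q(\gd u)$) then yields $|\gd\phi_k(x_k)|\le Q(\gd\phi_k(x_k))=1$, so
\[
1+2\ep_k\pt_p\varphi(x_k)+o(\ep_k)=|\gd\phi_k(x_k)|^2\le 1,
\]
which after dividing by $\ep_k$ and letting $k\to\infty$ gives $\pt_p\varphi(x_0)\le 0$. Combined with the subsolution inequality $\pt_p w\ge 0$ established for general $p$, this says $w$ satisfies zero Neumann on $B'$ in the viscosity sense.

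\medskip

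\textbf{Step 2: Harmonicity across $B'$.} From Step 1 and the harmonicity of $w$ in $B^+$, $w$ is a bounded viscosity solution of the classical zero-Neumann problem in $B_{1/2}^+$ with $C^{0,\alpha}$ boundary regularity (from Corollary \ref{preforaatheorem}). By the standard Schwarz reflection principle, the even extension of $w$ across $B'=\{x\cdot p=0\}\cap B_{1/2}$ is a classical harmonic function on all of $B_{1/2}$. Since this extension coincides with the even extension of $w$ already considered in Lemma \ref{subharmonic}, we conclude that $w$ is harmonic throughout $B_{1/2}$.

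\medskip

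The only nontrivial step is Step 1, and the one subtle point there is verifying that the supersolution definition for the discontinuous anisotropic Bernoulli problem \eqref{discontinuous2} truly gives $|\gd\phi_k(x_k)|^2\le 1$ rather than $\le Q(\gd\phi_k(x_k))^2$ evaluated naively at $e_1$. Because $Q$ is upper semicontinuous at $e_1$ (it jumps \emph{up} to $2$) and lower semicontinuous at every other direction with value $1$, and test functions touching from below produce gradients staying bounded away from $e_1$ (since $p\ne e_1$), the correct semicontinuous envelope of $Q$ relevant to the supersolution inequality is just $1$ near $\gd\phi_k(x_k)$. No such argument was needed in the $p=e_1$ case because there the minimal supersolution structure (via Lemma \ref{conditioninfinity}) provided the \say{two-sided} behavior at $e_1$ via the strong subsolution condition. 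Here, $p\ne e_1$ lets us bypass that machinery entirely.
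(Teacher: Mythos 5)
Your proof is correct and follows essentially the same route as the paper's: the paper's (very terse) argument is precisely your Step 1 — for $p\ne e_1$ the gradient $p+\ep_k\gd\varphi$ of any lower test function has tangential part bounded away from zero, so $Q=1$ and the free boundary supersolution condition forces $\partial_p\varphi(x_0)\le 0$, giving the zero Neumann condition and hence harmonicity across $B'$ by reflection. Your added discussion of the semicontinuous envelope of $Q$ is a reasonable clarification but not needed beyond the observation that the test gradients stay uniformly away from $e_1$.
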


\begin{proof}
    This is immediate by observing that for any touching function \(\varphi\) from below, there is some \(\delta>0\) such that
    \[
    \lw p+C\varepsilon\gd\varphi\rw^2-\lw(p+C\varepsilon\gd\varphi)\cdot e_1\rw^2 \ge \delta,
    \]
    independent of small \(\varepsilon>0\).
\end{proof}

\appendix\section{Classification of homogeneous solutions in 2D}\label{Appendix.classification}

In this section, we discuss the classification of homogeneous solutions of the form
\[
u(r,\theta)=r^{\kappa} m(\theta),\,r>0,\,\kappa\ge0,\,\theta\in \pt B_1 \cap \{x_1\ge 0\}
\]
to the equation \eqref{e.grad-degen-neumann} in dimension \(d=2\). Before discussing the classification, let us notice that any homogeneous solutions as described above to the problem \eqref{e.grad-degen-neumann} are also homogeneous solutions to the no-sign Signorini problem \eqref{localproblemtoe}. This is because, by the boundary condition of \(u\), we know that \(m\) (similar to the boundary condition of \(h\) in \eqref{equationforthesphericalfunctionh}) satisfies 
\[
\min\left\{-\partial_{\Vec{n}}m(\theta), \sqrt{|\gd_\tau m(\theta)|^2 + m^2(\theta)}\right\}=0 \ \hbox{ for } \ \theta\in \partial' B_1'
\]
which implies that \(m\) also satisfies
\be\label{thespherconditionofnosignsignom}
\min\left\{-\partial_{\Vec{n}}m(\theta), |m|(\theta)\right\}=0 \ \hbox{ for } \ \theta\in \partial' B_1'.
\ee
Therefore, we only have to discuss the homogeneous solutions to \eqref{localproblemtoe}.  At this point, the analysis becomes very similar to the classical Signorini case, but we present the details anyway to be complete.

In dimension \(d=2\), we call \(x_1=y\) and \(x_2=x\), and take \(\theta\) to be the standard polar coordinate, i.e. $\tan \theta = \frac{y}{x}$. We can further write
\[
u(r,\theta)=r^{\kappa} m(\theta) \ \hbox{ for } \ r>0,\,\theta\in [0,\pi].
\]
Due to \eqref{thespherconditionofnosignsignom} and \(u\) being harmonic in \(B_1^+\), we know that \(m\) would satisfy
\[
\bca
m''(\theta)+\kappa^2m(\theta)=0,\hbox{   for }\theta \in (0,\pi)&\\
\min\{m'(0),|m|(0)\}=\min\{-m'(\pi),|m|(\pi)\}=0,&
\eca
\]
The general solution to the equation can be written as 
\[
m_{\textup{general}}=a \cos(\kappa\theta) + b \sin(\kappa\theta)
\]
for some real numbers \(a,b\).
By the boundary condition, we have (we can without loss assume that \(\kappa>0\)) 
\[
\min\{b,|a|\}=0,
\]
and
\[
\min\{ a\sin(\kappa\pi)-b\cos(\kappa\pi),|a\cos(\kappa\pi)+b\sin(\kappa\pi)|\}=0.
\]
Suppose \(b>0\) then \(|a|=0\) and we can further assume that \(b=1\) after normalization. This would then give us
\[
\min\{ -\cos(\kappa\pi),|\sin(\kappa\pi)|\}=0.
\]
Since \(\cos(\kappa\pi)\) and \(\sin(\kappa\pi)\) can not be both zero at the same time, we obtain either 
\[
\cos(\kappa\pi)<0,\,\sin(\kappa\pi)=0,
\]
or
\[
\cos(\kappa\pi)=0,\,|\sin(\kappa\pi)|>0.
\]
In the first case we have \(\kappa=2k-1,\,k\in\Z_+\), and in the second case we have \(\kappa=\frac{2k-1}{2},\,k\in \Z_+\). From this we get
\be\label{classifyminthefirst}
m_\kappa=\sin(\kappa\theta),\,\kappa=2k-1,\hbox{ or }(2k-1)/2,\,k\in\Z_+.
\ee
In the case \(b=0\) and \(|a|>0\), we can normalize so that \(|a|=1\), and obtain
\[
\min\{ \pm\sin(\kappa\pi),|\cos(\kappa\pi)|\}=0.
\]
This gives in the case \(a=1\), \(\kappa = \frac{4k-3}{2}\) and in the case \(a=-1\), \(\kappa=  \frac{4k-1}{2}\), or in both cases \(\kappa=k\) for \(k\in \Z_+\). From this we get
\be\label{classifyminthesecond}
m_\kappa= \cos(\kappa \theta),\,\kappa=\frac{4k-3}{2} \hbox{ or }k,\,k\in\Z_+,
\ee
or
\be\label{classifyminthethird}
m_\kappa= -\cos(\kappa \theta),\,\kappa=\frac{4k-1}{2}\hbox{ or }k,\,k\in\Z_+,
\ee
Now combining \eqref{classifyminthefirst}, \eqref{classifyminthesecond} and \eqref{classifyminthethird}, we have that any \(\kappa\)-homogeneous solution \(u_\kappa\) to \eqref{localproblemtoe} would take one of the following form with \(k\in \Z_+\)
\begin{enumerate}
    \item For \(\kappa=2k-1\) or \((2k-1)/2\)
    \[
    u_\kappa=\hbox{Im}\lb (x_2+i|x_1|)^\kappa\rb;
    \]
    \item For \(\kappa=\frac{4k-3}{2}\) or \(k\)
    \[
     u_\kappa=\hbox{Re}\lb (x_2+i|x_1|)^\kappa\rb;
    \]
     \item For \(\kappa=\frac{4k-1}{2}\) or \(k\)
    \[
     u_\kappa=-\hbox{Re}\lb (x_2+i|x_1|)^\kappa\rb.
    \]
\end{enumerate}
Notice that the cases \(\kappa = \frac{4k-2\pm1}{2} \) with \(u_\kappa=\mp\hbox{Re}\lb (x_2+i|x_1|)^\kappa\rb\) can be identified after reflections with the case \(\kappa=(2k-1)/2\) with \(u_\kappa=\hbox{Im}\lb (x_2+i|x_1|)^\kappa\rb\).

\bibliographystyle{plainnat}
\bibliography{ref}

\end{document}